\DeclareMathOperator    \aff                    {aff}
\DeclareMathOperator    \conv           {conv}
\DeclareMathOperator    \intr                   {int}
\DeclareMathOperator    \relint         {rel\,int}
\DeclareMathOperator    \verts          {vert}
\newcommand{\bb}{\mathbb}
\newcommand{\R}{\bb R}
\newcommand{\Z}{\bb Z}
\newcommand{\N}{\bb N}
\newcommand\st{\mid}
\def\ve#1{\mathchoice{\mbox{\boldmath$\displaystyle\bf#1$}}
{\mbox{\boldmath$\textstyle\bf#1$}}
{\mbox{\boldmath$\scriptstyle\bf#1$}}
{\mbox{\boldmath$\scriptscriptstyle\bf#1$}}}
\newcommand{\bpi}{\bar \pi}
\newcommand{\setcond}[2]{\left\{\, #1 \,\st\, #2 \,\right\}}
\newcommand{\I}{\mathcal{P}}  %
\newcommand{\Itri}[1][q]{\I_{#1,\tri}}
\newcommand{\Idiag}[1][q]{\I_{#1,\diag}}
\newcommand{\Ivert}[1][q]{\I_{#1,\ver}}
\newcommand{\Ihor}[1][q]{\I_{#1,\hor}}
\newcommand{\Iedge}[1][q]{\I_{#1,\edge}}
\newcommand{\Ipoint}[1][q]{\I_{#1,\point}}
\renewcommand{\P}{\mathcal{P}}
\newcommand{\T}{\mathcal T}
\newcommand{\point}{{\EquiPoint}}
\newcommand{\ver}{{\EquiVertical}}
\newcommand{\hor}{{\EquiHorizontal}}
\newcommand{\diag}{{\EquiDiagonal}}
\newcommand{\edge}{{\EquiEdge}}
\newcommand{\tri}{{\EquiTriangle}} %
\newcommand\FundaTriangleLower{{}^{}_{\ve0}\EquiTriangleLower}  %
\newcommand\FundaTriangleUpper{{}^{}_{\ve0}\EquiTriangleUpper}  %
\newcommand{\E}{\mathcal{E}}
\newcommand{\rx}{{\ve r}}
\newcommand{\x}{{\ve x}}
\newcommand{\y}{{\ve y}}
\newcommand{\z}{{\ve z}}
\renewcommand{\v}{{\ve v}}
\newcommand{\e}{{\ve e}}
\renewcommand{\u}{{\ve u}}
\newcommand{\f}{{\ve f}}
\newcommand{\0}{{\ve 0}}
\newcommand{\1}{{\ve 1}}
\newcommand{\p}{{\ve p}}
\renewcommand{\t}{{\ve t}}
\newcommand{\w}{{\ve w}}
\renewcommand{\b}{{\ve b}}
\renewcommand{\d}{{\ve d}}
\newcommand{\cve}{{\ve c}}
\newcommand{\h}{{\ve h}}
\newcommand{\B}{B}
\def\st{\mid}
\newenvironment{psmallmatrix}{\left(\smallmatrix}{\endsmallmatrix\right)}
\newenvironment{psmallmatrixbig}{\bigl(\smallmatrix}{\endsmallmatrix\bigr)}
\newcommand\InlineFrac[2]{#1/#2}  %
\newcommand\ColVec[3][\relax]%
\let\frac=\InlineFrac\begin{psmallmatrixbig}#2\vphantom{/}\\#3\vphantom{/}\end{psmallmatrixbig}\egroup
\let\frac=\InlineFrac\begin{psmallmatrixbig}\ifx#200\else#2/#1\fi\\\ifx#300\else#3/#1\fi\end{psmallmatrixbig}\egroup
\newcommand\CVcoordleft[4]{#1}
\newcommand\CVcoordright[4]{#2}
\newcommand\CVcoordbottom[4]{#3}
\newcommand\CVcoordtop[4]{#4}
\newcommand{\diagGrid}[1][0101]{
\node[anchor=east,draw=none,font=\tiny,inner sep=2pt,yshift=1pt] at (0,0) {$\CVcoordbottom#1$};
\node[anchor=east,draw=none,font=\tiny,inner sep=2pt,yshift=-1pt] at (0,5) {$\CVcoordtop#1$};
\node[anchor=north,draw=none,font=\tiny,inner sep=2pt,xshift=1pt] at (0,0) {$\CVcoordleft#1$};
\node[anchor=north,draw=none,font=\tiny,inner sep=2pt,xshift=-1pt] at (5,0) {$\CVcoordright#1$};
\draw[step=1.0,black,thin] (0,0) grid (5,5);
\draw[black,thin]  (0,1) -- (1,0);
\draw[black,thin] (0,2) -- (2,0);
\draw[black,thin] (0,3)--(3,0);
\draw[black,thin] (0,4)--(4,0);
\draw[black,thin] (0,5)--(5,0);
\draw[black,thin]  (5,1) -- (1,5);
\draw[black,thin] (5,2) -- (2,5);
\draw[black,thin] (5,3)--(3,5);
\draw[black,thin] (5,4)--(4,5);}
\renewcommand{\pod}[1]%
{\allowbreak\mathchoice{\mkern18mu}{\mkern8mu}{\mkern8mu}{\mkern8mu}(#1)}
\chardef\Myunderscore=`\_
\newcommand\underscore{\Myunderscore\allowbreak}
\DeclareRobustCommand\sage[1]{\texttt{#1}}
\newtcolorbox{myboxedeq}{
  colback=white, 
  colframe=gray, 
  boxrule=0.5pt, 
  rounded corners, 
  boxsep=5pt, 
  box align=center,
  valign=center,
  enhanced, %
  breakable, %
}
\newtheorem{theorem}{Theorem}[section]
\newcommand\MkNewTheorem[2]{%
  \newtheorem{#1}{#2}
  \expandafter\def\csname c@#1\endcsname{\c@theorem}
  \expandafter\def\csname p@#1\endcsname{\p@theorem}
  \expandafter\def\csname the#1\endcsname{\thetheorem}
  \expandafter\def\csname #1name\endcsname{#2}
}
\theoremstyle{definition}
\renewcommand{\footnote}[1]{}
\newcommand\Autoref[1]{\@first@ref#1,@}
\def\@throw@dot#1.#2@{#1}%
\def\@set@refname#1{%
    \edef\@tmp{\getrefbykeydefault{#1}{anchor}{}}%
    \xdef\@tmp{\expandafter\@throw@dot\@tmp.@}%
    \ltx@IfUndefined{\@tmp autorefnameplural}%
         {\def\@refname{\@nameuse{\@tmp autorefname}s}}%
         {\def\@refname{\@nameuse{\@tmp autorefnameplural}}}%
}
\def\@first@ref#1,#2{%
  \ifx#2@\autoref{#1}\let\@nextref\@gobble%
  \else%
    \@set@refname{#1}%
    \@refname~\ref{#1}%
    \let\@nextref\@next@ref%
  \fi%
  \@nextref#2%
}
\def\@next@ref#1,#2{%
   \ifx#2@ and~\ref{#1}\let\@nextref\@gobble%
   \else, \ref{#1}%
   \fi%
   \@nextref#2%
}
\let\savemathbb=\mathbb
\let\OurMathBbAux=\mathbb
\DeclareRobustCommand\OurMathBb{\OurMathBbAux}
\let\mathbb=\OurMathBb
\let\bfseries=\undefined
\DeclareRobustCommand\bfseries
\let\OurMathBbAux=\mathbf}
\def\@thm#1#2#3{%
  \ifhmode\unskip\unskip\par\fi
  \normalfont
  \trivlist
  \let\thmheadnl\relax
  \let\thm@swap\@gobble
  \thm@notefont{\fontseries\mddefault\upshape\unboldmath\let\OurMathBbAux=\savemathbb}%
  \thm@headpunct{.}%
  \thm@headsep 5\p@ plus\p@ minus\p@\relax
  \thm@space@setup
  #1%
  \@topsep \thm@preskip               %
  \@topsepadd \thm@postskip           %
  \def\@tempa{#2}\ifx\@empty\@tempa
    \def\@tempa{\@oparg{\@begintheorem{#3}{}}[]}%
  \else
    \refstepcounter{#2}%
    \def\@tempa{\@oparg{\@begintheorem{#3}{\csname the#2\endcsname}}[]}%
  \fi
  \@tempa
}
\let\originaltau\tau
\renewcommand{\tau}{\boldsymbol{\originaltau}}
\renewcommand{\otimes}{\hspace{1pt}{\textnormal{\texttt{\#}}}\hspace{1pt}}
\renewcommand{\Tuple}{}
\newcommand{\Etuple}{\mathcal{E}}
\renewcommand{\p}{\mathtt{p}}
\newcommand{\ZZ}{\mathcal Z}
\newcommand{\mytriangle}[4]{%
    \fill[#1, opacity=#2] (#3,#4) -- (#3+1,#4) -- (#3,#4+1) -- cycle;
}
\newcommand{\mytriangleRemove}[2]{%
    \fill[pattern=crosshatch] (#1,#2) -- (#1+1,#2) -- (#1,#2+1) -- cycle;
}
\newcommand{\mytriangleReflect}[4]{%
    \fill[#1, opacity=#2] (#3,#4) -- (#3-1,#4) -- (#3,#4-1) -- cycle;
}
\newcommand{\mytriangleRemoveReflect}[2]{%
    \fill[pattern=crosshatch] (#1,#2) -- (#1-1,#2) -- (#1,#2-1) -- cycle;
}
\newcommand{\myvertEdge}[3]{%
    \draw[#1, line width=1.3] (#2,#3) -- (#2,#3+1);
}
\newcommand{\myhorEdge}[3]{%
    \draw[#1, line width=1.3] (#2,#3) -- (#2+1,#3);
}
\newcommand{\mydiagEdge}[3]{%
    \draw[#1, line width=1.3] (#2,#3) -- (#2+1,#3-1);
}
\newcommand{\mypoint}[3]{%
    \fill[#1] (#2,#3) circle[radius=3pt];
}
\newcommand{\mypointHole}[3]{%
    \fill[#1, draw = black] (#2,#3) circle[radius=3pt];
}
\newcommand{\myscale}{0.35}
\newcommand\tikzarrow{\quad\begin{tikzpicture}[scale=\myscale]
    \node at (0,5) {}; %
    \node at (0,0 ) {}; %
    \node at (0,2.5) {\Large$\Rightarrow$};
\end{tikzpicture}\quad}
\newcommand\tikzLRarrow{\quad\begin{tikzpicture}[scale=\myscale]
    \node at (0,5) {}; %
    \node at (0,0 ) {}; %
    \node at (0,2.5) {\Large$\Leftrightarrow$};
\end{tikzpicture}\quad}
\newcommand{\drawGridred}{    %
    \diagGrid[{}{}{}{}]
    \foreach \x in {0,1,2,3,4,5} {
        \foreach \y in {0,1,2,3,4,5} {
            \mypoint{red}{\x}{\y}
        }
    }}
\newcommand{\drawGridredNoLines}{    %
    \foreach \x in {0,1,2,3,4,5} {
        \foreach \y in {0,1,2,3,4,5} {
            \mypoint{red}{\x}{\y}
        }
    }}
\newcommand{\additive}{green}
\newcommand{\edgeadditive}{green!30!black}
\newcommand{\zero}{red}
\newcommand{\stepTwo}{

\begin{tikzpicture}[scale=\myscale]
\drawGridred
    \mytriangle{\additive}{0.4}{2}{1} %
    \mytriangleReflect{\additive}{0.4}{3}{5} %
    \mytriangle{\additive}{0.4}{0}{3} %
\end{tikzpicture}
\tikzLRarrow
\begin{tikzpicture}[scale=\myscale]
\drawGridred
    \mytriangle{\zero}{0.4}{2}{1} %
    \mytriangleReflect{\zero}{0.4}{3}{5} %
    \mytriangle{\zero}{0.4}{0}{3} %
\end{tikzpicture}

\begin{tikzpicture}[scale=\myscale]
\drawGridred
    \mydiagEdge{\additive}{2}{1} 
    \mydiagEdge{\additive}{1}{3}
    \mydiagEdge{\additive}{4}{4} 
\end{tikzpicture}
\tikzLRarrow
\begin{tikzpicture}[scale=\myscale]
\drawGridred
    \mydiagEdge{\zero}{2}{1} 
    \mydiagEdge{\zero}{1}{3}
    \mydiagEdge{\zero}{4}{4} 
\end{tikzpicture}
}
\title[Equivariant Perturbation IV]{Equivariant Perturbation in \\Gomory and Johnson's Infinite Group
  Problem.\\ IV. The General Unimodular Two-Dimensional Case}
\author{Robert Hildebrand}
\address{Robert Hildebrand: Grado Department of Industrial and Systems Engineering}
\email{rhil@vt.edu}
\author{Matthias K\"oppe}
\address{Matthias K\"oppe: Dept.\ of Mathematics, University of California, Davis}
\email{mkoeppe@math.ucdavis.edu}
\author{Luze Xu}
\address{Luze Xu: Dept.\ of Mathematics, University of California, Davis}
\email{lzxu@ucdavis.edu}
\begin{document}

\begin{abstract}We study an abstract setting for cutting planes for integer programming called the infinite group problem.  In this abstraction, cutting planes are computed via \emph{cut generating function} that act on the simplex tableau.  In this function space, cut generating functions are classified as minimal, extreme, and facets as a proxy for understanding the strength or potential importance of these functions.   Prior work developed algorithms for testing minimality, extremality, and facetness for cut generating functions applied to 1-row tableau and to some 2-row tableau in a restricted setting.  We complement and generalize this work by  giving an algorithm for testing the extremality of a large class of
  minimal valid functions for the two-dimensional infinite group problem.  
  Along the way, we develop results of independent interest on functional equations and infinite systems of linear equations.
\end{abstract}

\maketitle

\section{Introduction}
Since the mid-1990s, the Gomory mixed integer cut \cite{gomory1963algorithm, Gom60} has undergone a resurgence in the field of mixed-integer optimization. With advancements in computational techniques, these cuts, once deemed impractical, proved to be highly effective in combination with modern implementations of the simplex method and branch-and-bound techniques \cite{balas1993lift, balas1996mixed}. This unexpected discovery spurred renewed interest in general-purpose cutting planes, leading to significant breakthroughs in mixed-integer programming technology \cite{cornuejols2007revival}.

A powerful framework for understanding and generating cutting planes in integer programming has since emerged from the foundational work on the \textit{infinite group problem}, introduced by Gomory and Johnson in 1972 \cite{infinite, infinite2}. This framework has become central to the study of cut-generating functions, which seek to generalize cutting-plane techniques beyond single-row formulations and into multi-row settings \cite{conforti2013cut}. The infinite group problem captures the essence of cutting-plane generation by abstracting the underlying structure of integer programs into a geometric and algebraic problem that can be analyzed independently of any specific optimization problem.

The recent emphasis on multi-row cuts, derived from the infinite group problem, has further advanced the field. These cuts hold significant promise for solving large-scale mixed-integer programs by providing a more versatile and powerful toolset for cutting-plane generation. The framework of cut-generating functions, developed in part from Gomory and Johnson's work, now serves as a key approach in both the theoretical study and practical implementation of cutting planes \cite{conforti2013cut}. This line of research has proven to be pivotal in bridging the gap between theory and computation, with significant implications for the continued advancement of integer programming algorithms.

For a more comprehensive historical account of cutting planes and their development, see the survey \cite{igp_survey, igp_survey_part_2}.

 The study of cut-generating functions for $k=1$ is referred to
  as the {\em single-row} problem, and the general $k\geq 2$ case is referred
  to as the {\em multi-row} problem in the literature. Algorithms used in practice for solving mixed-integer problems have so far used only insights from the single-row problem. It is believed that the general multi-row analysis can lead to stronger cutting planes that can significantly boost the performance of state-of-the-art algorithms.

Cut generating functions are classified as \emph{valid}, \emph{minimal}, \emph{extreme}, or \emph{facet}, with extreme and facet being the most desirable properties.  See Section~\ref{sec:basic-notions} for definitions.  

\subsection{Recent Literature}
Since the publication of the survey articles \cite{igp_survey, igp_survey_part_2}, numerous advancements have addressed open questions and significantly expanded the research field on cut-generating functions. Notably, substantial software was developed to test the extremality of functions~\cite{hong-koeppe-zhou:software-paper}. These advancements have led to new discoveries about the extreme functions’ potential for unusual or ``wild" behavior, as documented in several studies~\cite{koeppe-zhou:crazy-perturbation, koeppe-zhou:param-full-paper, koeppe-zhou-ISCO, Kppe2016, Basu2018, Basu2020}. 

Efforts to better understand the structure and properties of these functions also progressed, with detailed examinations of their functional spaces presented in~\cite{hildebrand-koeppe-zhou:algo-paper-abstract-ipco, OJMO_2022__3__A5_0, Kppe2017}. Additionally, theoretical analyses have provided new insights into the strength of cut-generating functions~\cite{Basu2019}. Finally, several computational studies have been done on how to find effective cuts efficiently~\cite{cheng-basu-2024}.

Although it is clear that multi-row cuts are stronger than single row cuts, a key question seems to remain: How can we efficiently compute strong multi-row cuts that will be effective in solvers?

\subsection{Contributions}
In the paper, we generalize results from~\cite{bhk-IPCOext} about the 2-row problem by removing restrictive assumptions.  In particular, we show how to analyze piecewise linear cut generating functions and show that they are extreme and facets.  This is done by establishing a correspondence of infinite settings to finite  settings.  This results in an algorithm that proves which piecewise linear functions are extreme and facets in the 2-row setting. See \autoref{thm:extreme-restriction-m-2d-all} for our main result.  Along the way, we prove new results of potentially independent interest about functional equations and infinite systems of linear equations.  

\subsection{Outline} 
We attempt to keep this paper self-contained.  As such, we devote a significant portion of the paper to reviewing relevant definitions and results as presented in~\cite{igp_survey, igp_survey_part_2}.  That said, some of the tools used require some generalization to approach this more difficult problem.  At a glance, we extensively discuss necessary prior work and notation in Section~\ref{sec:basic-notions}. We save other sections for new developments.  

Our main result is \autoref{thm:extreme-restriction-m-2d-all}, which is completed in Section~\ref{sec:main-proof}.  For this, along the way, we develop novel techniques and results including new results on functional equations, and a discretization lemma that reduces structured infinite systems of equations to finite systems of equations.  We work to abstract these types of results so that they can be appreciated and understood independently from the cut-generating function application.

 In more detail, the paper is laid out as follows.

Section~\ref{sec:basic-notions} mostly reviews prior work that is important for this paper.  To begin, it formally introduces the problem, the main objects of study such as {\em valid functions, minimal valid functions, extreme functions, and facets}, and their basic properties.  
Section~\ref{sec:piecewise} introduces the notations and concepts from discrete geometry required for analyzing the problem, and collects foundational techniques for the general $k$-row problem. 
Section~\ref{sec:7-tuples-intro} introduces the perturbation space in the notation of generalized additivities that we refer to as 7-tuples.
Section~\ref{sec:foundations} recalls higher-dimensional variants of the celebrated {\em Interval Lemma}.
Section~\ref{sec:alg-restriction-finite-groups} recalls a result from \cite{igp_survey_part_2} that identifies the extremality of functions for the 1-row problem with a finite group problem.  

We now begin new results for this paper.

Section~\ref{sec:7-tuples} builds notation to study general types of additivities for piecewise linear functions and characterizes the types of important generalized additivities with respect to a 2-dimension polyhedral complex.
Section~\ref{sec:interval-lemma-applications-new-results} applies interval lemma results to our context.  We also develop a new result that is a   variant that we refer to as the \emph{hidden interval lemma} (\autoref{lem:hidden_interval_lemma}).  This lemma combines different additivities to reveal an underlying interval lemma. We are not aware of any result of this kind, even in the functional analysis literature.
Section~\ref{sec:underlying-theory} shows how some infinite systems of equations can be understood by studying only a finite system of equations.  We work in a general notation and thus this can be read independently of the other sections.  We present these results (\autoref{lem:infinte-system-to-finite-system-edges}) as such to highlight the underlying structures that are driving some of the main results.
Section~\ref{sec:2D-main-result} develops results on the perturbation function space.
Finally, in Section~\ref{sec:main-proof}, we prove our main result (\autoref{thm:extreme-restriction-m-2d-all}) that improves the work of \cite{bhk-IPCOext}.%

\section{The Infinite Group Problem}\label{sec:basic-notions}

Gomory and Johnson, in their work
in~\cite{infinite,infinite2}, introduced the so-called {\em infinite group problem}. %
It has its roots in Gomory's \emph{group problem}~\cite{gom}, which was
introduced by him as an algebraic relaxation of pure integer linear
optimization problems. 
We introduce this next as it will be useful for formulating many of our results in a unified language. One considers an
abelian group $G$, written additively, and studies
the set $R_{\ve f}(G,S)$ of functions $y \colon G \to \R$ satisfying the following constraints:  
\begin{equation}
  \label{GP} 
  \begin{aligned}
    &\sum_{\rx \in G} \rx\, y(\rx) \in \ve f + S \\
    &y(\rx) \in \mathbb{Z}_+ \ \ \textrm{for all $\rx \in G$}  \\
    &y \textrm{ has finite support}, 
  \end{aligned}
\end{equation}
where $S$ is a subgroup of $G$ and $\ve f$ is a given element in $G\setminus S$; so $\ve
f + S$ is the coset containing the element~$\ve f$. 
We are interested in studying the convex hull $R_{\ve f}(G,S)$ of
the set of all functions $y\colon G \to \R$ satisfying the constraints in~\eqref{GP}. $R_{\ve f}(G,S)$ is a convex subset of the vector space $\R^{(G)}$, which is infinite-dimensional when $G$ is an infinite group, i.e., of infinite order. The nomenclature {\em $k$-row infinite group problem} is reserved for the situation when $G =\R^k$ is taken to be the group
of real $k$-dimensional vectors under addition, and $S= \Z^k$ is the subgroup of the
integer vectors%
. When $k=1$, we refer to it as the {\em single-row infinite group problem}.

 \subsection{Valid inequalities and valid functions} 
 
Following Gomory and Johnson, we are interested in the description of $R_{\ve f}(G,S)$ as
the intersection of halfspaces in $\R^{(G)}$.  We first describe the general
form that these halfspaces take and then a standard normalization that leads
to the idea of cut-generating functions.  

\subsubsection{Valid inequalities}
Any halfspace in $\R^{(G)}$ is given by a pair $(\pi, \alpha)$, where $\pi \in \R^G$ and $\alpha \in
\R$, and the halfspace is the set of all $y \in \R^{(G)}$ that satisfy $\sum_{\rx \in G} \pi(\rx)y(\rx) \geq
\alpha$. The left-hand side of the inequality is a finite sum because $y$ has finite
support. Such an inequality is called a {\em valid inequality} for $R_{\ve f}(G,S)$
if $\sum_{\rx \in G} \pi(\rx)y(\rx) \geq \alpha$ for all $y \in R_{\ve
  f}(G,S)$, i.e., $R_{\ve f}(G,S)$ is contained in the halfspace defined by
$(\pi, \alpha)$. Note that the set of all valid inequalities $(\pi,\alpha)$ is a cone in
the space $\R^G \times \R$.

\subsubsection{Valid functions} Since data in finite-dimensional integer programs is usually rational, and this is our main motivation for studying the infinite group problem, it is
customary to concentrate on valid inequalities with $\pi \geq 0$; then we can choose, after a scaling, $\alpha = 1$ (otherwise, the inequality is implied by the nonnegativity of $y$). Thus, we only focus on
valid inequalities of the form $\sum_{\rx \in G} \pi(\rx)y(\rx) \geq 1$ with $\pi \geq 0$. Such functions $\pi \in \R^G$ are called {\em valid functions} for $R_{\ve f}(G,S)$. We remind the reader that this choice comes at a price because of {\cite[Proposition  2.4]{igp_survey, igp_survey_part_2}}; however, it can be shown that for rational {\em corner polyhedra}, which form an important family of relaxations for integer programs, all valid inequalities are restrictions of nonnegative valid functions for the infinite group problem. See~\cite{corner_survey} for a discussion.

 \subsection{Minimal functions, extreme functions and facets}\label{s:minimal-def}
We summarize a hierarchy on the set of valid
 functions defined by Gomory and Johnson~\cite{infinite,infinite2} that  captures the strength of the corresponding valid inequalities.

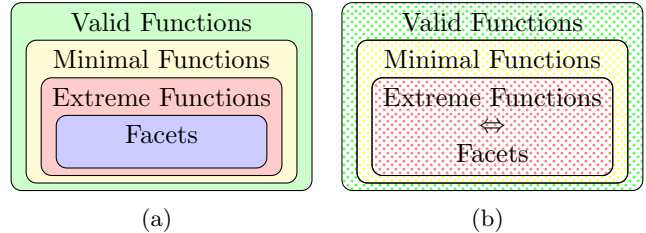
\begin{figure}[H]
  \subfigure[]{\label{subfig:infinite-hierarchy}
  \begin{tikzpicture}
  [font=\small,
  valid/.style={rounded corners,fill=green!20},
  minimal/.style={rounded corners,fill=yellow!20},
  extreme/.style={rounded corners,fill=red!20},
  facet/.style={rounded corners,fill=blue!20}]
\draw[valid] (0,0)--(0,2.5)--(4,2.5)--(4,0)--cycle; %
\draw[minimal] (0.2,0.1)--(0.2,2)--(3.8,2)--(3.8,0.1)--cycle; %
\draw[extreme] (0.4,0.2)--(0.4,1.5)--(3.6,1.5)--(3.6,0.2)--cycle; %
\draw[facet] (0.6,0.3)--(0.6,1)--(3.4,1)--(3.4,0.3)--cycle; %
  \node at (2, 2.25) { Valid Functions};
  \node at (2, 1.75) { Minimal Functions};
  \node at (2, 1.25) { Extreme Functions};
  \node at (2, 0.75) { Facets};
\end{tikzpicture}
}
\subfigure[]{\label{subfig:finite-hierarchy}
   \begin{tikzpicture}
  [font=\small,
  valid/.style={rounded corners,fill=green!30,pattern=crosshatch dots,pattern color=green!70},
  minimal/.style={rounded corners,fill=yellow!20,pattern=crosshatch dots,pattern color=yellow!70},
  extreme/.style={rounded corners,fill=red!20,pattern=crosshatch dots,pattern color=red!50},
  stain/.style={rounded corners,fill=white}]

\draw[valid] (0,0)--(0,2.5)--(4,2.5)--(4,0)--cycle; %
\draw[stain] (0.2,0.1)--(0.2,2)--(3.8,2)--(3.8,0.1)--cycle;
\draw[minimal] (0.2,0.1)--(0.2,2)--(3.8,2)--(3.8,0.1)--cycle; %
\draw[stain] (0.4,0.2)--(0.4,1.5)--(3.6,1.5)--(3.6,0.2)--cycle;
\draw[extreme] (0.4,0.2)--(0.4,1.5)--(3.6,1.5)--(3.6,0.2)--cycle; %
  \node at (2, 2.25) { Valid Functions};
  \node at (2, 1.75) { Minimal Functions};
  \node at (2, 1.25) { Extreme Functions};
  \node at (2, 0.85) { $\Leftrightarrow$};
  \node at (2, 0.5) { Facets};

\end{tikzpicture}
}
\caption{The hierarchy of valid, minimal, and extreme functions and facets
    and weak facets. (a) General case. (b) Situation in the finite-dimensional
    case.   As a consequence of this work, we can show some cases of 1-row and 2-row  continuous piecewise linear functions with
    rational breakpoints where this correspondence also holds.
  }
  \label{fig:hierarchy-with-facets}
  \label{fig:hierarchy}
\end{figure}

\subsubsection{Minimal functions}
A valid function $\pi$ for $R_{\ve f}(G,S)$ is said to be
\emph{minimal} for $R_{\ve f}(G,S)$ if there is no valid function $\pi' \neq \pi$
such that $\pi'(\rx) \le \pi(\rx)$ for all $\rx \in G$.  For every valid
function $\pi$ for $R_{\ve f}(G,S)$, there exists a minimal valid function $\pi'$
such that $\pi' \leq \pi$ \cite[Theorem~1.1]{bhkm}, and thus non-minimal valid
functions are redundant in the description of $R_{\ve f}(G,S)$.  

A function $\pi\colon G \rightarrow \mathbb{R}$ is \emph{subadditive} if
$\pi(\x + \y) \le \pi(\x) + \pi(\y)$ for all $\x,\y \in G$. We say that  $\pi$ is
\emph{symmetric} (or \emph{satisfies the symmetry condition}) if $\pi(\x) +
\pi(\f - \x) = 1$ for all $\x \in G$.

\begin{theorem}[Gomory and Johnson \cite{infinite}] \label{thm:minimal} Let $G$ be an abelian group, $S$ be a subgroup of $G$ and $\f \in G\setminus S$. Let
  $\pi \colon G \rightarrow \mathbb{R}$ be a nonnegative function. Then $\pi$
  is a minimal valid function for $R_{\ve f}(G,S)$ if and only if $\pi(\ve z) = 0$ for
  all $\ve z\in S$, $\pi$ is subadditive, and $\pi$ satisfies the symmetry
  condition. (The first two conditions imply that $\pi$ is periodic modulo
  $S$, that is, $\pi(\x) = \pi(\x + \ve z)$ for all $\ve z \in S$, and the symmetry condition implies that the values of minimal functions are bounded between $0$ and $1$.) 
\end{theorem}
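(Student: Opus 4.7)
The plan is the classical argument of Gomory and Johnson: sufficiency by direct verification, and necessity by three perturbation arguments, one per condition.

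For \textbf{sufficiency}, assume $\pi \ge 0$, $\pi(\ve z) = 0$ for $\ve z \in S$, $\pi$ subadditive, and $\pi$ symmetric. I would first derive periodicity modulo $S$ by applying subadditivity in both directions: $\pi(\x + \ve z) \le \pi(\x) + \pi(\ve z) = \pi(\x)$ and $\pi(\x) \le \pi(\x + \ve z) + \pi(-\ve z) = \pi(\x + \ve z)$. Symmetry at $\x = \0$ then yields $\pi(\ve f) = 1 - \pi(\0) = 1$. For any $y \in R_{\ve f}(G,S)$ with $\sum_\rx \rx\, y(\rx) = \ve f + \ve s$, iterated subadditivity on the finite support of $y$ gives
\[
\sum_{\rx} \pi(\rx)\, y(\rx) \;\ge\; \pi\!\bigl({\textstyle\sum_\rx} \rx\, y(\rx)\bigr) \;=\; \pi(\ve f + \ve s) \;=\; \pi(\ve f) \;=\; 1,
\]
so $\pi$ is valid. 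For minimality, given any valid $\pi' \le \pi$, I would apply the validity of $\pi'$ to the feasible $y$ defined by $y(\x) = y(\ve f - \x) = 1$ and $y = 0$ elsewhere, obtaining $\pi'(\x) + \pi'(\ve f - \x) \ge 1 = \pi(\x) + \pi(\ve f - \x)$; combined with $\pi' \le \pi$ componentwise, equality must hold, forcing $\pi' = \pi$.

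For \textbf{necessity}, I assume $\pi$ is minimal valid. For each condition that fails, I would construct $\pi' \le \pi$ with $\pi' \ne \pi$ and $\pi'$ still valid, contradicting minimality. If $\pi(\ve z_0) > 0$ for some $\ve z_0 \in S$, set $\pi'(\ve z_0) = 0$ and $\pi' = \pi$ elsewhere; validity of $\pi'$ follows because for any feasible $y$, the modification $y'(\ve z_0) = 0$ preserves feasibility (the weighted sum shifts by $-y(\ve z_0)\ve z_0 \in S$), and $\sum_\rx \pi'(\rx)\, y(\rx) = \sum_\rx \pi(\rx)\, y'(\rx) \ge 1$. If $\pi(\x_0 + \y_0) > \pi(\x_0) + \pi(\y_0)$, set $\pi'(\x_0 + \y_0) = \pi(\x_0) + \pi(\y_0)$; validity follows by substituting each unit of $\x_0 + \y_0$ in a feasible $y$ with one unit each of $\x_0$ and $\y_0$, preserving the weighted sum. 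For symmetry, validity applied to $y$ with $y(\x) = y(\ve f - \x) = 1$ already gives $\pi(\x) + \pi(\ve f - \x) \ge 1$; if strict at some $\x_0$, the intended perturbation is $\pi'(\x_0) = 1 - \pi(\ve f - \x_0)$.

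The hard part will be the symmetry step. Nonnegativity of $\pi'(\x_0) = 1 - \pi(\ve f - \x_0)$ requires the auxiliary bound $\pi(\ve f - \x_0) \le 1$, and the validity of $\pi'$ must be verified against feasible $y$ with $y(\x_0) > 0$. Both rely on $\pi(\ve f) = 1$ for minimal valid $\pi$, which I would first establish by a parallel single-point perturbation capping $\pi(\ve f)$ at $1$. Once $\pi(\ve f) = 1$ is in hand, validity of the symmetry perturbation reduces to a substitution argument that pairs each unit of $\x_0$ in $y$ with a phantom unit of $\ve f - \x_0$, using periodicity and $\pi(\ve f) = 1$ to balance the accounting. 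The interdependence of these auxiliary claims --- $\pi(\ve f) = 1$ being both a preparatory step and a consequence of the symmetry relation at $\x = \0$ --- is what requires careful ordering; steps (1) and (2) by contrast are essentially bookkeeping.
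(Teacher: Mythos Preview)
The paper does not give its own proof of this theorem; immediately after the statement it simply writes ``See \cite[Theorem~5.4]{corner_survey} for a proof.'' So there is no in-paper argument to compare against.

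Your proposal is the classical Gomory--Johnson argument and is correct in outline, including your identification of the symmetry step as the delicate one and your plan to first establish $\pi(\ve f)=1$ by a single-point cap before running the substitution. Two minor points worth tidying in a final write-up: in the sufficiency-side minimality argument, handle explicitly the degenerate case $\x = \ve f - \x$ (take $y(\x)=2$ rather than two separate unit assignments); and in the necessity-side symmetry perturbation, make explicit that subadditivity and $\pi|_S=0$ have already been established, so that periodicity and the bound $\pi(\rx)\le\pi(\ve f)=1$ are available when you verify validity of $\pi'$. With those details filled in, this matches the standard proof one finds in the cited survey.
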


See \cite[Theorem~5.4]{corner_survey} for a proof. Note that for $R_\f(\R^k, \Z^k)$, any minimal function is then periodic with respect to $\Z^k$.  

\begin{figure}[H]%

\begin{center}
\includegraphics[width=.31\linewidth]{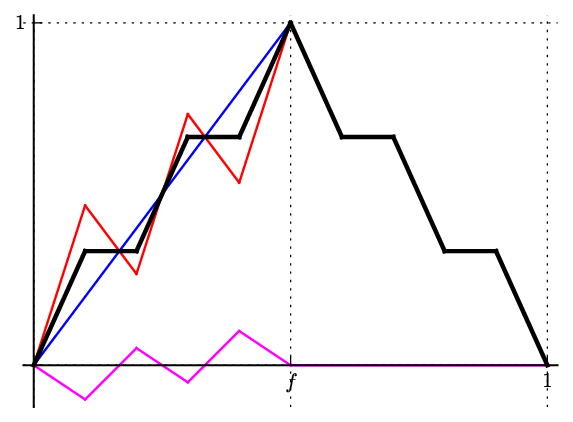}
\end{center}
\caption{This function %
is
  minimal, but not extreme (and  hence also not a facet).
  We can see this as 
  for any distinct minimal $\pi^1 = \pi + \bar\pi$ (\emph{blue}), $\pi^2 = \pi
  - \bar\pi$ (\emph{red}) such that $\pi = \tfrac{1}{2}\pi^1
  + \tfrac{1}{2} \pi^2$, the functions $\pi^1$ and $\pi^2$ are continuous
  piecewise linear with the same breakpoints as $\pi$. %
  A finite-dimensional extremality
  test can be used to find the two linearly independent perturbations $\bar\pi$ (\emph{magenta}), as shown in~\cite{basu-hildebrand-koeppe:equivariant}.
}
\label{fig:minimalNotExtreme}
\end{figure}

\subsubsection{Extreme functions}

A~valid function~$\pi$ is \emph{extreme}
for $R_{\ve f}(G,S)$ if it cannot be written as a convex combination of two other
valid functions for $R_{\ve f}(G,S)$, i.e., $\pi = \tfrac12(\pi^1 + \pi^2)$
implies $\pi = \pi^1 = \pi^2$ (see \autoref{fig:hierarchy} and \autoref{fig:minimalNotExtreme}).  
Extreme functions are easily seen to be minimal. In fact we may view this definition from a convex geometry perspective. By \autoref{thm:minimal}, the set of minimal valid functions is a convex subset of the infinite-dimensional space $\R^G$ of real-valued functions on $G$; this follows from the observation that all the properties in \autoref{thm:minimal} are preserved undertaking convex combinations of functions. %
\subsubsection{Facets}
A related notion is that of a \emph{facet}.  
Let $P(\pi)$ denote the set of all feasible solutions $y \in \R^{(G)}$ satisfying~\eqref{GP} such that $\sum_{r \in G} \pi(r) y(r) = 1$.  A valid function $\pi$ is called a \emph{facet} if for every valid function $\pi'$ such that $P(\pi) \subseteq P(\pi')$ we have that $\pi' = \pi$, as defined in~\cite{tspace}. Equivalently, a valid function $\pi$ is a facet if this condition holds for all such \emph{minimal} valid functions $\pi'$  (cf.~\cite{bhkm}).

\subsubsection{Relation between the three notions}
\label{subsec:relation-between-three-notions}
Facets are extreme functions (cf.~\cite[Lemma 1.3]{bhkm}), but it is unknown if all
extreme functions are facets.  %
See \autoref{fig:hierarchy-with-facets}\,(a). When $G$ is a finite abelian group, the set of minimal functions is a finite-dimensional polyhedron (given by constraints coming from \autoref{thm:minimal}).%
In this setting, it is well known that the notions of
facets and extreme inequalities are equivalent, and form the extreme points of
this polyhedron; see \autoref{fig:hierarchy-with-facets}\,(b).

\subsection{A roadmap for proving extremality and facetness}\label{s:roadmap}

An understanding of the set of points for which the subadditivity relations of a minimal function hold at equality is crucial to the study of both extreme functions and facets. This motivates the following definition.\begin{definition}
Let $\pi\colon \R^k \to \R$.
  For $\ve \sigma \in \R^3$, define the function of~$\ve \sigma\cdot \pi \colon \R^k \times\R^k \times \R^k \to \R$ as 
  \begin{equation}
     \label{eq:sigma-pi-definition}
     \ve \sigma \cdot \pi(\x^1,\x^2,\x^3) = \sum_{i=1}^3 \sigma_i \pi(\x^i)
   \end{equation}
  and the 
  \emph{additivity domain} of~$\pi$ as
  \begin{equation}
    \label{eq:Epi}
    E(\pi) := \setcond{(\x^1, \x^2) \in \R^k \times \R^k} { \ve (1,1,-1) \cdot \pi(\x^1,\x^2,\x^1 + \x^2) = 0}.
  \end{equation}
\end{definition}

\subsubsection{Finite test for minimality}\label{section:minimalityTest}
  
One of the main advantages of working with minimal valid functions that are piecewise linear is their combinatorial structure, which avoids many analytical complexities. Moreover, it is possible to give a finite description of~$\pi$ and $\Delta \pi$.  
From this, one can show a simple algorithm for testing minimality by analyzing the $(1,1,-1) \cdot \pi$ function \cite[Theorem 3.10, Remark 3.11]{bhk-IPCOext}].

\subsubsection{Conditions for Extreme and Facet}

The main technique used to show a function $\pi$ is extreme is to assume that $\pi = \tfrac{1}{2}(\pi^1 + \pi^2)$ where $\pi^1, \pi^2$ are valid functions, and then show that $\pi = \pi^1 = \pi^2$. 
Depending on the properties of the function being tested, various lemmas regarding continuity can be employed to deduce properties of related minimal valid functions. 

As noted in \cite[Theorem 1.4]{bhk-IPCOext} and \cite[Lemma 2.11]{igp_survey}, the following results we proven  in~\cite[Lemma 1.4]{MR0479416} and~\cite[Theorem 2.9]{basu-hildebrand-koeppe:equivariant} for the 1-row case, but easily extend to the $k$-row case.
\begin{theorem}[\cite{MR0479416,basu-hildebrand-koeppe:equivariant}]
\label{lem:minimality-of-pi1-pi2}
\label{Theorem:functionContinuous-all-cases}
If $\pi \colon \R^k \to \R$ is a minimal valid function, and $\pi = \frac{1}{2}\pi^1 + \frac{1}{2}\pi^2$, where $\pi^1, \pi^2$ are valid functions, then $\pi^1, \pi^2$ are both minimal. Moreover, if $\limsup_{\ve h\to \0} \frac{\lvert\pi(\ve h)\rvert}{\lvert \ve h\rvert} < \infty$, then this condition also holds for $\pi^1$ and $\pi^2$. This implies that $\pi, \pi^1$ and $\pi^2$ are all Lipschitz continuous.
\end{theorem}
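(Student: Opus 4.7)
The plan is to break the proof into three steps: the minimality of $\pi^1,\pi^2$; the transfer of the limsup bound; and a subadditivity-based passage from that bound to global Lipschitz continuity.

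First, to establish that $\pi^1$ and $\pi^2$ are minimal, I would verify the three criteria of \autoref{thm:minimal}. Validity forces $\pi^i \geq 0$; at $\z \in S$, the identity $\pi^1(\z)+\pi^2(\z)=2\pi(\z)=0$ together with nonnegativity forces $\pi^i(\z)=0$. For symmetry, apply validity to the finitely supported integer vector $y$ with $y(\x)=y(\f-\x)=1$ (which lies in $R_\f(G,S)$ since $\x+(\f-\x)=\f$), yielding $\pi^i(\x)+\pi^i(\f-\x)\geq 1$ for each $i$; averaging and comparing with the equality $\pi(\x)+\pi(\f-\x)=1$ known for $\pi$ forces both inequalities to be equalities. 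For subadditivity, apply validity to $y$ with $y(\x)=y(\y)=y(\f-\x-\y)=1$ to obtain $\pi^i(\x)+\pi^i(\y)+\pi^i(\f-\x-\y)\geq 1$, and combine with the just-proved symmetry relation $\pi^i(\x+\y)+\pi^i(\f-\x-\y)=1$ to conclude $\pi^i(\x)+\pi^i(\y)\geq \pi^i(\x+\y)$. Thus \autoref{thm:minimal} applies and $\pi^1,\pi^2$ are minimal.

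Second, the limsup condition transfers immediately: since $\pi^1,\pi^2$ are now minimal and hence nonnegative, $0\leq \pi^i(\h)\leq \pi^1(\h)+\pi^2(\h)=2\pi(\h)$, so $\limsup_{\h\to\0}|\pi^i(\h)|/|\h|\leq 2\limsup_{\h\to\0}|\pi(\h)|/|\h|<\infty$. Third, to pass from the limsup bound to a global Lipschitz estimate on any minimal $\tilde\pi$ satisfying it, choose $\delta>0$ and $M<\infty$ so that $\tilde\pi(\h)\leq M|\h|$ whenever $|\h|\leq\delta$. Subadditivity yields $\tilde\pi(\x+\h)-\tilde\pi(\x)\leq \tilde\pi(\h)$ and symmetrically $\tilde\pi(\x)-\tilde\pi(\x+\h)\leq \tilde\pi(-\h)$, so $|\tilde\pi(\x+\h)-\tilde\pi(\x)|\leq \max\{\tilde\pi(\h),\tilde\pi(-\h)\}\leq M|\h|$ whenever $|\h|\leq\delta$. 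For arbitrary $\h$, choose $n$ with $|\h/n|\leq\delta$ and write $\h=\sum_{i=1}^{n}\h/n$; $n$-fold subadditivity gives $\tilde\pi(\h)\leq n\,\tilde\pi(\h/n)\leq M|\h|$, and similarly for $\tilde\pi(-\h)$, delivering Lipschitz continuity of $\tilde\pi$ with constant $M$.

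The step requiring the most care is the first: the key maneuver is that validity applied to judiciously chosen integer feasible vectors yields inequalities for each of $\pi^1,\pi^2$ that \emph{must} saturate, because averaging reproduces an equality already known to hold for the minimal $\pi$. The $k$-dimensional setting introduces no new difficulty, since \autoref{thm:minimal} is stated for an arbitrary abelian group and all three feasible $y$'s used above lie in $R_\f(\R^k,\Z^k)$ without any restriction on $k$; this is precisely the pattern that allowed the original 1-row argument of \cite{MR0479416} to extend verbatim.
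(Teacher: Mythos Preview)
Your proposal is correct and follows the standard approach from the cited references~\cite{MR0479416,basu-hildebrand-koeppe:equivariant}; the paper itself does not reproduce the proof but simply points to those sources, noting that the one-row arguments extend verbatim to $k$ rows. Your three-step structure (minimality via \autoref{thm:minimal} using carefully chosen feasible $y$'s and an averaging-forces-equality argument, transfer of the limsup bound by the sandwich $0\le\pi^i\le 2\pi$, and Lipschitz continuity from subadditivity plus the local growth bound) is exactly the classical route.
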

Conveniently, if $\pi$ is a continuous piecewise linear function, then the condition $\limsup_{\ve h\to \0} \frac{\lvert\pi(\ve h)\rvert}{\lvert \ve h\rvert} < \infty$ is satisfied.

Suppose we choose a $\bar \pi \colon \R^k \to \R$ such that $\pi^1 := \pi + \bar \pi$ and $\pi^2:= \pi - \bar \pi$ are minimal valid functions.  Then we call $\bar \pi$ a \emph{perturbation function}.  Perturbation functions provide a more concise approach to studying extremality of $\pi$, and many properties required of $\pi^1, \pi^2$ also hold for $\bar \pi$. 
Indeed, following \autoref{lem:minimality-of-pi1-pi2}, if $\pi$ is a minimal valid function that is continuous (or just that the limit of the theorem holds), then $\bar \pi$ must also be continuous.
For convenience, we will stick to this setting where $\pi$ is continuous, and hence all possible perturbation functions are also continuous.

The following are some other related conclusions established in prior research that we will not need to reference.
\begin{enumerate}
    \item Subadditivity relations that hold tightly for \(\pi\) also hold tightly for \(\pi^1\) and \(\pi^2\), i.e., \(E(\pi) \subseteq E(\pi^1) \cap E(\pi^2)\)~\cite[Proof of Theorem 3.3]{infinite}.
    \item In the case \(k = 1\), if \(\pi\) is continuous from one side at zero, then \(\pi^1\) and \(\pi^2\) maintain continuity at all points where \(\pi\) is continuous~\cite[Theorem 2]{dey1}.
\end{enumerate}

We will assume that the input function is continuous, which will simplify some of our work.

To prove that a valid inequality is a facet, the main tool is the so-called \emph{Facet Theorem}, originally proved by Gomory and Johnson~\cite{tspace}  for the one-row case; it extends verbatim to
the $k$-row case.\footnote{Gomory and Johnson's original proof actually holds only for weak facets, and not for facets as claimed in~\cite{tspace}.} We present a stronger version of the theorem, which
first appeared in \cite{bhkm}.\footnote{In contrast to Gomory--Johnson's Facet Theorem, 
  the condition that $E(\pi) \subseteq E(\pi')$ implies $\pi' = \pi$ only
  needs to be tested on minimal valid functions, not all valid functions.} 
\begin{theorem}[Facet Theorem \cite{tspace}, {\cite[Theorem 3.1]{bhkm}}]
\label{thm:facet}
Let $\pi$ be a minimal valid function.  Suppose for every minimal valid function $\pi'$, $E(\pi) \subseteq E(\pi')$ implies $\pi' = \pi$.  Then $\pi$ is a facet.
\end{theorem}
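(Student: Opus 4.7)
The plan is to establish the facet property by showing that for every valid function $\pi'$ with $P(\pi) \subseteq P(\pi')$ one must have $\pi' = \pi$. I will proceed in three moves: first reduce to the case where $\pi'$ is minimal; next translate the inclusion of tight-solution sets $P(\pi) \subseteq P(\pi')$ into the inclusion of additivity domains $E(\pi) \subseteq E(\pi')$, at which point the hypothesis applies directly; and finally upgrade equality from the minimal dominated function back to $\pi'$ itself.

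For the reduction, given any valid $\pi'$ with $P(\pi) \subseteq P(\pi')$, I invoke \cite[Theorem~1.1]{bhkm} to produce a minimal valid $\pi'' \le \pi'$. For every $y \in P(\pi) \subseteq P(\pi')$, the chain $1 \le \sum_r \pi''(r)\, y(r) \le \sum_r \pi'(r)\, y(r) = 1$ (the first inequality being validity of $\pi''$) forces $y \in P(\pi'')$, so $P(\pi) \subseteq P(\pi'')$.

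The key combinatorial step is to show $E(\pi) \subseteq E(\pi'')$. Fix $(\x^1,\x^2) \in E(\pi)$ and set $\cve = \x^1 + \x^2$. I will build the witness $y = \delta_{\x^1} + \delta_{\x^2} + \delta_{\f - \cve}$, taken with multiplicity when the three points coincide. Then $\sum_r r\, y(r) = \x^1 + \x^2 + (\f - \cve) = \f$, so $y$ is feasible, and combining the additivity $\pi(\x^1) + \pi(\x^2) = \pi(\cve)$ with the symmetry of the minimal $\pi$,
\[
\sum_r \pi(r)\, y(r) = \pi(\x^1) + \pi(\x^2) + \pi(\f - \cve) = \pi(\cve) + \pi(\f - \cve) = 1.
\]
Hence $y \in P(\pi) \subseteq P(\pi'')$, which gives $\pi''(\x^1) + \pi''(\x^2) + \pi''(\f - \cve) = 1$. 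Because $\pi''$ is itself minimal, its symmetry yields $\pi''(\cve) + \pi''(\f - \cve) = 1$; subtracting the two identities leaves $\pi''(\x^1) + \pi''(\x^2) = \pi''(\x^1 + \x^2)$, i.e., $(\x^1, \x^2) \in E(\pi'')$. The hypothesis of the theorem, applied to the minimal $\pi''$, then yields $\pi'' = \pi$, so $\pi \le \pi'$.

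To finish, I suppose for contradiction that $\pi'(r_0) > \pi(r_0)$ for some $r_0 \in G$ and take $y = \delta_{r_0} + \delta_{\f - r_0}$; symmetry of $\pi$ gives $y \in P(\pi) \subseteq P(\pi')$, so $\pi'(r_0) + \pi'(\f - r_0) = 1$, while $\pi' \ge \pi$ pointwise with strict inequality at $r_0$ forces $\pi'(r_0) + \pi'(\f - r_0) > \pi(r_0) + \pi(\f - r_0) = 1$, a contradiction. The main obstacle is the third-paragraph witness construction: the symmetry of the minimal function is exactly what turns a three-term additivity into a single tight feasible solution, and the same symmetry must then be applied once more on $\pi''$ to recover the additivity on the dominated function; everything else is bookkeeping.
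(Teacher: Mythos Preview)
Your proof is correct and follows the standard argument for this result. The paper itself does not prove \autoref{thm:facet}; it merely states the theorem and cites \cite{tspace} and \cite[Theorem 3.1]{bhkm} for the proof, so there is no in-paper argument to compare against. Your reduction to a minimal dominated function, the three-point witness $y = \delta_{\x^1} + \delta_{\x^2} + \delta_{\f - (\x^1+\x^2)}$ combined with symmetry to translate $P(\pi) \subseteq P(\pi'')$ into $E(\pi) \subseteq E(\pi'')$, and the final symmetry-based contradiction to upgrade $\pi \le \pi'$ to $\pi = \pi'$ are exactly the ingredients of the proof in \cite{bhkm}.
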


As noted in~\cite{igp_survey}, we can use~\autoref{thm:facet} to prove that all facets are extreme.

The condition that $E(\pi) \subseteq E(\pi')$ implies $\pi' = \pi$ for every minimal valid function $\pi'$ is established along the following lines. First, structural properties of $\pi$ can be used to obtain a structured description of $E(\pi)$.  For example, the fact that $\pi$ is piecewise linear often shows that $E(\pi)$ is the union of many full-dimensional convex sets. $E(\pi')$ shares this structure with $E(\pi)$ because of the assumption that $E(\pi) \subseteq E(\pi')$. Then, results such as the {\em Interval Lemma}, discussed in \autoref{sec:foundations}, are used to show that $\pi'$ must be affine on the set of points contributing to $E(\pi')$. Finally, the conditions that all minimal valid functions are $0$ at the origin and $1$ at $\f + \Z^k$ puts further restrictions on the values that $\pi'$ can take, and ultimately force $\pi' = \pi$.\footnote{Sometimes certain continuity arguments need to be made, where results like Lemma~\ref{lem:tightness} (iii), (iv) and (v) are helpful. In such situations, the proof of extremality is usually slightly simpler than a proof for facetness, owing to Lemma~\ref{lem:tightness} (iii); see \autoref{rem:facet-extreme} and \autoref{rem:facet-non-piece}.}

\subsection{The $k$-dimensional theory of piecewise linear minimal valid functions}
\label{sec:piecewise}

\subsubsection{Polyhedral complexes and piecewise linear functions}\label{sec:def-complex-piecewise}

We introduce the notion of polyhedral complexes, which serves two purposes.
First, it provides a framework to define piecewise linear functions,
generalizing the familiar situation of functions of a single real variable. 
Second it is a tool for studying subadditivity and additivity relations of these functions.
This exposition follows \cite{bhk-IPCOext} with some updates to work with some more generalized notation.

\begin{definition}
\label{def:polyhedralComplex}
A (locally finite) {\em polyhedral complex} is a collection $\P$ of polyhedra in $\R^k$ such that:
\begin{enumerate}[\rm(i)]
\item $\emptyset \in \P$,
\item if $I \in \P$, then all faces of $I$ are in $\P$,
\item the intersection $I \cap J$ of two polyhedra $I,J \in \P$ is a face of both $I$ and $J$,
\item any compact subset of $\R^k$ intersects only finitely many faces in $\P$.
\end{enumerate}
A polyhedron $I$ from $\P$ is called a {\em face} of the complex.
A polyhedral complex~$\P$ is said to be {\em pure} if all its maximal faces (with
respect to set inclusion) have the same
dimension. In this case, we call the maximal faces of $\P$ the {\em cells}
of~$\P$. The zero-dimensional faces of $\P$ are called {\em vertices} and the set of vertices of $\P$ will be denoted by $\verts(\P)$. A polyhedral complex~$\P$ is said to be {\em complete} if the union of all faces of
the complex is~$\R^k$. A pure and complete polyhedral complex $\P$ is called a {\em triangulation} of $\R^k$ if every maximal cell is a simplex.  
\end{definition}

\begin{example}[Breakpoint intervals in $\R^1$ \cite{basu-hildebrand-koeppe:equivariant}]\label{ex:1d-breakpoint-complex}
  Let $0=x_0 < x_1 < \dots < x_{n-1} < x_n=1$ be a list of %
  ``breakpoints'' in $[0,1]$.  
  We extend it periodically as
  \begin{math}
    \B = \{\, x_0 + t, x_1 + t, \dots, x_{n-1}+t\st
    t\in\Z\,\}
  \end{math}.
  Define the set of 0-dimensional faces to be the collection of singletons,
  \begin{math}
    \I_{\B,\EquiOneDimPoint} = \bigl\{\, \{ x \} \st x\in B\,\bigr\},
  \end{math}
  and the set of one-dimensional faces to be the collection of closed intervals,
  \begin{math}
    \I_{\B,\EquiOneDimEdge} = \bigl\{\, [x_i+t, x_{i+1}+t] \st i=0, \dots, {n-1} \text{
      and } t\in\Z \,\bigr\}. 
  \end{math}
  Then $\I_{\B} = \{\emptyset\} \cup \I_{\B,\EquiOneDimPoint} \cup \I_{\B,\EquiOneDimEdge}$ is a
  locally finite %
  polyhedral
  complex%
  .
\end{example}

\begin{example}[Standard triangulations of~$\R^2$ \cite{bhk-IPCOext}]\label{ex:2d-standard-triangulation}
   Let $q$ be a positive integer.  Consider the
arrangement~$\mathcal H_q$ of all hyperplanes (lines) of~$\R^2$ of the form
$\ColVec{0}{1}\cdot \x = b$, $\ColVec{1}{0}\cdot \x = b$, and $\ColVec{1}{1}\cdot\x  = b$,
where $b \in \tfrac{1}{q}\Z$.  The complement of the arrangement~$\mathcal
H_q$ consists of two-dimensional cells, whose closures are the triangles
$$\FundaTriangleLower = \tfrac1q \conv(\{ \ColVec{0}{0}
, \ColVec{1}{0}
, \ColVec{0}{1}
\})\qquad\text{and}\qquad \FundaTriangleUpper = \tfrac1q \conv(\{\ColVec{1}{0}
, \ColVec{0}{1}
,
\ColVec{1}{1}
\})$$ and their translations by elements of the lattice $\smash[t]{\frac1q\Z^2}$. 
We denote by $\P_q$ the collection of these triangles and the vertices and
edges that arise as intersections of the triangles, and the empty set.  Thus $\P_q$ is a locally finite
polyhedral complex%
.  Since all
nonempty faces of~$\P_q$ are simplices, it is a triangulation of the
space~$\R^2$.
\end{example}

We give a precise definition of affine linear functions over a domain, suitable for
the general $k$-dimensional case.
\begin{definition}
Let $U \subseteq \R^k$. We say $\pi\colon U \to \R$ is {\em affine (or affine linear)} over $U$
if there exists a \emph{gradient} $\cve \in \R^k$ such that 
for any $\u_1, \u_2 \in U$ we have $$\pi(\u_2) - \pi(\u_1) =  \cve\cdot (\u_2 - \u_1).$$ 
\end{definition}

Given a pure and complete polyhedral complex $\P$, we call a function $\pi\colon \R^k\to \R$ {\em piecewise linear} over $\P$ if it is affine linear over the {\em relative interior} of each face of the complex. Under this definition, piecewise linear functions can be discontinuous. We say the function $\pi$ is \emph{continuous piecewise linear over $\P$} if it is affine over each of the cells of~$\P$ (thus automatically imposing continuity). Most of the results presented in this survey will be about continuous piecewise linear functions. %

Motivated by Gomory--Johnson's characterization of minimal valid functions
(\autoref{thm:minimal}), we are interested in functions~$\pi\colon \R^k\to \R$
that are periodic modulo~$\Z^k$, i.e., for all $\ve x\in \R^k$ and all vectors 
$\ve t\in\Z^k$, we have $\pi(\ve x+\ve t) = \pi(\ve x)$.  If $\pi$ is periodic modulo~$\Z^k$
and continuous piecewise linear over a pure and complete complex~$\P$, then we
can assume without loss of generality that $\P$ is
also \emph{periodic modulo~$\Z^k$}, i.e., for all $I\in \P$ and all vectors
$\ve t\in\Z^k$, the translated polyhedron~$I + \ve t$ also is a face of~$\P$.
This is the case in Examples \ref{ex:1d-breakpoint-complex} and~\ref{ex:2d-standard-triangulation}.

\begin{remark}[{\cite[Remark 3.5]{igp_survey, igp_survey_part_2}}]
\label{rem:p-finite} If all the cells of the polyhedral complex
  are bounded, the value of a continuous piecewise linear function at any
  point $\x$ can be obtained by interpolating the values of the function at
  the vertices of the minimal face containing $\x$. 
  Moreover, for a periodic continuous piecewise
  linear function over a periodic complex, we can give a finite description
  for $\pi$ by further restricting to the values in $\verts(\P) \cap  D$ where
  $ D = [0,1]^k$ or any set such that $ D + \Z^k = \R^k$. The finiteness of
  the set $\verts(\P) \cap  D$ is guaranteed by the assumption of local
  finiteness in \autoref{def:polyhedralComplex} (iv).
\end{remark}

\subsection{7-Tuples and Perturbation Space: Generalized types of additivities}
\label{sec:7-tuples-intro}
We begin by lifting the set of additivities to a notation we refer to as 7-tuples.
\begin{equation}
\label{eq:mathcalEpi}
\mathcal E(\pi) := \{ (\x^1, \x^2, \x^1 + \x^2, (1,1,-1), \0) : (\x^1, \x^2) \in E(\pi)\}
\end{equation}
In this setting, we will think of the point $(\x^1, \x^2, \x^1 + \x^2, (1,1,-1), \0)$ as a polyhedron or convex set.

More generally, we will think of sets of points that have structured additivities defined by a 7-tuple $(I_1, I_2, I_3, \ve \sigma, \t)$ where 
 $I_i \subseteq \R^k$ be convex sets, $\ve \sigma \in \{-1,1\}^3$ for $i=1,2,3$, and $\t \in \R^k$. Next, define the convex set in $\R^k \times \R^k \times \R^k$
 \begin{equation}
 F(I_1, I_2, I_3, \ve \sigma, \t)
 = \{(\x^1, \x^2, \x^3) \in  I_1 \times I_2 \times I_3 \st \sum_{i=1}^3 \sigma_i \x^i = \t\}.
 \end{equation}
 We say that a function $\pi$ is \emph{additive over $(I_1, I_2, I_3, \ve \sigma, \t)$}
 provided that 
 $$
\sum_{i=1}^3 \sigma_i \pi(\x^i) = 0 \text{ for all } (\x^1, \x^2, \x^3) \in F(I_1, I_2, I_3, \ve \sigma, \t).
 $$
 To simplify notation we often write $\tau = (I_1, I_2, I_3, \ve \sigma, \t)$ and discuss $\pi$ as additive over $\tau$.

For a set $F \subseteq \R^k \times \R^k \times \R^k$, define the projections 
\begin{equation}
\label{eq:projections}
    p_i \colon \R^k \times \R^k \times \R^k \to \R^k \ \text{ by } \ \x^1, \x^2, \x^3) \mapsto \x^i \ \text{ for } i=1,2,3.
\end{equation}
For a set of 7-tuples $\E$, we define the projections of faces defined by these 7-tuples as 
\begin{equation}
\label{eq:projE}
p(\E) :=\bigcup\{p_i(F(\tau)): i = 1,2,3, \tau\in \E\}.
\end{equation}

Next, we define the perturbation space.  We describe a general notation for later use.  In particular, let $\P$ be a collection of subsets of $\R^2$, $\E$ be a set of 7-tuples and $\mathcal Z, p(\E) \subseteq \P$.  

Furthermore, suppose that $\P$ is $\Z^2$-periodic.  That is, $\P = \P + \Z^2$.  We then define the perturbation space with respect to $\mathcal E, \mathcal Z, \P$ as
 \begin{align}
 \label{eq:bPimathcalE}
\begin{aligned}
   \bar \Pi^{\mathcal{E}}_{\mathcal{Z}, \P} :=& \{\bpi\colon \textstyle\bigcup\P \to \R && \st&&
  \bpi|_{\bigcup \mathcal{Z}} \equiv 0,\, \bpi(\0) = 0, \,
 \bpi(\f) = 0, && \text{(Zeros)}\\
 &&&&& \bpi(\x) = \bpi(\x + \t) \text{ for all } \x\in\textstyle\bigcup\P \text{ and }  \t\in\Z^2, && \text{(Periodic)}\\
 &&&&& \bpi \text{ is additive over $\tau$ for all } \tau \in \mathcal{E} && \text{(Additive)} \}.
\end{aligned}
\end{align}
This flexible notation allows us to consider different types of additivities, different subsets of domains, and record values where perturbation functions must vanish.

As a shorthand, we define
\begin{equation}
\label{eq:bPishorthand}
    \bar \Pi^{\mathcal E} := \bar \Pi^{\mathcal E}_{\emptyset, \R^2}
\end{equation}
Of particular interest, and as a starting point in our analysis, is the set $\bar \Pi^{\mathcal E(\pi)}$: this is the space of perturbation functions for the function $\pi$ (in other works, this has been denoted $\bar \Pi^{E(\pi)}$).

 Similar to the face decomposition of $E(\pi)$, we consider polyhedral decompositions of the set of $\mathcal E(\pi)$. We introduce the definition of valid 7-tuple to define the faces of $\mathcal E(\pi)$.

\begin{definition}[Valid 7-tuple]
$( I_1, I_2,  I_3, \ve \sigma,  \t)$ is a \emph{valid 7-tuple} if the projections of $F=F( I_1, I_2,  I_3, \ve \sigma,  \t)$ are the sets $I_i$, i.e., $p_i(F) = I_i$ for $i=1,2,3$.
\end{definition}

Studying valid 7-tuples is important since, for example,  we might have a case where $F( I_1, I_2,  I_3, \ve \sigma,  \t) = \emptyset$, even though $I_1, I_2, I_3 \neq \emptyset$.  Thus, we show that using only valid 7-tuples, we appropriately capture the set of additivity of interest.

We conclude this subsection with several results about the structure of valid 7-tuples.

 \begin{remark}
 \label{lemTripleSwap}
 Let $\mathrm{perm}\colon\{1,2,3\}\to \{1,2,3\}$ be a permutation, $( I_1, I_2,  I_3, \ve \sigma,  \t)$ is a valid 7-tuple, and $\pi$ is a function.  Then the following are equivalent:
 \begin{enumerate}
 \item $\pi$ is additive over $( I_1, I_2,  I_3, \ve \sigma,  \t)$,
 \item $\pi$ is additive over $( I_{\mathrm{perm}(1)}, I_{\mathrm{perm}(2)},  I_{\mathrm{perm}(3)}, \mathrm{perm}(\ve \sigma),  \t)$, where $\mathrm{perm}(\ve \sigma)=(\sigma_{\mathrm{perm}(1)}, \sigma_{\mathrm{perm}(2)}, \sigma_{\mathrm{perm}(3)})$,
 \item $\pi$ is additive over $(I_1, I_2,  I_3, -\ve \sigma,  -\t)$.
 \end{enumerate}
 \end{remark}
 \begin{proof}
 This is obvious because of the symmetry in the notation.
 \end{proof}

\begin{lemma}[Projections of valid 7-tuples]
\label{lem:projections}
For any set $F =F( I_1, I_2,  I_3, \ve \sigma,  \t)$, we have the following formulas for the projections:
\begin{enumerate}
\item $p_1(F) = I_1 \cap [\sigma_1 \t - (\sigma_1 \sigma_2 I_2 + \sigma_1 \sigma_3 I_3)]$,
\item $p_2(F) = I_2 \cap [\sigma_2 \t - (\sigma_2 \sigma_1 I_1 + \sigma_2 \sigma_3 I_3)]$,
\item $p_3(F) = I_3 \cap [\sigma_3 \t - (\sigma_3 \sigma_2 I_2 + \sigma_3 \sigma_1 I_1)]$.
\end{enumerate}
\end{lemma}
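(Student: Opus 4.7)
The plan is to unpack the definition of $F = F(I_1, I_2, I_3, \ve\sigma, \t)$ directly and rearrange the defining linear constraint. The key observation is that each $\sigma_i \in \{-1, +1\}$ is its own multiplicative inverse ($\sigma_i^2 = 1$), so the equation $\sum_i \sigma_i \x^i = \t$ can be solved cleanly for any one of the variables without any division.

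For part (1), a point $\x^1$ lies in $p_1(F)$ if and only if $\x^1 \in I_1$ and there exist $\x^2 \in I_2$, $\x^3 \in I_3$ with $\sigma_1 \x^1 + \sigma_2 \x^2 + \sigma_3 \x^3 = \t$. Multiplying both sides by $\sigma_1$ converts this to
\[
\x^1 \;=\; \sigma_1 \t - \sigma_1\sigma_2 \x^2 - \sigma_1\sigma_3 \x^3.
\]
As $(\x^2, \x^3)$ ranges over $I_2 \times I_3$, the right-hand side traces out the Minkowski combination $\sigma_1\t - (\sigma_1\sigma_2 I_2 + \sigma_1\sigma_3 I_3)$. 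Intersecting with the requirement $\x^1 \in I_1$ yields the formula claimed in (1).

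For parts (2) and (3), I would invoke the permutation symmetry recorded in \autoref{lemTripleSwap}: transposing indices $1 \leftrightarrow 2$ in the 7-tuple swaps the roles of $I_1, I_2$ and of $\sigma_1, \sigma_2$, and exchanges $p_1$ with $p_2$; likewise for $1 \leftrightarrow 3$. Applying these relabelings to (1) delivers (2) and (3) directly, with no further calculation.

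There is no substantive obstacle here — the lemma is essentially a bookkeeping statement. The only thing to watch is making sure the sign pattern $-\sigma_i\sigma_j$ in the Minkowski sum is written consistently; this comes out correctly precisely because $\sigma_i^2 = 1$, so multiplication by $\sigma_1$ (or $\sigma_2$, $\sigma_3$) converts coefficients without introducing rational factors.
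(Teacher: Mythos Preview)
Your proof is correct and follows essentially the same approach as the paper: solve the constraint for $\x^1$ using $\sigma_1^2 = 1$, recognize the resulting set as a Minkowski combination intersected with $I_1$, and then appeal to symmetry for (2) and (3). The paper simply says ``all these cases are symmetric'' rather than explicitly citing \autoref{lemTripleSwap}, but the content is identical.
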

\begin{proof}
Since all these cases are symmetric, we will just prove the first one.  By definition, we have
\begin{align*}
p_1(F)
&= \{ \x^1  \st \x^i \in I_i \text{ for } i=1,2,3, \sum_{i=1}^3 \sigma_i \x^i = \t\}\\
&= I_1 \cap \{\x^1 = \sigma_1 \t - (\sigma_1 \sigma_2 \x^2 + \sigma_1 \sigma_3 \x^3)\st \x^2 \in I_2, \x^3 \in I_3\}\\
  & = I_1 \cap [\sigma_1 \t - (\sigma_1 \sigma_2 I_2 + \sigma_1 \sigma_3 I_3)].
\end{align*}
\end{proof}

\begin{corollary}
\label{validCharacterization}
$( I_1, I_2,  I_3, \ve \sigma,  \t)$ with $\ve\sigma = (\sigma_1, \sigma_2, \sigma_3)$ is a valid 7-tuple if and only if the following hold
\begin{enumerate}
\item $ I_1 \subseteq \sigma_1 \t - (\sigma_1 \sigma_2 I_2 + \sigma_1 \sigma_3 I_3)$, \label{item:first_containment}
\item $ I_2 \subseteq \sigma_2 \t - (\sigma_2 \sigma_1 I_1 + \sigma_2 \sigma_3 I_3)$,
\item $ I_3 \subseteq \sigma_3 \t - (\sigma_3 \sigma_2 I_2 + \sigma_3 \sigma_1 I_1)$.
\end{enumerate}
\end{corollary}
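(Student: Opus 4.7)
The plan is to read this off directly from \autoref{lem:projections}, which already describes each projection $p_i(F)$ as an intersection of $I_i$ with another set. The only conceptual step is to observe the elementary set-theoretic fact that for any sets $X, Y$, the equality $X \cap Y = X$ is equivalent to $X \subseteq Y$.

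First I would unpack the definition of valid 7-tuple: $(I_1, I_2, I_3, \ve\sigma, \t)$ is valid precisely when $p_i(F) = I_i$ holds simultaneously for $i = 1, 2, 3$, where $F = F(I_1, I_2, I_3, \ve\sigma, \t)$. Next I would invoke \autoref{lem:projections} to rewrite each $p_i(F)$ as $I_i \cap A_i$ where, for example, $A_1 = \sigma_1 \t - (\sigma_1 \sigma_2 I_2 + \sigma_1 \sigma_3 I_3)$, and analogously for $A_2$ and $A_3$. Then the condition $p_i(F) = I_i$ becomes $I_i \cap A_i = I_i$, which is equivalent to $I_i \subseteq A_i$. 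Applying this simultaneously for $i=1,2,3$ yields the three displayed containments, establishing one direction.

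For the converse, if the three containments hold, then by \autoref{lem:projections} we get $p_i(F) = I_i \cap A_i = I_i$ for each $i$, so the 7-tuple is valid by definition. Since both directions are immediate from \autoref{lem:projections}, the corollary follows without further work.

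There is no real obstacle here: the lemma has already done all the computational work, and the corollary is essentially a restatement in which the equality condition $p_i(F) = I_i$ is replaced by the equivalent containment $I_i \subseteq A_i$. The only thing to be slightly careful about is to verify that the symmetric form of $A_i$ in each coordinate matches the statement — in particular that, e.g., the sign conventions $\sigma_i \sigma_j$ and the translate $\sigma_i \t$ line up exactly as in the corollary, which follows directly from rewriting $\sum_j \sigma_j \x^j = \t$ as $\x^i = \sigma_i \t - \sum_{j \neq i} \sigma_i \sigma_j \x^j$ (using $\sigma_i^2 = 1$).
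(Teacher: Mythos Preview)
Your proposal is correct and follows essentially the same approach as the paper: both invoke \autoref{lem:projections} to write $p_i(F) = I_i \cap A_i$ and then use the elementary fact that $I_i \cap A_i = I_i$ is equivalent to $I_i \subseteq A_i$. The paper's proof is nearly identical, spelling out the $i=1$ case explicitly and then noting the same logic applies to $I_2$ and $I_3$.
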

\begin{proof}
$(I_1, I_2, I_3, \ve\sigma, \t)$ is a valid 7-tuple, then
\begin{align*}
I_1
= p_1(F( I_1, I_2,  I_3, \ve \sigma,  \t))
= I_1 \cap [\sigma_1 \t - (\sigma_1 \sigma_2 I_2 + \sigma_1 \sigma_3 I_3)]
  \subseteq   [\sigma_1 \t - (\sigma_1 \sigma_2 I_2 + \sigma_1 \sigma_3 I_3)].
\end{align*}
On the other hand, if $ I_1 \subseteq [\sigma_1 \t - (\sigma_1 \sigma_2 I_2 + \sigma_1 \sigma_3 I_3)]$, then
\begin{align*}
p_1(F( I_1, I_2,  I_3, \ve \sigma,  \t))   =
I_1 \cap [\sigma_1 \t - (\sigma_1 \sigma_2 I_2 + \sigma_1 \sigma_3 I_3)]
=I_1
\end{align*}
Therefore, applying this logic to $I_2$ and $I_3$ as well finishes the result.
\end{proof}

\begin{corollary}
\label{validPointCharacterization}
The 7-tuple $( I_1, \{\v\},  I_3, \ve \sigma,  \t)$ with $\ve\sigma = (\sigma_1, \sigma_2, \sigma_3)$ is a valid 7-tuple if and only if
$$ I_1 = \sigma_1 \t - (\sigma_1 \sigma_2 \{\v\} + \sigma_1 \sigma_3 I_3).$$
\end{corollary}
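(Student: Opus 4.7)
The plan is to derive this corollary directly from Corollary \ref{validCharacterization} by specializing $I_2 = \{\v\}$ and showing that the three inclusions (1)--(3) produced by that characterization collapse to the single set equality claimed here. A key simplification throughout is that $\sigma_i \in \{-1,1\}$, so $\sigma_i^2 = 1$, which makes the linear equations in the definition of $F$ uniquely solvable for any one of the three variables in terms of the other two.

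For the forward direction, assume $(I_1,\{\v\},I_3,\ve\sigma,\t)$ is valid. Condition (1) of Corollary \ref{validCharacterization} immediately gives
\[
I_1 \subseteq \sigma_1\t - (\sigma_1\sigma_2\{\v\} + \sigma_1\sigma_3 I_3).
\]
For the reverse inclusion, I would exploit condition (3): for each $\y \in I_3$ there exists $\x \in I_1$ with $\y = \sigma_3\t - \sigma_3\sigma_2\v - \sigma_3\sigma_1\x$. Multiplying through by $\sigma_1\sigma_3$ and using $\sigma_i^2 = 1$ solves uniquely for $\x = \sigma_1\t - \sigma_1\sigma_2\v - \sigma_1\sigma_3\y$, showing that every element of $\sigma_1\t - (\sigma_1\sigma_2\{\v\} + \sigma_1\sigma_3 I_3)$ lies in $I_1$. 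Combined with the previous inclusion, this yields the stated equality.

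For the reverse direction, assume the equality holds and verify the three conditions of Corollary \ref{validCharacterization}. Condition (1) is just the $(\subseteq)$ half of the assumed equality. Condition (3) follows by the same algebraic rearrangement as above: rewrite $I_1 = \sigma_1\t - \sigma_1\sigma_2\v - \sigma_1\sigma_3 I_3$ and multiply through by $\sigma_1\sigma_3$ to obtain $\sigma_1\sigma_3 I_1 = \sigma_3\t - \sigma_3\sigma_2\v - I_3$, i.e.\ $I_3 = \sigma_3\t - \sigma_3\sigma_2\v - \sigma_3\sigma_1 I_1$. Condition (2) requires showing $\v \in \sigma_2\t - (\sigma_2\sigma_1 I_1 + \sigma_2\sigma_3 I_3)$: pick any $\y \in I_3$, set $\x := \sigma_1\t - \sigma_1\sigma_2\v - \sigma_1\sigma_3\y$, which lies in $I_1$ by assumption, and check directly that $\sigma_2\t - \sigma_2\sigma_1\x - \sigma_2\sigma_3\y = \v$ after cancellations using $\sigma_i^2 = 1$.

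There is no serious mathematical obstacle; the argument is a bookkeeping exercise in Minkowski arithmetic with signs in $\{-1,+1\}$. The only subtlety worth flagging is the degenerate case $I_3 = \emptyset$ (under the convention that an empty Minkowski summand yields an empty sum): the stated equality would force $I_1 = \emptyset$, while $I_2 = \{\v\}$ is nonempty, so the projection $p_2(F)$ would be empty and the 7-tuple would fail validity. This corner case is implicitly excluded because, in our context, the $I_i$ arise as nonempty faces of a polyhedral complex; I would note this briefly in the proof for completeness. The main expected difficulty is purely notational — carefully tracking which $\sigma_i$ multiplications cancel — which is handled uniformly by the identity $\sigma_i^2 = 1$.
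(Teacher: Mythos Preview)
Your proposal is correct and essentially identical to the paper's own proof: both derive the equality from conditions (1) and (3) of Corollary~\ref{validCharacterization} via double containment, and verify condition (2) by picking an element of $I_3$ and solving for the corresponding element of $I_1$. Your discussion of the degenerate case $I_3=\emptyset$ is an extra remark the paper omits, but otherwise the arguments match.
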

\begin{proof}
By \autoref{validCharacterization},
$( I_1, \{\v\},  I_3, \ve \sigma,  \t)$ is a valid 7-tuple if and only if
\begin{enumerate}
\item $ I_1 \subseteq \sigma_1 \t - (\sigma_1 \sigma_2 \{\v\} + \sigma_1 \sigma_3 I_3)$
\item $ \{\v\} \subseteq \sigma_2 \t - (\sigma_2 \sigma_1 I_1 + \sigma_2 \sigma_3 I_3)$
\item $ I_3 \subseteq \sigma_3 \t - (\sigma_3 \sigma_2 \{\v\} + \sigma_3 \sigma_1 I_1)$
\end{enumerate}
Rewriting (1) and (3) to have $I_1$ on the left hand side yields
\begin{enumerate}
\item $ I_1 \subseteq \sigma_1 \t - (\sigma_1 \sigma_2 \{\v\} + \sigma_1 \sigma_3 I_3)$
\setcounter{enumi}{2}
\item $ I_1 \supseteq \sigma_1 \t - (\sigma_1 \sigma_2 \{\v\} + \sigma_1 \sigma_3 I_3)$
\end{enumerate}
Thus, because of double containment, if $( I_1, \{\v\},  I_3, \ve \sigma,  \t)$ is a valid 7-tuple, then $I_1 = \sigma_1 \t - (\sigma_1 \sigma_2 \{\v\} + \sigma_1 \sigma_3 I_3)$.

If $I_1 = \sigma_1 \t - (\sigma_1 \sigma_2 \{\v\} + \sigma_1 \sigma_3 I_3)$, (1), (3) also hold. Moreover, there exist $\x^1 \in I_1$ and $\x^3\in I_3$ such that $\x^1 = \sigma_1 \t  - \sigma_1 \sigma_2 \v - \sigma_1 \sigma_3 \x^3$, which implies $\v = \sigma_2 t - (\sigma_2\sigma_1\x^1 + \sigma_2 \sigma_3 \x^3)\in \sigma_2 \t - (\sigma_2 \sigma_1 I_1 + \sigma_2 \sigma_3 I_3)$. Thus, $( I_1, \{\v\},  I_3, \ve \sigma,  \t)$ is a valid 7-tuple.
\end{proof}

\subsubsection{Combinatorializing the additivity domain}
\label{section:additivity-discretized}

Let $\pi \colon \R^k \to \R$ be a continuous piecewise linear function over a  pure, complete polyhedral complex $\P$.  

We now give a combinatorial representation of this set
using the faces of~$\P$. Let 
\begin{equation}
\label{eq:EpiP}
\mathcal E (\pi, \P) := \{ \tau : \tau = (I_1, I_2, I_3, (1,1,-1), \0), I_i \in \P, i=1,2,3, \text{ and } \pi \text{ additive over } \tau\}.
\end{equation}
  Extremality is more easily studied if we restrict ourselves the additivities $E(\pi,\P)$. 
  
If $\pi$ is continuous, then $(1,1,-1)\cdot \pi$ is continuous.
Under this continuity assumption, we could further refine the set of additivities by only considering a set of maximal faces in $E(\pi, \P)$.   But this is not necessary for the results in this paper.

\begin{lemma}[Adapted from {\cite[Lemma 3.12]{bhk-IPCOext}}]\label{lemma:covered-by-maximal-valid-triples}
\begin{equation*}
\bigcup \mathcal E(\pi) =  \bigcup \mathcal E(\pi,\P)
\end{equation*}
\end{lemma}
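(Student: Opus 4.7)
The plan is to prove the two inclusions separately. The containment $\bigcup\mathcal E(\pi,\P)\subseteq \bigcup\mathcal E(\pi)$ is an immediate unfolding of definitions: any $(\x^1,\x^2,\x^3)\in F(\tau)$ with $\tau=(I_1,I_2,I_3,(1,1,-1),\0)\in\mathcal E(\pi,\P)$ satisfies $\x^1+\x^2=\x^3$, and the additivity of $\pi$ over $\tau$ yields $\pi(\x^1)+\pi(\x^2)-\pi(\x^3)=0$, so $(\x^1,\x^2)\in E(\pi)$ and the triple lies in $\bigcup\mathcal E(\pi)$.

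For the reverse inclusion, fix $(\x^1,\x^2,\x^1+\x^2)\in\bigcup\mathcal E(\pi)$, so the slack $\Delta\pi(\x^1,\x^2):=\pi(\x^1)+\pi(\x^2)-\pi(\x^1+\x^2)$ vanishes. The approach is to pass to the common refinement complex $\P^*$ on $\R^{2k}$ whose cells are
$$P(I_1,I_2,I_3):=\{(\y^1,\y^2)\in I_1\times I_2 \st \y^1+\y^2\in I_3\}$$
for triples $I_1,I_2,I_3\in\P$; this is obtained as the common refinement of the product complex $\P\times\P$ with the pullback of $\P$ under the linear map $(\y^1,\y^2)\mapsto\y^1+\y^2$. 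Because $\pi$ is affine on each face of $\P$, the function $\Delta\pi\colon\R^{2k}\to\R$ is continuous and restricts to an affine function on every cell of $\P^*$. Let $P_0$ be the minimal cell of $\P^*$ containing $(\x^1,\x^2)$ and write $P_0=P(I_1,I_2,I_3)$ for the corresponding $I_i\in\P$; by minimality of the face, $(\x^1,\x^2)\in\relint(P_0)$.

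What remains is to show $\Delta\pi\equiv 0$ on $P_0$, from which $\tau:=(I_1,I_2,I_3,(1,1,-1),\0)\in\mathcal E(\pi,\P)$ and $(\x^1,\x^2,\x^1+\x^2)\in F(\tau)\subseteq\bigcup\mathcal E(\pi,\P)$. For this step I would invoke the minimality of $\pi$ via \autoref{thm:minimal}: subadditivity gives $\Delta\pi\geq 0$ globally, so on the polyhedron $P_0$ the affine function $\Delta\pi$ attains its minimum value $0$ at the relative interior point $(\x^1,\x^2)$. An affine function attaining its minimum over a polyhedron at a relative interior point must be constant on that polyhedron, so $\Delta\pi\equiv 0$ on $P_0$.

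The main obstacle, and the only nontrivial ingredient, is precisely this last step: without the sign information $\Delta\pi\geq 0$, an affine function vanishing at a single interior point of a polyhedron need not vanish on the whole polyhedron, and so the subadditivity coming from the minimality of $\pi$ is essential to close the argument. The remaining bookkeeping — verifying that the cells of $\P^*$ have the form $P(I_1,I_2,I_3)$ with $I_i\in\P$, that the minimal cell of $\P^*$ containing a given point can be written in this form, and that $\Delta\pi$ restricts to an affine function on each such cell — is routine and follows from the assumption that $\pi$ is continuous piecewise linear over $\P$ together with \autoref{def:polyhedralComplex}.
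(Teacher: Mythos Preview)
The paper does not supply its own proof of this lemma; it is stated as an adaptation of \cite[Lemma 3.12]{bhk-IPCOext} and used as a black box. Your argument is the standard one from that reference: form the complex $\P^*=\Delta\P$ of cells $P(I_1,I_2,I_3)$, observe that $\Delta\pi$ is continuous piecewise linear over $\P^*$, and on the minimal face $P_0$ containing a given additive pair use $\Delta\pi\ge0$ together with affineness to force $\Delta\pi\equiv0$ on $P_0$. This is correct, and your identification of the subadditivity step as the crux is exactly right.

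One remark worth making explicit: the lemma as written in the paper does not list minimality (or subadditivity) among its hypotheses, but it is genuinely needed and is being assumed from the surrounding context. Without $\Delta\pi\ge0$ the statement is false: take $k=1$, breakpoints at $\Z$, and values $\pi(0)=0,\ \pi(1)=1,\ \pi(2)=0,\ \pi(3)=2$; then $\Delta\pi(\tfrac12,\tfrac{11}{6})=0$, yet the unique cell $P([0,1],[1,2],[2,3])$ of $\P^*$ containing this point carries a non-constant affine $\Delta\pi$, so no tuple in $\mathcal E(\pi,\P)$ covers it. Your proof handles this correctly by invoking \autoref{thm:minimal}. The only other point that deserves a sentence rather than the phrase ``routine bookkeeping'' is that every face of $\P^*$ is again of the form $P(I_1',I_2',I_3')$ with $I_i'\in\P$; this is exactly what guarantees that the minimal face $P_0$ yields a tuple in $\mathcal E(\pi,\P)$, and it is the content of \cite[Lemmas 3.6--3.7]{bhk-IPCOext}.
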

This combinatorial
representation can then be made finite by choosing representatives (recall that if $\pi$ is minimal, then it is periodic~\autoref{thm:minimal}). 

\subsubsection{Perturbation functions}\label{subsec:perturbations}

We now discuss how to prove that a given minimal function is not a facet or not extreme. We consider the \emph{space of perturbation functions} $\bar \Pi^{\mathcal E(\pi)}$with prescribed additivities. 
 Clearly this is a linear space.

If $\pi$ is not a facet of $R_\f(\R^k, \Z^k)$, then  by the Facet Theorem, \autoref{thm:facet},  there exists a nontrivial $\bar \pi \in \bar \Pi^{\E(\pi)}$
such that $\pi' = \pi + \bar \pi$ is a minimal valid function.   Note that this last statement is not an if and only if statement.

Suppose $\pi$ is piecewise linear on a polyhedral complex $\P$.  We will often consider a refinement $\P'$ of $\P$ on which we can find a continuous piecewise linear perturbation $\bar \pi$ such that $\pi$ is not extreme.

The basic idea is that if one can find a non-zero function $\bar\pi$ in the linear
subspace of functions $\bar \Pi^{\mathcal E(\pi)}$ then the finite, combinatorial
description of $(1,1,-1)\cdot\pi$ (since $\pi$ and therefore $(1,1,-1)\cdot \pi$ is
piecewise linear) %
allows small perturbations from $\pi$ in the direction of $\bar\pi$ while maintaining minimality.

\begin{theorem}[Perturbation {\cite[Theorem 3.13]{bhk-IPCOext}}]
\label{corPerturb}
Let $\P$ be a pure, complete, polyhedral complex  in $\R^k$ that is periodic modulo $\Z^k$ and every cell of $\P$ is bounded.
Suppose $\pi$ is minimal and continuous piecewise linear over $\P$.  Suppose
$\bar \pi \not\equiv 0$ is continuous piecewise linear over a refinement $\P'$ of $\P$, is periodic
modulo $\Z^k$ and satisfies $\bar \pi \in \bar \Pi^{\mathcal E(\pi)}$.
Then $\pi$ is not extreme. Furthermore, given $\bar\pi$, there
exists an $\epsilon > 0$ such that $\pi^1 = \pi + \epsilon \bar \pi$ and
$\pi^2 = \pi - \epsilon \bar \pi$ are distinct minimal functions that are
continuous piecewise linear over $\P$ such that $\pi = \tfrac12(\pi^1 +
\pi^2)$. 
\end{theorem}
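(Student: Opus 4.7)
The plan is to verify, for a sufficiently small $\epsilon > 0$, that $\pi^1 := \pi + \epsilon \bpi$ and $\pi^2 := \pi - \epsilon \bpi$ are both minimal valid functions; then $\pi = \tfrac12(\pi^1 + \pi^2)$ together with $\pi^1 \neq \pi^2$ (immediate from $\bpi \not\equiv 0$) witnesses the non-extremality of $\pi$. Both $\pi^{1,2}$ are continuous piecewise linear over $\P'$ as linear combinations of such functions. By \autoref{thm:minimal}, I must check four properties of $\pi^{1,2}$: vanishing on $\Z^k$ (including at $\0$); symmetry; subadditivity; and nonnegativity.

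The lattice-vanishing and symmetry conditions are essentially immediate from $\bpi \in \bar\Pi^{\mathcal E(\pi)}$. Periodicity of $\bpi$ together with $\bpi(\0) = 0$ gives $\bpi(\z) = 0$ for every $\z \in \Z^k$, hence $\pi^{1,2}(\z) = 0$. For symmetry, evaluating the symmetry of $\pi$ at $\x = \0$ gives $\pi(\f) = 1$, so $\pi(\x) + \pi(\f - \x) - \pi(\f) = 0$ for every $\x$; consequently the 7-tuple $(\{\x\}, \{\f - \x\}, \{\f\}, (1,1,-1), \0)$ belongs to $\mathcal E(\pi)$, and additivity of $\bpi$ over this 7-tuple combined with $\bpi(\f) = 0$ yields $\bpi(\x) + \bpi(\f - \x) = 0$, so $\pi^{1,2}$ also satisfy symmetry.

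For subadditivity, form a locally finite, pure, $\Z^{2k}$-periodic polyhedral complex $\mathcal Q$ on $\R^{2k}$ that refines $\P' \times \P'$ so that the map $(\x, \y) \mapsto \x + \y$ sends each cell of $\mathcal Q$ into a single cell of $\P'$. On each maximal cell of $\mathcal Q$, both $\Delta\pi(\x, \y) := \pi(\x) + \pi(\y) - \pi(\x + \y)$ and $\Delta\bpi$ are affine. At each vertex $v = (\x^1, \x^2)$ of $\mathcal Q$: if $v \in E(\pi)$, then $\Delta\pi(v) = 0$ and the corresponding singleton 7-tuple in $\mathcal E(\pi)$ forces $\Delta\bpi(v) = 0$; if $v \notin E(\pi)$, then $\Delta\pi(v) > 0$. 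By local finiteness and $\Z^{2k}$-periodicity there are only finitely many vertex orbits, so $m := \min\{\Delta\pi(v) : v \notin E(\pi)\} > 0$ and $M := \max_v |\Delta\bpi(v)| < \infty$ are well defined. Any $\epsilon < m/(M+1)$ makes $\Delta(\pi \pm \epsilon \bpi)(v) \geq 0$ at every vertex, and affineness on each cell extends this inequality to all of $\R^{2k}$.

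The main obstacle is nonnegativity, since $\pi$ may vanish at points beyond $\Z^k$ and at such points $\bpi$ must vanish too, else $\pm\epsilon \bpi$ creates a negative value. I claim $\bpi$ vanishes automatically at every zero of $\pi$. Indeed, if $\pi(v) = 0$, then subadditivity together with $\pi \geq 0$ gives $\pi((n+1)v) \leq \pi(nv) + \pi(v) = 0$ inductively, forcing $\pi(nv) = 0$ for every $n \in \Z_{\ge 0}$. Hence every 7-tuple $(\{nv\}, \{mv\}, \{(n+m)v\}, (1,1,-1), \0)$ with $n, m \in \Z_{\ge 0}$ belongs to $\mathcal E(\pi)$, and additivity of $\bpi$ forces $\bpi((n+1)v) = \bpi(nv) + \bpi(v)$, so by induction $\bpi(nv) = n\bpi(v)$. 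Since $\bpi$ is continuous and $\Z^k$-periodic it is bounded on $\R^k$, so $|\bpi(v)| \leq \|\bpi\|_\infty / n$ for arbitrarily large $n$, which forces $\bpi(v) = 0$. Thus at every vertex of $\P'$ where $\pi$ vanishes, $\bpi$ vanishes as well; at every vertex with $\pi(v) > 0$, a threshold $\epsilon < \pi(v)/(|\bpi(v)| + 1)$ suffices, and by finiteness of vertex orbits modulo $\Z^k$ one uniform value of $\epsilon$ is obtained. Affineness on each cell of $\P'$ extends $\pi^{1,2} \geq 0$ to all of $\R^k$, and taking the smallest of all the thresholds produced above yields the desired $\epsilon$.
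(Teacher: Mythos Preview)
The paper does not prove this theorem here; it is quoted from \cite[Theorem 3.13]{bhk-IPCOext} and used as a tool. Your argument is correct and is essentially the standard one from that reference: verify the Gomory--Johnson minimality conditions for $\pi \pm \epsilon\bpi$, with the only nontrivial step being subadditivity, handled by the affine structure of $\Delta\pi$ and $\Delta\bpi$ on a suitable complex in $\R^{2k}$ together with finiteness of vertex orbits.

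One remark: your separate treatment of nonnegativity, while correct and rather elegant (the $\bpi(nv)=n\bpi(v)$ trick), is more than is needed. Once you have established that $\pi^{1,2}$ are subadditive and satisfy $\pi^{1,2}(\0)=0$, nonnegativity is automatic because $\pi^{1,2}$ are bounded (continuous and $\Z^k$-periodic): if $\pi^1(\x_0)<0$ then $\pi^1(n\x_0)\le n\pi^1(\x_0)\to -\infty$, a contradiction. So the fourth paragraph can be replaced by a single sentence. Also note that the conclusion ``continuous piecewise linear over $\P$'' in the theorem statement should read $\P'$, as you implicitly corrected.
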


\subsubsection{Two-row case using a standard triangulation of~$\R^2$}
\label{subsec:standard-tri-R2}

For the case of the standard triangulations~$\P_q$ of~$\R^2$
(\autoref{ex:2d-standard-triangulation}),
\cite{basu-hildebrand-koeppe:equivariant-2,bhk-IPCOext} 
describe an algorithm of the above scheme for a special class of piecewise
linear functions over this complex, which are said to be \emph{diagonally
  constrained}.

Let 
\begin{equation}
\label{eq:A-matrix}
A =  \begin{bmatrix}  1 & -1 &  0 & 0 &  1 & -1 \\
 0 & 0 & 1 & -1  & 1 & -1  \end{bmatrix}^\top.
\end{equation}
Then for every face $I \in \P_q$, there exists a vector $\ve b \in \frac1q\Z^6$ such that $I = \{\, \x \st A \x \leq \ve b\,\}.$ Furthermore, for every vector $\ve b \in \frac1q\Z^6$, the set $\{\, \x \st A \x \leq \ve b\,\}$ is a union of faces of $\P_q$ (possibly empty), since each inequality corresponds to a hyperplane in the arrangement $\mathcal{H}_q$.
A key element in the following results is that the matrix $A$ is totally unimodular.  In fact, it posses an even stronger condition than total unimodularity.

\begin{example}[{\cite[Example 7.5]{igp_survey, igp_survey_part_2}}]\label{ex:diag-constrained-function} \label{ex:diag-constrained-function-continued2}
  \autoref{figure:diagonallyConstrained-new-figure} shows the complex~$\P_5$
  with an example of a minimal valid continuous piecewise linear function
  on~$\P_5$ with $\f = \ColVec[5]{2}{2}$ that is periodic modulo~$\Z^2$. Note that, due the
  periodicity of the function modulo~$\Z^2$, the values of the function on the
  left and the right edge (and likewise on the bottom and the top edge) of $D = [0,1]^2$ match. 

  \end{example} 

\begin{figure}[H]

\begin{center}
\includegraphics[scale=0.47]{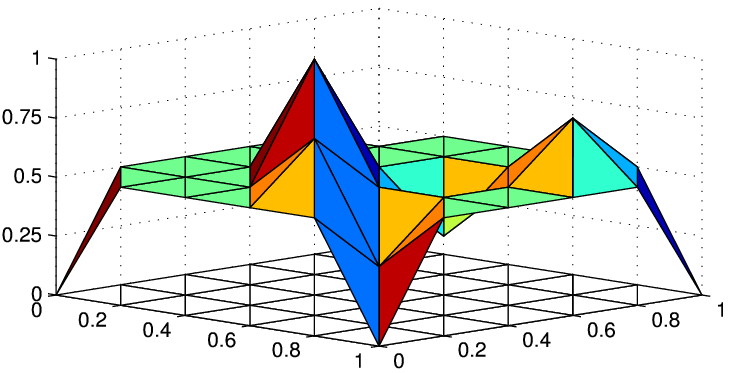} \ \ \ \ 
\begin{tikzpicture}[scale=0.7]
\begin{scope}[xshift = 2.5cm, yshift=2.5cm]
    \node {\includegraphics[scale=.18]{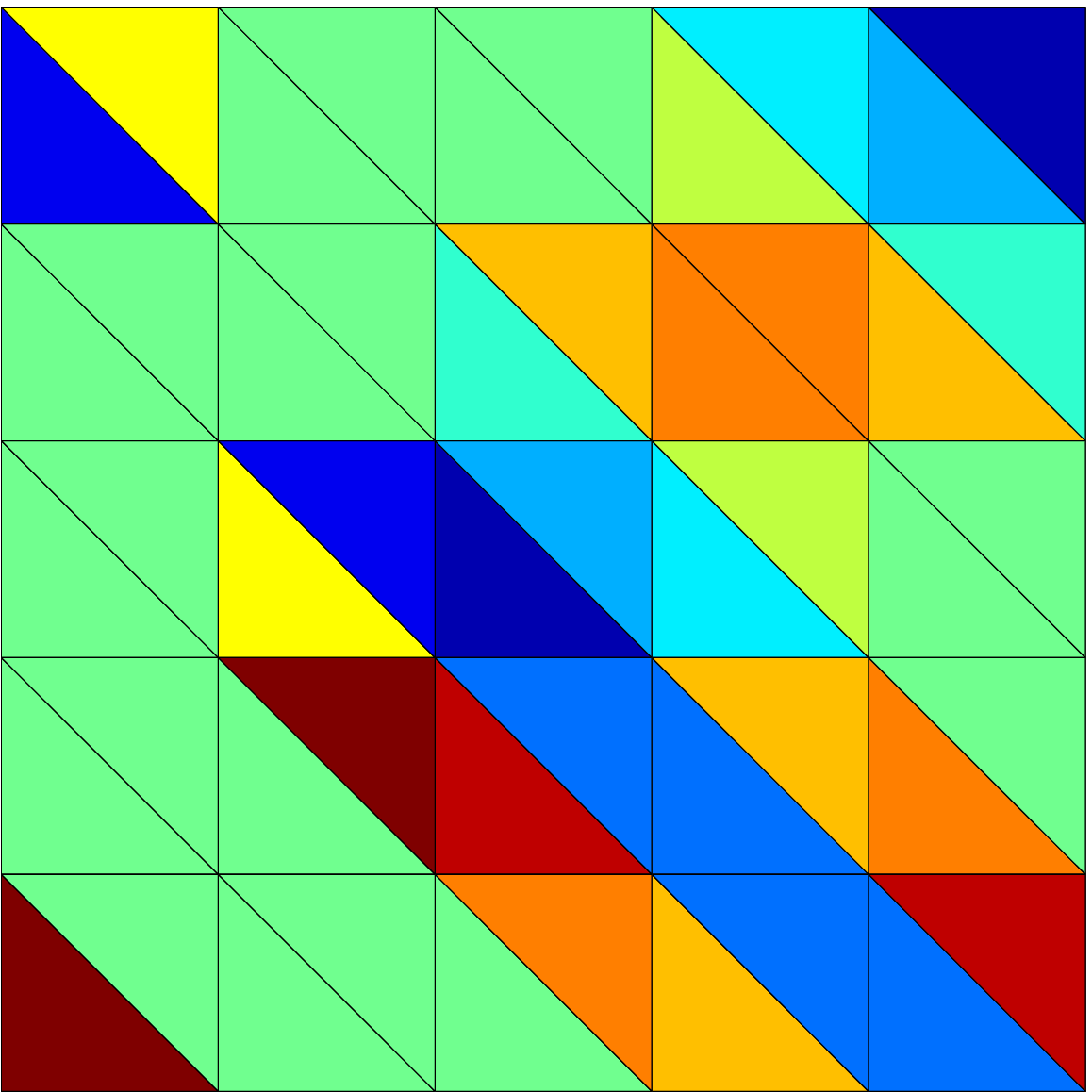}};
    \end{scope}
 
\begin{scope}[shift={(0,0)}]
\def\functionValues{{
      { 0,     2,     2,     2,     2,     0},
    { 2,     2,     2,     3,     1,     2},
    { 2,     2,     4,     2,     2,     2},
    { 2 ,    2,     2,     1,     2,     2},
    { 2,     2,     2,     2,     3,     2},
    { 0,     2,     2,     2,     2,     0}}}

\foreach \i in {0,...,5} {%
	\foreach \j in {0,...,5} {%
	\node[draw, fill = white, scale = 0.65] at (\j,\i) {\pgfmathparse{\functionValues[\i][\j]}\pgfmathresult};
	}
	}
	
	\end{scope}
\end{tikzpicture}
\end{center}
\caption{A minimal valid, continuous, piecewise linear function over the
  polyhedral complex $\P_5$.
  \emph{Left}, the three-dimensional plot of the function on $D = [0,1]^2$. \emph{Right},
  the complex $\P_5$, restricted to $D$ and colored according to slopes to match the 3-dimensional plot, and decorated
  with values $v$ at each vertex of $\P_5$ where the function takes value
  $\tfrac{v}{4}$. }

\label{figure:diagonallyConstrained-new-figure}
\end{figure}

    \subsection{The Interval Lemma and its $k$-dimensional generalizations}\label{sec:foundations}

To prove that a given minimal valid function $\pi$ is a facet (or an extreme function), we use the additivity domain $E(\pi)$ of a subadditive function $\pi\colon\R^k \to\R$. The goal is to show that $\pi$ and other minimal valid functions are affine when restricted to projections of $E(\pi)$. This is achieved by decomposing the additivity domain into convex sets, which can then be analyzed independently.

One of the key tools for this decomposition is based on the \textit{Interval Lemma}, introduced by Gomory and Johnson \cite{tspace, infinite2}. The Interval Lemma allows us to deduce that if a function behaves additively on certain bounded convex sets, it must exhibit affine properties on those sets. In other words, the lemma provides a way to reduce an infinite set of functional equations into a finite-dimensional problem where the function behaves linearly over specific intervals. This dimension-reduction technique simplifies the analysis and plays a crucial role in proving the extremality of minimal valid functions.
The Interval Lemma was later generalized to higher dimensions to apply to convex sets in $\R^k$. This generalization is essential for understanding additive properties across more complex domains, and has been utilized in works such as \cite{bhkm} to prove the extremality of minimal valid functions in higher-dimensional settings.

For the purpose of this paper dealing with valid 7-tuples, we will present the high-dimensional settings where a convex set in $\R^k\times \R^k$ is given for defining the additivity. %

For these statements, we need projections $\R^4 \to \R^2$ (in particular, to conveniently reuse theorems from the literature). Thus, 
define the projections $\p_i \colon \R^k \times \R^k \times \R^k \to \R^k$ by $(\x^1, \x^2) \mapsto \x^i$ for $i=1,2$ and $\p_3$ defined by $(\x^1, \x^2) \mapsto \x^1 + \x^2$.

\begin{theorem}[Convex additivity domain lemma, full-dimensional
  version {\cite[Theorem 1.7]{bhk-IPCOext}}]\label{lem:projection_interval_lemma_fulldim} 
  Let $f,g,h \colon \R^k \to \R$ be bounded functions. 
  Let $F \subseteq \R^k \times \R^k$ be a full-dimensional convex set
  such that $f(\u) + g(\v) = h(\u+\v)$ for all $(\u, \v) \in F$. 
  Then there exists a vector $\cve\in \R^k$ such that $f, g$ and $h$ are
  affine with the same gradient $\cve$ over $\intr(\p_1(F))$,
  $\intr(\p_2(F))$ and $\intr(\p_3(F))$, respectively. 
\end{theorem}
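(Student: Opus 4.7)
The plan is to reduce the $k$-dimensional statement to local applications of the classical one-dimensional bounded interval lemma inside product boxes, and then globalize the resulting gradient using the convexity of $\intr(F)$.

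First, since $F$ is full-dimensional in $\R^{2k}$, its interior is nonempty, and for every $(\u_0, \v_0) \in \intr(F)$ there exists $\epsilon > 0$ such that the open product of axis-aligned boxes
\[
B_u(\u_0) \times B_v(\v_0) := \bigl(\u_0 + (-\epsilon,\epsilon)^k\bigr) \times \bigl(\v_0 + (-\epsilon,\epsilon)^k\bigr)
\]
is contained in $\intr(F)$. On this product the identity $f(\u) + g(\v) = h(\u+\v)$ persists. To extract a gradient, I would fix all but the $j$-th coordinate of both $\u$ and $\v$, invoke the classical bounded one-dimensional interval lemma in the two real variables $(u_j, v_j)$, and obtain a common slope $c_j$ shared by $f$, $g$, and $h$ in the $j$-th direction. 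Iterating over $j = 1,\dots,k$ produces a local gradient $\cve = (c_1,\dots,c_k) \in \R^k$ such that $f$ is affine with gradient $\cve$ on $B_u(\u_0)$, $g$ is affine with gradient $\cve$ on $B_v(\v_0)$, and $h$ is affine with gradient $\cve$ on $B_u(\u_0) + B_v(\v_0)$.

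Next I would show the gradient is intrinsic and extend affinity to all of $\intr(\p_i(F))$. Since $\intr(F)$ is open and convex, any two basepoints in $\intr(F)$ can be joined by a line segment in $\intr(F)$; covering that segment by finitely many of the above product boxes, consecutive boxes overlap on a set with nonempty interior, and two affine representations of $f$ on an overlap must agree in gradient, so $\cve$ is independent of the basepoint. To lift this from interior-of-$F$ basepoints to arbitrary points of $\intr(\p_1(F))$, I would use the standard convex-geometric identity $\intr(\p_i(F)) = \p_i(\intr(F))$: given $\u \in \intr(\p_1(F))$, picking any lift $(\u,\v) \in F$ and any fixed $(\u_*,\v_*) \in \intr(F)$, one checks that a strict convex combination with a nearby point from a small ball in $\p_1(F)$ around $\u$ provides a lift of $\u$ that lies in $\intr(F)$, at which point the local product-box construction applies at $\u$. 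Symmetric arguments give the analogous conclusions on $\intr(\p_2(F))$ and $\intr(\p_3(F))$.

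The main obstacle is the local step, specifically the coordinate-wise reduction to the bounded 1D interval lemma: the boundedness hypothesis is essential in ruling out the pathological (Hamel-basis) solutions to the Cauchy functional equation $\phi(x+y) = \phi(x) + \phi(y)$, and this is exactly where the full hypotheses on $f, g, h$ are consumed. Everything else --- the patching along line segments, the convexity of $\intr(F)$, and the identity $\intr(\p_i(F)) = \p_i(\intr(F))$ --- is routine topology once the 1D lemma is available.
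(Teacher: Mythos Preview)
Your argument is essentially correct, but note that the paper does not give its own proof of this theorem: it is quoted from \cite{bhk-IPCOext} and then remarked to be the special case $L=\R^k$ of the more general Convex Additivity Domain Lemma (\autoref{lem:projection_interval_lemma}), which is likewise cited. So your coordinate-wise reduction to the bounded one-dimensional interval lemma, followed by patching along segments in $\intr(F)$ and the identity $\intr(\p_i(F))=\p_i(\intr(F))$, is a genuinely different, self-contained route. What you gain is an elementary proof that avoids the machinery of affinity with respect to a subspace~$L$; what the paper's approach buys is uniformity, since the full-dimensional statement, the lower-dimensional version, and the continuous corollary all flow from a single general lemma.

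One point in your local step deserves a sentence of justification. Applying the one-dimensional interval lemma along the $j$-th coordinate with the remaining coordinates of $(\u,\v)$ frozen yields a slope that \emph{a priori} depends on the frozen coordinates. To see it does not, use that on the product box $B_u(\u_0)\times B_v(\v_0)$ you may vary $\v_0$ freely while keeping $\u_0$ fixed: the slope of $f$ in direction $e_j$ at $\u_0$ is unchanged, yet the lemma forces it to equal the slope of $g$ in direction $e_j$ at $\v_0$, so the latter is constant on $B_v(\v_0)$; symmetrically the slope of $f$ is constant on $B_u(\u_0)$. Only then does coordinate-wise affinity with constant slopes $c_1,\dots,c_k$ upgrade to affinity with gradient $\cve=(c_1,\dots,c_k)$ on the whole box via the telescoping identity
\[
f(\u)-f(\u_0)=\sum_{j=1}^k\Bigl[f\bigl(\u_0+\textstyle\sum_{i\le j}(u_i-u_{0,i})e_i\bigr)-f\bigl(\u_0+\textstyle\sum_{i<j}(u_i-u_{0,i})e_i\bigr)\Bigr]=\sum_{j=1}^k c_j(u_j-u_{0,j}).
\]
With this detail filled in, the rest of your argument (the overlap-of-boxes patching and the lifting of $\u\in\intr(\p_1(F))$ to a point of $\intr(F)$ via a strict convex combination with a fixed interior point) goes through as written.
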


\autoref{lem:projection_interval_lemma_fulldim} can be established in a significantly more general setting, which
takes care of  situations in which the set $F$ is not full-dimensional
(\autoref{lem:projection_interval_lemma}). Affine properties are deduced with respect to certain subspaces, which is important for the classification of extreme functions in two or more dimensions.

\begin{definition}
Let $U \subseteq \R^k$. 
Given a linear subspace $L \subseteq \R^k$, we say $\pi\colon U \to \R$ is
{\em affine with respect to $L$ over $U$} if there exists $\cve \in \R^k$ such
that $\pi(\u^2) - \pi(\u^1) =  \cve\cdot( \u^2 - \u^1)$ for any $\u^1, \u^2
\in U$ such that $\u^2 - \u^1 \in L$.

\end{definition}

\begin{definition}
  For a linear space $L \subseteq \R^k$ and a set $U \subseteq \R^k$ such that
  for some $\u \in \R^k$ we have $\aff(U) \subseteq L + \u$, we will denote by
  $\intr_L(U)$ the interior of $U$ in the relative topology of $L + \u$.
\end{definition}
Note that $\intr_L(U)$ is well defined because either $\aff(U) = L + \u$, or
$\intr_L(U) = \emptyset$.   
We now state our most general Interval Lemma theorem on a convex domain.

\begin{theorem}[Convex additivity domain lemma; {\cite[Theorem 2.11]{bhk-IPCOext}}]
\label{lem:projection_interval_lemma}
Let $f,g,h \colon \R^k \to \R$ be bounded functions. Let $F \subseteq \R^k \times \R^k$ be a convex set
such that $f(\u) + g(\v) = h(\u+\v)$ for all $(\u, \v) \in F$. 
Let $L$ be a linear subspace of $\R^k$ such that $(L \times L) + F \subseteq \aff(F)$.
Let $(\u^0, \v^0) \in \relint(F)$. 
Then there exists a vector $\cve\in \R^k$ such that $f, g$ and $h$ are affine with gradient $\cve$ over $\intr_L((\u^0 + L) \cap \p_1(F))$, $\intr_L((\v^0 + L) \cap \p_2(F))$ and $\intr_L((\u^0 + \v^0 + L) \cap \p_3(F))$, respectively.
\end{theorem}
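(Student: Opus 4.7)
My plan is to reduce to the full-dimensional version \autoref{lem:projection_interval_lemma_fulldim} by slicing $F$ along the subspace $L \times L$. Translating so that $(\u^0, \v^0) = \0$, the hypothesis $(L \times L) + F \subseteq \aff(F)$ becomes $L \times L \subseteq \aff(F)$. I define $F_L := F \cap (L \times L)$, regarded as a subset of $L \times L$. Since $\0 \in \relint(F)$ and the direction space of $\aff(F)$ contains $L \times L$, a relative neighborhood of $\0$ inside $L \times L$ lies in $F_L$, so $F_L$ is full-dimensional within $L \times L$. The additivity $f(\u) + g(\v) = h(\u + \v)$ holds on $F_L$ for the bounded restrictions $f|_L, g|_L, h|_L$, so I apply \autoref{lem:projection_interval_lemma_fulldim} in the ambient space $L \times L$. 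This produces a gradient $\cve$ on $L$ (extended arbitrarily to $\R^k$) such that $f, g, h$ are affine with gradient $\cve$ on $\intr_L(\p_i(F_L))$ for $i = 1, 2, 3$.

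It then remains to identify these with the target sets $\intr_L(L \cap \p_i(F))$ from the conclusion. The containments $\p_i(F_L) \subseteq L \cap \p_i(F)$ are immediate, so only the reverse inclusion up to closure requires work. For $i = 1$ (and symmetrically $i = 2$), suppose $\u \in L \cap \p_1(F)$ with witness $(\u, \v) \in F$. Since $(\u, \0) \in L \times L \subseteq \aff(F)$, subtracting yields $(\0, \v) \in \aff(F)$, and the relative-interior property provides $\epsilon > 0$ with $(\0, -\epsilon \v) \in F$. The convex combination of $(\u, \v)$ and $(\0, -\epsilon \v)$ at parameter $s = \epsilon/(1+\epsilon)$ gives $\bigl((\epsilon/(1+\epsilon))\u, \0\bigr) \in F_L$, which tends to $(\u, \0)$ as $\epsilon \to \infty$. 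Hence $\u \in \cl(\p_1(F_L))$; since both $\p_1(F_L)$ and $L \cap \p_1(F)$ are convex with the same closure in $L$, their $L$-interiors coincide.

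The main obstacle is the case $i = 3$: an element $\w \in L \cap \p_3(F)$ need not admit a decomposition $\w = \u + \v$ with both $\u, \v \in L$, so the slicing argument does not directly cover $\p_3(F)$. My approach is a continuation argument along the segment $[\0, \w]$, which lies in $\intr_L(L \cap \p_3(F))$ by convexity (with $\0$ an interior point by the local analysis above). At each interior point $\w' = t\w$ with $t \in (0, 1)$, picking any $(\u, \v) \in F$ with $\u + \v = \w$, the point $(t\u, t\v)$ lies in $\relint(F)$; since $L \times L \subseteq \aff(F)$, perturbations by $(\h_1, \h_2) \in L \times L$ small keep us inside $F$, and the additivity yields a Cauchy-type functional equation for the increment $\h \mapsto h(\w' + \h) - h(\w')$ on a small $L$-neighborhood of $\0$. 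Boundedness of $h$ then forces local linearity with some gradient $\cve_{\w'}$ on $L$. The delicate step is to verify $\cve_{\w'} = \cve$ throughout: on overlapping $L$-neighborhoods the gradients must agree by equating the increment between two nearby points computed from either base, and compactness of $[\0, \w]$ gives a finite chain of overlaps that propagates $\cve_\0 = \cve$ to $\cve_\w$. Telescoping the local increments then yields $h(\w) - h(\0) = \cve \cdot \w$, completing the proof after undoing the translation.
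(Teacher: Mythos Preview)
The paper does not prove \autoref{lem:projection_interval_lemma}; it is quoted without proof from \cite[Theorem~2.11]{bhk-IPCOext}, so there is no in-paper argument to compare against.  I therefore just assess your attempt.

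Your reduction to the full-dimensional version via the slice $F_L = F\cap(L\times L)$ is sound, and the continuation argument you outline for $\p_3$ is correct in substance.  But the argument you give for $\p_1$ and $\p_2$ has a genuine gap.  You claim that $\p_1(F_L)$ and $L\cap\p_1(F)$ have the same closure in $L$, arguing that from $(\u,\v)\in F$ and $(\0,-\epsilon\v)\in F$ the convex combination $\bigl(\tfrac{\epsilon}{1+\epsilon}\u,\0\bigr)$ tends to $(\u,\0)$ ``as $\epsilon\to\infty$.''  This is impossible: the relative-interior property only supplies some fixed small $\epsilon_0>0$ with $(\0,-\epsilon_0\v)\in F$, and the resulting point $\tfrac{\epsilon_0}{1+\epsilon_0}\u$ is close to $\0$, not to $\u$.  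In fact the closures can differ.  Take $k=2$, $L=\R\times\{0\}$, and
\[
F=\bigl\{\bigl((u_1,0),(v_1,v_2)\bigr):\ (u_1-v_2)^2+v_1^2+v_2^2\le 1\bigr\}.
\]
Here $\aff(F)=\{u_2=0\}\supseteq L\times L$ and $\0\in\relint(F)$.  One computes $L\cap\p_1(F)=[-\sqrt2,\sqrt2]\times\{0\}$ (minimize over $v_1,v_2$, optimum at $v_2=u_1/2$) while $\p_1(F_L)=[-1,1]\times\{0\}$ (set $v_2=0$).  So the slicing step alone cannot deliver affinity of $f$ on all of $\intr_L(L\cap\p_1(F))$.

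The fix is simply to run your $\p_3$ continuation argument for $\p_1$ and $\p_2$ as well; nothing in it is special to the sum projection.  Given $\u\in L\cap\p_1(F)$ with witness $(\u,\v)\in F$, for each $t\in[0,1)$ the point $(t\u,t\v)$ lies in $\relint(F)$, and perturbing by small $(\h_1,\h_2)\in L\times L$ yields $f(t\u+\h_1)+g(t\v+\h_2)=h(t\w+\h_1+\h_2)$.  Subtracting the four specializations $\h_i\in\{\0,\h_i\}$ shows that $\phi(\h):=f(t\u+\h)-f(t\u)$ satisfies a local Cauchy equation on $L$; applying \autoref{lem:projection_interval_lemma_fulldim} locally (in the ambient space $L$) gives a gradient $\cve_t$, and the overlap-and-compactness chain you already described for $\p_3$ propagates $\cve_0=\cve$ out to any $\u$ in $\intr_L(L\cap\p_1(F))$.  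With this uniform treatment of all three projections the proof goes through.
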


Theorem~\ref{lem:projection_interval_lemma_fulldim} follows when $L=\R^k$.

Note that under the continuous functions assumption, the affine properties can extend to the boundary in higher dimensions.
Without this assumption, this is not possible in general (see \cite[Remark~2.12]{bhk-IPCOext}). 

\begin{corollary}[Convex additivity domain lemma for continuous functions; {\cite[Corollary 2.14]{bhk-IPCOext}})]
\label{lem:projection_interval_lemma-corollary}
Let $f,g,h \colon \R^k \to \R$ be continuous functions. Let $F \subseteq \R^k \times \R^k$ be a convex set
such that $f(\u) + g(\v) = h(\u+\v)$ for all $(\u, \v) \in F$. 
Let $L$ be a linear subspace of $\R^k$ such that $L \times L + F \subseteq \aff(F)$.
Let $(\u^0, \v^0) \in \relint(F)$. 
Then there exists a vector $\cve\in \R^k$ such that $f, g$ and $h$ are affine with gradient $\cve$ over $(\u^0 + L) \cap \p_1(F)$, $(\v^0 + L) \cap \p_2(F)$ and $(\u^0 + \v^0 + L) \cap \p_3(F)$, respectively.
\end{corollary}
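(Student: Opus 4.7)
The plan is to reduce this corollary directly to Theorem~\ref{lem:projection_interval_lemma}, using the continuity hypothesis on $f, g, h$ to extend the affine property from the $L$-interior to the full convex sets via an approximation argument. Let us write $S_1 := (\u^0 + L) \cap \p_1(F)$, $S_2 := (\v^0 + L) \cap \p_2(F)$, and $S_3 := (\u^0 + \v^0 + L) \cap \p_3(F)$.

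\textbf{Step 1.} Apply Theorem~\ref{lem:projection_interval_lemma} to obtain a vector $\cve \in \R^k$ such that $f$, $g$, $h$ are affine with common gradient $\cve$ over $\intr_L(S_1)$, $\intr_L(S_2)$, and $\intr_L(S_3)$, respectively. In particular, for any $\u \in \intr_L(S_1)$ we have $f(\u) - f(\u^0) = \cve \cdot (\u - \u^0)$, provided we first verify $\u^0 \in \intr_L(S_1)$ (and analogous statements for $\v^0, \u^0+\v^0$).

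\textbf{Step 2.} Verify that $\u^0 \in \intr_L(S_1)$, $\v^0 \in \intr_L(S_2)$, and $\u^0 + \v^0 \in \intr_L(S_3)$. Since $(\u^0,\v^0) \in \relint(F)$ and $(L\times L) + F \subseteq \aff(F)$, for every $\ve\ell \in L$ the point $(\u^0 + \ve\ell, \v^0)$ lies in $\aff(F)$, and for $\ve\ell$ sufficiently small it lies in $F$ itself. Projecting gives $\u^0 + \ve\ell \in \p_1(F)$ for all small $\ve\ell \in L$, hence $\u^0 \in \intr_L(S_1)$. The arguments for $\v^0$ and $\u^0 + \v^0$ are identical, using the coordinates $(\u^0, \v^0 + \ve\ell)$ and $(\u^0 + \ve\ell_1, \v^0 + \ve\ell_2)$ with $\ve\ell_1 + \ve\ell_2$ a prescribed small element of $L$.

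\textbf{Step 3.} Extend the affine identity from $\intr_L(S_i)$ to $S_i$ by continuity. Each $S_i$ is convex, contained in the affine space $\u^i + L$ (where $\u^1 = \u^0$, $\u^2 = \v^0$, $\u^3 = \u^0 + \v^0$), and has non-empty relative interior in $\u^i + L$ by Step~2. By the standard convex-analysis fact that for a convex set $C$ with $\relint(C) \neq \emptyset$ one has $C \subseteq \cl(\relint(C))$ — realized concretely by the line-segment argument that for $\u \in S_i$ and $\u^i \in \intr_L(S_i)$, the points $(1-t)\u + t\u^i$ lie in $\intr_L(S_i)$ for every $t \in (0,1]$ and converge to $\u$ as $t\to 0^+$ — every point of $S_i$ is the limit of a sequence in $\intr_L(S_i)$. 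Take any $\u \in S_1$ and such a sequence $\u_n \in \intr_L(S_1)$ with $\u_n \to \u$. Step~1 gives $f(\u_n) - f(\u^0) = \cve \cdot (\u_n - \u^0)$; passing to the limit and invoking continuity of $f$ yields $f(\u) - f(\u^0) = \cve \cdot (\u - \u^0)$, so $f$ is affine with gradient $\cve$ on all of $S_1$. The same argument applied to $g$ on $S_2$ and to $h$ on $S_3$ completes the proof.

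There is no substantial obstacle: the only nontrivial ingredient beyond Theorem~\ref{lem:projection_interval_lemma} is the elementary convex-analysis fact that $S_i \subseteq \cl(\intr_L(S_i))$, together with the verification in Step~2 that the relative $L$-interiors contain the base points $\u^0, \v^0, \u^0+\v^0$, which is immediate from the hypotheses $(\u^0,\v^0) \in \relint(F)$ and $(L\times L) + F \subseteq \aff(F)$.
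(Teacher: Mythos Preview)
Your proposal is correct and follows exactly the natural route one would expect: apply Theorem~\ref{lem:projection_interval_lemma} to get affineness on the $L$-interiors, verify that the base points lie in those interiors using $(\u^0,\v^0)\in\relint(F)$ together with $(L\times L)+F\subseteq\aff(F)$, and then push to the closure via the line-segment trick and continuity. The paper itself does not give a proof of this corollary (it is quoted from \cite[Corollary~2.14]{bhk-IPCOext}), and the argument there proceeds in the same way.
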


\subsection{Algorithm using restriction to finite group problems}
\label{sec:alg-restriction-finite-groups}
In this section, we discuss connections between infinite group problems and finite group problems.  We begin with a discussion of testing extremality for finite group problems.  Later we show that in certain settings, a function is extreme for an infinite group problem if and only if its restriction to a finite group is extreme for the finite group problem.  Hence, this connection provides an alternative algorithm from those settings for testing extremality and facetness.
\subsubsection{Algorithm for finite group problems}\label{s:finite-group}
When $S$ has finite index in $G$, we call $R_\f(G,S)$ a \emph{finite group problem}.
Note that there is a correspondence between $R_\f(G,S)$ and $R_\f(G/S,0)$ by aggregation of variables, and it is convenient to study
the finite-dimensional problem $R_{\bar\f}(G/S,0)$.  
The fundamental theorem of finitely generated abelian groups shows that $G/S
\cong (\tfrac{1}{q_1} \Z \times \dots \times \tfrac{1}{q_k} \Z)/\Z^k$ for some
$q_i \in \N$ for $i=1, \dots, k$.  Therefore, it suffices to consider
$G = \tfrac{1}{q_1} \Z \times \dots \times \tfrac{1}{q_k} \Z$ and $S = \Z^k$
where $q_i \in \N$.  
In the case of one row, $G/S = \tfrac{1}{q_1} \Z/\Z \cong \Z/q_1\Z$ is a cyclic
group.   Cyclic group problems were originally studied by
Gomory~\cite{gom} and have been the subject of many later studies.
See~\cite{Richard-Dey-2010:50-year-survey} for an excellent survey on these results.

 The set of minimal valid functions $\pi \colon G/S \to \R$ %
 is a (finite-dimensional) convex polytope~\cite{gom}.  Extreme functions are
 thus extreme points of this polytope.  As we noted in
 \autoref{subsec:relation-between-three-notions}, 
 standard polyhedral theory reveals that extreme functions are
 equivalent to weak facets and facets.  %
 Furthermore, extreme points of
 polytopes are characterized by points where the tight inequalities are of
 full rank.  Therefore, testing extremality of a function for a finite group
 problem can be done with simple linear algebra.  

 Note that there is a bijection
 between the minimal valid functions of $R_\f(G,S)$ and minimal valid functions
 for $R_{\bar \f}(G/S,0)$.  This is because 
 minimal valid functions for $R_\f(G,S)$ are $S$-periodic functions by
 \autoref{thm:minimal}.  
 Hence the extremality test 
 translates into the following statement about the space of perturbation functions.
\begin{theorem}
[Rephrased from {\cite[Theorem 8.1]{igp_survey, igp_survey_part_2}}]%
\label{thm:finite-extremality}
 Let $G = \tfrac{1}{q_1} \Z \times \dots \times \tfrac{1}{q_k} \Z$ and let $\f \in G$.  Let $\pi \colon G \to \R$ be a minimal valid function for $R_\f(G, \Z^k)$.  Then $\pi$ is extreme if and only if $\bar \Pi^{\E(\pi)}_{\emptyset, G}=  \{0\}$.
\end{theorem}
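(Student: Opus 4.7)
The plan is to prove both directions of the equivalence via the standard small-perturbation argument, critically using the fact that $G/\Z^k$ is finite. For the forward direction, assume $\pi$ is extreme and take an arbitrary $\bar\pi \in \bar\Pi^{\E(\pi)}_{\emptyset, G}$. I would show that for sufficiently small $\epsilon > 0$, both $\pi^{\pm} := \pi \pm \epsilon \bar\pi$ are minimal valid functions, which forces $\bar\pi \equiv 0$ via $\pi = \tfrac12(\pi^+ + \pi^-)$ and extremality. By \autoref{thm:minimal}, minimality reduces to verifying four conditions. Vanishing on $\Z^k$ follows from $\Z^k$-periodicity of $\bar\pi$ together with $\bar\pi(\0) = 0$. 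Symmetry is inherited because $(\x, \f-\x) \in E(\pi)$ for every $\x \in G$, so the additivity condition on $\bar\pi$ at this tuple gives $\bar\pi(\x) + \bar\pi(\f-\x) = \bar\pi(\f) = 0$. Subadditivity is preserved for $\epsilon$ small enough: the finitely many non-tight triples in $G/\Z^k$ have a positive slack that absorbs $\pm \epsilon \bar\pi$, while tight triples in $\E(\pi)$ are preserved with equality by the additivity condition on $\bar\pi$.

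Nonnegativity is the delicate step. Wherever $\pi(\x) > 0$, smallness of $\epsilon$ suffices, so it remains to handle points $\x \in G$ with $\pi(\x) = 0$. The plan is to iterate subadditivity: $0 \leq \pi(n\x) \leq n\pi(\x) = 0$ forces $\pi(n\x) = 0$ for every $n \in \N$, and thus $(\x, (n-1)\x) \in E(\pi)$ for each $n \geq 2$. Chaining the additivity condition on $\bar\pi$ yields $\bar\pi(n\x) = n\bar\pi(\x)$ by induction. Since $G/\Z^k$ is finite, some positive multiple $n\x$ lies in $\Z^k$; then $\Z^k$-periodicity combined with $\bar\pi(\0) = 0$ gives $n\bar\pi(\x) = \bar\pi(n\x) = 0$, hence $\bar\pi(\x) = 0$. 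Finiteness of $G/\Z^k$ then provides a uniform $\epsilon > 0$ that works at every point.

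For the reverse direction, assume $\bar\Pi^{\E(\pi)}_{\emptyset, G} = \{0\}$ and write $\pi = \tfrac12(\pi^1 + \pi^2)$ for valid functions $\pi^1, \pi^2$. By \autoref{lem:minimality-of-pi1-pi2} both $\pi^i$ are minimal. Setting $\bar\pi := \pi^1 - \pi$, I would verify that $\bar\pi \in \bar\Pi^{\E(\pi)}_{\emptyset, G}$: the values $\bar\pi(\0) = \bar\pi(\f) = 0$ follow from minimality of $\pi^1$ and $\pi$; $\Z^k$-periodicity is inherited; and for any $(\x^1, \x^2) \in E(\pi)$, the identity
$$0 = \pi(\x^1) + \pi(\x^2) - \pi(\x^1 + \x^2) = \tfrac12 \sum_{i=1}^{2} \bigl[\pi^i(\x^1) + \pi^i(\x^2) - \pi^i(\x^1 + \x^2)\bigr]$$
combined with the subadditivity of each $\pi^i$ forces both bracketed summands to vanish, so $(\x^1, \x^2) \in E(\pi^1) \cap E(\pi^2)$ and $\bar\pi$ is additive over this tuple. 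By hypothesis $\bar\pi = 0$, hence $\pi^1 = \pi$, and symmetrically $\pi^2 = \pi$, establishing extremality.

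The main obstacle is the nonnegativity step in the forward direction: at points $\x$ with $\pi(\x) = 0$ one cannot simply shrink $\epsilon$ to enforce $\pi^{\pm}(\x) \geq 0$, and one must instead deduce $\bar\pi(\x) = 0$. This is precisely where the finite-order property of $G/\Z^k$ is indispensable, and it is also the reason this equivalence cannot be imported without further work to the infinite group setting that forms the central subject of the remainder of this paper.
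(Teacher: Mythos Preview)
Your proposal is correct. The paper does not supply a self-contained proof of this statement; it instead invokes finite-dimensional polyhedral theory (the set of minimal valid functions is a polytope, extreme points are characterized by full-rank active constraints) and defers to the cited survey for details. Your route is a direct small-perturbation argument.

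The substantive difference is that the polyhedral viewpoint would, in principle, characterize extremality by triviality of the kernel of \emph{all} tight constraints at~$\pi$: these include not only the $E(\pi)$-additivities, $\bar\pi(\0)=0$, $\bar\pi(\f)=0$, and periodicity, but also the symmetry equalities and the tight nonnegativity constraints $\pi(\x)=0$. To recover the specific perturbation space $\bar\Pi^{\E(\pi)}_{\emptyset,G}$ one still has to argue that those additional active constraints are redundant. Your proof does exactly this, explicitly: symmetry is absorbed via the observation $(\x,\f-\x)\in E(\pi)$, and tight nonnegativity is handled by the finite-order trick $\bar\pi(n\x)=n\bar\pi(\x)$ together with $n\x\in\Z^k$. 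So your approach is more elementary and more self-contained; the polyhedral route is shorter to state but hides the same work.

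One minor citation issue: \autoref{lem:minimality-of-pi1-pi2} is stated for functions on $\R^k$, not on $G=\tfrac{1}{q_1}\Z\times\dots\times\tfrac{1}{q_k}\Z$. The assertion you actually need---that if $\pi$ is minimal and $\pi=\tfrac12(\pi^1+\pi^2)$ with $\pi^1,\pi^2$ valid then both $\pi^i$ are minimal---holds for any abelian group by the same one-line argument (a valid function strictly below $\pi^1$ would yield a valid function strictly below $\pi$), so this is easily patched.
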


\subsubsection{Restriction and interpolation in the one-row problem}\label{s:interpolation}
Gomory and Johnson devised the infinite group problem as a way to study the finite group problem.  
They studied interpolations of valid functions of the finite group problems $R_f(\tfrac{1}{q} \Z, \Z)$ in order to connect the problems, but they never completed this program.  Due to the ease of testing extremality in the finite group problems, having this connection is useful for algorithms.

\begin{theorem}
[{\cite[Theorem 8.6]{igp_survey, igp_survey_part_2}}]
\label{thm:extreme-facet-and-restriction}
\label{thm:extreme-restriction-m}
  Let $m \in \Z_{\geq 3}$.  Let $\pi$ be a continuous piecewise linear  minimal
  valid function for $R_f(\R,\Z)$ with breakpoints in $\tfrac{1}{q} \Z$ and suppose $f \in \tfrac{1}{q} \Z$.    The following are equivalent:
\begin{enumerate}
\item $\pi$ is a facet for $R_f(\R,\Z)$,
\item $\pi$ is extreme for $R_f(\R,\Z)$,
\item $\pi|_{\frac{1}{mq}\Z}$ is extreme for $R_f(\tfrac{1}{mq} \Z, \Z)$.
\end{enumerate}
\end{theorem}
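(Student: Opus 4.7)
The plan is to prove the equivalence via the cycle $(1) \Rightarrow (2) \Rightarrow (3) \Rightarrow (1)$. The implication $(1) \Rightarrow (2)$ is a general fact already recalled in \autoref{subsec:relation-between-three-notions}: every facet is an extreme function.

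For $(2) \Rightarrow (3)$ I would argue the contrapositive via \emph{interpolation}. Assume $\pi|_{\frac{1}{mq}\Z}$ is not extreme for $R_f(\tfrac{1}{mq}\Z,\Z)$. By \autoref{thm:finite-extremality} there is a nonzero $\bar\pi^{\mathrm{fin}} \in \bar\Pi^{\mathcal{E}(\pi|_{\frac{1}{mq}\Z})}_{\emptyset,\,\frac{1}{mq}\Z}$. Extend $\bar\pi^{\mathrm{fin}}$ to a $\Z$-periodic continuous piecewise linear function $\bar\pi\colon\R\to\R$ by affine interpolation on each interval $[\tfrac{j}{mq},\tfrac{j+1}{mq}]$. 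By construction $\bar\pi$ is nonzero, periodic, and vanishes at $0$ and $f$. The core verification is that $\bar\pi \in \bar\Pi^{\mathcal{E}(\pi)}$: for every maximal 2D additive face $(I_1,I_2,I_3,(1,1,-1),0)\in\mathcal{E}(\pi,\P_q)$ one must show $\bar\pi(x)+\bar\pi(y)=\bar\pi(x+y)$ for all $(x,y)\in I_1\times I_2$ with $x+y\in I_3$. Because $\bar\pi$ is affine on each cell of the refinement $\P_{mq}$, and each face of $\P_q$ decomposes into cells of $\P_{mq}$ whose vertices lie in $\tfrac{1}{mq}\Z$ (where $\bar\pi=\bar\pi^{\mathrm{fin}}$ already satisfies the required additivities), a linear-interpolation argument—using $m\ge 3$ to ensure each $\P_q$-cell contains enough interior $\P_{mq}$-vertices—lifts the discrete additivities to the full 2D face. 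Then \autoref{corPerturb} applied to $\pi$ and $\bar\pi$ yields distinct minimal valid $\pi^1,\pi^2$ with $\pi=\tfrac12(\pi^1+\pi^2)$, contradicting extremality.

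For $(3)\Rightarrow(1)$, assume $\pi|_{\frac{1}{mq}\Z}$ is extreme for the finite group problem. I invoke the \emph{Facet Theorem} (\autoref{thm:facet}): it suffices to show that any minimal valid $\pi'$ with $E(\pi)\subseteq E(\pi')$ satisfies $\pi'=\pi$. Since $\pi$ is continuous piecewise linear and therefore Lipschitz, \autoref{lem:minimality-of-pi1-pi2} makes $\pi'$ Lipschitz continuous as well. Applying the Convex Additivity Domain Lemma (\autoref{lem:projection_interval_lemma_fulldim}) to each maximal 2D face in $\mathcal{E}(\pi,\P_q)$ forces $\pi'$ to be affine on the projections of those faces. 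A covering argument from the single-row theory (plus the handling of uncovered intervals described below) then shows $\pi'$ is in fact continuous piecewise linear with breakpoints in $\tfrac{1}{q}\Z$. Consequently $\pi'|_{\frac{1}{mq}\Z}$ is a minimal valid function for $R_f(\tfrac{1}{mq}\Z,\Z)$ whose additivities contain those of $\pi|_{\frac{1}{mq}\Z}$. Since $\pi|_{\frac{1}{mq}\Z}$ is extreme—equivalently a facet in the finite setting (\autoref{subsec:relation-between-three-notions})—we get $\pi'|_{\frac{1}{mq}\Z}=\pi|_{\frac{1}{mq}\Z}$. Finally, since both $\pi$ and $\pi'$ are affine on each $[\tfrac{i}{q},\tfrac{i+1}{q}]$ and agree at the endpoints (which lie in $\tfrac{1}{q}\Z\subseteq\tfrac{1}{mq}\Z$), two affine functions agreeing at two points coincide on the interval, giving $\pi'=\pi$ on all of $\R$.

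The main obstacle in both directions is transferring additivity between the grid $\tfrac{1}{mq}\Z$ and the continuum $\R$ on intervals of $\P_q$ that are \emph{not} covered by 2D additive faces of $\pi$; on such uncovered intervals the Interval Lemma gives no information, and one must instead exploit one-dimensional additivities in $E(\pi)$ together with Lipschitz continuity of $\pi'$ (and, in the interpolation direction, the affineness of $\bar\pi$ on $\P_{mq}$-cells) to propagate the equality $\pi'=\pi$ across the entire interval. The threshold $m\ge 3$ is precisely what guarantees that every cell of $\P_q$ contains enough interior points of $\tfrac{1}{mq}\Z$ for the discrete additivity structure to encode the full continuous additivity structure, so that interpolation in one direction and restriction in the other are faithful.
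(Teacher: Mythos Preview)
The paper does not prove this one-row statement itself; it cites the survey and notes that the argument is a warm-up for the two-row proof of \autoref{thm:extreme-restriction-m-2d-all}, so that is the structure to compare against. Your $(1)\Rightarrow(2)$ and the interpolation argument for $(2)\Rightarrow(3)$ are correct and standard (cf.\ \autoref{lem:tri-restriction-restricted-domain-new}); note though that interpolation works for every $m\ge1$, so $m\ge3$ is not used there, and \autoref{lem:minimality-of-pi1-pi2} concerns $\pi=\tfrac12(\pi^1+\pi^2)$ and does not give Lipschitz continuity of an arbitrary minimal $\pi'$ with $E(\pi)\subseteq E(\pi')$.

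The real gap is in $(3)\Rightarrow(1)$. You claim that any such $\pi'$ must be continuous piecewise linear with breakpoints in $\tfrac1q\Z$, then finish by matching values on the grid. The Interval Lemma forces affineness of $\pi'$ only on intervals that are projections of two-dimensional additive faces; on \emph{uncovered} intervals the only relations in $E(\pi)$ are one-dimensional---of the form $(I,\{v\},I',\ve\sigma,\t)$---and these merely translate or reflect $\pi'$ between intervals without forcing affineness. So the assertion that $\pi'$ is piecewise linear over $\tfrac1q\Z$ is unjustified, and the endpoint-matching step collapses. The correct mechanism (used in the survey proof and in \autoref{sec:main-proof}) does not try to make $\pi'$ affine on uncovered intervals: one works in the perturbation space, zeroes $\bar\pi$ on covered intervals via the Interval Lemma, and observes that the residual relations on uncovered intervals couple only values at $x$ and $1-x$ (in local coordinates on each interval). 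Then \autoref{lem:infinte-system-to-finite-system-edges} shows this system has a nontrivial solution on $[0,1]$ iff it has one on $\{\tfrac1m,\tfrac{m-1}m\}$---and \emph{this} is where $m\ge3$ enters, ensuring $\tfrac1m\ne\tfrac{m-1}m$ so that a nontrivial continuous perturbation produces a nontrivial one on $\tfrac1{mq}\Z$, contradicting extremality of the restriction.
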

\noindent The proof can  be found in \cite[Theorem 8.6]{igp_survey, igp_survey_part_2} and serves as a nice warm-up to the proof of our main result in this paper.

\subsubsection{Restriction and interpolation for $k\geq 2$}

Some similar restriction results can be proved for the case of $k$ rows, but
this area is much more open.  Restrictions seem to require the use of nice
polyhedral complexes.  The only results known are for the polyhedral complex
$\P_q$ (Example~\ref{ex:2d-standard-triangulation}) in $\R^2$. 
\begin{theorem}[{\cite[Theorem 4.5
and Theorem 5.16
]{bhk-IPCOext}}]
\label{thm:GJ-restrictions-2D}
Let $\pi\colon \R^2 \to \R$ be a continuous piecewise linear function over $\P_q$ and suppose $\f \in \tfrac{1}{q} \Z^2$.  Then the following hold:
\begin{enumerate}
\item $\pi$ is minimal for $R_\f(\R^2,\Z^2)$ if and only if $\pi_{\frac{1}{q}\Z^2}$ is minimal for $R_\f(\tfrac{1}{q} \Z^2, \Z^2)$.
\item If $\pi$ is extreme for $R_\f(\R^2,\Z^2)$, then $\pi|_{\frac{1}{q} \Z^2}$ is extreme for $R_\f(\tfrac{1}{q} \Z^2,\Z^2)$.
\end{enumerate}
\end{theorem}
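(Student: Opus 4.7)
The plan is to prove parts (1) and (2) in sequence, modeled on the one-row restriction result \autoref{thm:extreme-restriction-m}, but exploiting the specific structure of the triangulation $\P_q$ and the total unimodularity of the matrix $A$ from \eqref{eq:A-matrix}.

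For part (1), the forward direction is immediate: if $\pi$ satisfies the Gomory--Johnson conditions of \autoref{thm:minimal} on $\R^2$, then the restriction $\pi|_{\tfrac{1}{q}\Z^2}$ inherits nonnegativity, subadditivity, symmetry, and vanishing on $\Z^2$ on that subgroup. For the converse, I would show that $\Delta\pi(\x,\y) := \pi(\x)+\pi(\y)-\pi(\x+\y)$ is continuous piecewise linear on $\R^2 \times \R^2$ over a natural refinement complex whose cells in $\R^4$ are cut out by systems $A\x \le \b_1$, $A\y \le \b_2$, $A(\x+\y) \le \b_3$ with $\b_i \in \tfrac{1}{q}\Z^6$. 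Because $A$ is totally unimodular, the vertices of each such cell lie in $\tfrac{1}{q}\Z^2 \times \tfrac{1}{q}\Z^2$, so subadditivity at the grid points propagates via affine interpolation to all of $\R^2 \times \R^2$. A parallel argument applied to the symmetry function $\x \mapsto \pi(\x) + \pi(\f - \x) - 1$, which is continuous piecewise linear over $\P_q$ because $\f \in \tfrac{1}{q}\Z^2$ and $\P_q$ is invariant under $\x \mapsto \f - \x$, shows that vanishing at the grid vertices implies vanishing identically. Nonnegativity of $\pi$ on $\R^2$ and vanishing on $\Z^2$ then follow from affine interpolation of vertex values.

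For part (2), I would argue the contrapositive. If $\pi|_{\tfrac{1}{q}\Z^2}$ is not extreme for $R_\f(\tfrac{1}{q}\Z^2,\Z^2)$, then by \autoref{thm:finite-extremality} there is a nonzero $\bar\pi_0 \colon \tfrac{1}{q}\Z^2 \to \R$ such that $\pi|_{\tfrac{1}{q}\Z^2} \pm \bar\pi_0$ are both minimal for the finite group problem. Interpolate $\bar\pi_0$ to a continuous piecewise linear function $\bar\pi$ over $\P_q$ by specifying its vertex values (this is well-defined since $\P_q$ is a triangulation). The functions $\pi^i := \pi + (-1)^i \bar\pi$ for $i=1,2$ are then continuous piecewise linear over $\P_q$, and their restrictions to $\tfrac{1}{q}\Z^2$ are minimal; by part (1), they are minimal on $\R^2$. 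Since $\bar\pi \not\equiv 0$ and $\pi = \tfrac{1}{2}(\pi^1 + \pi^2)$, the function $\pi$ is not extreme.

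The main obstacle is the vertex analysis in the reverse direction of part (1): one must identify the correct $\R^4$-complex on which $\Delta\pi$ is piecewise affine and verify that every vertex has coordinates in $\tfrac{1}{q}\Z$. This rests on the total unimodularity of $A$, which guarantees that the three independent constraint systems on $\x$, $\y$, and $\x+\y$, with right-hand sides in $\tfrac{1}{q}\Z^6$, jointly produce only grid vertices. Once this is set up, the remaining arguments reduce to routine affine interpolation and the elementary reduction of a subadditivity inequality on a bounded polytope to its finitely many vertices.
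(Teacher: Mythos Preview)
The paper does not give its own proof of this theorem; it is imported verbatim from \cite[Theorems 4.5 and 5.16]{bhk-IPCOext}. That said, your proposal is correct and matches the approach of the cited source, and the key ingredients you identify are precisely the ones this paper restates as \autoref{lem:polyhedra-integral} and \autoref{lemma:vertices}: the constraint matrix built from copies of $A$ and the coupling $\sum \sigma_i \x^i = \t$ is totally unimodular, so the vertices of every face $F(I_1,I_2,I_3,\ve\sigma,\t)$ with $I_i\in\P_q$ lie in $\tfrac{1}{q}\Z^2\times\tfrac{1}{q}\Z^2\times\tfrac{1}{q}\Z^2$, and hence the sign of the piecewise linear $\Delta\pi$ is determined by its values at grid points. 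One small correction: in part~(2) you do not need \autoref{thm:finite-extremality}; the existence of $\bar\pi_0$ with $\pi|_{\tfrac{1}{q}\Z^2}\pm\bar\pi_0$ both minimal is just the definition of non-extremality in the finite group problem.
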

For $k\geq3$ rows, it is unclear when similar results are possible.

In the special case of diagonally constrained functions in $\R^2$, there is a similar result to \autoref{thm:extreme-restriction-m}.

\begin{theorem}[{\cite[Theorem 1.9]{bhk-IPCOext}}]
\label{thm:1/4q}
Let $\pi$ be a minimal continuous piecewise linear function over $\P_q$ that is diagonally constrained and $\f \in \verts(\P_q)$.  Fix $m \in \Z_{\geq 3}$.
Then $\pi$ is extreme for $R_{\ve f}(\R^2, \Z^2)$ if and only if the restriction $\pi\big|_{\frac{1}{mq}\Z^2}$ is extreme for $R_{\ve f}(\frac{1}{mq} \Z^2, \Z^2)$.  
\end{theorem}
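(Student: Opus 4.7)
The plan is to prove both directions of the equivalence by contrapositive, reducing each side to a statement about whether the associated space of perturbations is trivial. The tools are already in the excerpt: the finite-dimensional extremality test \autoref{thm:finite-extremality}, the continuity transfer \autoref{lem:minimality-of-pi1-pi2}, the Perturbation Theorem \autoref{corPerturb}, and the convex additivity domain lemma \autoref{lem:projection_interval_lemma-corollary}.

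\textbf{Setup.} By \autoref{thm:finite-extremality}, the restriction $\pi|_{\frac{1}{mq}\Z^2}$ is extreme if and only if $\bar\Pi^{\E(\pi|_{\frac{1}{mq}\Z^2})}_{\emptyset,\frac{1}{mq}\Z^2}=\{0\}$. On the infinite side, because $\pi$ is continuous piecewise linear and hence Lipschitz, \autoref{lem:minimality-of-pi1-pi2} converts any witness $\pi=\tfrac12(\pi^1+\pi^2)$ of non-extremality into a nonzero continuous $\bar\pi:=\pi^1-\pi\in\bar\Pi^{\E(\pi)}$; conversely, by \autoref{corPerturb}, any nonzero continuous piecewise linear element of $\bar\Pi^{\E(\pi)}$ over a refinement of $\P_q$ witnesses non-extremality of $\pi$. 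Thus it is enough to relate $\bar\Pi^{\E(\pi)}$ to the finite perturbation space via interpolation (for the forward direction) and restriction (for the reverse direction).

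\textbf{Forward direction (interpolation).} Assume the restriction is not extreme, and let $\bar\pi_{\mathrm{fin}}$ be a nonzero element of $\bar\Pi^{\E(\pi|_{\frac{1}{mq}\Z^2})}_{\emptyset,\frac{1}{mq}\Z^2}$. Since $q\mid mq$, the complex $\P_{mq}$ refines $\P_q$; define $\bar\pi$ as the continuous piecewise linear interpolation of $\bar\pi_{\mathrm{fin}}$ over $\P_{mq}$. Periodicity modulo $\Z^2$ and the vanishing at $\0$ and $\f$ are inherited. For any $\tau=(I_1,I_2,I_3,(1,1,-1),\0)\in\mathcal E(\pi,\P_q)$ one must verify additivity on all of $F(\tau)$; this follows because each $I_i$ is exactly tiled by faces of $\P_{mq}$ on which $\bar\pi$ is affine, and the total unimodularity of the matrix $A$ of \eqref{eq:A-matrix} ensures that the vertices of $\P_{mq}$ inside $F(\tau)$ linearly span the affine hull of $F(\tau)$ in a compatible way, so additivity at the lattice points propagates to the entire face. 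Then \autoref{corPerturb} concludes that $\pi$ is not extreme.

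\textbf{Reverse direction (restriction).} Assume $\pi$ is not extreme and let $\bar\pi\in\bar\Pi^{\E(\pi)}$ be the nonzero continuous perturbation produced above. The aim is to show that $\bar\pi|_{\frac{1}{mq}\Z^2}$ is itself nonzero. For every $\tau\in\mathcal E(\pi,\P_q)$ whose set $F(\tau)$ is two-dimensional, apply \autoref{lem:projection_interval_lemma-corollary} with a subspace $L\subseteq\R^2$ determined by $\aff(F(\tau))$; this forces $\bar\pi$ to be affine with a common gradient on each of $p_1(F(\tau))$, $p_2(F(\tau))$, and $p_3(F(\tau))$. The diagonally constrained hypothesis guarantees that, as $\tau$ ranges over $\mathcal E(\pi,\P_q)$, these projections cover $\R^2$ by full-dimensional triangles of $\P_q$ whose gradient conditions couple across shared edges and vertices. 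Propagating these gradient-matching relations along the resulting combinatorial network -- in analogy with the one-row argument behind \autoref{thm:extreme-restriction-m} -- forces $\bar\pi$ to be continuous piecewise linear over $\P_{mq}$. Hence $\bar\pi$ is determined by its values on $\verts(\P_{mq})\cap[0,1)^2\subseteq\frac{1}{mq}\Z^2$; if all of these vanished, then $\bar\pi\equiv 0$, a contradiction.

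\textbf{Main obstacle.} The crux is the gradient-propagation step in the reverse direction: lifting a merely Lipschitz perturbation $\bar\pi$ from ``affine on each additive projection'' up to ``continuous piecewise linear over the refined complex $\P_{mq}$''. The diagonally constrained hypothesis and the factor $m\geq 3$ (which ensures that every cell of $\P_q$ contains enough interior vertices of $\P_{mq}$ to pin $\bar\pi$ down from its gradient and a single value) are precisely what make this propagation close up on $\P_{mq}$; without them a genuinely finer refinement might be required and the reduction to the finite group problem on $\frac{1}{mq}\Z^2$ would break.
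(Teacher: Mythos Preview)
A preliminary remark: the paper does not prove this theorem itself; it quotes it from \cite{bhk-IPCOext} and then proves the stronger \autoref{thm:extreme-restriction-m-2d-all}, which drops the diagonally-constrained hypothesis. The comparison below is therefore against the argument in \cite{bhk-IPCOext} and its generalization in Section~\ref{sec:main-proof}.

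Your forward direction is correct and is essentially how the paper (and \cite{bhk-IPCOext}) proceeds: the interpolation over $\P_{mq}$ is additive over each $F(\tau)$ because the vertices of $F(\tau)$ land in $\bigl(\tfrac{1}{mq}\Z^2\bigr)^3$ by the total-unimodularity argument (\autoref{lemma:vertices} applied at level~$mq$); the paper packages this as \autoref{lem:tri-restriction-restricted-domain-new} and \autoref{lemContinuousDiscrete}, and for the bare implication $(2)\Rightarrow(3)$ simply invokes \autoref{thm:GJ-restrictions-2D} with $q$ replaced by $mq$.

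The reverse direction has a real gap. The sentence ``the diagonally constrained hypothesis guarantees that \dots\ these projections cover $\R^2$'' is not what diagonally constrained means: it restricts which \emph{types} of maximal additive faces can occur in $\mathcal E(\pi,\P_q)$ (roughly, only the full-dimensional ones in the sense of \autoref{lemma:cases}), not that their projections tile $\R^2$. When some triangle $K\in\P_q$ is not a projection of any full-dimensional additive face, the interval lemma says nothing about $\bar\pi$ on $\intr(K)$; a continuous $\bar\pi\in\bar\Pi^{\mathcal E(\pi)}$ can be an arbitrary continuous function on $K$ vanishing on $\partial K$. Such a $\bar\pi$ is certainly not piecewise linear over $\P_{mq}$, and its restriction to $\tfrac{1}{mq}\Z^2$ may well be zero, so your argument does not close.

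The argument that actually works (in \cite{bhk-IPCOext}, and mirrored in Step~6a of the proof of \autoref{thm:extreme-restriction-m-2d-all}) splits into two cases. If $\bar\pi|_{\frac{1}{q}\Z^2}\neq 0$ one is done immediately. Otherwise, on every covered triangle the interval lemma forces $\bar\pi$ to be affine with value zero at the three vertices in $\tfrac{1}{q}\Z^2$, hence identically zero there; since $\bar\pi\not\equiv 0$, some triangle $K$ must be uncovered. One then \emph{discards} $\bar\pi$ and constructs a new perturbation supported on $K$ (and its translates/reflections under the point-type relations) that is nonzero at a point of $\intr(K)\cap\tfrac{1}{mq}\Z^2$. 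This is exactly where $m\geq 3$ enters: it guarantees such an interior lattice point exists. Your stated role for $m\geq 3$---``enough interior vertices of $\P_{mq}$ to pin $\bar\pi$ down from its gradient and a single value''---is off target, since an affine function on a $\P_q$-triangle is already determined by its three vertices in $\tfrac{1}{q}\Z^2$.
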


Our main result implies that this theorem holds without the added restriction when we remove the assumption of diagonally constrained.

\section{Generalized Additive Functions and Valid 7-tuples}
\label{sec:7-tuples}
We begin with a table of notation, some defined in the previous section, and other notation to come.  

\begin{table}[H]
\begin{tabular}{llr}
    Symbol & Description & Reference\\
    \hline
         $R_\f(G,S)$ &Group problem with group $G$ and subgroup $S$ & \eqref{GP}\\
     $R_\f(\R^2,\Z^2)$ & 2-Row Infinite Group Problem& \\
     $R_\f(\tfrac{1}{mq} \Z^2, \Z^2)$ & 2-Row group problem on grid& \\
    $E(\pi)$ &  Set of additive pairs $(\x^1, \x^2)$ &\eqref{eq:Epi}\\
    $\mathcal E(\pi)$ & Set of  7-tuples $(\x^1, \x^2, \x^1 + \x^2, (1,1,-1), \0)$ &\eqref{eq:mathcalEpi}\\
    $\mathcal E(\pi, \P)$& Set of additive 7-tuples from faces of $\P$ &\eqref{eq:EpiP}\\
    $p_i(F)$ & Projection on to the $i$th tuple of variables from $F$ & \eqref{eq:projections}\\
    $p(\E)$ & Collection of projections of the 7-tuples from $\E$ & \eqref{eq:projE}\\
            $\mathcal Z$ & Set of faces of $\P$ where we set perturbation to be 0 & \eqref{eq:bPimathcalE}\\
    $\bar \Pi^{\mathcal{E}}_{\mathcal{Z}, \mathcal{F}}$ & Perturbation space with domain $\mathcal F$ satisfying 7-tuples $\mathcal E$, zeros $\mathcal Z$ & \eqref{eq:bPimathcalE}\\
    $\bar \Pi^{\mathcal E}$ & Shorthand for  $\bar \Pi^{\mathcal E}_{\emptyset, \R^2}$ & \eqref{eq:bPishorthand}\\
     $\otimes$ & Refinement operator for polyhedral complexes & \eqref{eq:refinement}
\end{tabular}
\caption{Notation: Informal descriptions of notation with links to formal definitions.}
\label{tab:notation}
\end{table}

We continue with some more definitions.  
We say polyhedral complex $\P$ is $\Z^2$-periodic if $\P = \P + \Z^2$.

For two polyhedral complexes $\P, \mathcal Q$ we define the (symmetric) refinement of $\P$ and $\mathcal Q$ as 
\begin{equation}
\label{eq:refinement}
\P \otimes \mathcal Q := \{ P \cap Q : P \in \mathcal P, Q \in \mathcal Q, P \cap Q \neq \emptyset\}.
\end{equation}

\subsection{Special Properties for $\P_q$}
Before continuing, we explain our unique notation.  Within our complex $\P_q$, there are points, three types of edges, and two types of triangles.  When discussing faces of this complex, we will add extra notation to allow the reader to remember which type of face it is.  To do so, we optionally provide the reminder in pictorial format in front of the variable representing the face.  For example, if $I \in \Itri$ is a triangle, we may refer to it as $\tri I$.  
In this notation, we may for example express 7-tuples as 
$\Tuple(\diag I_1, \ver I_2, \tri I_3, \ve\sigma, \t)$.
\begin{lemma}[Valid 7-tuples (adaptation of Lemma 4.9, \cite{bhk-IPCOext})]\label{lemma:cases}
Suppose that $I_1, I_2, I_3 \in \P_q$ and $\Tuple( I_1, I_2,  I_3, \ve \sigma,  \t)$ is a valid 7-tuple.  Up to permutation of $I_1, I_2, I_3$ via \autoref{lemTripleSwap}, one of the following is true.
\begin{enumerate}
\item $I_1,I_2,I_3 \in \Ipoint$,
\item $I_1\in \Ipoint$ and $I_2, I_3 \in \P_{q,*}$ for $* = \ver, \hor, \diag$, \label{item:point-edge}
\item $I_1 \in \Ivert$, $I_2 \in \Ihor$, $I_3 \in \Idiag$, \label{item:three-diags}
\item $I_1\in \Ipoint$ and $I_2, I_3 \in \P_{q,\tri}$,
\item $I_1 \in \P_{q,*}$ for $* = \ver, \hor$ or $\diag$ and $I_2, I_3 \in \Itri$,
\item $I_1 \in \P_{q, *_1}$, $I_2 \in \P_{q,*_2}$ for $*_1 \neq *_2$, $*_1,*_2 \in \{\ver, \hor, \diag\}$, and $I_3 \in \Itri$,
\item $I_1,I_2,I_3 \in \I_{q,*}$ for $* = \ver, \hor, \diag$, or $\tri$
\end{enumerate}
\end{lemma}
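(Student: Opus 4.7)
My plan is to carry out a case analysis on the dimension triple $(d_1,d_2,d_3)$, where $d_i = \dim I_i \in \{0,1,2\}$, ordered via \autoref{lemTripleSwap} so that $d_1 \leq d_2 \leq d_3$. The main workhorse will be \autoref{validCharacterization}, which for any valid 7-tuple gives the inclusion $I_i \subseteq \sigma_i \t - \sigma_i\sigma_j I_j - \sigma_i\sigma_k I_k$ for every rotation of indices. Taking dimensions immediately yields $d_i \leq d_j + d_k$, hence $d_3 \leq d_1 + d_2$, ruling out $(0,0,1)$, $(0,0,2)$, and $(0,1,2)$ and leaving precisely $(0,0,0), (0,1,1), (0,2,2), (1,1,1), (1,1,2), (1,2,2), (2,2,2)$. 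Four of these map directly to the listed cases: $(0,0,0)\!\to\!(1)$, $(0,2,2)\!\to\!(4)$, $(1,2,2)\!\to\!(5)$, and $(2,2,2)\!\to\!(7)$ with $*=\tri$.

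Next, I will handle the two cases in which a point is paired with higher-dimensional faces. For $(0,1,1)$ I will invoke \autoref{validPointCharacterization} directly: the inclusion from the dimensional step sharpens to the equality $I_2 = \sigma_2\t - \sigma_2\sigma_1\v - \sigma_2\sigma_3 I_3$, exhibiting $I_2$ as a signed translate of $I_3$. Since negation sends each of the three edge directions of $\P_q$ (namely $\ver$, $\hor$, $\diag$) to itself as a direction, $I_2$ and $I_3$ must have the same edge type, which is case (2).

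The substantive work lies in the $(1,1,1)$ and $(1,1,2)$ cases, where I will extract direction information from the validity inclusion. For $(1,1,2)$ the dimension bound $d_3 = 2 = d_1 + d_2$ is saturated, so $I_1 + I_2$ must be genuinely two-dimensional; since the signed Minkowski sum of two parallel unit segments stays one-dimensional, $I_1$ and $I_2$ must have distinct edge directions, giving case (6). For $(1,1,1)$ I suppose two of the three edges share a direction $\d$; then their signed Minkowski sum is a segment parallel to $\d$, and the validity inclusion confines the third edge to a line in direction $\d$. If the third edge had a different direction, this confinement would force it to collapse to a point, contradicting $\dim = 1$. So either all three directions coincide (case (7) with $*\in\{\ver,\hor,\diag\}$), or all three are pairwise distinct, and since $\P_q$ offers exactly the three edge directions $\ver,\hor,\diag$, after relabeling we recover case (3).

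The routine dimensional accounting is cheap; the conceptual core will be the $(1,1,1)$ case, where I need a \emph{direction} constraint and not merely a dimension constraint, and must carry it through for arbitrary signs $\ve\sigma$ and shifts $\t$. The clean observation, that a signed Minkowski sum of parallel segments stays parallel to the common direction, combined with \autoref{lemTripleSwap} to absorb all sign and permutation choices, is what allows this case to close in a few lines rather than splintering into a sprawl of subcases.
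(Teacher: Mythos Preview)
Your proof is correct and takes a genuinely different organizational route from the paper. The paper proceeds by explicit enumeration: it counts all $5^3=125$ type-triples, identifies the $50$ that fall under cases (1)--(7), and disposes of the remaining $75$ via three direct contradiction arguments (two points with a non-point; two same-type edges with a different-type third face; a point with two faces of different types). You instead stratify by the dimension triple $(d_1,d_2,d_3)$, extract the single inequality $d_3 \le d_1 + d_2$ once from \autoref{validCharacterization}, and then within each surviving dimension profile argue about edge directions. Both proofs bottom out in the same two observations---that a signed translate preserves edge direction (your $(0,1,1)$ case, the paper's Case~3) and that a signed Minkowski sum of parallel segments stays in a line of that direction (your $(1,1,1)$ and $(1,1,2)$ cases, the paper's Case~2)---but your taxonomy reaches them with less bookkeeping. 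In particular, you dispatch the ``point plus edge plus triangle'' scenario with the one-line inequality $2 \le 0+1$, whereas the paper folds it into its Case~3 via \autoref{validPointCharacterization}. The paper's enumeration has the virtue of being self-auditing (the $50+75=125$ count certifies completeness); your version trades that for a more conceptual structure.
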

\begin{remark}
The type (2) corresponds to the translation and reflection between domains.
\end{remark}

\begin{proof}
Fix $\ve\sigma \in \{\pm1\}^3$ and $\t \in \R^2$.
The sets $I_1, I_2, I_3$ could be in any of the 5 sets $\Ipoint, \I_{q,\ver},\P_{q,\hor},\P_{q,\diag},\Itri$.  Therefore there are $5^3 = 125$ possible $\Tuple(I_1, I_2, I_3, \ve\sigma, \t)$ .
Of these 125, there are 1 in case 1, $3\times 3 = 9$ in case 2, $3! = 6$ in case 3, $3$ in case 4, $3\times 3 =9$ in case 5, $3\times 3\times 2=18$ in case 6, $4$ in case 7.

This yields a total of $1 + 9 + 6 + 3 + 9 + 18 + 4 =  50$.  We will show that the other 75 possibilities are not valid 7-tuples.    For the sake of contradiction, suppose $\Tuple(I_1, I_2, I_3, \ve\sigma, \t)$ is a valid 7-tuple.  We proceed by cases and handle each case up to permutation of $I_1, I_2, I_3$.

\textbf{Case 1.} {\em Suppose $I_1,I_2 \in \Ipoint$, $I_3 \in \P_{q,*}$ for $* = \ver, \hor, \diag, \tri$.}

  Let $\u^i \in \verts(\P_q)$ such that $I_i = \{\u^i\}$ for $i=1,2$.
  By \autoref{validCharacterization},  $ I_3 \subseteq [\sigma_3 \t - (\sigma_3 \sigma_2 I_2 + \sigma_3 \sigma_1 I_1)] =  [\sigma_3 \t - (\sigma_3 \sigma_2 \{\u^2\} + \sigma_3 \sigma_1 \{\u^1\})]$ which is a singleton.  This is a contradiction since $I_3$ is supposed to be an edge or a triangle.
  There are 12 possibilities in this case.

\textbf{Case 2.} {\em Suppose $I_1, I_2 \in \P_{q,*_1}$ for $*_1 = \ver, \hor$ or $\diag$  and $I_3 \in \P_{q,*_2}$ for $*_2 = \ver, \hor, \diag$ or $\tri$, and $*_1 \neq *_2$. }

  Since $I_1, I_2$ are the same type of edges, there exists $\h \in \{(1,0), (0,1), (1,1)\}$, $c_1, c_2 \in \R$ such that $I_i \subseteq \{\x \st \h \cdot \x = c_i\}$ for $i=1,2$.
  By \autoref{validCharacterization},
  \begin{align*}
  I_3 &\subseteq [\sigma_3 \t - (\sigma_3 \sigma_2 I_2 + \sigma_3 \sigma_1 I_1)] \\
& = \{\x^3 \st \x^3 = \sigma_3 \t - (\sigma_3 \sigma_2 \x^2 + \sigma_3 \sigma_1 \x^1), \text{ with } \x^1 \in I_1, \x^2 \in I_2\}\\
  &\subseteq  \{\x^3 \st \h \cdot \x^3 = \h \cdot (\sigma_3 \t - (\sigma_3 \sigma_2 \x^2 + \sigma_3 \sigma_1 \x^1)), \text{ with } \x^1 \in I_1, \x^2 \in I_2\}\\
  &=  \{\x^3 \st \h \cdot \x^3 = \sigma_3 \h \cdot\t - (\sigma_3 \sigma_2 c_2 + \sigma_3 \sigma_1 c_1), \text{ with } \x^1 \in I_1, \x^2 \in I_2\}.
  \end{align*}
  Therefore $I_3$ is contained in a parallel hyperplane,  and hence $I_3$ must either be an edge of the same type as $I_1, I_2$ or a vertex.  This is a contradiction.
There are 27 possibilities in this case.

\textbf{Case 3.} {\em Suppose $I_1 \in \Ipoint$, $I_2 \in \I_{q,*_1}$ for $*_1 = \ver, \hor, \diag$ or $\tri$,  $I_3 \in \I_{q,*_2}$ for $*_2 = \ver, \hor, \diag$ or $\tri$, and $*_1 \neq *_2$. }

Let $I_1 = \{\u\}$.   By \autoref{validCharacterization},
\begin{enumerate}
\item $ I_2 \subseteq \sigma_2 \t - (\sigma_2 \sigma_1 \{\u\} + \sigma_2 \sigma_3 I_3) \Rightarrow \sigma_3 \t - (\sigma_3 \sigma_2 I_2 + \sigma_3 \sigma_1\{\u\}) \subseteq I_3$
\item $ I_3 \subseteq \sigma_3 \t - (\sigma_3 \sigma_2 I_2 + \sigma_3 \sigma_1\{\u\})$
\end{enumerate}
Therefore, $I_3 = \sigma_3 \t - (\sigma_3 \sigma_2 I_2 + \sigma_3 \sigma_1\{\u\})$, that is to say that $I_2$ and $I_3$ are translates and possibly reflections of each other.  Therefore, $I_2$ and $I_3$ must be the same type of faces.   This is a contradiction.  There are 36 possibilities in this case.

Note that $12 + 27 + 36 = 75$, and thus we have covered all the possibilities.
\end{proof}

Let $\Delta_n$ be the standard simplex in $n$-dimensions.
We can define the faces of $\P_q$ choosing some $\u \in \frac{1}{q}\Z^2$, $\d \in \{\pm\e^1, \pm\e^2, \pm(\e^1-\e^2)\}$ with $\e^1=(1,0)$ and $\e^2=(0,1)$, as 
\begin{align*}
& \point I = \{\u\}, \ \ 
\edge I = \{\x \in \R^2 : \x = \u + \frac{1}{q} \lambda \d, \lambda \in \Delta_1\}, \ \ \\
& \tri I= \{\x \in \R^2 : \x = \u \pm \frac{1}{q} \left(\lambda_1 \ve{e}^1 + \lambda_2 \ve e^2\right), \ve \lambda \in \Delta_2\}.
\end{align*}
Note that there are multiple equivalent representations, for example, $\edge I = \{\x \in \R^2 : \x = \u + \frac{1}{q} \lambda \d, \lambda \in \Delta_1\}=\{\x \in \R^2 : \x = (\u+\frac1q\d) - \frac{1}{q} \lambda \d, \lambda \in \Delta_1\}$.

We next give explicit characterizations of the $F(\tau)$.
\begin{lemma}
\label{lem:F-formulas}
Consider various 7-tuples $\tau = (I_1, I_2, I_3, \ve \sigma, \t)$. %
The 7-tuple $\tau$ is valid if and only if $F(\tau)$ can be parametrized by the following formulas. Furthermore, if $I_1, I_2, I_3, \sigma$ are given, there is a unique $\t$ to make $\tau$ a valid 7-tuple.

\noindent \textbf{One dimensional Faces:}
\begin{equation*}
\begin{array}{r@{\hskip -.5pt}rrlr@{\hskip -.25pt}l@{\hskip -.25pt}l@{\hskip -.5pt}rrr@{\hskip -.5pt}r}
F(&\ver\hor\diag I_1,& \point \{\u^2\},& \ver\hor\diag I_3,& \ve \sigma, \t) 
&=& \{ (\u^1, \u^2, \u^3) + \tfrac{1}{q}\lambda ( &-\sigma_1\sigma_3\d,& \0,& \d) &: \lambda \in \Delta_1\},
\end{array}
\end{equation*}
where $\u^1 = \sigma_1\t - \sigma_1\sigma_2\u^2 -\sigma_1\sigma_3\u^3$, and $\d \in \{\pm\e^1, \pm\e^2, \pm(\e^1-\e^2)\}$ with $\e^1=(1,0)$ and $\e^2=(0,1)$.
\begin{equation*}
\begin{array}{r@{\hskip -.5pt}rrlr@{\hskip -.25pt}l@{\hskip -.25pt}l@{\hskip -.5pt}rrr@{\hskip -.5pt}r}
F(&\ver I_1,&  \hor I_2,& \diag I_3,& \ve \sigma, \t)
&=& \{(\u^1, \u^2, \u^3) + \tfrac{1}{q}\lambda  (&-\sigma_1 \sigma_3 \e^1,& \sigma_2\sigma_3 \e^2,& \e^1-\e^2) &: \lambda \in \Delta_1\},
\end{array}
\end{equation*}
where $\u^1 = \sigma_1\t - \sigma_1\sigma_2\u^2 -\sigma_1\sigma_3\u^3$, $\e^1=(1,0)$ and $\e^2=(0,1)$.

\noindent \textbf{Two dimensional Faces:}
\begin{equation*}
\begin{array}{r@{\hskip -.5pt}rrlr@{\hskip -.25pt}l@{\hskip -.25pt}l@{\hskip -.5pt}rrr@{\hskip -.5pt}r}
F(&\tri I_1,& \point \{\u^2\},& \tri I_3,& \ve \sigma, \t) 
&=& \{ (\u^1, \u^2, \u^3) + \frac{1}{q}(&-\sigma_1\sigma_3(\lambda_1 \ve d^1+\lambda_2 \ve d^2),&\0,&\lambda_1 \ve d^1+\lambda_2 \ve d^2) 
 &: \ve \lambda \in \Delta_2\},
\end{array}
\end{equation*}
where $\u^1 = \sigma_1\t - \sigma_1\sigma_2\u^2 -\sigma_1\sigma_3\u^3$, and $\d^1,\d^2 \in \{\pm\e^1, \pm\e^2, \pm(\e^1-\e^2)\}$, $\d^1 \neq \d^2$.
\begin{equation*}
\begin{array}{r@{\hskip -.5pt}rrlr@{\hskip -.25pt}l@{\hskip -.25pt}l@{\hskip -.5pt}rrr@{\hskip -.5pt}r}
F(&\hor I_1,& \ver I_2,& \tri I_3,& \ve \sigma, \t) 
&=& \{ (\sigma_1\u^1, \sigma_2\u^2, -\sigma_3\u^3) + \frac{1}{q}(&\lambda_1 \sigma_1\ve d^1,&\sigma_2\lambda_2 \ve d^2,&-\sigma_3(\lambda_1 \ve d^1+\lambda_2 \ve d^2)) 
 &: \ve \lambda \in \Delta_2\},
\end{array}
\end{equation*}
where $\u^3 = -\t + \u^1 +\u^2$, $\d^1=\sigma\e^1, \d^2=\sigma\e^2, \sigma\in\{\pm1\}$.
\end{lemma}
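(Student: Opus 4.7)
The plan is to proceed case by case through the four configurations listed, using the explicit face parametrizations of $\P_q$ given just before the lemma statement, together with \autoref{validCharacterization} and \autoref{validPointCharacterization}. For each case, I would first establish the formula for $F(\tau)$ assuming validity, then verify the converse by computing $p_i(F(\tau))$ directly from the parametrization, and finally extract the uniqueness of $\t$ from the requirement that the chosen base vertices $\u^i \in \verts(\P_q)\cap I_i$ form a triple in $F(\tau)$.

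First I would handle the two cases in which one face is a point, namely $(\ver\hor\diag I_1, \point\{\u^2\}, \ver\hor\diag I_3, \ve\sigma, \t)$ and $(\tri I_1, \point\{\u^2\}, \tri I_3, \ve\sigma, \t)$. By \autoref{validPointCharacterization}, validity in these cases is equivalent to $I_1 = \sigma_1\t - \sigma_1\sigma_2\{\u^2\} - \sigma_1\sigma_3 I_3$. Substituting the explicit parametrization of $I_3$ (with direction $\d$ over $\Delta_1$, respectively directions $\d^1, \d^2$ over $\Delta_2$) expresses $I_1$ as an affine image of $I_3$ with base vertex $\u^1 = \sigma_1\t - \sigma_1\sigma_2\u^2 - \sigma_1\sigma_3\u^3$ and direction $-\sigma_1\sigma_3\d$ (resp.\ $-\sigma_1\sigma_3\d^i$). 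Since $\x^2 = \u^2$ is forced, every triple in $F(\tau)$ is determined by the choice of $\x^3 \in I_3$, and pairing $\x^3$ with the unique $\x^1$ solving $\sigma_1\x^1 + \sigma_2\u^2 + \sigma_3\x^3 = \t$ yields the stated parametrization.

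The two remaining cases $(\ver I_1, \hor I_2, \diag I_3, \ve\sigma, \t)$ and $(\hor I_1, \ver I_2, \tri I_3, \ve\sigma, \t)$ cannot be reduced via a point component, so I would attack $\sum_i \sigma_i \x^i = \t$ directly. Parametrize each edge and triangle by its natural direction vectors in $\{\pm\e^1, \pm\e^2, \pm(\e^1-\e^2)\}$, pick base vertices $\u^i$, and set $\t = \sum_i \sigma_i \u^i$, which is forced if $(\u^1, \u^2, \u^3)$ is to lie in $F(\tau)$. Substituting the parametrized $\x^i$ into the additivity constraint gives a linear system in the parameters $\mu_1, \mu_2, \mu_3$ (edge case) or $\mu_1, \mu_2, \lambda_1, \lambda_2$ (tri case). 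In the edge-edge-edge case, the identity $\e^1 - \e^2 = \e^1 + (-\e^2)$ couples the three one-dimensional motions into a single one-parameter family, which recovers the claimed $\Delta_1$-parametrization with coupled increments $(-\sigma_1\sigma_3\e^1,\ \sigma_2\sigma_3\e^2,\ \e^1-\e^2)$. In the horizontal-vertical-triangle case, the two triangle parameters $\lambda_1, \lambda_2$ decouple into the $\e^1$- and $\e^2$-components and determine $\mu_1, \mu_2$ uniquely, producing the claimed two-dimensional parametrization on $\Delta_2$.

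The converse direction is immediate: once $F(\tau)$ admits the stated parametrization, computing $p_i(F(\tau))$ and comparing with the face parametrizations shows $p_i(F(\tau)) = I_i$, so $\tau$ is valid. Uniqueness of $\t$ for given $I_1, I_2, I_3, \ve\sigma$ follows because any valid 7-tuple must contain the base-vertex triple $(\u^1, \u^2, \u^3)$ in $F(\tau)$, forcing $\t = \sum_i \sigma_i\u^i$. The main technical obstacle is the edge-edge-edge case: one must argue carefully that the three one-dimensional independent motions along $\e^2$, $\e^1$, and $\e^1 - \e^2$ can cancel under $\sum_i \sigma_i \x^i = \t$ only through the specific coupled motion recorded in the formula. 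This is where the total unimodularity of the matrix $A$ from \eqref{eq:A-matrix} is crucial, as it guarantees that the linear system relating the edge parameters $\mu_i$ has a clean $\frac{1}{q}$-integral solution space exactly isomorphic to $\Delta_1$.
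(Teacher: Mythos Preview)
Your case-by-case plan matches the paper's proof closely: the two ``point'' cases are handled exactly as you describe via \autoref{validPointCharacterization}, and the two remaining cases are handled by substituting the edge/triangle parametrizations into the constraint $\sum_i \sigma_i \x^i = \t$ and solving the resulting linear system in the scalar parameters. Your treatment of the converse (read off $p_i(F(\tau))=I_i$ from the parametrization) and of uniqueness of $\t$ (evaluate at the base-vertex triple) is also how the paper proceeds.

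One correction: your last paragraph misidentifies the mechanism in the edge--edge--edge case. Total unimodularity of $A$ from \eqref{eq:A-matrix} is \emph{not} used here and is not what forces the one-parameter family. In the paper's argument one writes $\x^i = \u^i + \tfrac{1}{q}\lambda_i\d^i$ with $\d^1=\e^1$, $\d^2=\e^2$, $\d^3=\e^1-\e^2$, and the constraint becomes the $2\times 2$ system
\[
\tfrac{1}{q}\begin{bmatrix}\sigma_1 & 0\\ 0 & \sigma_2\end{bmatrix}\begin{bmatrix}\lambda_1\\ \lambda_2\end{bmatrix} \;=\; (\t - \textstyle\sum_i\sigma_i\u^i) - \tfrac{1}{q}\sigma_3\lambda_3\begin{bmatrix}1\\-1\end{bmatrix},
\]
which is solved by inverting the diagonal sign matrix; the one-dimensionality comes simply from having two equations in three unknowns, and the specific coupled increments come from the sign pattern of $\ve\sigma$ together with the requirement $\lambda_i\in[0,1]$. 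Total unimodularity enters elsewhere in the paper (\autoref{lem:polyhedra-integral}, \autoref{lemma:vertices}) to control integrality of vertices of $F(\tau)$, not to derive these parametrizations. If you try to route the argument through TU you will find it does not connect; just do the elementary linear algebra.
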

We save the proof for the appendix.

\begin{lemma}
\label{lem:F-formula-one-tri}
Suppose that $K_1, K_2$ are edges of $K$ and $\ve\mu=(1,1,-1)$.
The 7-tuple $\tau=(K_1, K_2, K, \ve \mu, \t)$ is valid if and only if $F(\tau)$ can be parametrized by the following formulas:
\begin{equation*}
\begin{array}{r@{\hskip -.5pt}rrlr@{\hskip -.25pt}l@{\hskip -.25pt}l@{\hskip -.5pt}rrr@{\hskip -.5pt}r}
F(&\hor K_1,& \ver K_2,& \tri K,& \ve \mu, \t) 
&=& \{ (\t, \t, \t) + \frac{1}{q}(&\lambda_1 \ve d^1,&\lambda_2 \ve d^2,&(\lambda_1 \ve d^1+\lambda_2 \ve d^2)) 
 &: \ve \lambda \in \Delta_2\},
\end{array}
\end{equation*}
where $\d^i$ is a direction of $K_i$ such that $\t$ is the vertex of $K$ in the intersection of the two directions and $\t+\frac1q\d^1, \t+\frac1q\d^2$ are the other two vertices.
\end{lemma}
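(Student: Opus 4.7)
The plan is to invoke Lemma~\ref{lem:F-formulas} in the case $F(\hor I_1, \ver I_2, \tri I_3, \ve\sigma, \t)$ specialized at $\ve\sigma = \ve\mu = (1,1,-1)$ and then exploit the additional hypothesis that $K_1, K_2$ are edges of the same triangle $K$ to force the three base vertices of the general parametrization to collapse to the common vertex $\t$.

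First, I apply Lemma~\ref{lem:F-formulas} with $I_1 = K_1$, $I_2 = K_2$, $I_3 = K$ and $\sigma_1 = \sigma_2 = 1$, $\sigma_3 = -1$. This yields that $\tau$ is a valid 7-tuple if and only if
\begin{equation*}
F(\tau) = \Bigl\{(\u^1, \u^2, \u^3) + \tfrac{1}{q}(\lambda_1 \d^1, \lambda_2 \d^2, \lambda_1 \d^1 + \lambda_2 \d^2) : \ve\lambda \in \Delta_2\Bigr\}
\end{equation*}
with $\u^3 = -\t + \u^1 + \u^2$ and $\d^1 = \sigma\e^1$, $\d^2 = \sigma\e^2$ for some $\sigma \in \{\pm 1\}$. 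Reading off projections, $p_1(F(\tau))$ is the edge from $\u^1$ in direction $\d^1$, $p_2(F(\tau))$ is the edge from $\u^2$ in direction $\d^2$, and $p_3(F(\tau))$ is the triangle with apex $\u^3$ and incident edges in directions $\d^1, \d^2$ (hence with vertex set $\{\u^3,\ \u^3 + \tfrac{1}{q}\d^1,\ \u^3 + \tfrac{1}{q}\d^2\}$).

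Next I use the hypothesis that $K_1, K_2$ are edges of $K$. Since $K_1 = p_1(F(\tau))$ is an edge of $K$ with direction $\d^1$, its two endpoints $\u^1$ and $\u^1 + \tfrac{1}{q}\d^1$ must be the two vertices of $K$ differing by $\tfrac{1}{q}\d^1$, namely $\u^3$ and $\u^3 + \tfrac{1}{q}\d^1$; this forces $\u^1 = \u^3$. The symmetric argument applied to $K_2$ gives $\u^2 = \u^3$. Substituting into $\u^3 = -\t + \u^1 + \u^2$ then yields $\t = \u^3$, so $\t = \u^1 = \u^2 = \u^3$ is the unique vertex of $K$ incident to both $K_1$ and $K_2$, and the other two vertices of $K$ are $\t + \tfrac{1}{q}\d^1$ and $\t + \tfrac{1}{q}\d^2$. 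Substituting the collapsed base points back into the general parametrization produces exactly the formula in the statement.

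For the converse direction, the parametrization in the statement is precisely the specialization of the general formula of Lemma~\ref{lem:F-formulas} at $\u^1 = \u^2 = \u^3 = \t$, which automatically satisfies $\u^3 = -\t + \u^1 + \u^2$, so the validity of $\tau$ follows from Lemma~\ref{lem:F-formulas}. The only delicate point is a small bookkeeping check that the sign $\sigma$ allowed by the general formula is the one consistent with $K_1, K_2 \subseteq K$ (namely $\sigma = +1$ when $\t$ is the right-angle vertex of a $\FundaTriangleLower$-translate and $\sigma = -1$ when $\t$ is the right-angle vertex of a $\FundaTriangleUpper$-translate). This is purely mechanical and presents no conceptual obstacle.
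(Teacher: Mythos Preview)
Your proof is correct and takes essentially the approach the paper intends: the paper states Lemma~\ref{lem:F-formula-one-tri} without an explicit proof, treating it as an immediate specialization of Lemma~\ref{lem:F-formulas}, which is exactly what you do. The key observation you make --- that forcing $K_1$ and $K_2$ to be edges of the same triangle $K$ collapses $\u^1 = \u^2 = \u^3 = \t$ --- is the only content beyond the general formula, and your derivation of it from the vertex description of $K$ is clean.
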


\begin{lemma}
\label{lemSepEdge}
Suppose $\Tuple(I, J, K, \ve \sigma,  \t)$ is a valid 7-tuple with different types of edges $I,J \in \Iedge$ and a triangle $K \in \Itri$.  Then there exists $\u \in \verts(J)$ such that $\Tuple(I, \{\u\}, K_1, \ve \sigma,  \t)$ is a valid 7-tuple, where $K_1$ is an edge of $K$ with the same type as $I$.
\end{lemma}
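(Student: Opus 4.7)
My plan is to analyze the geometry of the third projection of $F = F(I, J, K, \ve\sigma, \t)$. Since $I$ and $J$ are edges of different types, their direction vectors $\d_I, \d_J$, drawn from $\{\pm\e^1, \pm\e^2, \pm(\e^1 - \e^2)\}$, are linearly independent. By \autoref{lem:projections},
\[
  p_3(F) \;=\; K \cap P, \qquad P \;:=\; \sigma_3\t - \sigma_3\sigma_1 I - \sigma_3\sigma_2 J,
\]
and $P$ is a parallelogram in $\R^2$ with sides of length $\tfrac{1}{q}$ in directions $\d_I$ and $\d_J$. Validity of the given 7-tuple forces $p_3(F) = K$, hence $K \subseteq P$.

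Next I will argue that $K$ must sit inside $P$ as exactly one of the two half-parallelograms cut by a diagonal: two edges of $K$ lie along adjacent sides of $P$ (one parallel to $\d_I$, one parallel to $\d_J$), while the hypotenuse of $K$ lies along a diagonal of $P$. This reduces to a short case analysis on the unordered pair $(\d_I, \d_J)$ from the three edge-type combinations $\{\hor, \ver\}$, $\{\hor, \diag\}$, $\{\ver, \diag\}$; in each case, the third edge-direction of the triangle $K$ is forced to equal $\pm(\d_I - \d_J)$ or $\pm(\d_I + \d_J)$, matching a diagonal of $P$. In particular, the edge $K_1$ of $K$ parallel to $\d_I$ lies on one of the two $\d_I$-sides of $P$.

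To extract the desired vertex, I will parametrize $J = \{\u^0 + (t/q)\d_J : t \in [0,1]\}$. The two $\d_I$-sides of $P$ correspond to the extreme values $t = 0$ and $t = 1$, i.e., to the two vertices of $J$. Let $\u \in \verts(J)$ be the vertex whose corresponding $\d_I$-side of $P$ contains $K_1$. Then $F(I, \{\u\}, K_1, \ve\sigma, \t)$ consists of triples $(\x^1, \u, \x^3)$ with $\x^1 \in I$, $\x^3 \in K_1$, and since the additivity constraint (with $\x^2 = \u$ fixed) furnishes an affine bijection between the parallel segments $I$ and $K_1$ of equal length $\tfrac{1}{q}$, the three projections evaluate to $I$, $\{\u\}$, and $K_1$, confirming validity of the 7-tuple.

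The main obstacle I expect is the geometric structural claim in the second paragraph --- that $K$ is genuinely one of the two half-parallelograms of $P$ --- which requires the finite case analysis above. An alternative that sidesteps this analysis is to invoke \autoref{lem:F-formulas} directly in the $(\hor, \ver, \tri)$ case, setting the barycentric parameter $\lambda_2 = 0$ to pin down the vertex and edge; for the remaining $(\hor, \diag, \tri)$ and $(\ver, \diag, \tri)$ cases, one would derive analogous explicit parametrizations in the spirit of the appendix proof of \autoref{lem:F-formulas}, from which the vertex $\u$ and edge $K_1$ can be read off in the same way.
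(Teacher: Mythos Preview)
Your proposal is correct, and the alternative you sketch at the end is precisely the route the paper takes: it invokes \autoref{lem:F-formulas} to parametrize $F(\hor I, \ver J, \tri K, \ve\sigma, \t)$ over $\Delta_2$, identifies the vertex $\u = \sigma_2\u^2$ of $J$ as the base point of the parametrization, reads off $K_1$ by freezing $\lambda_2 = 0$, and then recognizes the resulting one-parameter family as $F(I,\{\u\},K_1,\ve\sigma,\t)$ via the one-dimensional formula in the same lemma. The paper does only the $(\hor,\ver)$ case and declares the others similar.

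Your main geometric argument is a genuinely different route. Rather than unwinding the explicit parametrization, you use \autoref{lem:projections} to get $K \subseteq P$ and then argue from edge lengths that $K$ must be one of the two triangular halves of the parallelogram $P$; the vertex $\u$ is then identified as the endpoint of $J$ indexing the $\d_I$-side of $P$ on which $K_1$ sits, and validity drops out of \autoref{validPointCharacterization}. This makes the picture more transparent and avoids writing down coordinates, at the cost of the case analysis on edge-type pairs that you flag. The paper's approach is not really shorter, since the same case analysis has already been absorbed into the appendix proof of \autoref{lem:F-formulas}; it just front-loads the computation into a reusable lemma. Either argument bottoms out in the same finite check.
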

\begin{proof}
We will prove this for the case where $I \in \P_{q,\hor}$, $J \in \P_{q,\ver}$. The other cases are similar so we will omit their proofs.
By \autoref{lem:F-formulas}, we can parametrize 
$$
F(\hor I, \ver J, \tri K, \ve \sigma, \t) 
= \{ (\sigma_1\u^1, \sigma_2\u^2, -\sigma_3\u^3) + \frac{1}{q}(\lambda_1 \sigma_1 \ve d^1,\lambda_2 \sigma_2\ve d^2,-\sigma_3(\lambda_1 \ve d^1+\lambda_2 \ve d^2)): \ve \lambda \in \Delta_2\},
$$
where $\sigma_2\u^2$, $\sigma_2\u^2+\frac1q\lambda_2\sigma_2\d^2$ are two vertices of $J$, and $\t = \u^1+\u^2 - \u^3$. 

Then $K_1 = \{-\sigma_3\u^3-\frac1q\sigma_3\lambda_1\d^1:\lambda_1\in[0,1]\}$, $\sigma_1\u^1 = \sigma_1\t - \sigma_1\sigma_2(\sigma_2\u^2) -\sigma_1\sigma_3(-\sigma_3\u^3)$,
and for $\d = -\sigma_3\d^1$,
$$F\Tuple(I, \{\sigma_2\u^2\}, K_1, \ve \sigma,  \t)=\{ (\sigma_1\u^1, \sigma_2\u^2, -\sigma_3\u^3) + \tfrac{1}{q}\lambda (-\sigma_1\sigma_3\d, \0, \d) : \lambda \in \Delta_1\}.$$ 
Therefore, $\Tuple(I, \{\sigma_2\u^2\}, K_1, \ve \sigma,  \t)$ is a valid 7-tuple.
\end{proof}

 \subsection{Triangulation properties}
\label{sec:triangulation}

The specific choice of the triangulation $\P_q$ lends itself to strong unimodularity properties.  
This will allow us to demonstrate that the important additivities can be written using the original faces of the  complex $\P_q$.

We begin with a structural lemma about $\P_q$ regarding Minkowski sums that seems to be a key ingredient in preserving the structure of the complex under projections of additive faces.

\begin{lemma}[Lemma 4.3, \cite{bhk-IPCOext}]\label{lemma:I+J-all-cases}
Let $I, J \in \P_q$.  Then $- I$ and $I + J$ are unions of faces in $\P_q$.
\end{lemma}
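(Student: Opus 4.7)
I will handle the two claims separately, with the negation case being trivial and the Minkowski sum requiring a mild structural argument leveraging the very rigid geometry of $\P_q$.

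\emph{Step 1 (Negation).} I would enumerate the five kinds of faces from \autoref{ex:2d-standard-triangulation}. For a vertex $\{\u\}$ with $\u \in \tfrac1q\Z^2$ we have $-\{\u\} = \{-\u\}$, which is again a vertex of $\P_q$. For an edge of type $\ver$, $\hor$, or $\diag$, negation produces an edge of the same type (since the direction sets $\{\pm\e^1\}$, $\{\pm\e^2\}$, $\{\pm(\e^1-\e^2)\}$ are each closed under negation) whose endpoints lie in $\tfrac1q\Z^2$. Finally, $-\FundaTriangleLower = -\tfrac1q\conv\{\0,\e^1,\e^2\}$ is a translate of $\FundaTriangleUpper$ and vice versa. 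In every case $-I$ is itself a face of $\P_q$.

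\emph{Step 2 (Minkowski sum).} The plan is to show that $I+J$ is a convex polygon whose vertices lie in $\tfrac1q\Z^2$ and whose edges are parallel to one of the three reference directions $\e^1,\e^2,\e^1-\e^2$, and then invoke the characterization of faces of $\P_q$ as slabs of the matrix $A$ from \eqref{eq:A-matrix}. For vertices: every vertex of $I+J$ is a sum of a vertex of $I$ and a vertex of $J$, and $\verts(\P_q) \subseteq \tfrac1q\Z^2$, hence $\verts(I+J) \subseteq \tfrac1q\Z^2$. For edge directions: in the plane, the edges of a Minkowski sum of two polygons are parallel to edges of the summands, and all edges of faces of $\P_q$ lie in $\{\pm\e^1,\pm\e^2,\pm(\e^1-\e^2)\}$. (Degenerate cases where $I$ or $J$ is a point or segment are even easier: $I+J$ is a translate of $J$, a segment, or a parallelogram/hexagon with the same allowed edge directions.)

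\emph{Step 3 (Matching to the arrangement).} Because $I+J$ is a convex polytope with edges only in the three directions $\e^1$, $\e^2$, $\e^1-\e^2$, its facet normals lie in $\{\pm\e^1,\pm\e^2,\pm(\e^1+\e^2)\}$, which are exactly the rows of $A$. Since its vertices sit in $\tfrac1q\Z^2$, each supporting hyperplane has offset in $\tfrac1q\Z$. Thus $I+J = \{\x : A\x \le \b\}$ for some $\b \in \tfrac1q\Z^6$. By the observation recorded just after \eqref{eq:A-matrix}, any such set is a union of faces of $\P_q$, since each inequality corresponds to a hyperplane of the arrangement $\mathcal H_q$.

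\emph{Main obstacle.} The only delicate point is confirming that the edges of the Minkowski sum really stay within the three reference directions. This is a standard fact about planar Minkowski sums (edges of $I+J$ come from edges of $I$ or $J$), so the proof reduces to the short case check on face types of $\P_q$; no new machinery is needed.
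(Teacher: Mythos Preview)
The paper does not actually prove this lemma; it is quoted verbatim from \cite{bhk-IPCOext} (their Lemma~4.3) and stated here without proof, so there is no in-paper argument to compare against.

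Your outline is correct and gives a clean self-contained proof. Step~1 is routine. In Step~2 the two facts you rely on---that $\verts(I+J)\subseteq\verts(I)+\verts(J)$ and that every edge of a planar Minkowski sum is a translate of an edge of one summand---are standard and do the job. Step~3 then correctly reduces the claim to the sentence recorded just after~\eqref{eq:A-matrix}, namely that any set of the form $\{\x:A\x\le\b\}$ with $\b\in\tfrac1q\Z^6$ is a union of faces of $\P_q$. Two small edge cases deserve a word: (i) if $I=\emptyset$ or $J=\emptyset$ the sum is empty, which is trivially a face; (ii) when $I+J$ is not full-dimensional (e.g.\ two parallel edges, or one summand a point) you should note that its affine hull is a translate of $\spann\{\e^1\}$, $\spann\{\e^2\}$, or $\spann\{\e^1-\e^2\}$ through a point of $\tfrac1q\Z^2$, hence can be cut out by a pair of opposite rows of $A$ with right-hand sides in $\tfrac1q\Z$, so the description $\{\x:A\x\le\b\}$ with $\b\in\tfrac1q\Z^6$ is still available. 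With those remarks your plan is a complete argument.
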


Consider a pair of matrices $(A,B)$  with $A \in \R^{m \times p}$ and $B \in \R^{m' \times p}$.  For an integer $n$, the matrix ${A \choose B}^{(n)}$  is the  $n$-fold matrix is the $(n\cdot m + m') \times (n\cdot p)$ block structured matrix
$$
{A \choose B}^{(n)} := \begin{pmatrix}
    A & \\
    & A\\
    & &  \ddots & \\
    & &   & A\\
    B & B & \cdots & B\end{pmatrix}
$$

\begin{lemma}
\label{lem:polyhedra-integral}
Let $A$ be defined in~\eqref{eq:A-matrix} and 
consider the polyhedron 
$$
P = \{(\x^1, \dots,\x^n) :  Ax^i \leq \b^i \text{ for } i=1, \dots, n, \text{ and } \sum_{i=1}^p \x^i - \sum_{i={p+1}}^n \x^i = \bf{0} \}
$$
If $\b^i \in \Z^6$ for all $i=1\dots, n$, then $P$ is an integral polyhedron (as a consequence of its constraint matrix being totally unimodular).  Furthermore, the projections $p_i(P)$ onto the variables $\x^i$ are all integral polyhedra as well.
\end{lemma}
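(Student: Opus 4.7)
My plan is to establish total unimodularity of the constraint matrix $M$ of $P$, apply Hoffman--Kruskal to obtain integrality of $P$, and then deduce integrality of each coordinate projection $p_i(P)$ from the general fact that projections of integer polyhedra defined by TU systems are themselves integer.

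For total unimodularity, I would first observe that $A$ itself is TU: since it has only two columns, every square submatrix is at most $2 \times 2$, and every such minor lies in $\{-1, 0, +1\}$ by direct inspection. To handle the full matrix $M$ --- which consists of $n$ block-diagonal copies of $A$ stacked with the two linking equality rows encoding $\sum_{i \leq p} \x^i - \sum_{i > p} \x^i = \0$ --- I would reformulate the system by introducing auxiliary variables $y^i_3 := x^i_1 + x^i_2$ for each $i$. Then $A \x^i \leq \b^i$ decomposes into box constraints on $y^i_1, y^i_2, y^i_3$ together with the per-block equality $y^i_1 + y^i_2 - y^i_3 = 0$. In the reformulated equality system, each column (variable $y^i_j$) has at most two nonzero entries, each in $\{\pm 1\}$. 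After flipping the sign of the $i$-th per-block equality for $i \leq p$ and adjoining a redundant ``dummy'' equation that absorbs the $y^i_3$ variables (implied by summing the per-block equalities together with the linking ones, hence preserving the polyhedron), the matrix becomes the node--arc incidence matrix of a directed graph and is therefore TU. An alternative route is direct application of Ghouila--Houri's criterion, using the split of the per-block rows according to whether $i \leq p$ or $i > p$ to cancel the sign-dependent contributions of the linking rows.

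Once TU of $M$ is established, Hoffman--Kruskal yields integrality of $P$, since both $\b^i \in \Z^6$ and the linking right-hand sides are integer. For the projections $p_i(P)$, every vertex of $p_i(P)$ is the image under the coordinate projection of some vertex of $P$, because the preimage of any vertex of $p_i(P)$ is a non-empty face of $P$ and contains a vertex of $P$ mapping to it. Since all vertices of $P$ are integer, so are all vertices of $p_i(P)$. The recession cone of $p_i(P)$ is the image of $\mathrm{rec}(P)$, which is integer-generated again by TU with zero right-hand side. Hence $p_i(P)$ is an integral polyhedron.

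The main obstacle is the TU verification for $M$: the interaction between the $n$ block-diagonal $A$'s and the two linking equalities, whose signs depend on the block index $i$, does not fit the standard TU-preserving composition rules for stacking block-diagonal TU matrices under a TU linking matrix. The network-matrix reformulation via the $y^i_3$ auxiliary variables and the row sign-flip for $i \leq p$ resolves this cleanly, but requires careful bookkeeping of the third ``diagonal'' column of $A$ so that the resulting directed graph is well-defined and the adjoined dummy equation is verified to be redundant.
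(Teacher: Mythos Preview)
Your approach is correct but differs from the paper's in a meaningful way, and there is one small logical gap worth flagging.

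\textbf{What the paper does.} The paper never introduces auxiliary variables. Instead it reduces the constraint matrix $M$ of $P$ directly, using only the elementary operations that preserve total unimodularity (negating a row or column, deleting a repeated row or column, deleting a row or column that is a unit vector). Since the first four rows of each $A$-block are $\pm$ unit vectors and the last two rows are $\pm(1,1)$, each block collapses to the single row $\bar A = [\,1\ \ 1\,]$; after sign-normalizing the linking rows, $M$ reduces to the $n$-fold matrix ${\bar A \choose I_2}^{(n)}$. This is a $(0,1)$-matrix with exactly two ones per column, and the Heller--Tompkins equitable bicoloring (color the $\bar A$-rows one class and the $I$-rows the other) finishes the TU argument in one line. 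The projection claim is left implicit, relying on the fact that $P$ is bounded (the $\pm e_j$ rows of $A$ box each $\x^i$), so projections of integer polytopes are integer.

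\textbf{What you do differently, and the gap.} Your network-matrix route via the auxiliary variables $y^i_3 = x^i_1 + x^i_2$ is valid and the dummy-row calculation checks out: after the sign flips and the redundant row, every column has exactly one $+1$ and one $-1$. However, this establishes TU of the \emph{extended} constraint matrix $\tilde M$ in the $(y^i_1,y^i_2,y^i_3)$-space, not of $M$ itself. You then write ``Once TU of $M$ is established\ldots'', which does not follow without an extra step. Two clean fixes: (i)~observe that eliminating each $y^i_3$ via its defining equality is a pivot on a $\pm 1$ entry, and pivoting preserves TU, so $M$ is TU; or (ii)~bypass TU of $M$ entirely by noting that $P$ is the coordinate projection of the integer polytope $\tilde P$, and reuse your own projection argument. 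Either patch is short.

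\textbf{Comparison.} The paper's reduction is shorter and stays in the original variable space; once one recalls the list of TU-preserving operations, it is essentially a one-step simplification followed by a standard bicoloring citation. Your approach buys a concrete structural interpretation (the constraint system is, up to an extended formulation, a network-flow system), which is a nice insight but costs the extra bookkeeping of auxiliary variables, the dummy row, and the pivoting or second projection step. On the other hand, you actually justify the integrality of the projections $p_i(P)$, which the paper leaves unstated.
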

\begin{proof}
    Because totally unimodularity is preserved under the operations multiplying a row or column by $-1$, adding or removing a repeated row or column, and adding or removing a row or column that is standard unit vector, we only need to show that ${\bar{A} \choose I}^{(n)}$ is totally unimodular for $\bar{A} = \begin{bmatrix}
1 & 1
\end{bmatrix}$. Note that this is a $(0,1)$-matrix with at most two nonzeros per column.
    By \cite{heller1956extension} (also see, \cite[Corollary 4.8]{ccz-ipbook}), we know that it is totally unimodular because it has an equitable bicoloring, i.e., the rows corresponding to $\bar{A}$ is one color and the rows corresponding to $I$ is another color.
\end{proof}
The properties in \autoref{lemma:vertices} provide an easy method to compute $E(\pi, \P_q)$
by using simple arithmetic and set
membership operations on vertices of~$\P_q$.  

\begin{lemma}[Variation of {\cite[Lemma 4.4]{bhk-IPCOext}}. 
 See also {\cite[Lemma 7.3]{igp_survey, igp_survey_part_2}}]
 \label{lemma:vertices}
\label{lemma:faces}
  Let $I_1, I_2, I_3$ be faces in the complex~$\P_q$, $\ve \sigma \in \{-1,0,1\}^3$, and $\t \in \tfrac{1}{q}\Z^2$.   Let  $F = F\Tuple(I_1, I_2, I_3, \ve \sigma, \t)$.

\begin{enumerate}
    \item[(i)] The projections $p_1(F)$, $p_2(F)$, and $p_3(F)$ are faces in the complex $\P_q$.  Call these faces $I'_1, I'_2, I'_3$, respectively.  Then 
    $F = F\Tuple(I'_1, I'_2, I'_3, \ve \sigma, \t)$.
    \item[(ii)]  The vertices of $F$ are in $\tfrac{1}{q}\Z^2 \times \tfrac{1}{q}\Z^2 \times \tfrac{1}{q}\Z^2$.
\end{enumerate}
\end{lemma}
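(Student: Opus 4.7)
The plan is to prove part (i) via a projection/Minkowski-sum argument combined with convexity, and part (ii) by appealing to the total unimodularity packaged in \autoref{lem:polyhedra-integral}.

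For part (i), I would start from the explicit projection formulas of \autoref{lem:projections}, e.g.\ $p_1(F) = I_1 \cap [\sigma_1 \t - (\sigma_1\sigma_2 I_2 + \sigma_1\sigma_3 I_3)]$, and similarly for $p_2, p_3$. By \autoref{lemma:I+J-all-cases}, the Minkowski sum $\sigma_1\sigma_2 I_2 + \sigma_1\sigma_3 I_3$ (absorbing the signs into either a reflection $-I$ or a translation) is a union of faces of $\P_q$. Translating by the vector $\sigma_1 \t \in \tfrac{1}{q}\Z^2$ preserves this property because $\P_q$ is $\tfrac{1}{q}\Z^2$-periodic. Intersecting with the face $I_1$ yields a union of faces of $I_1$ in the complex $\P_q$. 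On the other hand, $p_1(F)$ is a convex set, being the linear image of the convex polyhedron $F$. The only convex subsets of a simplex $I_1$ that are unions of faces of $\P_q$ restricted to $I_1$ are single faces themselves, so $I'_1 := p_1(F)$ is a face of $\P_q$. The same argument applies to $p_2(F)$ and $p_3(F)$. To verify $F = F(I'_1, I'_2, I'_3, \ve\sigma, \t)$, the inclusion $F(I'_1, I'_2, I'_3, \ve\sigma, \t) \subseteq F$ follows from $I'_i \subseteq I_i$, while the reverse inclusion is automatic since every $(\x^1, \x^2, \x^3) \in F$ satisfies $\x^i \in p_i(F) = I'_i$ by definition of projection.

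For part (ii), I would reduce to the setting of \autoref{lem:polyhedra-integral} by a scaling and translation. Scale all coordinates by $q$, setting $\y^i = q\x^i$. Each face $I_i \in \P_q$ can be written as $\{\x : A\x \leq \b^i\}$ with $\b^i \in \tfrac{1}{q}\Z^6$; hence $q\b^i \in \Z^6$ and $A\y^i \leq q\b^i$ is an integer system. The linking equality $\sum_i \sigma_i \x^i = \t$ becomes $\sum_i \sigma_i \y^i = q\t \in \Z^2$. By splitting each $\sigma_i \in \{-1,1\}$ according to its sign (the $\sigma_i = 0$ case decouples the product into independent integral pieces), and translating one of the variables by the integer vector $q\t$ to eliminate the right-hand side, I land in the exact form of \autoref{lem:polyhedra-integral}. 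Total unimodularity of the constraint matrix then gives that the polyhedron in $\y$-space is integral, so its vertices lie in $\Z^2 \times \Z^2 \times \Z^2$. Translating back and rescaling by $\tfrac{1}{q}$ shows the vertices of $F$ lie in $\tfrac{1}{q}\Z^2 \times \tfrac{1}{q}\Z^2 \times \tfrac{1}{q}\Z^2$.

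The main obstacle is mostly bookkeeping: cleanly handling the sign pattern $\ve\sigma$ (and the possibility $\sigma_i = 0$, if one allows the more general form in the statement) when reducing to the standard format of \autoref{lem:polyhedra-integral}, and carefully justifying that ``convex union of faces inside a single face is a face,'' which is where the simplicial (triangulation) nature of $\P_q$ is used implicitly. Neither step involves any deep idea beyond the results already established, so the proof is primarily a matter of assembling the pieces in the correct order.
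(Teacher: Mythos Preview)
Your proposal is correct and follows essentially the same route as the paper: part (i) via the projection formulas of \autoref{lem:projections} combined with \autoref{lemma:I+J-all-cases} and the ``convex union of faces inside a face is a face'' observation, and part (ii) by reducing (after the $q$-scaling you describe) to \autoref{lem:polyhedra-integral}. The paper's write-up is terser---it omits the explicit scaling bookkeeping for (ii) and leaves the convexity step in (i) more implicit---but the argument is the same.
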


\begin{proof}
By definition of $F$, we know that $F = F\Tuple(I_1, I_2, I_3, \ve{\sigma}, \t)$ for some $I_1, I_2, I_3 \in \P_q$. From \autoref{lem:projections}, we have the following explicit formulas for the projections:
\[
\begin{aligned}
p_1(F) &= I_1 \cap \left[\sigma_1 \t - \left(\sigma_1 \sigma_2 I_2 + \sigma_1 \sigma_3 I_3\right)\right], \\
p_2(F) &= I_2 \cap \left[\sigma_2 \t - \left(\sigma_2 \sigma_1 I_1 + \sigma_2 \sigma_3 I_3\right)\right], \\
p_3(F) &= I_3 \cap \left[\sigma_3 \t - \left(\sigma_3 \sigma_2 I_2 + \sigma_3 \sigma_1 I_1\right)\right].
\end{aligned}
\]

To show that $p_1(F)$ is a face of $\P_q$, note that $I_1 \in \P_q$ by assumption. Through multiple applications of  \autoref{lemma:I+J-all-cases}, it follows that $\left[\sigma_1 \t - \left(\sigma_1 \sigma_2 I_2 + \sigma_1 \sigma_3 I_3\right)\right]$ is a union of faces of $\P_q$. Since $\P_q$ is a polyhedral complex (and hence closed under intersections), the intersection of $I_1$ with this union of faces is a convex union of faces that is a subset of $I_1$, and thus must be a face of $I_1$, and therefore  a face of $\P_q$.  Thus, $I'_1 := p_1(F)$ is a face of $\P_q$.  This follows similarly for $I'_2 = p_2(F)$ and $I'_3 = p_3(F)$.

Therefore, $F \subseteq I'_1 \times I'_2 \times I'_3 \subseteq I_1 \times I_2 \times I_3$.  It follows that $F = F\Tuple(I'_1, I'_2, I'_3, \ve \sigma, \t)$.

The intersection with the translated scaled sets $\sigma_1 \t - (\sigma_1 \sigma_2 I_2 + \sigma_1 \sigma_3 I_3)$ preserves the structure of $I_1$ because the translation and scaling operations do not introduce elements outside $\P_q$. Hence, $p_1(F)$ is a face of $\P_q$. The same argument applies to $p_2(F)$ and $p_3(F)$.

Part (ii) follows from \autoref{lem:polyhedra-integral}.
\end{proof}

\section{Interval Lemma - Applications and Hidden Functional Equation}
\label{sec:interval-lemma-applications-new-results}
Now that we understand the faces that describe possibile additivies and generalized descriptions of these additivities, we can apply \autoref{lem:projection_interval_lemma-corollary} to understand its implications.

Applying to the valid 7-tuple, we have the following corollary.
\begin{corollary}\label{cor:two_triangle_one_edge}
If $\bpi$ is continuous and  additive over the valid tuple $\Tuple(\tri I, \hor J, \tri K, \ve \sigma, \0)$, then $\bpi$ is affine over $I,J,K$ in the $\hor$-direction.
\end{corollary}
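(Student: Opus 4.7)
The plan is to reduce the three-variable additivity $\sigma_1 \bpi(\x^1) + \sigma_2 \bpi(\x^2) + \sigma_3 \bpi(\x^3) = 0$ on $F(\tri I, \hor J, \tri K, \ve\sigma, \0)$ to the two-variable Cauchy form required by \autoref{lem:projection_interval_lemma-corollary}. Substituting $\u = \sigma_1 \x^1$, $\v = \sigma_2 \x^2$ forces $\x^3 = -\sigma_3(\u+\v)$, and defining $\tilde f(\u) = \sigma_1 \bpi(\sigma_1 \u)$, $\tilde g(\v) = \sigma_2 \bpi(\sigma_2 \v)$, $\tilde h(\w) = -\sigma_3 \bpi(-\sigma_3 \w)$ converts the additivity to $\tilde f(\u) + \tilde g(\v) = \tilde h(\u+\v)$ on the convex set $F' = \{(\u,\v) : \u \in \sigma_1 I,\ \v \in \sigma_2 J,\ \u+\v \in -\sigma_3 K\}$. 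Validity of the 7-tuple identifies $p_1(F'), p_2(F'), p_3(F')$ with $\sigma_1 I, \sigma_2 J, -\sigma_3 K$, and $\tilde f, \tilde g, \tilde h$ inherit continuity from $\bpi$.

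Next I apply \autoref{lem:projection_interval_lemma-corollary} with $L = \spann(\e^1)$. Since $\sigma_2 J$ is a horizontal edge, $\aff(\sigma_2 J)$ is a horizontal line and therefore $\aff(F') = \R^2 \times \aff(\sigma_2 J)$ (the only affine-hull constraint beyond being in $\R^2\times\R^2$ comes from the one-dimensional second projection). Horizontal translations preserve this, so the subspace condition $(L \times L) + F' \subseteq \aff(F')$ holds. For any base point $(\u^0, \v^0) \in \relint(F')$, the corollary yields a gradient $\cve \in \R^2$ such that $\tilde f, \tilde g, \tilde h$ are affine with gradient $\cve$ on the horizontal slices $(\u^0 + L) \cap \sigma_1 I$, $(\v^0 + L) \cap \sigma_2 J = \sigma_2 J$, and $(\u^0 + \v^0 + L) \cap (-\sigma_3 K)$. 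Undoing the substitution translates this into affineness of $\bpi$ in the $\hor$-direction on the corresponding slices of $I$, $J$, $K$ with horizontal slope $c = \cve \cdot \e^1$.

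To conclude, I would vary $(\u^0, \v^0)$ throughout $\relint(F')$. Because $\sigma_2 J$ itself is the common anchor slice in every invocation, the slope $c$ is pinned down by $\tilde g$ on $\sigma_2 J$ and is therefore independent of the base point. The standard convex-geometric identity $p_i(\relint(F')) = \relint(p_i(F'))$ ensures that the resulting horizontal slices sweep out the relative interiors of $I$ and $K$, so by continuity of $\bpi$ the uniform-slope affineness extends to the closed triangles $I$ and $K$; the edge $J$ is handled by a single application since it is itself a horizontal slice.

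The principal obstacle I expect is the bookkeeping for the slice-coverage argument: verifying that every horizontal line meeting $\relint(I)$ (and $\relint(K)$) is realized as $(\u^0 + L) \cap \sigma_1 I$ for some admissible base point in $\relint(F')$, and that the slope remains constant across choices. Both points follow from the fact that $\sigma_2 J$ appears as the full common anchor in every application, together with standard projection facts for convex sets, so the detailed verification is routine once the substitution and the subspace condition above are in place.
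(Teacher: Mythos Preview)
Your proposal is correct and follows essentially the same approach as the paper: both reduce to the two-variable Cauchy form on $F'=\{(\u,\v):\u\in\sigma_1 I,\ \v\in\sigma_2 J,\ \u+\v\in-\sigma_3 K\}$ and apply \autoref{lem:projection_interval_lemma-corollary} with $L=\spann(\e^1)$. The paper's proof is a one-liner that omits the verification of the subspace condition and the slice-coverage/constant-slope argument you spell out; your added detail is sound and fills gaps the paper leaves implicit.
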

\begin{proof}
Apply \autoref{lem:projection_interval_lemma-corollary} on $F=\{(\u,\v)\st \u\in \sigma_1 I, \v\in \sigma_2 J, \u+\v\in -\sigma_3 K\}$ and $L=\{(x,0)\st x\in\R\}$ corresponding to the edge type of $J$. Because the tuple is valid, we have $\p_1(F) = \sigma_1 I, \p_2(F)=\sigma_2 J, \p_3(F) = -\sigma_3 K$.
\end{proof}

\subsection{Hidden Cauchy functional equation through multiple relations}
\label{subsec:hidden-interval-lemma}
In prior work,  it sufficed to consider additives in tuples individually when applying functional equation results.  Here, however, we need to see how  multiple additivities can interact.   
We combine relations to find a hidden Cauchy Functional Equation on Additive Domains.  In particular, when relations have overlapping projections.  See for instance Figure~\ref{fig:overlapping_K} where the projection $\p_3$ overlaps for two additivities.
\begin{figure}[h]
\begin{minipage}{0.4\textwidth}
\begin{center}
\begin{tikzpicture}

\draw [pink, ultra thick] (2,0) -- (3,0) node[midway, below] {$I_1$};
\draw [pink, ultra thick] (0,3) -- (0,4) node[midway, left] {$J_1$};

\draw [blue, ultra thick] (0,1) -- (1,1) node[midway, below] {$I_2$};
\draw [blue, ultra thick] (2,2) -- (1,3) node[midway, left] {$J_2$};

\node at (2.25,3.5) {$K$};

\draw (2,3) -- (3,3) node[midway, below] {$K_1$};
\draw (2,3) -- (2,4) node[midway, left] {$K_2$};

\draw [blue, dashed, fill=blue, fill opacity=0.5] (1,4) -- (2,4) -- (3,3) -- (2,3) -- cycle;

\draw [pink, dashed, fill=pink, fill opacity=0.5] (2,3) -- (3,3) -- (3,4) -- (2,4) -- cycle;

\draw[->] (-0.5,0) -- (4.5,0) node[right] {$x$};
\draw[->] (0,-0.5) -- (0,4.5) node[above] {$y$};

\foreach \x in {1,2,3,4}
    \draw (\x cm,1pt) -- (\x cm,-1pt) node[anchor=north] {$\x$};
\foreach \y in {1,2,3,4}
    \draw (1pt,\y cm) -- (-1pt,\y cm) node[anchor=east] {$\y$};
\def \dotsize {2pt}
\draw[fill=pink] (2,0) circle (\dotsize);
\node at (2,0) [above] {$\u^1$};
\draw[fill=pink] (0,3) circle (\dotsize);
\node at (0,3) [right] {$\u^2$};
\draw[fill=blue] (2,2) circle (\dotsize);
\node at (2,2) [below right] {$\tilde
\u^2$};
\draw[fill=blue] (0,1) circle (\dotsize);
\node at (0,1) [above right] {$\tilde\u^1$};
\end{tikzpicture}
\end{center}
\end{minipage}
\begin{minipage}{0.55\textwidth}
\caption{
The plot features sets $I_1$, $J_1$ as pink line segments on x-axis (2,0)--(3,0) and y-axis (0,3)--(0,4), respectively. $I_2$ and $J_2$ are blue line segments from (0,1)--(1,1) and (2,2)--(1,3) respectively. The pink and blue polygons represent the Minkowski sums $I_1 + J_1$ and $I_2 + J_2$. Lastly, $K_1$ and $K_2$ denote sides of set $K$, outlined by the intersections of the pink polygon and the blue polygon.
}
\label{fig:overlapping_K}
\end{minipage}
\end{figure}

\begin{lemma}[Hidden Interval Lemma]
\label{lem:hidden_interval_lemma}
Let $f,g,h \colon \R^2 \to \R$ be continuous functions. Let $F, \tilde F \subseteq \R^2 \times \R^2$ convex
such that $f(\u) + g(\v) = h(\u+\v)$ for all $(\u, \v) \in F \cup \tilde F$. 

Suppose that $\p_3(F) = \p_3(\tilde F)$ and is full dimensional in $\R^2$.  Furthermore, suppose that 
$$
\p_i(F) = \{  \u^{i} + \lambda_i \d^{i} : \lambda_i \in [0,1]\}, 
\ \ \text{ and } 
\p_i(\tilde F) = \{ \tilde \u^{i} + \tilde \lambda_i \tilde \d^{i} : \tilde \lambda_i \in [0,1]\} \ \ \text{ for } i=1,2
$$
 such that $\d^{1} = \tilde \d^{1}$.  
 That is, the projections $\p_1$ for both convex sets $F, \tilde F$ are simply translates of each other.
 
 Furthermore, assume that the pairs $(\d^1, \d^2)$ , $(\d^1, \tilde \d^2)$, and $(\d^2, \tilde \d^2)$
 are all linearly independent. 
 
Then  there exists a vector $\cve\in \R^2$ such that $f, g$ and $h$ are affine with gradient  $\cve$ with respect to the linear space defined by $\d^{1}$ over $\p_i(F)$ and $\p_i(\tilde F)$ for $i=1,2,3$.
\end{lemma}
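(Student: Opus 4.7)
The strategy is to apply the classical Interval Lemma machinery separately to $F$ and to $\tilde F$, extract from each a gradient vector for $h$ on $\p_3(F)$ and on $\p_3(\tilde F)$, and then use the shared full-dimensional projection $\p_3(F) = \p_3(\tilde F)$ to force these two gradients to agree; the hypothesis $\tilde\d^1 = \d^1$ will then propagate the common gradient back to $\p_1(F) \cup \p_1(\tilde F)$. The ``hidden'' content of the lemma is precisely this rigidity argument on the shared 2-dimensional projection.

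First I would parametrize $F$ as $\{(\u^1+s\d^1,\u^2+t\d^2) : (s,t)\in S\}$ for a 2-dimensional convex $S \subseteq \R^2$, which is possible because $\p_1(F), \p_2(F)$ are 1-dimensional segments in the linearly independent directions $\d^1,\d^2$. For $\v_0,\v_1\in\relint(\p_2(F))$ close to each other, the slices $\{\u:(\u,\v_j)\in F\}$ are overlapping segments in direction $\d^1$, and subtracting the two additivity equations on the overlap shows that
\[
h\bigl(w+(\v_1-\v_0)\bigr) - h(w) \;=\; g(\v_1) - g(\v_0)
\]
is independent of $w$ for $w$ in the common region in $\p_3(F)$. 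A Cauchy-type argument on the 1-dimensional space $\spann(\d^2)$, combined with the continuity of $g$ and $h$, then yields that $h$ is affine in direction $\d^2$ on $\p_3(F)$ with some slope $c_2^F$ and $g$ is affine in direction $\d^2$ on $\p_2(F)$ with the same slope. Slicing orthogonally produces $h$ affine in direction $\d^1$ on $\p_3(F)$ with slope $c_1^F$ and $f$ affine in direction $\d^1$ on $\p_1(F)$ with the same slope. Since $(\d^1,\d^2)$ are linearly independent, the pair $(c_1^F, c_2^F)$ uniquely determines a gradient $\cve^F \in \R^2$ with $\cve^F\cdot\d^i = c_i^F$, and combining the axial affineness statements via convexity and continuity on the 2-dimensional $\p_3(F)$ gives that $h$ is affine on $\p_3(F)$ with gradient $\cve^F$.

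Applying the identical argument to $\tilde F$ (using the linear independence of $(\d^1,\tilde\d^2)$) produces a gradient $\cve^{\tilde F}$ with analogous properties on $\p_1(\tilde F), \p_2(\tilde F), \p_3(\tilde F)$. Since $h$ is a single continuous function that is affine on the common 2-dimensional convex set $\p_3(F)=\p_3(\tilde F)$ with both of these gradients, uniqueness of the gradient of an affine function on a 2-dimensional domain forces $\cve^F = \cve^{\tilde F} =: \cve$. The conclusion for $f$ on $\p_1(F)\cup\p_1(\tilde F)$ with respect to $\spann(\d^1)$ then follows from $\tilde\d^1=\d^1$ and the two 1D affineness statements; the conclusion for $h$ on $\p_3(F)=\p_3(\tilde F)$ is immediate; and the conclusion for $g$ on $\p_2(F)\cup\p_2(\tilde F)$ with respect to $\spann(\d^1)$ is vacuous, because the linear independences of $(\d^1,\d^2)$ and of $(\d^1,\tilde\d^2)$ imply that no two distinct points of these sets differ by an element of $\spann(\d^1)$.

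\textbf{Main obstacle.} The main technical difficulty is establishing the 2-dimensional affineness of $h$ on $\p_3(F)$ in the first step. Since $F$ has dimension $2$ in $\R^4$, the hypothesis $(L\times L)+F\subseteq\aff(F)$ of \autoref{lem:projection_interval_lemma-corollary} is satisfied only by $L=\{0\}$, so that corollary cannot be invoked with a nontrivial $L$. The slicing-and-Cauchy argument recovers this 2D affineness by hand, but it requires care to stitch together the 1D conclusions in the $\d^1$ and $\d^2$ directions across the whole of $\p_3(F)$ using convexity and continuity. Once this is in place, the gradient-matching via $\p_3(F)=\p_3(\tilde F)$ and the propagation of $\cve$ back to $\p_1(F)$ and $\p_1(\tilde F)$ via $\tilde\d^1=\d^1$ are routine linear algebra.
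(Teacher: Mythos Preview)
Your approach has a genuine gap at the very first step, where you claim that from the additivity on $F$ alone one can deduce that $h$ is affine in direction $\d^2$ on $\p_3(F)$. The subtraction you perform gives
\[
h\bigl(w+(\v_1-\v_0)\bigr)-h(w)=g(\v_1)-g(\v_0)
\]
only for $w=\u+\v_0$ with $\u$ varying along $\p_1(F)$, i.e.\ $w$ moves only in the $\d^1$ direction while the $\d^2$-level is fixed at $\v_0$. The difference is therefore independent of the $\d^1$-coordinate of $w$ but still depends on $\v_0$; this is merely the separability $h(\u+\v)=f(\u)+g(\v)$ rewritten, and no Cauchy relation in the $\d^2$ variable follows. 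Concretely, with $\d^1=\e^1$, $\d^2=\e^2$, $F=\{(s\e^1,t\e^2):s,t\in[0,1]\}$ and $f(s,0)=s^2$, $g(0,t)=t^2$, $h(s,t)=s^2+t^2$, the additivity holds on $F$ yet none of $f,g,h$ are affine. So your plan to obtain gradients $\cve^F$ and $\cve^{\tilde F}$ separately and then match them on $\p_3(F)=\p_3(\tilde F)$ cannot get started.

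The paper's argument is essentially different: it never extracts any affineness from $F$ or $\tilde F$ individually. Instead it parametrizes the common set $\p_3(F)=\p_3(\tilde F)$ simultaneously by the $(\lambda_1,\lambda_2)$-coordinates coming from $F$ and the $(\tilde\lambda_1,\tilde\lambda_2)$-coordinates coming from $\tilde F$, equates the two resulting expressions for $h$, and rearranges into a single functional equation of the form $\hat f(\lambda_1)+\hat g(\lambda_2)=\hat h(\lambda_1+\bar d_1\lambda_2)$ in scalar variables. The linear independence of $(\d^2,\tilde\d^2)$ is what makes $\bar d_1\neq 0$, so this is a genuine Cauchy-on-a-triangle equation to which \autoref{lem:projection_interval_lemma_fulldim} applies. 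The ``hidden'' interval lemma is hidden precisely because the Cauchy structure appears only after combining the two relations; neither one carries it by itself.
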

\begin{proof}
Without loss of generality, we will assume that $\d^1 = \tilde \d^1 = \e^1$.
Let 
$
D = \begin{bmatrix} \d^1 & \d^2\end{bmatrix}, \text{ and } \tilde D = \begin{bmatrix} \tilde \d^1 & \tilde \d^2 \end{bmatrix} =  \begin{bmatrix}  \d^1 & \tilde \d^2 \end{bmatrix}.
$
Since $\p_i(F), \p_i(\tilde F)$ are line segments for $i=1,2$, any point $\u \in \p_3(F) = \p_3(\tilde F)$ can be written uniquely as 
$$
\u = \u^1 + \u^2 + D \bm{\lambda}\ \ \  \text{ or equivalently } \ \ \ \u = \tilde \u^1 + \tilde \u^2 + \tilde D \tilde{\bm{\lambda}},
$$
where $\bm{\lambda}, \tilde{\bm{\lambda}} \in [0,1]^2$. 
Setting these relations equal, we can derive an expression for $\tilde{\bm \lambda}$ in terms of $\bm \lambda$ as
\begin{align*}
\tilde{\bm \lambda} &= \tilde D^{-1}(\u^1 + \u^2 - \tilde \u^1 - \tilde \u^2) + \tilde D^{-1} D \bm \lambda
= \bar\u + \begin{bmatrix} \e^1 & \bar{\d} \end{bmatrix}\bm \lambda
\end{align*}
for appropriate choices of $\bar{\u}$ and $\bar{\d}$.

So 
$\tilde{\lambda}_1 = \bar u_1 + \lambda_1  + \bar d_1\lambda_2$ and $
\tilde{\lambda}_2 = \bar u_2 + \bar d_2 \lambda_2.
$
Next, by substitution,
\begin{align*}
f( \u^1 +  \lambda_1 \d^1 ) + g(\u^2  + \lambda_2 \d^2)
&=h( \u^1 + \u^2 + D \bm{\lambda})\\
& = h( \tilde{\u}^1 + \tilde \u^2 + \tilde D \tilde{\bm \lambda})\\
&=f(\tilde  \u^1 +  \tilde \lambda_1 \tilde\d^1 ) + g(\tilde \u^2  + \tilde\lambda_2 \tilde\d^2)\\
& = f(\tilde  \u^1 +  \left(\bar u_1 + \lambda_1  + \bar d_1\lambda_2\right) \tilde\d^1 ) + g(\tilde \u^2  + \left(\bar u_2 + \bar d_2 \lambda_2 \right) \tilde\d^2)
\end{align*}
Rewriting, we have
\begin{equation*}
\underbrace{f( \u^1 +  \lambda_1 \d^1 ) }_{\hat f(\lambda_1)} + 
\underbrace{g(\u^2  + \lambda_2 \d^2)- g(\tilde \u^2  + \left(\bar u_2 + \bar d_2 \lambda_2 \right) \tilde\d^2)}_{\hat g(\lambda_2)}  = 
\underbrace{f(\tilde  \u^1 +  \left(\bar u_1 + \lambda_1  + \bar d_1\lambda_2\right) \tilde\d^1 )}_{\hat h(\lambda_1 + \bar d_1 \lambda_2)} .
\end{equation*}

Moreover, this holds for all $\{(\lambda_1, \lambda_2) \in [0,1]^2 : \lambda_1 + \lambda_2 \leq 1\}$.
By applying \autoref{lem:projection_interval_lemma_fulldim} to $\hat f, \hat g, \hat h$, we know that 
\begin{align*}
    f(\u^1 +  \lambda_1 \d^1) &= c\lambda_1\\
    g(\u^2  + \lambda_2 (\bar{d}_1 \d^1+\tilde{\d}^2))- g(\tilde \u^2  + \tilde\lambda_2 \tilde\d^2) &= g(\u^2  + \lambda_2 \d^2)- g(\tilde \u^2  + \left(\bar u_2 + \bar d_2 \lambda_2 \right) \tilde\d^2) = c\bar d_1\lambda_2\\
    f(\tilde  \u^1 +  \tilde \lambda_1 \tilde\d^1 )&=f(\tilde  \u^1 +  \left(\bar u_1 + \lambda_1  + \bar d_1\lambda_2\right) \tilde\d^1 ) = c(\lambda_1 + \bar d_1\lambda_2).
\end{align*}
Thus, $f$ is affine with gradient $\cve := c\, \d^1$ over $\p_1(F)$ and $\p_1(\bar{F})$, and $g$ is affine with gradient $\cve := c\, \d^1$ with respect to the linear space $\mathrm{span}(\d^1)$ over $\p_2(F)$ and $\p_2(\bar{F})$. By additivity, we know that $h$ is affine with gradient $\cve := c\, \d^1$ with respect to the linear space $\mathrm{span}(\d^1)$ over $\p_3(F)=\p_3(\bar{F})$.
\end{proof}

\begin{corollary}[Interval Lemma on Edges]
\label{lemILedges}
Pick distinct $\EquiParam{1}, \EquiParam{2}, \EquiParam{3} \in \{ \ver, \hor, \diag\}$.
Suppose $\bpi$ is additive over $\tau^1 = 
(I_1,J_1,\tri K, \ve \sigma^1, \t^1)$ and $\tau^2 = (I_2, J_2, \tri K, \ve \sigma^2, \t^2)$  with $K \in \Itri$, $I_1, I_2 \in \P_{q, \EquiParam{1}}$, $J_1 \in \P_{q,\EquiParam{2}}$ and $J_2 \in \P_{q, \EquiParam{3}}$.  Then $\pi$ is affine in the $\EquiParam{1}$ direction in the faces $I_1, I_2$ and $K$.
\end{corollary}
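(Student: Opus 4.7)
The proof is an application of the Hidden Interval Lemma (\autoref{lem:hidden_interval_lemma}) to the two additivity relations given by $\tau^1$ and $\tau^2$. The feature making the lemma applicable is that both tuples share the triangle $K$ in their third slot: by validity $p_3(F(\tau^i)) = K$, so both additivities ``see'' $K$ full-dimensionally, matching the common full-dimensional third projection demanded by the Hidden Interval Lemma.

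The first step is to rewrite each additivity $\sum_{j=1}^3 \sigma^i_j \bpi(\x^j) = 0$, subject to $\sum_{j=1}^3 \sigma^i_j \x^j = \t^i$, in the Cauchy form $f_i(\u) + g_i(\v) = h_i(\u+\v)$. Setting $\u = \sigma^i_1 \x^1$, $\v = \sigma^i_2 \x^2$, so that $\u+\v = \t^i - \sigma^i_3\x^3$, take $f_i(\u) = \sigma^i_1\bpi(\sigma^i_1\u)$, $g_i(\v) = \sigma^i_2\bpi(\sigma^i_2\v)$, and $h_i(\w) = -\sigma^i_3\bpi(\sigma^i_3(\t^i-\w))$. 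The image $F^{\mathrm{IL}}_i$ of $F(\tau^i)$ under $(\x^1,\x^2,\x^3)\mapsto(\u,\v)$ is then a convex subset of $\R^2\times\R^2$ with $\p_1(F^{\mathrm{IL}}_i) = \sigma^i_1 I_i$ (a segment of edge type $\EquiParam{1}$), $\p_2(F^{\mathrm{IL}}_i) = \sigma^i_2 J_i$ (a segment of edge type $\EquiParam{2}$ for $i=1$, resp.\ $\EquiParam{3}$ for $i=2$), and $\p_3(F^{\mathrm{IL}}_i) = \t^i - \sigma^i_3 K$ (a full-dimensional triangle congruent to $K$). Composing with the affine map $\w' = \sigma^i_3(\t^i - \w)$ on the third coordinate normalizes both third projections simultaneously to $K$ itself, yielding a common ``$h$'' equal (up to a global sign) to $\bpi$ on $K$; the accompanying sign flips on $\u,\v$ preserve the edge-type directions up to sign.

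Next, verify the direction hypotheses of \autoref{lem:hidden_interval_lemma}: the two first-projection directions coincide up to sign (both edge type $\EquiParam{1}$), while the two second-projection directions are of edge types $\EquiParam{2}$ and $\EquiParam{3}$. Since $\EquiParam{1},\EquiParam{2},\EquiParam{3}$ are three distinct elements of $\{\ver,\hor,\diag\}$, any two of the corresponding direction vectors are linearly independent, matching the lemma's linear-independence hypothesis. Applying the Hidden Interval Lemma yields a gradient $\cve\in\R^2$ such that $f_1, f_2$, and $h$ are affine in the $\EquiParam{1}$ direction over $\sigma^i_1 I_i$ and $K$ respectively; unwinding $f_i(\u) = \sigma^i_1\bpi(\sigma^i_1\u)$ and the normalization of $h$ to $\pm\bpi|_K$ gives that $\bpi$ itself is affine in the $\EquiParam{1}$ direction on each of $I_1, I_2$, and $K$ (we read the conclusion of the corollary as being about $\bpi$, the function whose additivity is hypothesized). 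The main obstacle is purely bookkeeping: the two tuples come with potentially different sign patterns $\ve\sigma^i$ and translations $\t^i$, so fitting them into the \emph{single} joint form $f(\u)+g(\v)=h(\u+\v)$ that \autoref{lem:hidden_interval_lemma} demands takes care. The change of variables above handles this; one can equivalently re-run the proof of \autoref{lem:hidden_interval_lemma} in situ while carrying the signs along, using the explicit parametrization from \autoref{lem:F-formulas} to make the edge/triangle geometry transparent.
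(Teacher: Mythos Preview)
Your proposal is correct and matches the paper's approach: the corollary is stated immediately after \autoref{lem:hidden_interval_lemma} with no separate proof, so the intended argument is precisely the direct application of the Hidden Interval Lemma that you describe. Your identification of the key structural features---the common full-dimensional third projection $K$, the shared edge type $\EquiParam{1}$ in the first slots giving a common direction $\d^1$, and the pairwise linear independence of the three edge directions---is exactly what makes the lemma fire, and your acknowledgment that the different sign vectors $\ve\sigma^i$ and translations $\t^i$ require either a careful change of variables or re-running the proof of \autoref{lem:hidden_interval_lemma} in place (carrying the signs through the parametrizations of \autoref{lem:F-formulas}) is the right way to handle the only real wrinkle.
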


\newcommand{\F}{\mathcal F}
\newcommand{\fvec}{f}
\newcommand{\ff}{f}

\section{Underlying theory of perturbations with finite reflection groups}
\label{sec:underlying-theory}
We prove a fundamental lemma underlying some of the concepts for proofs in our papers on equivariant perturbations for the infinite group problem.  We do this without defining anything about the infinite group problem and instead, just show how certain function spaces cut by linear equations decompose to an infinite set of finite dimensional linear systems.

 We say that the domain $D\subseteq [0,1]$ is \emph{symmetric} if $D = 1 - D:= \{1 - x : x \in D\}$.
Let $K$ be a finite set and let $(a_k, b_k) \in \R$ for all $k \in K$.  Let $I_k, J_k \subseteq [\ell]:=\{1,2,\dots,\ell\}$ for $k \in K$, where $\ell$ is a positive integer.
    Consider the function space defined by linear equations parametrized by the symmetric domain \(D\):
    \begin{equation}
        \label{eq:linear-equations}
        \F (D) := \left\{ \fvec \colon D\times [\ell] \to \R : \sum_{i \in I_k} a_i \ff(x,i) + \sum_{j \in J_k} b_j \ff(1-x,j) = 0
        \quad \forall \; x \in D, \; k \in K\right\}.
    \end{equation}

Define 
\begin{equation}
\label{eq:delta-general}
\Delta_k \fvec(x) := \sum_{i \in I_k} a_i {\ff}(x,i) + \sum_{j \in J_k} b_j {\ff}(1 - x,j).
\end{equation}

\begin{definition}
    The \textit{orbit} of a point \(x \in [0, 1]\), denoted by \( \mathcal{O}(x) \), is defined as:
    \[
    \mathcal{O}(x) = \{(x, i), (1 - x, i) : i \in [\ell]\}.
    \]
    The orbit includes both the point \( x \) and its reflection \( 1 - x \), and all the indices in $[\ell]$. Note that the orbit of \( x=1/2 \) is \( \mathcal{O}(1/2) = \{(1/2, i) : i \in [\ell]\} \).
\end{definition}

\begin{lemma}
    \label{lem:orbits}
    The orbits \( \mathcal{O}(x) \) for points \( x \in [0, 1] \) have the following properties:
    \begin{enumerate}
        \item For any \( x \in [0, 1] \), we have \( \mathcal{O}(x) = \mathcal{O}(1 - x) \), meaning the system behaves symmetrically with respect to \(x\) and \(1 - x\).
        \item The orbits of distinct points \( x \) and \( x' \) in the interval \( [0, 1/2] \) are disjoint:
        \[
        \mathcal{O}(x) \cap \mathcal{O}(x') = \emptyset \quad \text{for} \; x \neq x', \; x, x' \in [0, 1/2].
        \]
    
        \item The set of all points \( (x, i) \) for \( x \in [0, 1] \) and \( i = 1, \dots, \ell \) can be expressed as the union of orbits for \( x \in [0, 1/2] \):
        \[
        \{(x, i) :  x \in [0, 1], i \in [\ell]\} = \bigsqcup_{x \in [0, 1/2]} \mathcal{O}(x).
        \]
    \end{enumerate}
\end{lemma}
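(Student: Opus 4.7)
The statement is essentially a formal bookkeeping lemma about the reflection $x \mapsto 1-x$ on $[0,1]$, so the plan is short and direct. All three parts follow from unpacking the definition $\mathcal{O}(x) = \{(x,i), (1-x,i) : i \in [\ell]\}$ together with the involutive nature of the map $x\mapsto 1-x$ on $[0,1]$.

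\textbf{Part (1).} First I would substitute $1-x$ in place of $x$ in the definition and observe that $\{(1-x,i),(1-(1-x),i) : i\in[\ell]\} = \{(1-x,i),(x,i):i\in[\ell]\}$, which is literally $\mathcal{O}(x)$. This is a one-line computation.

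\textbf{Part (2).} I would argue by contradiction: suppose $x,x'\in[0,1/2]$ with $x\neq x'$ and $(y,j)\in\mathcal{O}(x)\cap\mathcal{O}(x')$. Then $y\in\{x,1-x\}$ and $y\in\{x',1-x'\}$, giving four cases. The cases $x=x'$ and $1-x=1-x'$ contradict $x\neq x'$. The two remaining cases both give $x+x'=1$; since $x,x'\le 1/2$, the only way to satisfy this is $x=x'=1/2$, again a contradiction. Hence the intersection is empty.

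\textbf{Part (3).} For every $(x,i)$ with $x\in[0,1]$, let $x^* := \min(x,1-x)\in[0,1/2]$. By construction $(x,i)\in\mathcal{O}(x^*)$, so the union of orbits over $x^*\in[0,1/2]$ covers the whole set. Disjointness of this union is exactly what Part (2) provides (and the edge case $x=1/2$ is already handled since $\mathcal{O}(1/2)$ appears only once). Thus the union is a disjoint one.

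\textbf{Main obstacle.} There is no real obstacle here; the only mildly delicate point is the boundary case $x = 1/2$, where $x$ and $1-x$ coincide and $\mathcal{O}(1/2)$ has half the expected cardinality. I would make sure to call this out explicitly when showing Part (2) so that the reader sees why the case $x=x'=1/2$ is ruled out by the hypothesis $x\neq x'$ rather than by a genuine disjointness argument.
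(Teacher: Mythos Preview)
Your proposal is correct and follows essentially the same approach as the paper's own proof: all three parts are handled by directly unpacking the definition of $\mathcal{O}(x)$ and using the involution $x\mapsto 1-x$, with the same special attention to the boundary case $x=1/2$. Your case analysis in Part~(2) and the use of $x^*=\min(x,1-x)$ in Part~(3) are cosmetically different but logically identical to what the paper does.
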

The proof is somewhat trivial, so we provide it only in the appendix.

We next prove our main result of the section.  For a domain $D \subseteq [0,1]$ function $\fvec$ with $\fvec \colon D\times [\ell] \to \R$, we say that $\fvec$ is nontrivial if $\fvec \not\equiv 0$.  We say that $f$ is piecewise linear if each function $f(\cdot, i)$ is piecewise linear for $i \in [\ell]$.
\begin{lemma}[Nontrivial Infinite System  $\iff$ Nontrivial Finite System]
\label{lem:infinte-system-to-finite-system-edges}

    For any integer $m>2$, the following are equivalent:
    \begin{enumerate}
        \item There exists a nontrivial solution for $\F([0,1])$.
        \item There exists a nontrivial  continuous piecewise linear solution for $\F([0,1])$ with $f(0,i) = f(1,i) = 0$ for all $i=1, \dots, \ell$.
        \item There exists a nontrivial solution for $\F(\{\tfrac{1}{m}, \tfrac{m-1}{m}\})$.
    \end{enumerate}
\end{lemma}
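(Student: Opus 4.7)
The plan exploits the orbital decomposition of the system~\eqref{eq:linear-equations}. By \autoref{lem:orbits}, for each $x \in (0, 1/2)$ the equations at $x$ and those at $1-x$ together involve only the $2\ell$ orbit values $\{f(x,i), f(1-x,i) : i \in [\ell]\}$, and the matrix of this finite orbit-level system is \emph{independent of $x$}. Since $m > 2$, the two points $1/m$ and $(m-1)/m$ form exactly one such non-degenerate orbit. The implication $(2)\Rightarrow(1)$ is immediate, so only $(1)\Rightarrow(3)$ and $(3)\Rightarrow(2)$ require work.

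For $(1)\Rightarrow(3)$, I take any nontrivial $f \in \mathcal F([0,1])$ and choose $x_0 \in [0,1]$, $i_0 \in [\ell]$ with $f(x_0, i_0) \neq 0$. Define $g \colon \{1/m, (m-1)/m\} \times [\ell] \to \R$ by transporting values across orbits: $g(1/m, i) := f(x_0, i)$, $g((m-1)/m, i) := f(1 - x_0, i)$. Then $\Delta_k g(1/m) = \Delta_k f(x_0) = 0$ and $\Delta_k g((m-1)/m) = \Delta_k f(1 - x_0) = 0$, both holding because $f \in \mathcal F([0,1])$. Nontriviality carries over since $g(1/m, i_0) = f(x_0, i_0) \neq 0$.

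For $(3)\Rightarrow(2)$, I start with nontrivial $g \in \mathcal F(\{1/m, (m-1)/m\})$ and extend it to a continuous piecewise linear $f \colon [0,1] \times [\ell] \to \R$ by interpolating linearly on the three intervals $[0, 1/m]$, $[1/m, (m-1)/m]$, $[(m-1)/m, 1]$ (all nondegenerate since $m > 2$) using nodes $f(0,i) = f(1,i) = 0$, $f(1/m, i) = g(1/m, i)$, and $f((m-1)/m, i) = g((m-1)/m, i)$. On $[0, 1/m]$ one computes $f(x, i) = m x \cdot g(1/m, i)$ and $f(1-x, j) = m x \cdot g((m-1)/m, j)$, whence $\Delta_k f(x) = m x \cdot \Delta_k g(1/m) = 0$; the interval $[(m-1)/m, 1]$ is handled symmetrically. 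For $x \in [1/m, (m-1)/m]$, both $x$ and $1-x$ lie in this same interval; writing $f(x, \cdot) = \beta \, g(1/m, \cdot) + (1-\beta) \, g((m-1)/m, \cdot)$ with $\beta = \frac{(m-1)/m - x}{(m-2)/m}$, the symmetry $x \leftrightarrow 1-x$ forces the \emph{complementary} weights $f(1-x, \cdot) = (1-\beta) \, g(1/m, \cdot) + \beta \, g((m-1)/m, \cdot)$, and hence
\[
    \Delta_k f(x) = \beta \, \Delta_k g(1/m) + (1-\beta) \, \Delta_k g((m-1)/m) = 0.
\]

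The main obstacle is the middle-interval verification in $(3)\Rightarrow(2)$: it works precisely because the two orbital equations at $1/m$ and $(m-1)/m$ combine into a convex combination with weights $\beta$ and $1-\beta$ pairing up exactly due to the reflection symmetry $x \mapsto 1-x$ around $1/2$. The $m>2$ hypothesis is used to keep the three intervals nondegenerate and to ensure $1/m \neq 1/2$, so that $\{1/m, (m-1)/m\}$ is a genuine two-point orbit rather than the fixed point of the reflection.
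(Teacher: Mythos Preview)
Your proof is correct and follows essentially the same approach as the paper: the same orbit-transport for $(1)\Rightarrow(3)$ and the same four-node piecewise linear interpolation for $(3)\Rightarrow(2)$. The only difference is cosmetic: the paper dispatches the middle interval by observing that $\Delta_k f(x)$ is affine on $[1/m,(m-1)/m]$ and vanishes at both endpoints, whereas you compute the explicit convex combination with weights $\beta$ and $1-\beta$; both are the same argument.
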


\begin{proof}
We show $(2) \Rightarrow (1) \Rightarrow (3) \Rightarrow (2)$.\medbreak

\textbf{Step 1.} Clearly $(2) \Rightarrow (1)$.\medbreak

\textbf{Step 2.} We show that $(3) \Rightarrow (2)$.\\
Suppose there exists a nontrivial solution $\fvec$ for $\F\left(\left\{\tfrac{1}{m}, \tfrac{m-1}{m}\right\}\right)$. We construct continuous piecewise linear functions $\hat{\ff}(\cdot, i): [0,1] \to \mathbb{R}$ such that $\hat{\ff}(0,i) = \hat{\ff}(1,i) = 0$ for all $i$, and $\hat{\fvec}$ satisfies $\F([0,1])$.

\textit{Construction of $\hat{\ff}(\cdot, i)$:}
Let $B = \{x_0, x_1, x_2, x_3\} = \{0, \tfrac{1}{m}, \tfrac{m-1}{m}, 1\}$. Define $\hat{\ff}(x,i)$ by linear interpolation over $B$:
For $x \in [0,1]$, 
\[
\hat{\ff}(x,i) = \sum_t \lambda_t \ff(x_t,i)  \text{ 
where } \sum_t \lambda_t = 1, \lambda_t \geq 0, \mathrm{SOS2}(\lambda),\]
where $\mathrm{SOS2}(\lambda)$ is a Special Ordered Set of type 2, which means that at most two $\lambda_t$ are positive and if so, then they must be consecutive.
Since $\hat{\ff}(0,i) = \hat{\ff}(1,i) = 0$, and $\ff(\tfrac{1}{m},i)$, $\ff(\tfrac{m-1}{m},i)$ are given, $\hat{\ff}(x,i)$ is continuous and piecewise linear in $x$.

\textit{Verification that the system is satisfied:}
For all $x \in [0,1]$ and $k \in K$, we need to show $\Delta_k \hat \fvec(x) = 0$ (see \eqref{eq:delta-general}).  
On each interval $[x_t, x_{t+1}]$, the functions $\hat{\fvec}(x,i)$ and $\hat{\ff}(1 - x,j)$ are linear, so the left-hand side $\Delta_k \hat \fvec(x)$ is linear in $x$. At the breakpoints $x_t$, we have $\Delta_k \hat \fvec(x_t) = 0$
because the original solution satisfies the system at $x = \tfrac{1}{m}, \tfrac{m-1}{m}$, and $\hat{\ff}(0,i) = \hat{\ff}(1,i) = 0$.
Therefore, the left-hand side is zero at both ends of each interval, implying it is zero throughout the interval. Thus, $\hat{\fvec}$ satisfies $\F([0,1])$, and (2) holds.
\medbreak

 \textbf{Step 3:} We show that (1) implies (3).\\
Suppose there is a nontrivial $\fvec \in \F([0,1])$.  Then there exists an index $i \in [\ell]$ and a point $\bar x \in [0,1]$ such that $\ff(\bar x, i) \neq 0$.
Without loss of generality, $\bar x \in [0,\tfrac{1}{2}]$ and $i=1$.  

Let $\bar D = \{\bar x, 1-\bar x\}$. By \autoref{lem:orbits}, the function values used in the equation system $\F(\bar D)$ are restricted to $f(x, i)$ for $(x,i) \in \mathcal O(\bar x)$.   
Since $\fvec \in \F([0,1])$, it follows that $\fvec|_{\bar D \times [\ell]} \in \F( \bar D)$.

Next,  let $\tilde \fvec \colon \tilde D\times [\ell] \to \R$ with $\tilde D = \{\tfrac{1}{m}, \tfrac{m-1}{m}\}$ be defined as 
$$
\tilde f(\tfrac{1}{m},i) = f(\bar x,i), \qquad \tilde f(\tfrac{m-1}{m},i) = f(1-\bar x,i).
$$
Then $\fvec|_{\bar D \times[\ell]} \in \F( \bar D)$ implies that $\tilde \fvec \in \F(\tilde D)$.

This follows by noticing that $\Delta_k \fvec(\bar x) = 0$ implies that $\Delta_k \tilde \fvec(\tfrac{1}{m}) = 0$ (and respectively for $1 - \bar x$ and $\tfrac{m-1}{m}$), which is obtained by substituting $\ff(\bar x,i)$ with $\tilde \ff(\tfrac{1}{m},i)$ and $\ff(1 - \bar x,i)$ with $\tilde f(\tfrac{m-1}{m},i)$ in the system of equations.

Note that the above implication could be an ``if and only if" when $\bar x \neq \tfrac{1}{2}$, but since we allow that option, we only make this a one-way implication.
    \end{proof}

\subsection{Applying the general systems to edges}
We show how two particular edge relations can be formulated in terms of \eqref{eq:linear-equations}.  This lemma is the key to the final step of the proof of our main result.
\begin{lemma}
\label{lem:point-edge-system}
    Consider Items~\ref{item:point-edge} and~\ref{item:three-diags}  from \autoref{lemma:cases}.  
    That is, consider a valid 7-tuples\\ \(\tau^1 = (\ver\hor\diag I_1, \point I_2, \ver \hor\diag I_3, \ve \sigma, \t)\),
    \(\tau^2= (\ver I_1, \hor I_2, \diag I_3, \ve \sigma, \t)\).
The equations that describe a function $\bpi$  being additive over $\tau^1$ and $\tau^2$ can be transformed to the format of $\mathcal F(D)$ in \eqref{eq:linear-equations}.
\end{lemma}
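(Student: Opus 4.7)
The plan is to transform the additivity equations by explicitly parametrizing each of the edges involved in $\tau^1$ and $\tau^2$, then substituting into the additivity relation $\sum_{k=1}^3 \sigma_k \bpi(\x^k) = 0$. By Lemma \ref{lem:F-formulas}, each of $F(\tau^1)$ and $F(\tau^2)$ is one-dimensional and parametrized by a single $\lambda \in [0,1]$, so the additivity relations form a one-parameter family of linear equations in $\bpi$. The key idea is that the two possible orientations of each edge correspond precisely to the reflection $x \mapsto 1-x$ built into the framework of \eqref{eq:linear-equations}.

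Concretely, for each one-dimensional face $I_k$ appearing in $\tau^1$ (namely $I_1$ and $I_3$) or in $\tau^2$ (namely $I_1, I_2, I_3$), I fix once and for all a parametrization $\phi_k(x) = \v^k + \tfrac{x}{q}\d^{(k)}$ with chosen orientation $\d^{(k)} \in \{\pm\e^1,\pm\e^2,\pm(\e^1-\e^2)\}$, assign each edge a distinct index $k \in [\ell]$, and define $f(x,k) := \bpi(\phi_k(x))$. Comparing this fixed parametrization with the one given by Lemma \ref{lem:F-formulas}, the point $\x^k(\lambda)$ appearing in $F(\tau)$ equals $\phi_k(\mu_k(\lambda))$ with $\mu_k(\lambda) \in \{\lambda,\, 1-\lambda\}$, depending on whether the direction vector from Lemma \ref{lem:F-formulas} agrees with or reverses $\d^{(k)}$. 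Substituting this into the additivity equation for $\tau^2$ yields
$$\sigma_1 f(\mu_1(\lambda),1) + \sigma_2 f(\mu_2(\lambda),2) + \sigma_3 f(\mu_3(\lambda),3) = 0,$$
and splitting the three indices according to whether $\mu_k(\lambda) = \lambda$ or $\mu_k(\lambda) = 1-\lambda$ identifies the index sets $I_k, J_k \subseteq [\ell]$ and the coefficients $a_i, b_j \in \{\pm 1\}$ (the signs read off from $\ve\sigma$) that put the equation in the required form $\sum_{i\in I_k} a_i f(x,i) + \sum_{j\in J_k} b_j f(1-x,j) = 0$ with $D = [0,1]$.

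For $\tau^1$, the same substitution gives $\sigma_1 f(\mu_1(\lambda),1) + \sigma_2 \bpi(\u^2) + \sigma_3 f(\mu_3(\lambda),3) = 0$. The middle term is constant in $\lambda$ rather than a function evaluation at some $x$ or $1-x$. Since $I_2 = \{\u^2\}$ is a $0$-dimensional face of $\P_q$, we have $\u^2 \in \verts(\P_q) = \tfrac{1}{q}\Z^2$, and in the perturbation spaces relevant to applying Lemma \ref{lem:infinte-system-to-finite-system-edges} (where $\bpi$ is required to vanish on the discrete set $\tfrac{1}{q}\Z^2$), this constant is zero. The equation then reduces to the format of \eqref{eq:linear-equations} with $\ell$ replaced by two active edges.

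The main obstacle I expect is purely mechanical: careful bookkeeping of signs and orientations across all combinations of $\ve\sigma \in \{\pm 1\}^3$ and the six possible edge directions, accomplished by case analysis on $\mu_k(\lambda)$. Conceptually, no new mathematics is needed beyond substitution. The one genuinely subtle point is treating the constant term $\sigma_2 \bpi(\u^2)$ in $\tau^1$; I handle this by appealing to the zero condition $\bpi|_{\cup\mathcal{Z}} \equiv 0$ inherent in the intended application, which forces the equation to be homogeneous as required by the framework of $\mathcal F(D)$.
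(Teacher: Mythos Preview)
Your proposal is correct and follows essentially the same approach as the paper: parametrize $F(\tau^1)$ and $F(\tau^2)$ as one-dimensional via Lemma~\ref{lem:F-formulas}, then define $f(\lambda,i)$ by composing $\bpi$ with the edge parametrizations and read off the coefficients. Your version is in fact more explicit than the paper's terse argument, since you spell out how the orientation ambiguity produces the $x \leftrightarrow 1-x$ reflection when a fixed global parametrization is compared against the one coming from Lemma~\ref{lem:F-formulas}, and you correctly note that the point term $\sigma_2\bpi(\u^2)$ drops out because $\u^2\in\verts(\P_q)\subseteq\mathcal Z$ in the intended application (the paper handles this implicitly by allowing $f(\cdot,2)$ to be the constant function $\bpi(\u^2)$, which is zero in context).
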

\begin{proof}
For each tuple, we transform the equation $\sum_{i=1}^3 \sigma_i \pi(\x^i)$ for $(\x^1, \x^2,\x^3) \in F(\tau)$ into the format of $\F([0,1])$.  
By \autoref{lem:F-formulas}, $F(\tau^1)$ and $F(\tau^2)$ are both 1-dimensional polytopes that can be parametrized by some variable $\lambda \in [0,1]$.
For each $\tau^1, \tau^2$, by setting $f(\lambda, i) := \pi( \tilde \u^i + \frac{1}{q} \lambda \tilde \d^i)$ for $\tilde \u^i$ and $\tilde \d^i$ given from the formulas, in \autoref{lem:F-formulas}, we see that we exactly recover the format of the systems of equations $\F([0,1])$.
This completes the proof.
\end{proof}

\section{Space of Perturbations and 7-tuples}
\label{sec:2D-main-result}

\subsection{Systems of equations}

We now consider the set of  valid 7-tuples $\mathcal E(\pi, \P_q)$ and subdivide it by types of 7-tuples. This combinatorial representation can then be made finite by choosing representatives of $I_1$ and $I_2$ in $[0,1]^2$ under the action of~$\Z^2\times\Z^2$.

For
$\EquiParam1, \EquiParam2, \EquiParam3 \in \{\point, \ver, \hor, \diag,\tri\} $, define
$$
\mathcal E_{\EquiParam1, \EquiParam2, \EquiParam3} := \bigcup \{ (I_{\phi(1)}, I_{\phi(2)}, I_{\phi(3)}, \ve \sigma,  \t) \in \mathcal E \st I_{\phi(i)} \in \EquiParam{i} \
\text{for  $i=1,2,3$}, \phi \in \mathrm{Perm}(\{1,2,3\})\,\},
$$
where $\mathrm{Perm}(\{1,2,3\})$ is the set of permutation functions on $\{1,2,3\}$.

For example,
$$
\mathcal E_{\tri, \tri, \tri} = \{ (I_1, I_2, I_3, \ve \sigma,  \t) \in \mathcal E : I_i \in \tri\}.
$$

So
\begin{equation}
\label{eq:additive-union}
\mathcal E = \bigcup_{\EquiParam1, \EquiParam2, \EquiParam3 \in \{\point, \ver, \hor, \diag,\tri\}} \mathcal E_{\EquiParam1, \EquiParam2, \EquiParam3}.\end{equation}

\begin{remark}
\label{remContinuity}
By the definition of additivity over a valid 7-tuple, if $\bpi$ is additive over a valid 7-tuple $(I, J, K, \ve \sigma, t)$, then for any valid 7-tuple $(I', J', K', \ve\sigma, \t)$ with $I' \subseteq I$, $J' \subseteq J$, $K' \subseteq K$ with $I', J', K' \in \P_q$, we have $\bpi$ is additive over $(I', J', K', \ve\sigma, \t)$.
\end{remark}

  \subsection{Restricting to the lattice when working on restricted domain}

 We need to adapt {\cite[Lemma 3.14]{igp_survey, igp_survey_part_2}} to a more general setting that handles working on a restricted domain and uses more general constraints.

 In our proof structure of our main result, we reduce the domain by deducing certain relations about corresponding functions. So it could be that we work with only a partial triangulation of the space. 
 
We will need the following notation for the next result. Define $\boldsymbol\sigma \cdot \pi\colon\R^k \times \R^k \times\R^k \to \R$ by $(\x^1, \x^2, \x^3) \mapsto \sum_{i=1}^3 \sigma_i \pi(\x^i)$. Note that for $\boldsymbol{\sigma} = (1,1,-1)$, we have $\boldsymbol{\sigma} \cdot \pi(\x^1, \x^2, \x^1 + \x^2) =  \pi(\x^1) + \pi( \x^2) - \pi(\x^1 + \x^2)$.

For a polytope $P$, let $\verts(P)$ denote the vertices of $P$.  For a polyhedral complex $\P$, let $\verts(\P) := \bigcup \{ \verts(P) : P \in \P\}$.

A triangulation $\mathcal T$ in $\R^n$ is a polyhedral complex such that every $P \in \mathcal T$ is a simplex of some dimension $1 \leq d \leq n$ or the empty set.

For a function $\phi \colon \verts(\T) \to \R$, we define the piecewise linear interpolation $\mathrm{pwl}(\phi)$ by 
$\x \mapsto \sum_{\v \in \verts(P)} \lambda_\v \phi(\v)
$
where $P$ is a minimal face of $\T$ containing $\x$, $\sum_{\v \in \verts(P)} \lambda_\v \v = \x$ and $\sum_{\v \in \verts(P)} \lambda_\v =1$, $\lambda_\v \geq 0$.

The choice of $\lambda_\v$ is unique since $\T$ is a triangulation and $P$ is chosen as a minimal face.

\begin{lemma}
\label{lem:tri-restriction-restricted-domain-new}
 Suppose $\T$ be a subcomplex of a triangulation of $\R^k$ such that there exists $q \in \mathbb{N}$ such that $\verts(\T) \subseteq \tfrac{1}{q}\Z^k$.  
Let $\mathcal E$ be a set of valid 7-tuples $\tau = (I_1, I_2, I_3, \boldsymbol \sigma, \t)$ with $I_i \in \T$, $\boldsymbol{\sigma} \in \{-1,1\}^3$ and $\t \in \tfrac{1}{q}\Z^k$.  Suppose that for each $\tau \in \mathcal E$ we have that $p_i(F(\tau)) \subseteq \verts(\T)$ for $i=1,2,3$.

\begin{enumerate}
\item If $\bar \pi \in \bar \Pi^{\E}_{\ZZ, \T}$, then $\bar \pi|_{\tfrac{1}{q}\Z^k} \in \bar\Pi^{{\E}\cap\frac1{q}\Z^k}_{\mathcal{Z}\cap \frac1{q}\Z^k,\T \cap \frac1{q}\Z^k}$.\label{item:restriction-implies-perturbation-space}
\item Let $\bar \pi_{\T}$ be piecewise linear over $\T$.
 Then 
$\bar \pi_{\T} \in \bar \Pi^{\E}_{\mathcal{Z}, \T}$ if and only if 
  $\bar \pi_{\T}|_{\tfrac{1}{q}\Z^k} \in \bar\Pi^{{\E}\cap\frac1{q}\Z^k}_{\mathcal{Z}\cap \frac1{q}\Z^k,\T \cap \frac1{q}\Z^k}$. \label{lem:tri-restriction-part1-new}

  \item For every $\bar \pi \in \bar \Pi^{\E}_{\mathcal{Z}}$ there exists unique  $\bar \pi_{\T} \in \bar \Pi^{\E}_{\mathcal{Z}, \T}$ and $\bar \pi_0 \in \bar \Pi^{\E}_{\mathcal{Z} \cup \verts(\T), \T}$
  such that 
  $$
\bar \pi = \bar \pi_{\T} + \bar \pi_0.
  $$\label{lem:tri-restriction-part2-new}
  \end{enumerate}
\end{lemma}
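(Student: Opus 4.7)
The plan is to dispatch each of the three parts in turn, using the fact that the hypothesis $p_i(F(\tau)) \subseteq \verts(\T) \subseteq \tfrac{1}{q}\Z^k$ forces every additivity equation encoded by a $\tau \in \E$ to be supported on finitely many lattice points, so additivity over $F(\tau)$ on the full domain $\bigcup \T$ reduces to a vertex equation on $\tfrac{1}{q}\Z^k$. This is the single mechanism that makes all three parts work.

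For Part (1), I will verify directly that each of the three defining conditions of $\bar\Pi^{\E}_{\mathcal Z, \T}$ restricts to the lattice. The Zeros and Periodic conditions transfer because $\tfrac{1}{q}\Z^k$ is $\Z^k$-invariant and contains $\0$ and $\f$. For Additive, the projection hypothesis places each $\x^i$ appearing in $\sum_i \sigma_i \bar\pi(\x^i) = 0$ at a lattice vertex of $\T$, so the equation transfers verbatim. For Part (2), the forward direction is Part (1). For the reverse, continuity and piecewise linearity of $\bar\pi_\T$ over $\T$ give $\bar\pi_\T = \mathrm{pwl}(\bar\pi_\T|_{\verts(\T)})$; zeros at all vertices of a face $Z \in \mathcal Z$ propagate by linearity to all of $Z$; $\Z^k$-periodicity on $\verts(\T)$ lifts to periodicity on $\bigcup\T$ because $\T$ is $\Z^k$-periodic and $\mathrm{pwl}$ is compatible with this action; additivity over $\tau \in \E$ is again (by the projection hypothesis) a finite vertex equation already satisfied by $\bar\pi_\T|_{\frac{1}{q}\Z^k}$.

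For Part (3), I construct the decomposition explicitly: set $\bar\pi_\T := \mathrm{pwl}(\bar\pi|_{\verts(\T)})$ and $\bar\pi_0 := \bar\pi|_{\bigcup \T} - \bar\pi_\T$. To see $\bar\pi_\T \in \bar\Pi^{\E}_{\mathcal Z, \T}$, apply Part (1) to $\bar\pi$ to obtain $\bar\pi|_{\frac{1}{q}\Z^k} \in \bar\Pi^{\E\cap \frac{1}{q}\Z^k}_{\mathcal Z \cap \frac{1}{q}\Z^k, \T \cap \frac{1}{q}\Z^k}$; since by construction $\bar\pi_\T|_{\verts(\T)} = \bar\pi|_{\verts(\T)}$, Part (2) yields the claim. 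The function $\bar\pi_0$ vanishes on $\verts(\T)$ by construction, vanishes on $\bigcup\mathcal Z$ because both $\bar\pi$ and $\bar\pi_\T$ do, inherits $\Z^k$-periodicity as a difference of periodic functions, and is additive over each $\tau \in \E$ because the additivity relation is linear in the function. Uniqueness follows since any decomposition $\bar\pi = \bar\pi'_\T + \bar\pi'_0$ with $\bar\pi'_0$ vanishing on $\verts(\T)$ forces $\bar\pi'_\T|_{\verts(\T)} = \bar\pi|_{\verts(\T)}$, and piecewise linearity over $\T$ uniquely determines $\bar\pi'_\T = \mathrm{pwl}(\bar\pi|_{\verts(\T)})$.

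The main technical subtlety sits in the reverse direction of Part (2) (and equivalently in verifying $\bar\pi_\T$ in Part (3)): one must convert the a priori infinite family of equations indexed by points of $F(\tau)$ into a finite family of vertex equations, which is exactly what the projection hypothesis $p_i(F(\tau)) \subseteq \verts(\T)$ delivers. A mild implicit assumption I will state at the beginning is that $\mathcal Z$ is a union of faces of $\T$ with vertices in $\verts(\T)$, so that zeros at the vertices propagate to whole faces by linearity; without this, the Zeros condition would not descend and lift cleanly between the continuous and lattice perturbation spaces.
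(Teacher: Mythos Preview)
Your proof is correct and Parts~(1) and~(3) match the paper's argument essentially line by line, including the explicit construction $\bar\pi_\T := \mathrm{pwl}(\bar\pi|_{\verts(\T)})$, the appeal to Part~(2) to place $\bar\pi_\T$ in the perturbation space, and the linearity-of-additivity argument for $\bar\pi_0$.

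The one substantive difference is in the reverse direction of Part~(2). You read the hypothesis $p_i(F(\tau)) \subseteq \verts(\T)$ literally, which forces each $F(\tau)$ to be a finite set of lattice points; additivity over $\tau$ is then already a finite system of equations on $\tfrac{1}{q}\Z^k$ and nothing further is needed. The paper's proof instead proceeds under what is evidently the intended (weaker) hypothesis $p_i(\verts(F(\tau))) \subseteq \verts(\T)$; this is what \autoref{lemma:vertices} actually supplies for $\P_q$, where $F(\tau)$ is typically a positive-dimensional polytope, and it is what \autoref{rem:tri-satisfy-lemma-new} invokes. The paper's key step is that piecewise linearity of $\bar\pi_\T$ on each $I_i \in \T$ makes $\boldsymbol\sigma \cdot \bar\pi_\T$ \emph{affine} on the polytope $F(\tau)$, hence identically zero there if and only if it vanishes at $\verts(F(\tau))$, whose projections lie in $\verts(\T) \subseteq \tfrac{1}{q}\Z^k$. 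Your argument bypasses this affineness step under the literal hypothesis, but would not go through under the weaker one; the paper's argument extracts more from the piecewise-linear structure and is what is needed in the setting where the lemma is actually applied.
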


\begin{proof}
\begin{enumerate}
    \item Trivially, $\bpi$ must be additive over $\E\cap \frac1q\Z^k$, i.e., $\boldsymbol{\sigma} \cdot \bpi(\x^1,\x^2,\x^3)=0$ for all $\sum_{i=1}^3\sigma_i\x^i=\t$ and $\x^i\in\frac1q\Z^k$, where $\x^i\in I_i$ and $\tau=(I_1, I_2, I_3, \ve\sigma,\t)\in\E$.  But since $\verts(\T) \subseteq \tfrac{1}{q}\Z^k$, we must have that $\bpi|_{\tfrac{1}{q} \Z^k}$ is additive over $\E\cap\frac1q\Z^k$.

\item Let $\tau = (I_1, I_2, I_3, \boldsymbol \sigma, \t) \in \mathcal E$ be a valid 7-tuple.
Since $\tau$ is a valid 7-tuple, $I_i \in \T$.   And since $\bar \pi_{\T}$ is piecewise linear over $\T$, we have that $\boldsymbol{\sigma} \cdot \pi_{\T}$ is affine over $F(\tau)$.  
Thus, 
$
\boldsymbol{\sigma} \cdot \pi_{\T}|_{F(\tau)} \equiv 0
$
if and only if 
$
\boldsymbol{\sigma} \cdot \pi_{\T}|_{\verts(F(\tau))} \equiv 0.
$
Since $p_i(\verts(F(\tau))) \subseteq \verts(\T) \subseteq \tfrac{1}{q}\Z^k$, we have that 
$
\boldsymbol{\sigma} \cdot \pi_{\T}|_{\verts(F(\tau))} \equiv 0
$
if and only if 
$
\left(\boldsymbol{\sigma} \cdot \bar \pi_{\T}|_{\tfrac{1}{q} \Z^k}\right)\bigg\rvert_{\verts(F(\tau))} \equiv 0.
$
Noting this for each $\tau \in \mathcal E$ completes the proof of part (2).

\item Next, let $\bar \pi \in \bar \Pi^{\E}_{\mathcal{Z}}$.  Let $\bar \pi_{\T}$ be the unique extension of $\bar \pi|_{\tfrac{1}{q} \Z^k}$ to a piecewise linear function over $\T$.  It follows that $\bar \pi_{\T}|_{\tfrac{1}{q} \Z^k} = \bar \pi|_{\tfrac{1}{q} \Z^k} \in \bar\Pi^{{\E}\cap\frac1{q}\Z^k}_{\mathcal{Z}\cap \frac1{q}\Z^k,\T \cap \frac1{q}\Z^k}$.  
Therefore, by part (2), $\bar \pi_{\T} \in \bar \Pi^{\E}_{\mathcal{Z}, \T}$.  Since $\bar \Pi^{\E}_{\mathcal{Z}, \T}$ is a vector space containing $\bar \pi$ and $\bar \pi_{\T}$, we have that $\pi_0 = \bar \pi - \bar \pi_{\T} \in \bar \Pi^{\E}_{\mathcal{Z}, \T}$, which establishes part (3).
\end{enumerate}
\end{proof}

\begin{remark}
\label{rem:tri-satisfy-lemma-new}
The polyhedral complexes $\P_{B}$ for $B = \tfrac{1}{q} \Z \cap [0,1)$ from \autoref{ex:1d-breakpoint-complex} and $\P_q$ from \autoref{ex:2d-standard-triangulation} are triangulations of $\R^1$ and $\R^2$, respectively, and satisfy the hypotheses of \autoref{lem:tri-restriction-restricted-domain-new}.  This is due to \autoref{lemma:vertices}. %
\end{remark}

\label{sec:main-result-space-transformations}

\subsection{Mappings on restriction and interpolation}

We construct a diagram to illustrate the interaction between restriction and interpolation across various perturbation spaces. This connection plays a crucial role in our main proof by demonstrating their correspondence, enabling us to relate these spaces while modifying the function’s domain to simplify our arguments.

\begin{lemma}
  \label{lemContinuousDiscrete}
  Suppose $\P$ is a $\Z^2$-periodic  collection of faces such that  $p(\E) \subseteq \P \otimes(\Ipoint \cup \Iedge)$, and $\verts(\P_q)\subseteq\mathcal{Z}\subseteq\P$.
  Then for $m\ge 1$, the following maps in the diagram exist:
  \begin{enumerate}
    \item a surjection by restriction $\phi_1 \colon \pi\mapsto \pi|_{\frac{1}{mq}\Z^2}$ from ${\widehat\Pi^{\E}_{\mathcal{Z},\P} }$ to ${\bar\Pi^{{\E}\cap\frac1{mq}\Z^2}_{\mathcal{Z}\cap\frac1{mq}\Z^2,\P \cap \frac1{mq}\Z^2}}$;%
    \item %
    a bijection
    by interpolation $\phi_2$ from ${\bar\Pi^{{\E}\cap\frac1{mq}\Z^2}_{\mathcal{Z}\cap\frac1{mq}\Z^2,\P \cap \frac1{mq}\Z^2}}$ to ${\bar\Pi^{\E}_{\mathcal{Z},\P} \cap \mathrm{Pwl}(\P\otimes\mathcal P_{mq})}$, where $\mathrm{Pwl}(\P\otimes\mathcal P_{mq})$ denotes the set of continuous piecewise linear functions on the refinement of $\P_q$ on the domain $\P$;
    
    \item an injection by identity embedding $\phi_3 \colon \pi \mapsto \pi$ from ${\bar\Pi^{\E}_{\mathcal{Z},\P} \cap \mathrm{Pwl}(\P\otimes\mathcal P_{mq})}$ to ${\widehat\Pi^{\E}_{\mathcal{Z},\P}}$.
  \end{enumerate}

\[
\begin{tikzcd}[row sep=0.1em]
  {\bar\Pi^{\E}_{\mathcal{Z},\P} \cap \mathrm{Pwl}(\P\otimes\mathcal P_{mq})} &&  && {\bar\Pi^{{\E}\cap\frac1{mq}\Z^2}_{\mathcal{Z}\cap \frac1{mq}\Z^2,\P \cap \frac1{mq}\Z^2}}\\
  && {\widehat\Pi^{\E}_{\mathcal{Z},\P}} &&
  \arrow["\phi_3"'{pos=0.7}, hook, from=1-1, to=2-3]
  \arrow["\phi_1 \textup{(restrict)}"', two heads, from=2-3, to=1-5]
  \arrow["\phi_2 \textup{ (interpolate)}"', curve={height=12pt}, tail, two heads, from=1-5, to=1-1]
\end{tikzcd}
\]

\end{lemma}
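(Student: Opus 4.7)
The plan is to leverage \autoref{lem:tri-restriction-restricted-domain-new} applied with the refined triangulation $\mathcal T := \P \otimes \P_{mq}$ and lattice $\tfrac{1}{mq}\Z^2$. First I would verify the hypotheses of that lemma for $\mathcal T$: since $\P_{mq}$ is a triangulation of $\R^2$ with vertex set exactly $\tfrac{1}{mq}\Z^2$, the refinement $\mathcal T$ is (a subcomplex of) a triangulation of $\R^2$ whose vertices lie in $\tfrac{1}{mq}\Z^2$; and for each $\tau \in \mathcal E$, \autoref{lemma:vertices}(ii) gives $\verts(F(\tau)) \subseteq (\tfrac{1}{q}\Z^2)^3 \subseteq (\tfrac{1}{mq}\Z^2)^3$, so $p_i(F(\tau)) \subseteq \verts(\mathcal T)$. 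The hypothesis $p(\mathcal E) \subseteq \P \otimes (\Ipoint \cup \Iedge)$ guarantees that these projections actually sit inside $\mathcal T$ as faces, so the equations imposed by $\mathcal E$ refine to equations over $\mathcal T$. With this setup, I would then establish the three maps in order of increasing work.

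The map $\phi_3$ is simply the inclusion of $\bar\Pi^{\mathcal E}_{\mathcal Z, \P} \cap \mathrm{Pwl}(\P \otimes \P_{mq})$ into $\widehat\Pi^{\mathcal E}_{\mathcal Z, \P}$; it is manifestly injective, and the only check is that the subset containment holds, which follows from the definition of the hat-space as the ambient perturbation space in which we work. For $\phi_2$, define $\phi_2(g) := \mathrm{pwl}(g)$, the unique continuous piecewise linear extension of $g$ over $\mathcal T$. That $\phi_2(g) \in \mathrm{Pwl}(\P \otimes \P_{mq})$ is tautological. To see $\phi_2(g) \in \bar\Pi^{\mathcal E}_{\mathcal Z, \P}$, invoke part~(2) of \autoref{lem:tri-restriction-restricted-domain-new} (with $q$ replaced by $mq$ and $\T = \mathcal T$): since $\phi_2(g)|_{\tfrac{1}{mq}\Z^2} = g$ lies in the grid perturbation space by hypothesis, the ``if'' direction of that statement promotes $\phi_2(g)$ to the full perturbation space. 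Bijectivity of $\phi_2$ is immediate because PWL interpolation over $\mathcal T$ and restriction to $\verts(\mathcal T) = \P \cap \tfrac{1}{mq}\Z^2$ are mutually inverse operations on the space $\mathrm{Pwl}(\P \otimes \P_{mq})$.

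For $\phi_1$, well-definedness follows directly from part~(1) of \autoref{lem:tri-restriction-restricted-domain-new}: restriction of any $\pi \in \widehat\Pi^{\mathcal E}_{\mathcal Z, \P}$ to $\tfrac{1}{mq}\Z^2$ preserves all the defining equations, periodicity, and zero conditions. Surjectivity is then exhibited by the composition $\phi_3 \circ \phi_2$: for any $g$ in the grid perturbation space, the function $\phi_3(\phi_2(g))$ belongs to $\widehat\Pi^{\mathcal E}_{\mathcal Z, \P}$ and satisfies $\phi_1(\phi_3(\phi_2(g))) = \mathrm{pwl}(g)|_{\tfrac{1}{mq}\Z^2} = g$.

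The main potential obstacle is the verification that $\mathcal T = \P \otimes \P_{mq}$ genuinely satisfies the hypotheses of \autoref{lem:tri-restriction-restricted-domain-new}, i.e., that its cells may be viewed as simplices inside a global triangulation of $\R^2$ and that every projection $p_i(F(\tau))$ is not merely contained in $\verts(\mathcal T)$ as a set but arises as a face of $\mathcal T$. The structural hypothesis $p(\mathcal E) \subseteq \P \otimes (\Ipoint \cup \Iedge)$, together with \autoref{lemma:faces}, reduces this to the observation that the relevant projections are points or $\tfrac{1}{q}$-edges subdivided by $\P_{mq}$, which is immediate. The condition $\verts(\P_q) \subseteq \mathcal Z$ is exactly what lets the interpolation respect the zero set. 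Everything else is bookkeeping around the lattice inclusion $\tfrac{1}{q}\Z^2 \subseteq \tfrac{1}{mq}\Z^2$.
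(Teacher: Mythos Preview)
Your proposal is correct and follows essentially the same route as the paper: both arguments invoke \autoref{lem:tri-restriction-restricted-domain-new} on the refinement $\P\otimes\P_{mq}$, use its part~(2) to show that interpolation $\phi_2$ is a bijection, and obtain surjectivity of $\phi_1$ by composing with $\phi_2$ (and $\phi_3$). You are more explicit than the paper about verifying the hypotheses of \autoref{lem:tri-restriction-restricted-domain-new} via \autoref{lemma:vertices} and the assumption $p(\E)\subseteq \P\otimes(\Ipoint\cup\Iedge)$, which the paper leaves implicit.
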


Note that the maps (1) and (3) also exist for any valid 7-tuples $\E$. The main proof ingredient for map (2) is if the additivity equation holds for the discrete function, then the additivity equation holds for the interpolation function.
\begin{proof}
\emph{Map (1).}
 Let $\bpi\in {\widehat \Pi^{\E}_{\mathcal{Z},\P} }$, the restriction $\bpi|_{\frac1{mq}\Z^2}$ is unique. The restriction map $\phi_1(\bpi)$ also inherits the additivity equations from $\bar\Pi^{\E}_{\mathcal{Z},\P}$, thus $\phi_1(\bpi)\in {\bar\Pi^{{\E}\cap\frac1{mq}\Z^2}_{\mathcal{Z}\cap\frac1{mq}\Z^2,\P \cap \frac1{mq}\Z^2}}$.

  To prove that $\phi_1$ is a surjection, for any $\bpi_{mq}\in {\bar\Pi^{{\E}\cap\frac1{mq}\Z^2}_{\mathcal{Z}\cap\frac1{mq}\Z^2,\P \cap \frac1{mq}\Z^2}}$, we have $\phi_1(\phi_2(\bpi_{mq}))=\bpi_{mq}$, where the interpolation $\phi_2(\bpi_{mq})$ is in ${\widehat\Pi^{\E}_{\mathcal{Z},\P} }$.\medbreak

\emph{Map (2).} Let $\bpi_{mq} \colon P \cap \tfrac{1}{mq} \Z^2 \to \R$. Let $\mathrm{pwl}(\bpi_{mq})$ be the continuous piecewise linear function obtained from $\bpi_{mq}$ via interpolation. By \autoref{lem:tri-restriction-restricted-domain-new}~\autoref{lem:tri-restriction-part1-new}, $\bpi_{mq} \in{\bar\Pi^{{\E}\cap\frac1{mq}\Z^2}_{\mathcal{Z}\cap\frac1{mq}\Z^2,\P \cap \frac1{mq}\Z^2}}$ if and only if 
$\mathrm{pwl}(\bpi_{mq})\in \bar \Pi^\E_{\ZZ,\P}$.  Since $\mathrm{pwl}(\bpi_{mq})|_{\tfrac{1}{mq} \Z^2} = \bpi_{mq}$, we conclude that $\phi_2$ is indeed a bijection.

\medbreak
  
\emph{Map (3).} Clearly any piecewise linear function is continuous, so the mapping as an injection is trivial.
\end{proof}

\section{Proof of main result}\label{sec:main-proof}

We state and prove our main result about the connection of piecewise linear 2D functions with the finite group problem.

We will often derive results about perturbation functions when they are continuous.  For this we define the set of continuous perturbation functions
$$
\widehat{\Pi}^{\E}_{\mathcal{Z},\P} :=  \{ \bar \pi \in \bar \Pi^\E_{\mathcal{Z},\P}: \bpi \text{ is Lipschitz continuous}\}.
$$
As functions in this space are continuous, we can think of $\ZZ$ as a polyhedral complex instead of just a list of polytopes. Thus, for any $I \in \ZZ$, we can assume that any subface $I' \subseteq I$ is also contained in $\ZZ$.

\begin{theorem}[Main Result]\label{thm:extreme-facet-and-restriction-2d-all}
\label{thm:extreme-restriction-m-2d-all}
  Let $m \in \Z_{\geq 3}$.  Let $\pi$ be a continuous piecewise linear  minimal
  valid function for $R_\f(\R^2,\Z^2)$  over $\P_q$  and suppose $\f \in \tfrac{1}{q} \Z^2$.    The following are equivalent:
\begin{enumerate}
\item\label{result1} $\pi$ is a facet for $R_\f(\R^2,\Z^2)$,
\item\label{result2} $\pi$ is extreme for $R_\f(\R^2,\Z^2)$,
\item\label{result3} $\pi|_{\frac{1}{mq}\Z^2}$ is extreme for $R_\f(\tfrac{1}{mq} \Z^2, \Z^2)$.
\end{enumerate}
\end{theorem}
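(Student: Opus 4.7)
The proof establishes the chain $(1)\Rightarrow(2)\Rightarrow(3)\Rightarrow(1)$. The implication $(1)\Rightarrow(2)$ is standard, since facets are extreme (\autoref{subsec:relation-between-three-notions}). For $(2)\Rightarrow(3)$, view $\pi$ as piecewise linear over the refinement $\P_{mq}$ and apply \autoref{thm:GJ-restrictions-2D}\,(2) with $q$ replaced by $mq$, noting $\f\in\tfrac1q\Z^2\subseteq\tfrac1{mq}\Z^2$; this yields extremality of $\pi|_{\frac1{mq}\Z^2}$. The substantive direction is $(3)\Rightarrow(1)$, and the rest of the plan concerns it.

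\textbf{Setup via the Facet Theorem.} By \autoref{thm:facet}, it suffices to show that every minimal valid $\pi'$ with $E(\pi)\subseteq E(\pi')$ satisfies $\pi'=\pi$. Write $\bar\pi=\pi'-\pi$ and, using \autoref{lem:minimality-of-pi1-pi2} applied to the decomposition $\pi=\tfrac12(\pi'+(2\pi-\pi'))$, observe that $\pi'$ is Lipschitz continuous, hence so is $\bar\pi$. Since $\pi(\0)=\pi'(\0)=0$, $\pi(\f)=\pi'(\f)=1$, and $E(\pi)\subseteq E(\pi')$ gives the required additivity equations, we have $\bar\pi\in\widehat\Pi^{\mathcal E(\pi)}_{\emptyset,\R^2}$. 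By hypothesis~(3) and \autoref{thm:finite-extremality}, the discrete perturbation space $\bar\Pi^{\mathcal E(\pi)\cap\frac1{mq}\Z^2}_{\emptyset,\frac1{mq}\Z^2}=\{0\}$. So, by the diagram in \autoref{lemContinuousDiscrete}, it is enough to show that any such $\bar\pi$ is continuous piecewise linear over $\P_q\otimes\P_{mq}$: once that is established, the bijection $\phi_2$ and the injection $\phi_3$ identify $\bar\pi$ with the piecewise linear interpolant of its restriction to $\tfrac1{mq}\Z^2$, which equals zero.

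\textbf{Recovering piecewise linearity on the refinement.} We split $\mathcal E(\pi,\P_q)$ using the classification of \autoref{lemma:cases}. The full-dimensional triples (those involving two triangles) give affineness of $\bar\pi$ on individual triangles via the convex additivity-domain lemma \autoref{lem:projection_interval_lemma_fulldim}; the mixed triangle--edge tuples (\autoref{cor:two_triangle_one_edge}) extend affineness in specific edge directions on the incident edges. Additivities mixing two differently-typed edges over a common triangle are handled by the hidden interval lemma \autoref{lem:hidden_interval_lemma} and its edge form \autoref{lemILedges}, which combines two such relations to produce a Cauchy-type functional equation yielding affineness in a shared direction. Taken together with the continuity of $\bar\pi$, these affineness conclusions along all three edge directions within each $\P_q$-triangle force $\bar\pi$ to be piecewise linear over the refinement $\P_q\otimes\P_{mq}$, \emph{provided} that on each $\P_q$-edge itself $\bar\pi$ is already piecewise linear with breakpoints contained in $\tfrac1{mq}\Z^2$.

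\textbf{Main obstacle: the edge system.} The hard step is the last clause. On a $\P_q$-edge, the remaining constraints on $\bar\pi$ come from the point--edge and three-edge additivities, namely cases (\ref{item:point-edge}) and (\ref{item:three-diags}) of \autoref{lemma:cases}. By \autoref{lem:point-edge-system}, these are precisely of the abstract form $\F([0,1])$ in \eqref{eq:linear-equations}, where the reflection $x\mapsto 1-x$ encodes the sign patterns $\ve\sigma$ that arise in the 7-tuples. A nontrivial continuous $\bar\pi$ confined to such an edge would yield a nontrivial element of $\F([0,1])$; then \autoref{lem:infinte-system-to-finite-system-edges}, which is precisely where the assumption $m\ge 3$ enters, produces a nontrivial element of $\F(\{\tfrac1m,\tfrac{m-1}m\})$ and hence of $\bar\Pi^{\mathcal E(\pi)\cap\frac1{mq}\Z^2}_{\emptyset,\frac1{mq}\Z^2}$, contradicting~(3). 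Thus $\bar\pi$ is piecewise linear on every $\P_q$-edge with breakpoints in $\tfrac1{mq}\Z^2$, which closes the argument: $\bar\pi$ is piecewise linear over $\P_q\otimes\P_{mq}$, hence (by \autoref{lem:tri-restriction-restricted-domain-new}) determined by its values on $\tfrac1{mq}\Z^2$, which all vanish, so $\bar\pi\equiv0$ and $\pi'=\pi$. The decisive new ingredient compared to the restricted \autoref{thm:1/4q} is the abstract reduction \autoref{lem:infinte-system-to-finite-system-edges}, which removes the diagonally-constrained assumption by handling arbitrary reflection/translation interactions on edges in a single shot.
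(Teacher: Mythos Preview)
Your overall direction for $(3)\Rightarrow(1)$ is right, but the central step has a real gap. You aim to prove that every $\bar\pi\in\widehat\Pi^{\mathcal E(\pi)}$ is continuous piecewise linear over $\P_{mq}$, and then restrict. This cannot succeed as stated: the interval lemma, \autoref{cor:two_triangle_one_edge}, and the hidden interval lemma only yield affineness on triangles that actually occur in some additive $7$-tuple. A triangle $K\in\P_q$ that never appears in $p(\mathcal E(\pi,\P_q))$ carries no constraint on $\bar\pi|_{\intr(K)}$ beyond continuity and boundary values, so $\bar\pi$ need not be piecewise linear there. Relatedly, your use of \autoref{lem:infinte-system-to-finite-system-edges} is a non sequitur: that lemma says a nontrivial solution to the infinite edge system exists if and only if a nontrivial solution to the finite one exists; it does \emph{not} say the given $\bar\pi$ is piecewise linear on edges, which is what your argument needs.

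The paper avoids this by never claiming $\bar\pi$ is piecewise linear. Instead it transforms the triple $(\E,\ZZ,\P)$ through a chain of equal and equivalent updates (Steps~1--5), projecting triangles out of the domain while recording how discrete perturbations lift back. The decisive case split is Step~6. If, after these reductions, some triangle $K\in\P'$ remains with $K\notin\ZZ'$, then no $7$-tuple in $\E'$ touches $\intr(K)$, and one manufactures a nontrivial \emph{discrete} perturbation by setting a single value at the interior grid point $(x_0,y_0)+\tfrac1{mq}\begin{psmallmatrix}1\\1\end{psmallmatrix}$ (this is where $m\ge3$ is first used), contradicting (3) directly without reference to the given $\bar\pi$. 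Only when no such triangle remains does one pass to the pure edge system and invoke \autoref{lem:point-edge-system} together with \autoref{lem:infinte-system-to-finite-system-edges}; there, the nontriviality of the continuous space (inherited through the equivalence chain from the assumption that $\pi$ is not a facet) implies nontriviality of the finite edge system, again contradicting (3). Your proposal is missing both the domain-reduction machinery (Steps~3--5, which are what make \autoref{lemContinuousDiscrete} applicable, since its hypothesis $p(\E)\subseteq\P\otimes(\Ipoint\cup\Iedge)$ fails before those steps) and the empty-triangle case of Step~6a.
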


The proof will track the space of perturbation functions as we transform additivities and restrict the space.  Thus, the technique here will require moving between different function spaces and maintaining a connection between these spaces.

 We focus on proving that \ref{result3} implies \ref{result1}.

\begin{proof}[Proof of \autoref{thm:extreme-facet-and-restriction-2d-all}]
As mentioned in \autoref{subsec:relation-between-three-notions}, facets are extreme functions~\cite[Lemma 1.3]{bhkm}, and hence \ref{result1} $\Rightarrow$ \ref{result2}.
By \autoref{thm:GJ-restrictions-2D},  \ref{result2} $\Rightarrow $ \ref{result3}.
We now show \ref{result3} $\Rightarrow$ \ref{result1}.

Let $\pi|_{\frac{1}{mq} \Z^2}$ be extreme for $R_\f(\tfrac{1}{mq} \Z^2, \Z^2)$.
Since the domain is finite, we know by \autoref{thm:finite-extremality} that 
\begin{equation}
\label{eq:finite-grid}
    \bar \Pi^{\mathcal E(\pi) \cap \frac{1}{mq} \Z^2}_{\emptyset, \frac{1}{mq}\Z^2} = \{0\}.
\end{equation}

\emph{For the sake of deriving a contradiction,} suppose that $\pi$ is not a facet for $R_\f(\R^2,\Z^2)$. By the Facet Theorem (\autoref{thm:facet}), this means that 
\begin{equation}
\label{eq:facet-non-empty}
  \bar \Pi^{\mathcal E(\pi)} \neq \{0\} .
\end{equation}
We will show that \eqref{eq:finite-grid} and \eqref{eq:facet-non-empty} are in contradiction.

Since $\pi$ is piecewise linear, $\pi$ is Lipschitz continuous.   By \autoref{Theorem:functionContinuous-all-cases}, any $\bar \pi$ in $\bar \Pi^{\E(\pi)}$ is also Lipschitz continuous. Furthermore, since $\pi$ is piecewise linear, we can combinatorialize the additivities  via \autoref{lemma:covered-by-maximal-valid-triples}
we can write
$$
\widehat \Pi^{\E(\pi, \P_q)}_{\emptyset, \P_q} = \bar \Pi^{\E(\pi)}.
$$
If there exists $\bpi\in \widehat \Pi^{\E(\pi, \P_q)}_{\emptyset, \P_q}$ with $\bpi|_{\frac1q\Z^2} \ne 0$, then by \autoref{lem:tri-restriction-restricted-domain-new} part~\ref{lem:tri-restriction-part1-new} with $\T = \P_{mq}$, there is a nontrivial $\bpi|_{\frac{1}{mq}\Z^2}\in \bar \Pi^{\E(\pi) \cap \frac{1}{mq} \Z^2}_{\emptyset, \frac{1}{mq}\Z^2}$, which contradicts \eqref{eq:finite-grid}.
Therefore $\widehat \Pi^{\E(\pi, \P_q)}_{\frac1q\Z^2, \P_q} =\bar \Pi^{\E(\pi)} \ne\{0\}$.

By \autoref{lemma:cases}, we know the exact structure of $\E(\pi, \P_q)$ and can expand this to a union of 7-tuples as \eqref{eq:additive-union}.  We will sequentially address these types of 7-tuples - we will modify additive 7-tuples, the zero that we enforce, and the domain  to obtain simpler perturbations spaces that we can analyze.

We will initialize
$
\E \leftarrow \E(\pi, \P_q), \ \ \ZZ \leftarrow \tfrac1q\Z^2, \ \  \P \leftarrow \P_q
  $
  and maintain a triple $(\E, \ZZ, \P)$ such that 
   \begin{equation}
       \label{eq:perturbation-correspondance}
  \begin{tikzcd}
          {\widehat\Pi^\Etuple_{\mathcal{Z},\P}}
          && {\widehat\Pi^{\Etuple'}_{\mathcal{Z}',\P'}} &&
          \arrow["{\mathrm{project}}", shift left=3, tail, two heads, from=1-1, to=1-3]
          \arrow["{\mathrm{lift}}", shift left=3, tail, two heads, from=1-3, to=1-1]
  \end{tikzcd}
  \end{equation}
\begin{enumerate}
\item As we  update  $\E \leftarrow \E', \ZZ \leftarrow \ZZ'$, we will say that the \emph{update is equal} provided that  $\widehat \Pi^\E_{\ZZ, \P} = \widehat \Pi^{\E'}_{\ZZ', \P}$.
\item As we  update  $\E \leftarrow \E', \ZZ \leftarrow \ZZ', \P \leftarrow \P'$, we will say that the \emph{update is equivalent} provided that  \eqref{eq:perturbation-correspondance} holds.
\end{enumerate}
Clearly an equal update is equivalent.

For triple $(\E', \ZZ', \P')$ that projects, we can derive a contradiction by showing that 
$$
\widehat \Pi^{\E'\cap \frac{1}{mq}\Z^2}_{\ZZ'\cap \frac{1}{mq}\Z^2, \P'\cap \frac{1}{mq}\Z^2} \neq \{0\}
\ \ 
\Rightarrow
\ \ 
\widehat \Pi^{\E(\pi, \P_q) \cap \frac{1}{mq}\Z^2}_{\frac1q\Z^2, \P_q\cap \frac{1}{mq}\Z^2} \neq \{0\}
\ \ 
\Rightarrow
\ \ 
\widehat \Pi^{\E(\pi) \cap \frac{1}{mq}\Z^2}_{\emptyset, \frac{1}{mq}\Z^2} \neq \{0\}.
$$
Throughout we assume  $\bar \pi \in \widehat \Pi^{\E(\pi)} \setminus \{0\}$ as our representative that $\pi$ is not a facet.

Before the mathematical description of each step, we will provide a pictorial description and a summary to explain the concepts of that step.
In the diagrams, we use green to denote faces of $\P_q$ in 7-tuple additivity, red to denote when they must take the value zero in the perturbation space, and a crosshatch on triangles when we remove it from the domain of the function via a projection.  The diagrams do not represent $\ve \sigma$ and $\t$ since these are not key elements in the arguments (although they are important in definitions of valid 7-tuples).
\begin{center}
    \begin{tikzpicture}[scale=1.2*\myscale]
        \def\spacing{9.5}
        
        \def\ta{0}
        \def\ea{\ta + \spacing}
        \def\tzez{\ea + \spacing}
        \def\tr{\tzez + \spacing}
        
        \mypoint{\additive}{\ta+1}{1};
        \mytriangle{\additive}{0.5}{\ta}{0};
        
        \node[right] at (\ta + 1.2, 0.25) {Additive};

        \myhorEdge{\edgeadditive}{\ea}{0};
        \node[right] at (\ea + 1.2, 0.25) {Edge Additive};

        \mytriangle{\zero}{0.5}{\tzez}{0};
        \myhorEdge{\zero}{\tzez}{1};
        \node[right] at (\tzez + 1.2, 0.25) {Zero};

        \mytriangleRemove{\tr}{0};
        \node[right] at (\tr+1.2, 0.25) {Project};
    \end{tikzpicture}
\end{center}

\parbox{\textwidth}{ \emph{Step 1: Remove $\E_{\point,\point,\point}$.}}
\begin{tabular}{|m{6cm}|m{6cm}|m{4.5cm}|}
\hline
Diagram & Summary & Update\\
\hline
\vspace{0.5cm}
\begin{center}
(no diagram needed for this step)
\end{center}
&  \parbox{6cm}{ We deduced already that $\bpi|_{\tfrac{1}{q} \Z} =0$.  Thus, $\bpi$ is trivially additive over any tuple with points from the grid.}
& \makecell{$\E \leftarrow \E \setminus \E_{\point,\point,\point}$\\[4pt]
(Equivalent update)}\\
\hline
\multicolumn{3}{|c|}{} \\ %
\multicolumn{3}{|c|}{Now we assume that $\E_{\point, \point, \point} = \emptyset$.}\\[-1ex] %
\multicolumn{3}{|c|}{\rule{\linewidth}{0.4pt}} \\ %
\hline
\end{tabular}
\begin{lemma}[Step 1 equal]
 The update $\E \leftarrow \E':=\E \setminus \E_{\point,\point,\point}$, $\ZZ \leftarrow \ZZ$ is an equal update.
\end{lemma}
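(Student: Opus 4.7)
The plan is to prove equality of the two perturbation spaces by double containment, which should be essentially immediate given the state of the triple $(\E,\ZZ,\P) = (\E(\pi,\P_q), \tfrac{1}{q}\Z^2, \P_q)$ at this point of the proof of \autoref{thm:extreme-restriction-m-2d-all}. First I would observe that since $\E' = \E \setminus \E_{\point,\point,\point} \subseteq \E$, imposing additivity over all of $\E$ is a strictly stronger requirement than imposing it over $\E'$. Hence the containment
\[
\widehat\Pi^{\E}_{\ZZ,\P} \subseteq \widehat\Pi^{\E'}_{\ZZ,\P}
\]
is immediate from the definition \eqref{eq:bPimathcalE}, requiring no work.

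For the reverse containment, I would fix an arbitrary $\bar\pi \in \widehat\Pi^{\E'}_{\ZZ,\P}$ and show that $\bar\pi$ is automatically additive over every $\tau \in \E_{\point,\point,\point}$. Writing such a tuple as $\tau = (\{\u^1\},\{\u^2\},\{\u^3\},\ve\sigma,\t)$ with $\u^i \in \verts(\P_q)$, I would invoke the fact that $\verts(\P_q) \subseteq \tfrac{1}{q}\Z^2 = \ZZ$, combined with the ``Zeros'' clause of \eqref{eq:bPimathcalE}, which forces $\bar\pi|_{\bigcup \ZZ} \equiv 0$. Hence $\bar\pi(\u^i) = 0$ for $i=1,2,3$, so that $\sum_{i=1}^3 \sigma_i\, \bar\pi(\u^i) = 0$ trivially, and $\bar\pi$ is additive over $\tau$. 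This gives $\widehat\Pi^{\E'}_{\ZZ,\P} \subseteq \widehat\Pi^{\E}_{\ZZ,\P}$, completing the equality.

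There is no real obstacle here — the whole point of having started the construction with $\ZZ = \tfrac{1}{q}\Z^2$ and having already deduced (in the body of the proof of \autoref{thm:extreme-restriction-m-2d-all}) that $\bar\pi|_{\tfrac{1}{q}\Z^2} = 0$ is precisely that triple-point additivities become vacuous constraints. The only micro-subtlety worth flagging in the writeup is that $\E_{\point,\point,\point}$ consists of valid 7-tuples whose projections are singletons at vertices of $\P_q$, which is ensured by \autoref{lemma:vertices}(i) applied to the initial complex $\P_q$. Once that observation is in place, the equal update follows from definition chasing.
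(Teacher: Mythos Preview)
Your proof is correct and follows essentially the same approach as the paper: both establish the double containment, with $\widehat\Pi^{\E}_{\ZZ,\P} \subseteq \widehat\Pi^{\E'}_{\ZZ,\P}$ immediate from $\E' \subseteq \E$, and the reverse following because any $\bar\pi$ with $\bar\pi|_{\ZZ} \equiv 0$ trivially satisfies additivity over triple-point tuples. Your write-up is simply more explicit than the paper's one-line justification, and your remark about \autoref{lemma:vertices} is slightly superfluous since $I_i \in \Ipoint$ already means $I_i$ is a singleton vertex of $\P_q$ by definition of $\E_{\point,\point,\point}$.
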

\begin{proof}
Because $\E' \subseteq \E$, $\widehat \Pi^{\E}_{\ZZ,\P}\subseteq \widehat \Pi^{\E'}_{\ZZ,\P}$.
 
Alternatively, for any $\tilde \pi \in \widehat \Pi^{\E'}_{\ZZ,\P}$, $\tilde \pi$ trivially is additive over any $\tau \in \E_{\point, \point, \point}$, and thus $\tilde \pi \in \widehat \Pi^{\E}_{\ZZ, \P}$.
\end{proof}

\parbox{\textwidth}{ \emph{Step 2: Remove full dimensional additivities.}}
\begin{tabular}{|m{6cm}|m{6cm}|m{4.5cm}|}
\hline
\begin{center}
\stepTwo 
\end{center}
& \vspace{-5cm} \parbox{6cm}{\raggedright  \emph{Step 2a.  Remove $\E_{\tri,\tri,\tri}$:}
Use the generalized interval lemma to show that perturbations vanish on these triangles.\\
\emph{Step 2b. Remove $\E_{\EquiParam*, \EquiParam*, \EquiParam*}$ for $\EquiParam* \in \{\ver, \hor, \diag\}$:} Apply similar logic to show that perturbations vanish on such edges.}
& 
\makecell[l]{Set\\ $\E'' = \bigcup_{\EquiParam* \in \{\ver, \hor, \diag, \tri\} }\E_{\EquiParam*,\EquiParam*,\EquiParam*}$\\ \\ 
$\E \leftarrow \E \setminus \E''$\\[4pt]
$\ZZ\leftarrow \ZZ \cup p(\E'')$}\\
\hline
\multicolumn{3}{|c|}{} \\ %
\multicolumn{3}{|c|}{Now we assume that $\E_{\EquiParam*,\EquiParam*,\EquiParam*} = \emptyset$ for $\EquiParam* \in \{\ver, \hor, \diag, \tri\}$.} \\[-1ex] %
\multicolumn{3}{|c|}{\rule{\linewidth}{0.4pt}} \\ %
\hline
\end{tabular}\\
\begin{lemma}[Step 2a equal]
 The update $\E \leftarrow \E':=\E \setminus \E_{\tri,\tri,\tri}$, $\ZZ \leftarrow \ZZ':=\ZZ \cup p(\E_{\tri,\tri,\tri})$ is equal.
\end{lemma}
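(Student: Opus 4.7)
The plan is to prove the two containments $\widehat\Pi^{\E}_{\ZZ,\P} \subseteq \widehat\Pi^{\E'}_{\ZZ',\P}$ and $\widehat\Pi^{\E}_{\ZZ,\P} \supseteq \widehat\Pi^{\E'}_{\ZZ',\P}$ separately.

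For the easier direction ($\supseteq$), let $\bpi \in \widehat\Pi^{\E'}_{\ZZ',\P}$. Since $\ZZ \subseteq \ZZ'$, $\bpi$ vanishes on $\ZZ$. Since $\ZZ' \supseteq p(\E_{\tri,\tri,\tri})$, $\bpi$ vanishes identically on every face that appears as a projection of some $\tau \in \E_{\tri,\tri,\tri}$. Hence for any such $\tau = (\tri I_1, \tri I_2, \tri I_3, \ve\sigma, \t)$ and any $(\x^1,\x^2,\x^3) \in F(\tau)$, each $\x^i$ lies in $I_i = p_i(F(\tau)) \subseteq \bigcup \ZZ'$, so $\sum_i \sigma_i \bpi(\x^i) = 0$ trivially. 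Thus $\bpi$ is additive over every $\tau \in \E_{\tri,\tri,\tri}$, and combined with additivity over $\E'$ gives additivity over $\E = \E' \cup \E_{\tri,\tri,\tri}$, proving $\bpi \in \widehat\Pi^{\E}_{\ZZ,\P}$.

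For the harder direction ($\subseteq$), let $\bpi \in \widehat\Pi^{\E}_{\ZZ,\P}$. We must show that $\bpi$ vanishes on $p(\E_{\tri,\tri,\tri})$ (the inclusion $\E' \subseteq \E$ gives additivity over $\E'$ for free). Fix $\tau = (\tri I_1, \tri I_2, \tri I_3, \ve\sigma, \t) \in \E_{\tri,\tri,\tri}$. The strategy is to recast the additivity $\sum_i \sigma_i \bpi(\x^i) = 0$ as a Cauchy-type equation $f(\u) + g(\v) = h(\u+\v)$ by substituting $\u^i = \sigma_i \x^i$ and letting $f_i(\u) = \sigma_i \bpi(\sigma_i \u)$; this turns the constraint $\sum \sigma_i \x^i = \t$ into $\u^1 + \u^2 = (\t - \u^3)$, so with $h(\w) := -f_3(\t - \w)$, we get $f_1(\u^1) + f_2(\u^2) = h(\u^1 + \u^2)$ on the convex set $F = \{(\u^1,\u^2) : \u^1 \in \sigma_1 I_1,\ \u^2 \in \sigma_2 I_2,\ \u^1+\u^2 \in \t - \sigma_3 I_3\}$. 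Validity of $\tau$ ensures $p_i(F)$ is a full two-dimensional triangle for each $i$, so $F$ is four-dimensional in $\R^2 \times \R^2$. Since $\bpi$ is continuous, Corollary~\ref{lem:projection_interval_lemma-corollary} (equivalently, the full-dimensional convex additivity domain lemma) applied with $L = \R^2$ yields a common gradient $\cve \in \R^2$ such that $f_1, f_2, h$ (and hence $\bpi$ up to sign) is affine over all of $I_1, I_2, I_3$, respectively.

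The main (and essentially only) obstacle is verifying that the convex additivity domain lemma in its full-dimensional form truly applies here, i.e., that $F$ is full-dimensional and that the reparametrization preserves boundedness/continuity, but both follow readily from validity of $\tau$ and the fact that $\bpi$ is Lipschitz continuous. Once affineness on each triangle $I_i$ is established, we conclude: the three vertices of each $I_i$ lie in $\verts(\P_q) \subseteq \tfrac1q\Z^2 \subseteq \bigcup \ZZ$, so $\bpi$ vanishes on $\verts(I_i)$; an affine function on a triangle that is zero at all three vertices is identically zero on that triangle. Therefore $\bpi \equiv 0$ on $I_1 \cup I_2 \cup I_3$. Ranging over all $\tau \in \E_{\tri,\tri,\tri}$ gives $\bpi \equiv 0$ on $\bigcup p(\E_{\tri,\tri,\tri})$, so $\bpi \in \widehat\Pi^{\E'}_{\ZZ',\P}$, completing the equality of the two perturbation spaces.
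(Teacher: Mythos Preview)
Your proof is correct and follows essentially the same approach as the paper: for the nontrivial direction you apply the full-dimensional convex additivity domain lemma to conclude that $\bpi$ is affine on each triangle $I_1, I_2, I_3$, then use that $\bpi$ vanishes on $\verts(\P_q) \subseteq \ZZ$ to deduce $\bpi \equiv 0$ on each $I_i$. Your explicit reparametrization $\u^i = \sigma_i \x^i$ handles general $\ve\sigma$, which is more than strictly needed here (at this stage all tuples in $\E_{\tri,\tri,\tri}$ still have $\ve\sigma = (1,1,-1)$ and $\t = \0$), but it does no harm.
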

\begin{proof}
Suppose that $\bpi=0$ on $I, J, K$, we know that $\bpi$ satisfies trivially the equation by $(I, J, K, \ve \sigma, \0)$. Therefore, $\widehat\Pi^{\Etuple'}_{\mathcal{Z}',\P}\subseteq\widehat\Pi^\Etuple_{\mathcal{Z},\P}$.
  On the other hand, suppose that $\bpi\in \widehat\Pi^\Etuple_{\mathcal{Z},\P}$. By \autoref{lem:projection_interval_lemma_fulldim}, $\bpi$ are affine in the interiors of $I,J,K$. Because $\bpi$ is continuous, $\bpi$ is affine in $I,J,K$. As $\verts(I)\cup \verts(J) \cup \verts(K)\subseteq \P\cap\frac1q\Z^2\subseteq\mathcal{Z}$, $\bpi=0$ in $I, J, K$. Therefore, $\widehat\Pi^\Etuple_{\mathcal{Z},\P}
  \subseteq
  \widehat\Pi^{\Etuple'}_{\mathcal{Z}',\P}$.
  Hence, $\widehat\Pi^\Etuple_{\mathcal{Z},\P}
  =
  \widehat\Pi^{\Etuple'}_{\mathcal{Z}',\P}$.
\end{proof}

\begin{lemma}[Step 2b equal]
 The update $\E \leftarrow \E':=\E \setminus \bigcup_{\EquiParam* \in \{\ver, \hor, \diag\} }\E_{\EquiParam*,\EquiParam*,\EquiParam*}$, $\ZZ \leftarrow \ZZ':=\ZZ \cup p(\bigcup_{\EquiParam* \in \{\ver, \hor, \diag\} }\E_{\EquiParam*,\EquiParam*,\EquiParam*})$ is equal.
 \end{lemma}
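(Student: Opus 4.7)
The plan is to mirror the proof of the Step 2a equal lemma, but since the edges are one-dimensional rather than full-dimensional, I will replace the full-dimensional version of the interval lemma by the general version for continuous functions (\autoref{lem:projection_interval_lemma-corollary}), applied with a carefully chosen one-dimensional subspace $L$. The central observation is that for any type $\EquiParam{*} \in \{\ver,\hor,\diag\}$, there is a single direction $\d \in \{\e^1,\e^2,\e^1-\e^2\}$ such that every edge of type $\EquiParam{*}$ is parallel to $\d$, and I will take $L = \spann(\d)$.

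For the containment $\widehat\Pi^{\E'}_{\mathcal Z',\P}\subseteq \widehat\Pi^{\E}_{\mathcal Z,\P}$, I argue exactly as in Step 2a: if $\bar\pi$ vanishes on $I_1,I_2,I_3$, then $\ve\sigma\cdot\bar\pi$ vanishes identically on $F(\tau)$ for any valid 7-tuple $\tau=(I_1,I_2,I_3,\ve\sigma,\t)$ of a single edge type, so the additivity over $\tau$ is automatic.

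For the reverse containment, I take $\bar\pi\in\widehat\Pi^{\E}_{\mathcal Z,\P}$ and fix a 7-tuple $\tau=(I_1,I_2,I_3,\ve\sigma,\t)\in\E_{\EquiParam{*},\EquiParam{*},\EquiParam{*}}$. Using \autoref{lemTripleSwap} I reduce to $\ve\sigma=(1,1,-1)$. Parametrizing each edge as $I_i=\u^i+[0,\tfrac1q]\d$ and writing $\u^1+\u^2-\t-\u^3=c\,\d/q$ for the appropriate constant $c$, the set $F(\tau)\subseteq\R^2\times\R^2$ becomes a two-dimensional polytope in the affine space $\{(\u^1+s_1\d/q,\u^2+s_2\d/q):s_1,s_2\in\R\}$. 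Hence $\aff(F(\tau))=(\u^1,\u^2)+L\times L$, which verifies the hypothesis $(L\times L)+F(\tau)\subseteq\aff(F(\tau))$ of \autoref{lem:projection_interval_lemma-corollary}. Because $\tau$ is valid, $p_i(F(\tau))=I_i$, and since $I_i\subseteq\u^0_i+L$ for any $\u^0_i\in I_i$, the intersections $(\u^0_i+L)\cap p_i(F(\tau))$ appearing in the corollary equal $I_i$. Thus the corollary delivers a common gradient $\cve$ such that $\bar\pi$ is affine with gradient $\cve$ over each of $I_1,I_2,I_3$. Since the two vertices of each edge $I_i$ lie in $\verts(\P_q)\subseteq \tfrac1q\Z^2\subseteq\mathcal Z$, where $\bar\pi$ vanishes, affinity along a one-dimensional edge forces $\bar\pi\equiv 0$ on $I_i$, so $\bar\pi\in\widehat\Pi^{\E'}_{\mathcal Z',\P}$.

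The main obstacle I anticipate is purely bookkeeping, not substance: one must verify the subspace hypothesis $(L\times L)+F\subseteq\aff(F)$ for the interval lemma in each of the three edge-type cases and check that the general-$\ve\sigma$ reduction via \autoref{lemTripleSwap} does not interact badly with the parametrizations in \autoref{lem:F-formulas}. Once the geometry of the parallel-edge parametrization is in place, the rest of the argument is essentially identical to Step 2a, with ``$\intr$'' replaced by ``$\intr_L$'' and continuity of $\bar\pi$ used to extend the affinity from the relative interior of each edge to its closure before invoking vanishing at the endpoints.
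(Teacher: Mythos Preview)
Your proposal is correct and follows essentially the same approach as the paper: both directions of containment are established exactly as you describe, with the key step being an application of \autoref{lem:projection_interval_lemma-corollary} to conclude that $\bar\pi$ is affine on each edge $I_i$, after which vanishing at the two endpoints in $\verts(\P_q)\subseteq\mathcal Z$ forces $\bar\pi\equiv 0$ on $I_i$. The paper's proof is much terser---it simply invokes the corollary without spelling out the choice of $L$ or verifying the hypothesis $(L\times L)+F\subseteq\aff(F)$---so your added bookkeeping (choosing $L=\spann(\d)$, parametrizing the parallel edges, and checking that $\aff(F)=(\u^1,\u^2)+L\times L$) fills in details the paper leaves implicit.
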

\begin{proof}
 Clearly if $\bpi=0$ in $I_i \text{ for } i=1,2,3$, then $\bpi$ is additive over $(I_1, I_2, I_3,\ve\sigma, \t)$.    On the other hand, if $\bpi$ is additive over $(I_1, I_2, I_3,\ve\sigma, \t)$, then by \autoref{lem:projection_interval_lemma-corollary}, $\bpi$ is affine on $I_1, I_2, I_3$.  Since $\bpi$ is zero on the vertices of $I_1, I_2, I_3$, we have that $\bpi$ is zero on all $I_1, I_2, I_3$.
  Therefore, $\widehat\Pi^\Etuple_{\mathcal{Z},\P}
  =
  \widehat\Pi^{\Etuple'}_{\mathcal{Z}',\P}$.
\end{proof}

\parbox{\textwidth}{ \emph{Step 3: Remove $\E_{\ver\hor\diag,\tri,\tri}$}.}

\begin{tabular}{|m{6cm}|m{6cm}|m{4.5cm}|}
\hline
\begin{tikzpicture}
\node (pic0){
\begin{tikzpicture}[scale=\myscale]
\drawGridred
    \mytriangle{\additive}{0.4}{1}{1} %
    \myhorEdge{\edgeadditive}{2}{1} 
    \mytriangleReflect{\additive}{0.4}{4}{3} %
\end{tikzpicture}
\tikzLRarrow
};
    \node[right=0.1 of pic0] (pic1) {
            \begin{tikzpicture}[scale=\myscale]
            \drawGridred
            \myvertEdge{\edgeadditive}{4}{2}
            \myhorEdge{\edgeadditive}{2}{1}

            \mytriangleReflect{\additive}{0.4}{4}{3}
        \end{tikzpicture}
    };

    \node[above=-0.2cm of pic1] (pic2) {
        \begin{tikzpicture}[scale=\myscale]
            \drawGridred
            \mytriangle{\additive}{0.4}{1}{1} %
            \myhorEdge{\edgeadditive}{2}{1}
            \myvertEdge{\edgeadditive}{1}{1}

        \end{tikzpicture}
    };
    \node[below=-0.2cm of pic1] (pic3) {
        \begin{tikzpicture}[scale=\myscale]
            \drawGridred
            \myvertEdge{\edgeadditive}{1}{1}
            \myvertEdge{\edgeadditive}{4}{2}
            \mypoint{\edgeadditive}{0}{0}

        \end{tikzpicture}
    };
    \node[below=-0.2cm of pic3] (pic4) {
        \begin{tikzpicture}[scale=\myscale]
            \drawGridred
            \myhorEdge{\zero}{2}{1}
            \myhorEdge{\zero}{3}{4}
            \myhorEdge{\zero}{1}{1}

        \end{tikzpicture}
    };
\end{tikzpicture}
&
 Consider some $\tau^0 \in \E_{\ver\hor\diag,\tri,\tri}$. We use  \autoref{cor:two_triangle_one_edge} to deduce affine properties in the direction of the edge.  This allows us to set some edges to 0.
We then recover an equivalent update by adding appropriate 7-tuples.

We iteratively apply this logic.
&
\parbox{4.5cm}{For $\tau^0 \in \E_{\EquiParam{*}, \tri,\tri}$
with $* \in \{\ver,\hor,\diag\}$
find appropriate $\tau^1, \tau^2, \tau^3$ that each use at most 1 triangle.\\ \\
$\E \leftarrow (\E \setminus \{\tau^0\}) \cup \{\tau^1, \tau^2, \tau^3\}$\\[4pt]
$\ZZ \leftarrow \mathcal Z \cup \{\EquiParam{*} I_1, \EquiParam{*} J, \EquiParam{*} K_1\}$}\\
\hline
\multicolumn{3}{|c|}{} \\ %
\multicolumn{3}{|c|}{Now we assume $\E_{\ver\hor\diag,\tri,\tri} = \emptyset$.} \\[-1ex] %
\multicolumn{3}{|c|}{\rule{\linewidth}{0.4pt}} \\ %
\hline
\end{tabular}

Note, at this point, for any $\tau \in \E$, we have $\t = \0$. 
\begin{lemma}[Step 3 equal]
\label{lem:step3step}
  \label{lemTriTriEdge}
  For each $\tau^0=(\tri I, \EquiParam{*}J, \tri K, \ve \sigma, \0)\in \Etuple$ with $I, K \in \Itri$, $J \in \Iedge$, $\ve\sigma\in\{(1,1,-1),(1,-1,1)\}$, 
  there exist 7-tuples $\{\tau^1, \tau^2, \tau^3\}$ where 
  $\tau^1, \tau^2$ are of type (6)  and $\tau^3$ is of type (2) such that
   the update $\E \leftarrow \E':=(\E \setminus \{\tau^0\}) \cup \{\tau^1, \tau^2, \tau^3\}$, $\ZZ \leftarrow \ZZ':=\mathcal Z \cup \{\EquiParam{*} I_1, \EquiParam{*} J, \EquiParam{*} K_1\}$ is equal.
  \end{lemma}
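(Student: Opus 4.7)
The plan is to construct explicit $\tau^1,\tau^2,\tau^3$ and verify that, together with the freshly enforced vanishing of $\bpi$ on the $\EquiParam{*}$-edges $I_1,J,K_1$, they carry exactly the same information as additivity over $\tau^0$. I begin with the vanishing half. For any $\bpi\in\widehat\Pi^{\E}_{\ZZ,\P}$, additivity over $\tau^0$ combined with \autoref{cor:two_triangle_one_edge} yields that $\bpi$ is affine in the $\EquiParam{*}$-direction on each of $I$, $J$, $K$. Since $\verts(\P_q)\subseteq\tfrac{1}{q}\Z^2\subseteq\ZZ$, the endpoints of the $\EquiParam{*}$-edges $I_1\subset I$, $J$, and $K_1\subset K$ are already zeros of $\bpi$; affineness then forces $\bpi\equiv 0$ on each of them, establishing the extra vanishing required by $\ZZ'$.

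For the construction, pick one of the two non-$\EquiParam{*}$ edge types and call it $\diamond\in\{\ver,\hor,\diag\}\setminus\{\EquiParam{*}\}$. Let $I_\diamond\subset I$ and $K_\diamond\subset K$ be the corresponding $\diamond$-edges of the two triangles, chosen so that $K_\diamond$ is the translate or reflection of $I_\diamond$ under the affine map $\x^3=-\sigma_1\sigma_3\x^1-\sigma_2\sigma_3\x^2$ forced by validity of $\tau^0$ (with $\t=\0$). Using the parametrizations of \autoref{lem:F-formulas} together with \autoref{validCharacterization} and \autoref{validPointCharacterization}, there exist unique translations $\t^1,\t^2,\t^3$ and a sign vector $\ve\sigma^3$ such that
\[
  \tau^1 := (\tri I,\, \EquiParam{*}J,\, I_\diamond,\, \ve\sigma,\, \t^1), \qquad \tau^2 := (K_\diamond,\, \EquiParam{*}J,\, \tri K,\, \ve\sigma,\, \t^2)
\]
are valid 7-tuples of type (6), and $\tau^3 := (I_\diamond,\, K_\diamond,\, \point\{\v\},\, \ve\sigma^3,\, \t^3)$ is valid of type (2) for a suitable $\v\in\verts(\P_q)$.

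For the forward inclusion $\widehat\Pi^{\E}_{\ZZ,\P}\subseteq\widehat\Pi^{\E'}_{\ZZ',\P}$, beyond the vanishing above, the key observation is that $\tau^0$ combined with $\bpi\equiv 0$ on $J$ forces $\bpi$ to be \emph{constant} (not just affine) along $\EquiParam{*}$-direction lines on both $I$ and $K$: for $\x^1\in I$ fixed and $\x^2\in J$ varying, $\x^3$ sweeps an $\EquiParam{*}$-direction segment in $K$ on which $\bpi$ must equal $\bpi(\x^1)$, and affineness in the $\EquiParam{*}$-direction then promotes this to constancy, symmetrically on $I$. From constancy on $I$ one reads off the additivity of $\tau^1$ (collapsing $\x^1\in I$ to its $\diamond$-edge projection preserves $\bpi$); similarly $\tau^2$ from constancy on $K$; and $\tau^3$ from the resulting identity $\bpi|_{I_\diamond}(\y^1)=\bpi|_{K_\diamond}(\y^2)$ for matched $\y^1,\y^2$. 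For the reverse inclusion, for any $(\x^1,\x^2,\x^3)\in F(\tau^0)$, using $\bpi\equiv 0$ on $J$ it suffices to show $\sigma_1\bpi(\x^1)+\sigma_3\bpi(\x^3)=0$; chain $\tau^1\to\tau^3\to\tau^2$ by first using $\tau^1$ to relate $\bpi(\x^1)$ to $\bpi(\y^1)$ for some $\y^1\in I_\diamond$, then $\tau^3$ to relate $\bpi(\y^1)$ to $\bpi(\y^2)$ for $\y^2\in K_\diamond$, and finally $\tau^2$ to relate $\bpi(\y^2)$ to $\bpi(\x^3)$.

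The main obstacle is bookkeeping: one must verify that the translations $\t^1,\t^2,\t^3$ and signs $\ve\sigma^3$ can simultaneously be chosen so that (i) each of $\tau^1,\tau^2,\tau^3$ is a valid 7-tuple of the asserted type, and (ii) the chain $\tau^1\to\tau^3\to\tau^2$ composes to trace the affine map $\x^3=-\sigma_1\sigma_3\x^1-\sigma_2\sigma_3\x^2$ of $\tau^0$ exactly for every $(\x^1,\x^2,\x^3)\in F(\tau^0)$. Both requirements follow from the explicit parametrizations in \autoref{lem:F-formulas}, but handling the two cases $\ve\sigma\in\{(1,1,-1),(1,-1,1)\}$ and the three possible edge types $\EquiParam{*}\in\{\ver,\hor,\diag\}$ requires a small symmetric case analysis.
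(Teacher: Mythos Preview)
Your proposal is correct and follows essentially the same approach as the paper. Up to permutation of the entries (which is harmless by \autoref{lemTripleSwap}), your $\tau^1,\tau^2,\tau^3$ coincide with the paper's choices $(\EquiParam{*}J,\,I_\diamond,\,\tri I,\,\ve\sigma,\t^1)$, $(\EquiParam{*}J,\,K_\diamond,\,\tri K,\,\ve\sigma,\t^2)$, $(I_\diamond,\,\{\u\},\,K_\diamond,\,\ve\sigma,\0)$ with $\u\in\verts(J)$; the paper simply makes the bookkeeping you flag more explicit by taking $\t^3=\0$ (since $\tau^3$ is a restriction of $\tau^0$ via \autoref{remContinuity}) and invoking \autoref{lem:F-formulas} for the unique $\t^1,\t^2$.
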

\begin{proof}
We will prove the lemma only for $J \in \P_{q, \hor}$ as the other cases are similar.

\textbf{[Define subfaces of $I$ and $K$].}
Let $I_1, K_1 \in \P_{q, \hor}$, $I_2,  K_2 \in \P_{q, \ver}$, $I_3,  K_3 \in \P_{q, \diag}$ such that $I_1, I_2, I_3 \subseteq I$, and $K_1, K_2, K_3 \subseteq K$.

\textbf{[Deduce 0's from interval lemma].} 
By \autoref{cor:two_triangle_one_edge}, if $\bpi$ is additive on  $(\tri I, \hor J, \tri K, \ve \sigma, \0)$, then $\bpi$ is affine over $I,J,K$ in the $\hor$-direction. By continuity and $\bpi|_{\verts(\P_q)}\equiv0$, we know that $\bpi(x,y) = \bpi(a,y)$ for all $(x,y)\in I$, $\bpi(x,y) = \bpi(b,y)$ for all $(x,y)\in K$, and $\bpi|_{I_1\cup J\cup K_1}\equiv 0$. 
Equally, if $\bpi|_{I_1\cup J\cup K_1}\equiv 0$, then  $\bpi$ is additive on  $(\tri I, \hor J, \tri K, \ve \sigma, \0)$.
Thus, since $\tau^0 \in \E$, the update $\ZZ \leftarrow \ZZ\cup\{I_1, J, K_1\}$ is equal.

\textbf{[Show equivalent tuples].} 
Next, assuming the updated $\ZZ$, pick a vertex $\u$ of $J$ and the unique vectors $\t^i$ according to \autoref{lem:F-formulas}, to make these valid 7-tuples: 
  \begin{equation*}
  \begin{array}{rlllll}
    \tau^1 &:=
    (\hor J,& \ver I_2 ,& \tri I,& \ve \sigma,& \t^1),\\
\tau^2&:=(\hor J,& \ver K_2,& \tri K,& \ve \sigma,& \t^2),\\
\tau^3&:=(\ver I_2,& \point\{\u\},& \ver K_2,& \ve \sigma,& \0).
\end{array}
  \end{equation*}
We want to show that ${\widehat\Pi^\Etuple_{\mathcal{Z}',\P}}= {\widehat\Pi^{\Etuple'}_{\mathcal{Z}',\P}}$.

First, suppose $\bpi \in \widehat\Pi^\Etuple_{\mathcal{Z}',\P}$.  
Because $\bpi = 0$ on $I_1, J, K_1$, $\bpi$ is invariant over changes in the $\hor$-direction.  Hence, $\tau^1$ and $\tau^2$ are satisfied. By \autoref{remContinuity}, since $\bpi$ is additive on $\tau$, then it is also additive on $\tau^3$.
Thus, $\bpi \in \widehat\Pi^{\Etuple'}_{\mathcal{Z}',\P}$.

Next, suppose $\bpi \in \widehat\Pi^{\Etuple'}_{\mathcal{Z}',\P}$.  We just need to show that $\bpi$ is additive on $\tau$. Suppose that $(\x^1, \x^2, \x^3) \in F(\tau)$. Because $\bpi$ is additive on $\tau^3$, we have $\sigma_1 \bpi(\tilde{\x}^1) + \sigma_3 \bpi(\tilde{\x}^3)=0$, where $\tilde{\x}^1$ is the projection of $\x^1$ onto $I_2$ and $\tilde{\x}^3$ is the projection of $\x^3$ onto $K_2$. Because $\bpi$ is additive on $\tau^1$ and $\bpi=0$ on $J$, we have $\sigma_2\bpi(\tilde{\x}^1) +\sigma_3\bpi(\x^1) = 0$. Similarly, because $\bpi$ is additive on $\tau^2$ and $\bpi=0$ on $J$, we have $\sigma_2\bpi(\tilde{\x}^3) +\sigma_3\bpi(\x^3) = 0$.
Thus, $\sigma_1\bpi(\x^1)+\sigma_2\bpi(\x^2)+\sigma_3\bpi(\x^3)=-\sigma_2\sigma_3(\sigma_1\bpi(\tilde{\x}^1)+\sigma_3\bpi(\tilde{\x}^3))=0$, which implies that $\bpi\in\widehat\Pi^{\Etuple}_{\mathcal{Z}',\P}$.

Therefore, the update $\E \leftarrow \E \setminus \{\tau^0\} \cup \{\tau^1, \tau^2, \tau^3\}$ is an equal update.

\end{proof}
\begin{corollary}[Updates applied iteratively]
    By iteratively applying \autoref{lem:step3step}, we create an equal update with $\E_{\ver\hor\diag, \tri,\tri} = \emptyset$.
\end{corollary}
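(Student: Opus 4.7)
The plan is to invoke \autoref{lem:step3step} finitely many times, treating each application as a single equal update and then composing. First, I would record that because $\pi$ is $\Z^2$-periodic over the $\Z^2$-periodic complex $\P_q$, and because every valid 7-tuple in $\E(\pi,\P_q)$ is determined by its equivalence class under translation by $\Z^2$, the set of equivalence classes of tuples in $\E_{\ver\hor\diag,\tri,\tri}$ is finite (there are only finitely many faces of each type in any fundamental domain such as $[0,1]^2$, by the local finiteness clause of \autoref{def:polyhedralComplex}).

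Next I would verify that a single invocation of \autoref{lem:step3step} strictly decreases the number of representatives in $\E_{\ver\hor\diag,\tri,\tri}$ by one. The removed tuple $\tau^0$ lies in $\E_{\ver\hor\diag,\tri,\tri}$ by hypothesis, while none of the three replacements does: in the proof of \autoref{lem:step3step} we have $\tau^1 = (\hor J,\ver I_2,\tri I,\ve\sigma,\t^1)$ and $\tau^2 = (\hor J,\ver K_2,\tri K,\ve\sigma,\t^2)$, which are of type~(6) in \autoref{lemma:cases} (two edges of distinct types and one triangle), while $\tau^3 = (\ver I_2,\point\{\u\},\ver K_2,\ve\sigma,\0)$ is of type~(2) (a point and two edges of the same type). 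The same conclusion holds verbatim when $J$ is a vertical or diagonal edge, by the symmetric case discussion already used in the proof of \autoref{lem:step3step}.

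I would then invoke transitivity: equality of $\widehat\Pi^{\E}_{\mathcal Z,\P}$ over successive updates is clearly a composable relation, so performing one iteration per representative of $\E_{\ver\hor\diag,\tri,\tri}$ and accumulating the zero-sets $\{\EquiParam{*} I_1, \EquiParam{*} J, \EquiParam{*} K_1\}$ into $\ZZ$ yields a single equal update whose final 7-tuple set satisfies $\E_{\ver\hor\diag,\tri,\tri} = \emptyset$.

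The main obstacle, and the one I would verify most carefully, is the termination of the iteration: we must be sure that no added tuple falls back into $\E_{\ver\hor\diag,\tri,\tri}$ and that periodicity does not generate infinitely many distinct representatives in disguise. Both points reduce to the type classification of \autoref{lemma:cases} and the $\Z^2$-periodic local finiteness of $\P_q$ noted above, so the process halts after finitely many steps and yields the claimed equal update.
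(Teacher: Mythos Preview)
Your argument is correct and is precisely the natural way to justify this corollary; the paper itself states the corollary without proof, treating it as an immediate consequence of \autoref{lem:step3step}. Your key observations---that the replacement tuples $\tau^1,\tau^2,\tau^3$ are of types~(6) and~(2) in \autoref{lemma:cases} and hence never re-enter $\E_{\ver\hor\diag,\tri,\tri}$, that equal updates compose, and that $\Z^2$-periodicity together with local finiteness of $\P_q$ makes the iteration terminate after finitely many representative tuples---are exactly the points one must check, and you have identified them all.
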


\parbox{\textwidth}{ \emph{Step 4: Remove $\E_{\tri,\point, \tri}$ by changing the domain.}}
\begin{tabular}{|m{6cm}|m{6cm}|m{4.5cm}|}
\hline
\begin{tikzpicture}
\node (pic0){
\begin{tikzpicture}[scale=\myscale]
\drawGridred
    \mytriangle{\additive}{0.4}{1}{1} %
        \mytriangle{\additive}{0.4}{4}{2} %
    \mypoint{\edgeadditive}{3}{1} %
\end{tikzpicture}
\tikzarrow
};
\node [right = 0.1 of pic0](pic1a){
\begin{tikzpicture}[scale=\myscale]
\drawGridred
    \mytriangleRemove{1}{1}  %
        \mytriangle{\additive}{0.4}{4}{2} %
\end{tikzpicture}
};
\node [below=0.1 of pic0](pic0b){
\begin{tikzpicture}[scale=\myscale]
\drawGridred
    \mytriangle{\zero}{0.4}{1}{1} %
\end{tikzpicture}
\tikzLRarrow
};
\node [right=0.1 of pic0b](pic1b){
\begin{tikzpicture}[scale=\myscale]
\drawGridred
        \mytriangle{\zero}{0.4}{4}{2} %
\end{tikzpicture}
};
\node [below=0.1 of pic0b](pic0c){
\begin{tikzpicture}[scale=\myscale]
\drawGridred
    \mytriangle{\additive}{0.4}{1}{1} %
    \myvertEdge{\edgeadditive}{0}{1} %
    \myhorEdge{\edgeadditive}{1}{0} %
\end{tikzpicture}
\tikzLRarrow
};
\node [right=0.1 of pic0c](pic1c){
\begin{tikzpicture}[scale=\myscale]
\drawGridred
        \mytriangle{\additive}{0.4}{4}{2} %
    \myvertEdge{\edgeadditive}{0}{1} %
    \myhorEdge{\edgeadditive}{1}{0} %
\end{tikzpicture}
};
\node [below=0.1 of pic0c](pic0d){
\begin{tikzpicture}[scale=\myscale]
\drawGridred
    \mytriangle{\additive}{0.4}{1}{1} %
        \mytriangle{\additive}{0.4}{3}{0} %
    \mypoint{\edgeadditive}{2}{4} %
\end{tikzpicture}
\tikzLRarrow
};
\node [right = 0.1 of pic0d](pic1d){
\begin{tikzpicture}[scale=\myscale]
\drawGridred
    \mytriangle{\additive}{0.4}{4}{2}  %
        \mytriangle{\additive}{0.4}{3}{0} %
    \mypoint{\edgeadditive}{2}{4} %
\end{tikzpicture}
};
\end{tikzpicture}
&

Consider some $\tau  \in \E_{\tri,\point, \tri}$ where
$\tau = (\tri I, \point \{\u\}, \tri K, \ve \sigma, \t)$.  

We will remove $I$ from the domain.  To do so, we need to do the following:

(1) If $I \in \mathcal Z$, add $K$ to $\mathcal Z$.

(2) Remove $\tau$ from $\E$.

(3) For any $\tau'$ with $I$ in it, replace $I$ by $K$ to create a new 7-tuple $\tau''$.
(Note, we need to update $\ve\sigma', \t'$ via $\ve\sigma, \t$ and $\u$.).

(4) By storing $\tau$, we can retain a mapping for how to recover function values on $I$, and for now remove $I$ from the domain.

\begin{center} \textbf{[This step repeats until there are no more $\tau  \in \E_{\tri,\point, \tri}$.  The sequence as to choices of $\tau$ to apply this to is not important, even though it will result in different representations.]}\end{center}

& 
\parbox{4cm}{While $\E_{\tri,\point, \tri} \neq \emptyset$, let $\tau \in \E_{\tri,\point, \tri}$.\\

(1)  If $I \in \mathcal Z$, then\\ $\mathcal Z \leftarrow \mathcal Z \setminus \{ I\} \cup \{ K\}$.
\\
\\
(2)  $\mathcal E \leftarrow \mathcal E \setminus \{\tau\}$.\\
\\
(3)   For all $\tau' \in \E$ containing $I$, derive $\tau''$ and \\
$\E \leftarrow \E \setminus \{\tau'\}\cup \{\tau''\}$.
\\
\\
(4)  $\mathcal P \leftarrow \mathcal P \setminus \{\tri I\}$.}\\
\hline
\multicolumn{3}{|c|}{} \\ %
\multicolumn{3}{|c|}{Now we assume $\E_{\tri,\point, \tri} = \emptyset$.} \\[-1ex] %
\multicolumn{3}{|c|}{\rule{\linewidth}{0.4pt}} \\ %
\hline
\end{tabular}
\begin{lemma}
\label{reduceTranzero}
Suppose that $\tau = (I, \point \{\u\}, K, \ve \sigma, \t) \in \E$ where $\u \in \tfrac{1}{q} \Z^2$.  Then $\ZZ \leftarrow \ZZ \setminus \{I\} \cup \{K\}$ is an equal update.
\end{lemma}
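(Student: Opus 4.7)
The plan is to show equality $\widehat\Pi^{\E}_{\ZZ, \P} = \widehat\Pi^{\E}_{\ZZ', \P}$ with $\ZZ' := (\ZZ \setminus \{I\}) \cup \{K\}$ by verifying that, once the additivity enforced by $\tau$ is in place, vanishing on $I$ and vanishing on $K$ become equivalent conditions.

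The key ingredient is \autoref{validPointCharacterization}: since the middle slot of $\tau$ is a singleton, validity forces $I = \sigma_1\t - (\sigma_1\sigma_2\{\u\} + \sigma_1\sigma_3 K)$, so $I$ is a translated and possibly reflected copy of $K$, and in particular $p_1(F(\tau)) = I$, $p_3(F(\tau)) = K$. The additivity of $\bpi$ over $\tau$ reads $\sigma_1\bpi(\x^1) + \sigma_2\bpi(\u) + \sigma_3\bpi(\x^3) = 0$ for all $(\x^1,\u,\x^3) \in F(\tau)$. Since $\u \in \tfrac{1}{q}\Z^2$, the singleton $\{\u\}$ lies in $\verts(\P_q) \subseteq \ZZ$ by the standing assumption of Step~4, and the swap $\ZZ \to \ZZ'$ only removes the face $I$, so $\{\u\} \in \ZZ'$ as well. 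Hence in either space $\bpi(\u) = 0$ and the additivity collapses to $\sigma_1\bpi(\x^1) + \sigma_3\bpi(\x^3) = 0$.

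From here I would conclude by double inclusion. For $\widehat\Pi^{\E}_{\ZZ,\P} \subseteq \widehat\Pi^{\E}_{\ZZ',\P}$: if $\bpi \in \widehat\Pi^{\E}_{\ZZ,\P}$, then $\bpi|_I \equiv 0$; for every $\x^3 \in K = p_3(F(\tau))$ I would pick $\x^1 \in I$ with $(\x^1,\u,\x^3) \in F(\tau)$, and the collapsed relation forces $\bpi(\x^3) = 0$. Thus $\bpi$ vanishes on $K$ and on all of $\bigcup \ZZ \setminus I$, and since $\bigcup \ZZ' \subseteq \bigcup(\ZZ \cup \{K\})$, this gives $\bpi \in \widehat\Pi^{\E}_{\ZZ',\P}$. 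The reverse inclusion is symmetric, using $I = p_1(F(\tau))$ to propagate zeros from $K$ back to $I$, and noting that $\ZZ \setminus \{I\} \subseteq \ZZ'$ so the remaining zero conditions in $\ZZ$ are preserved by the swap.

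I do not foresee any real obstacle. The statement is essentially the propagation of a single linear relation through the bijection $I \leftrightarrow K$ supplied by the point in the middle slot of $\tau$. The only subtlety worth flagging is the dependence on the invariant $\verts(\P_q) \subseteq \ZZ$ maintained throughout Step~4, which is exactly what guarantees $\bpi(\u) = 0$ in both spaces despite the modification of $\ZZ$.
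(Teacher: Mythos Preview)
Your proposal is correct and is precisely what the paper's one-line proof (``This equivalence is obvious by exchanging function values through the additivity over $\tau$'') is gesturing at; you have simply spelled out the bijection $I \leftrightarrow K$ via \autoref{validPointCharacterization} and the collapse $\sigma_1\bpi(\x^1)+\sigma_3\bpi(\x^3)=0$ once $\bpi(\u)=0$. The only remark worth adding is that your forward inclusion tacitly uses $I\in\ZZ$ (so that $\bpi|_I\equiv 0$), which is exactly the hypothesis under which the lemma is invoked in Step~4 but is not literally written in the lemma statement.
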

\begin{proof}
    This equivalence is obvious by exchanging function values through the additivity over $\tau$.
\end{proof}

\begin{lemma}
\label{reduceTran-general}
Suppose that $\tau = (I_1, \point \{\v\}, \tilde I_1,\ve \sigma, \t) \in \mathcal E$ where $\v \in \tfrac{1}{q} \Z^2$.  
Suppose that $\tau'=(I_1, I_2, I_3, \ve \mu, \u)\in\mathcal E$ containing $I_1$. Then there exists a valid 7-tuple $\tau'' = (\tilde I_1, I_2, I_3, \ve{\tilde \mu}, \ve{ \tilde u})$ with $\ve{\tilde \mu}=(\tilde \mu_1,\mu_2,\mu_3)$, where $\tilde \mu_1 = -  \sigma_1 \sigma_3\mu_1$ and $\ve{\tilde u} = \u - \mu_1 \sigma_1\t + \mu_1 \sigma_1 \sigma_2 \v$, such that $\E \leftarrow \E \setminus \{\tau'\}\cup \{\tau''\}$ is an equal update.
\end{lemma}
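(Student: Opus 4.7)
The plan is to exploit the presence of $\tau \in \mathcal E$ to rewrite $\bpi$-values on $I_1$ in terms of $\bpi$-values on $\tilde I_1$, and then transport the additivity relation of $\tau'$ through this rewriting. Two things have to be checked: that $\tau''$ is a valid 7-tuple, and that replacing $\tau'$ by $\tau''$ leaves $\widehat\Pi^\E_{\ZZ,\P}$ unchanged.

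\textbf{Step 1 (Validity of $\tau''$).} I would first apply \autoref{validPointCharacterization} to the valid 7-tuple $\tau = (I_1, \{\v\}, \tilde I_1, \ve\sigma, \t)$. This gives the exact equality $\tilde I_1 = \sigma_3 \t - \sigma_2\sigma_3\v - \sigma_1\sigma_3 I_1$, so the affine map $\Phi\colon \x^1 \mapsto \sigma_3\t - \sigma_1\sigma_3\x^1 - \sigma_2\sigma_3\v$ is a bijection $I_1 \to \tilde I_1$, with inverse $\tilde\x^1 \mapsto \sigma_1\t - \sigma_1\sigma_2\v - \sigma_1\sigma_3\tilde\x^1$. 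Substituting $\x^1 = \Phi^{-1}(\tilde\x^1)$ into the defining equation $\mu_1\x^1 + \mu_2\x^2 + \mu_3\x^3 = \u$ of $F(\tau')$ is a one-line computation that produces exactly $\tilde\mu_1\tilde\x^1 + \mu_2\x^2 + \mu_3\x^3 = \ve{\tilde u}$, with $\tilde\mu_1 = -\mu_1\sigma_1\sigma_3$ and $\ve{\tilde u} = \u - \mu_1\sigma_1\t + \mu_1\sigma_1\sigma_2\v$, matching the lemma. Thus $\Phi$, acting on the first coordinate, is a bijection $F(\tau') \leftrightarrow F(\tau'')$. Since $\tau'$ is valid, $p_i(F(\tau')) = I_i$ for $i=1,2,3$, and the bijection then forces $p_1(F(\tau'')) = \tilde I_1$ and $p_i(F(\tau'')) = I_i$ for $i = 2, 3$, proving validity of $\tau''$.

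\textbf{Step 2 (Equal update).} Let $\E' = \E\setminus\{\tau'\}\cup\{\tau''\}$; note that $\tau$ remains in $\E \cap \E'$, and at this stage of the main proof we have $\frac{1}{q}\Z^2 \subseteq \ZZ$, so every admissible $\bpi$ in either space satisfies $\bpi(\v)=0$. Combining $\bpi(\v) = 0$ with additivity over $\tau$ yields the pointwise identity
\[
\bpi(\x^1) = -\sigma_1\sigma_3\,\bpi(\Phi(\x^1)) \quad \text{for all } \x^1 \in I_1.
\]
To show $\widehat\Pi^\E_{\ZZ,\P} \subseteq \widehat\Pi^{\E'}_{\ZZ,\P}$, take any $(\tilde\x^1,\x^2,\x^3)\in F(\tau'')$, let $\x^1 = \Phi^{-1}(\tilde\x^1) \in I_1$, and use the bijection from Step 1 to get $(\x^1,\x^2,\x^3)\in F(\tau')$; substituting the displayed identity into the additivity equation for $\tau'$ yields the additivity equation for $\tau''$. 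The reverse inclusion is verbatim symmetric, substituting in the opposite direction.

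The argument is essentially mechanical, and I do not expect any genuine obstacle: the whole proof reduces to the bijection $\Phi$ coming from \autoref{validPointCharacterization} plus one substitution in each direction. The only care needed is bookkeeping of the signs $\sigma_i$ and $\mu_i$ to confirm that the computed coefficients match the stated $\tilde\mu_1$ and $\ve{\tilde u}$; everything else is forced by the fact that $\bpi$ vanishes on $\v$ and that $\tau$ provides a rigid affine link between values on $I_1$ and on $\tilde I_1$.
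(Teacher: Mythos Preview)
Your proposal is correct and follows essentially the same approach as the paper: both use \autoref{validPointCharacterization} to get the affine bijection between $I_1$ and $\tilde I_1$, then substitute through the defining equation of $F(\tau')$ to obtain $\tilde\mu_1$ and $\ve{\tilde u}$, and finally use $\bpi(\v)=0$ together with additivity over $\tau$ to transfer the additivity relation. Your organization is slightly cleaner on the validity side---you argue via the induced bijection $F(\tau')\leftrightarrow F(\tau'')$ and read off the projections, whereas the paper verifies each of the three containment conditions of \autoref{validCharacterization} by hand---but this is a cosmetic difference, not a different route.
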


\begin{proof}
\textbf{Additivity.} Since $\pi$ is additive over $(I_1, \point \{\v\}, \tilde I_1,\ve \sigma, \t)$, we know that
\begin{align*}
&&&0 = \sigma_1\pi(\x^1) + \sigma_2\pi(\v) + \sigma_3\pi(\ve {\tilde x}^1)   && 
\text{ for all } \x^1 \in I_1,\, \ve{\tilde x}^1 \in \tilde I_1, \,
\sigma_1 \x^1 + \sigma_2 \v  + \sigma_3 \ve{\tilde x}^1 = \t.\\
\Leftrightarrow  &&&0 = \mu_1\pi(\x^1) +  \mu_1 \sigma_1 \sigma_3\pi(\ve{\tilde x}^1 )  &&
\text{ for all } 
\x^1 \in I_1,\, \ve{\tilde x}^1 \in \tilde I_1,\,  
\mu_1 \x^1  = \mu_1 \sigma_1\t - \mu_1 \sigma_1 \sigma_2 \v  - \mu_1 \sigma_1 \sigma_3 \ve{\tilde x}^1 .\\
\Leftrightarrow &&& 0 = \mu_1\pi(\x^1) -  c_1\pi(\ve{\tilde x}^1)  &&  \text{ for all } \x^1 \in I_1,\, \ve{\tilde x}^1 \in \tilde I_1,\,
\mu_1 \x^1  = \u - \ve{\tilde u}  + c_1 \ve{\tilde x}^1 .
\intertext{
The second equation follows since $\pi(\v) = 0$ as that $\v\in \verts(\P_q)$ and we also multiple through by $\sigma_1 \mu_1$.  The third equation follows from rewriting based on definitions.  We will now show the equivalence between the two 7-tuples.  Suppose $\pi$ is additive over $(I_1, I_2, I_3, \ve \mu, \u)$.  That is,}
&&&0= \mu_1\pi( \x^1) + \mu_2\pi(\x^2) + \mu_3\pi( \x^3)  && \text{ for all }  \x^1 \in I_1,\,   \x^2 \in I_2,\, \x^3 \in I_3,\,  \mu_1  \x^1 + \mu_2   \x^2  + \mu_3  \x^3 = \u.\\
\Leftrightarrow &&& 0= \tilde \mu_1\pi(\ve{\tilde x}^1) + \mu_2\pi( \x^2) + \mu_3\pi( \x^3)
&& \text{ for all } \ve{\tilde x}^1 \in \tilde I_1,\,  \x^2 \in I_2,\,   \x^3 \in I_3,
 \u - \ve{\tilde u}  + \tilde \mu_1 \ve{\tilde x}^1  + \mu_2   \x^2  + \mu_3  \x^3 = \u,
\end{align*}
where the equivalence follows from applying the last equation above. This is equivalent to $\pi$ is additive over $(\tilde I_1, I_2, I_3, \ve{\tilde \mu}, \ve{\tilde u})$.

\textbf{Validity.}
Finally, we show that $(I_1, I_2, I_3, \ve \mu, \u)$
is a valid  7-tuple if and only if
$(\tilde I_1, I_2, I_3, \ve{\tilde \mu}, \ve{\tilde u}))$
is a valid  7-tuple.
We establish the correspondence via \autoref{validCharacterization}.  First note that since
$(I_1, \point \{\v\}, \tilde I_1,\ve \sigma, \t)$ is a valid 7-tuple, by \autoref{validPointCharacterization}, we have
  $ I_1 = \sigma_1 \t + \sigma_1 \sigma_2 \v - \sigma_1 \sigma_3 \tilde I_1$.
  
Suppose \autoref{validCharacterization}\eqref{item:first_containment} holds for $(I_1, I_2, I_3, \ve \mu, \u)$.
Multiplying by $\mu_1$, we have
$ \mu_1 I_1 \subseteq  \u - ( \mu_2 I_2 +  \mu_3 I_3)$.
Then substituting for $I_1$ we have
\begin{align*}
&& \mu_1 \sigma_1 \t + \mu_1 \sigma_1 \sigma_2 \v - \mu_1 \sigma_1 \sigma_3 \tilde I_1 &\subseteq  \u - ( \mu_2 I_2 +  \mu_3 I_3),\\
\Leftrightarrow && - \mu_1 \sigma_1 \sigma_3 \tilde I_1 &\subseteq  \u-\mu_1 \sigma_1 \t - \mu_1 \sigma_1 \sigma_2 \v - ( \mu_2 I_2 +  \mu_3 I_3),\\
\Leftrightarrow && \tilde \mu_1 \tilde I_1 &\subseteq  \ve{\tilde u} - ( \mu_2 I_2 +  \mu_3 I_3).
\end{align*}
Therefore,  \autoref{validCharacterization}\eqref{item:first_containment} holds for $(I_1, I_2, I_3, \ve \mu, \u)$ if and only if condition 1 of \autoref{validCharacterization}\eqref{item:first_containment} holds for $(\tilde I_1, I_2, I_3, \ve {\tilde \mu}, \ve{\tilde u})$.  Conditions 2 and 3 are similar.   This finishes the proof.
\end{proof}

\begin{lemma}
\label{reduceTran}
Suppose that $\tau = (I, \point \{\u\}, K,\ve \sigma, \t) \in \mathcal E$ where $\u \in \tfrac{1}{q} \Z^2$.  
Suppose that $\tau'=(I, J', K', \ve \sigma', \t')\in\mathcal E$ containing $I$. Then there exists a valid 7-tuple $\tau'' = (K, J', K', \ve \sigma'', \t'')$ with $\ve{\sigma}''=(\sigma''_1,\sigma'_2,\sigma'_3)$, where $\sigma''_1 = -  \sigma_1 \sigma_3\sigma'_1$ and $\t'' = \t' - \sigma'_1 \sigma_1\t + \sigma'_1 \sigma_1 \sigma_2 \u$, such that $\E \leftarrow \E \setminus \{\tau'\}\cup \{\tau''\}$ is an equal update.
\end{lemma}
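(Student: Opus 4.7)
The plan is to observe that this statement is essentially a renaming of \autoref{reduceTran-general}, and then to either invoke that lemma directly or mirror its proof. Under the correspondence $I_1 \mapsto I$, $\tilde I_1 \mapsto K$, $\v \mapsto \u$, $I_2 \mapsto J'$, $I_3 \mapsto K'$, $\ve\mu \mapsto \ve{\sigma'}$, $\ve u \mapsto \t'$, $\ve{\tilde\mu} \mapsto \ve{\sigma''}$, and $\ve{\tilde u} \mapsto \t''$, the transformation rules $\tilde\mu_1 = -\sigma_1\sigma_3\mu_1$ and $\ve{\tilde u} = \ve u - \mu_1\sigma_1\t + \mu_1\sigma_1\sigma_2\v$ become $\sigma''_1 = -\sigma_1\sigma_3\sigma'_1$ and $\t'' = \t' - \sigma'_1\sigma_1\t + \sigma'_1\sigma_1\sigma_2\u$, exactly as required.

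If I instead reprove the lemma directly, the two steps are the same as in \autoref{reduceTran-general}. First, for the additivity equivalence: from $\tau\in\mathcal E$ and $\pi(\u)=0$ (since $\u\in\verts(\P_q)$), the relation $\sigma_1\pi(\x) + \sigma_3\pi(\ve{\tilde x}) = 0$ for $(\x,\u,\ve{\tilde x})\in F(\tau)$ lets us solve $\sigma'_1\x = \t' - \t''  + \sigma_1\sigma_3\sigma'_1 \ve{\tilde x} $ for all $\x\in I$ in terms of a corresponding $\ve{\tilde x}\in K$, and substitute into $\sigma'_1\pi(\x)+\sigma'_2\pi(\x^2)+\sigma'_3\pi(\x^3)=0$ to obtain the additivity over $\tau''$; the reverse substitution gives the converse direction. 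Second, for validity: since $\tau$ is valid, \autoref{validPointCharacterization} gives $I = \sigma_1\t + \sigma_1\sigma_2\u - \sigma_1\sigma_3 K$; plugging this expression for $I$ into the three containments of \autoref{validCharacterization} for $\tau'$ translates them term-by-term into the corresponding three containments for $\tau''$, thanks to the choice of $\sigma''_1$ and $\t''$.

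Both the additivity and the validity arguments are bookkeeping on signs and translations; there is no genuinely new content beyond \autoref{reduceTran-general}. The only mild obstacle is keeping the signs consistent, in particular checking that the factor $\sigma'_1 \sigma_1$ applied to $\t$ in the definition of $\t''$ is exactly what is needed to absorb $\sigma'_1 I = \sigma'_1(\sigma_1\t + \sigma_1\sigma_2\u - \sigma_1\sigma_3 K)$ into $-\sigma''_1 K + (\t' - \t'')$. Once this is verified, the equal-update claim $\widehat\Pi^{\mathcal E}_{\mathcal Z,\P} = \widehat\Pi^{(\mathcal E\setminus\{\tau'\})\cup\{\tau''\}}_{\mathcal Z,\P}$ follows immediately because the additivity equations indexed by $\tau'$ and $\tau''$ cut out the same subspace of $\widehat\Pi$ given that the equations indexed by $\tau$ are already enforced.
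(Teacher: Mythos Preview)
Your proposal is correct and matches the paper's approach exactly: the paper's proof of \autoref{reduceTran} is the single sentence ``It follows directly from \autoref{reduceTran-general},'' and your dictionary $I_1\mapsto I$, $\tilde I_1\mapsto K$, $\v\mapsto\u$, $I_2\mapsto J'$, $I_3\mapsto K'$, $\ve\mu\mapsto\ve{\sigma'}$, $\ve u\mapsto\t'$ carries out precisely that reduction. Your additional sketch of how the additivity and validity arguments would go if reproved from scratch is more than the paper provides, but is consistent with the proof of \autoref{reduceTran-general}.
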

\begin{proof}
    It follows directly from \autoref{reduceTran-general}.
\end{proof}

\begin{lemma}[Step 4 equivalence]%
  \label{removeTriCont}
Suppose $\tau=(\tri I, \point \{\u\}, \tri K, \ve\sigma, \t)$ is the only 7-tuple in $\Etuple$ where $I$ appears and where $\u \in \frac1q\ZZ^2$.  Then there exist 7-tuples $\{\tau^1, \tau^2, \tau^3\}$ of type (2) such that the update $\E \leftarrow \E':=\E \setminus \{\tau\}\cup\{\tau^1,\tau^2,\tau^3\}$, $\ZZ \leftarrow \ZZ$, $\mathcal P \leftarrow \mathcal P' := \mathcal P \setminus \{I\}$ is an equivalent update. Moreover,
\begin{align*}
\mathcal E''=\{\tau^1 = &(\hor I_1, \point \{\u\}, \hor K_1, \ve\sigma, \t),\\
\tau^2 = & (\ver I_2, \point \{\u\}, \ver K_2, \ve\sigma, \t),\\
\tau^3 = &(\diag I_3, \point \{\u\}, \diag K_3, \ve\sigma, \t)\},
\end{align*}
where $I_1 \in \P_{q,\hor}, I_2\in \P_{q,\ver}, I_3 \in \P_{q,\diag}$ are the edges of $I$ and  $K_1 \in \P_{q,\hor}, K_2\in \P_{q,\ver}, K_3 \in \P_{q,\diag}$ are the edges of $K$.
\end{lemma}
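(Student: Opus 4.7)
The plan is to establish the equivalent update by constructing a bijection between $\widehat\Pi^{\Etuple}_{\ZZ,\P}$ and $\widehat\Pi^{\Etuple'}_{\ZZ,\P'}$ via restriction and extension. The first step is to observe that, by \autoref{validPointCharacterization}, validity of $\tau = (\tri I, \point\{\u\}, \tri K, \ve\sigma, \t)$ forces $\tri K = \sigma_3\t - \sigma_3\sigma_1\,\tri I - \sigma_3\sigma_2\{\u\}$, so the affine map $\phi\colon I \to K$ given by $\phi(\x) := \sigma_3(\t - \sigma_1\x - \sigma_2\u)$ is a bijection. Because $\phi$ is built from reflections and translations belonging to the $\Z^2$-symmetry group of $\P_q$, it preserves edge types and therefore sends each edge $I_j$ of $I$ of type $\EquiParam{j}$ to the edge $K_j$ of $K$ of the same type. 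This makes each proposed $\tau^j = (\EquiParam{j} I_j, \point\{\u\}, \EquiParam{j} K_j, \ve\sigma, \t)$ valid (again by \autoref{validPointCharacterization}) and ensures $F(\tau^j) \subseteq F(\tau)$.

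For the restriction direction, I would take $\bpi \in \widehat\Pi^{\Etuple}_{\ZZ,\P}$ and restrict it to $\bigcup\P'$; additivity over each $\tau^j$ is immediate from $F(\tau^j) \subseteq F(\tau)$ together with additivity over $\tau$, while additivity over the remaining 7-tuples of $\Etuple'$ is inherited verbatim from $\Etuple$. For the extension direction, I would take $\bpi' \in \widehat\Pi^{\Etuple'}_{\ZZ,\P'}$ and lift it by $\bpi := \bpi'$ on $\bigcup\P'$ and $\bpi(\x) := -\sigma_1\sigma_3\,\bpi'(\phi(\x))$ on $\intr(I)$. Since $\u \in \tfrac{1}{q}\Z^2 \subseteq \bigcup\ZZ$ gives $\bpi'(\u) = 0$, this formula is exactly the additivity relation demanded by $\tau$, so additivity over $\tau$ holds by construction; additivity over any other $\tau'' \in \Etuple\setminus\{\tau\}$ is inherited from $\bpi'$ because, by hypothesis, $\tau$ is the only 7-tuple of $\Etuple$ containing $I$, so every such $\tau''$ has its projections in $\bigcup\P'$ where $\bpi = \bpi'$. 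Periodicity and vanishing on $\bigcup\ZZ$ transfer directly (including the degenerate case where both $\tri I$ and $\tri K$ already lie in $\ZZ$, in which the lift is automatically zero on $I$).

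The main obstacle will be verifying that the extension glues continuously across the boundary $\partial I = I_1 \cup I_2 \cup I_3$, so that $\bpi$ is a single Lipschitz function on $\bigcup\P$. For $\x \in I_j$ the limit of $\bpi$ from $\intr(I)$ is $-\sigma_1\sigma_3\,\bpi'(\phi(\x))$ by the defining formula (since $\phi$ is continuous), while additivity of $\bpi'$ over $\tau^j$, after cancelling $\bpi'(\u) = 0$, rearranges to exactly $\bpi'(\x) = -\sigma_1\sigma_3\,\bpi'(\phi(\x))$. Hence the interior and boundary definitions coincide on $\partial I$, and Lipschitz continuity of $\bpi$ on $\bigcup\P$ follows from Lipschitz continuity of $\bpi'$ together with the smoothness of the affine map $\phi$. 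The restriction and extension maps are manifestly mutual inverses, yielding the bijection and hence the equivalent update.
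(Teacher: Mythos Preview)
Your proposal is correct and follows essentially the same route as the paper: both construct the restriction/extension bijection, verify validity of the $\tau^j$ via \autoref{validPointCharacterization}, use $F(\tau^j)\subseteq F(\tau)$ (the paper's \autoref{remContinuity}) for the forward direction, and define the lift on $\intr(I)$ by the same formula $-\sigma_1\sigma_3\,\bpi'(\phi(\x))$. Your treatment of the boundary gluing is in fact more explicit than the paper's, which simply remarks that the additivity on the edge tuples $\tau^j$ forces the extension to agree with $\hat\theta$ on $\partial I$ and then appeals to the affine relationship with $K$ for continuity.
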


\begin{proof}

Since
$\tau=(\tri I, \point \{\u\}, \tri K, \ve\sigma, \t)$, by \autoref{validPointCharacterization} is a vaild 7-tuple, we have
$$
  I = \sigma_1 \t - (\sigma_1 \sigma_2 \u + \sigma_1 \sigma_3 K).
$$
Since this equation is just an affine transformation, we must also have
$$
  I_i = \sigma_1 \t - (\sigma_1 \sigma_2 \u + \sigma_1 \sigma_3 K_i).
$$
Then by \autoref{validPointCharacterization}, the three 7-tuples in $\Etuple'':=\{\tau^1,\tau^2,\tau^3\}$ are valid 7-tuples.

Define the projection $h\colon \widehat\Pi^\Etuple_{\mathcal{Z},\P}  \to \widehat \Pi^{\Etuple'}_{\mathcal{Z},\P\setminus I}$ such that $\theta \mapsto \theta|_{\bigcup(\P \setminus I)}$.

Suppose $\theta \in \widehat\Pi^\Etuple_{\mathcal{Z},\P}$ and $\theta\ne 0$.
Clearly $h(\theta)$ is continuous since $h$ is just the restriction to a closed set.
By \autoref{remContinuity}, since $\theta$ is additive on $\tau$, then it is also additive on all of the 7-tuples in $\Etuple''$.  Therefore,  $\theta \in \widehat\Pi^{\Etuple\cup \Etuple''}_{\mathcal{Z},\P} \subseteq \widehat\Pi^{\Etuple'}_{\mathcal{Z},\P}$.   This inclusion holds since $\Etuple' \subseteq \Etuple \cup \Etuple''$. Since $\Etuple'$ has no 7-tuples containing $I$, the restriction $\theta|_{\bigcup\P\setminus I}$ must also be additive on all 7-tuples in $\Etuple'$.
Thus $h(\theta) \in \widehat\Pi^{\Etuple'}_{\mathcal{Z},\P\setminus I}$.
Also $h(\theta)\ne 0$ because otherwise $\theta = 0$ on $\P\setminus I$ and then by $\tau\in \E$, we have $\theta = 0$ on $I$ as well.

Now define the lift $h^{-1}\colon \bar\Pi^{\Etuple'}_{\mathcal{Z},\P\setminus I} \to \bar\Pi^\Etuple_{\mathcal{Z},\P}$ by
$
\hat \theta \mapsto \theta
$
where
$$
\theta(\x) = \begin{cases}
\hat \theta(\x) & \text{ for } \x \in \bigcup (\P\setminus I) = \bigcup\P\setminus \intr(I),\\
-\sigma_3 \sigma_1 \hat \theta(\y) & \text{ for } \x \in \intr(I), \text{with } \sigma_1 \x + \sigma_2 \u + \sigma_3 \y = \t.
\end{cases}
$$
Above, whenever $\x \in \intr(I)$, we have $\y \in \intr(K)$.
Hence, the values of $\theta$ on $\intr(I)$ are determined uniquely by the additive relation $(\tri I, \point \{\u\}, \tri K, \ve\sigma, \t)$.

Now consider any $\hat \theta \in \widehat\Pi^{\Etuple'}_{\mathcal{Z},\P\setminus I}$, $\hat\theta\ne 0$ and let $\theta = h^{-1}(\hat \theta)$. By construction of $h^{-1}$, we see that $\theta\ne 0$ and $\theta$ is additive on $\tau$ in the interior. Since $\hat \theta$, and hence $\theta$, is additive on the 7-tuples in $\Etuple''$, $\theta$ is also additive on the closed set, that is
$
\sum_{i=1}^3\sigma_i \theta(\x^i) = 0, \ \forall \x^1 \in I, \x^2 =\u, \x^3\in K, \sum_{i=1}^3 \sigma_i \x^3 = \t.
$
Since $\u \in \verts(\P_q)$ is only one point, this equation shows a affine relationship between the values $\theta$ on $K$ and on $I$.  Since $\theta$ is continuous on the closed set $K$, then it is also continuous on the closed set $I$. Thus, $\theta = h^{-1}(\hat \theta)\in \widehat\Pi^{\Etuple}_{\mathcal{Z},\P}$.

Therefore, the update is equivalent.
\end{proof}

\begin{corollary}[Updates applied iteratively]
    By iteratively applying \autoref{removeTriCont}, we create an equivalent update with $\E_{\tri,\point, \tri} = \emptyset$.
\end{corollary}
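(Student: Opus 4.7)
The plan is to proceed by induction on the number of $\Z^2$-periodicity classes of triangles $\tri I$ that appear as the first or third component of some 7-tuple in $\E_{\tri,\point,\tri}$. Because the complex $\P$ has only finitely many triangle faces modulo $\Z^2$, this induction parameter is a well-defined nonnegative integer, and the base case (parameter equal to zero) is exactly the conclusion $\E_{\tri,\point,\tri}=\emptyset$ we seek.

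For the inductive step I would pick any $\tau=(\tri I,\point\{\u\},\tri K,\ve\sigma,\t)\in\E_{\tri,\point,\tri}$ and modify $(\E,\ZZ,\P)$ so that $\tri I$ is eliminated from every component. First, if $\tri I\in\ZZ$, I would apply \autoref{reduceTranzero} to swap $\tri I$ for $\tri K$ in $\ZZ$; this is an equal update. Next, using the invariants established in Steps 1--3, any $\tau'\in\E\setminus\{\tau\}$ whose support meets $\tri I$ must lie in $\E_{\tri,\point,\tri}$ (type (4)) or in $\E_{\tri,\,\ver\hor\diag,\,\ver\hor\diag}$ (the edge-edge-triangle sub-case of type (6)); in either case $\tri I$ occupies a triangle slot. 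For each such $\tau'$, I would apply \autoref{reduceTran} to replace it with a $\tau''$ in which the slot is occupied by $\tri K$ instead. Each such replacement is an equal update. After processing all of them, $\tau$ becomes the unique 7-tuple whose support meets $\tri I$.

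At this point the hypothesis of \autoref{removeTriCont} is met, so I would apply that lemma: the update $\E\leftarrow(\E\setminus\{\tau\})\cup\{\tau^1,\tau^2,\tau^3\}$, $\ZZ\leftarrow\ZZ$, $\P\leftarrow\P\setminus\{\tri I\}$ is equivalent, where $\tau^1,\tau^2,\tau^3$ are the three type-(2) tuples restricting the point-to-triangle additivity to matched edges of $I$ and $K$. Compositions of equal updates are equal, and composing an equal update with an equivalent one remains equivalent, so the whole iteration preserves the diagram \eqref{eq:perturbation-correspondance}. After the step, no face of the $\Z^2$-orbit of $\tri I$ lies in $\P$ and hence in any 7-tuple, so the induction parameter strictly decreases.

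The main obstacle is bookkeeping. I expect the delicate points to be: verifying that replacing $\tri I$ by $\tri K$ via \autoref{reduceTran} does not reintroduce 7-tuples of the previously eliminated classes $\E_{\point,\point,\point}$, $\E_{\ast,\ast,\ast}$, or $\E_{\ver\hor\diag,\tri,\tri}$ (which follows because the triangle slot remains a triangle slot and edge slots remain edge slots under the substitution); ensuring that the $\Z^2$-periodic iteration removes every translate of $\tri I$ at once, consistently with the periodicity of $\bpi$; and confirming that the new type-(2) tuples $\tau^1,\tau^2,\tau^3$ introduced by \autoref{removeTriCont} do not interfere with any future iteration, which holds since they involve only edges and the remaining triangles of $\P$. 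Once these are checked, the corollary follows by finite induction.
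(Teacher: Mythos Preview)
Your proposal is correct and follows essentially the same approach as the paper: for each $\tau=(\tri I,\point\{\u\},\tri K,\ve\sigma,\t)\in\E_{\tri,\point,\tri}$, use \autoref{reduceTran} on the remaining type-(4) and type-(6) tuples containing $I$, use \autoref{reduceTranzero} if $I\in\ZZ$, and then invoke \autoref{removeTriCont} to drop $I$ from~$\P$. The paper's proof is terser and does not spell out the induction parameter or the bookkeeping checks you list, but the argument is the same.
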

\begin{proof}
For a tuple $\tau=(\tri I, \point \{\u\}, \tri K, \ve\sigma, \t)$, we will apply \autoref{reduceTran} to all the tuples in $\E_{\tri, \point, \tri}$ and $\E_{\edge,\edge,\tri}$ containing $I$. Thus, after some equivalent updates, $\tau$ will be the only tuple containing $I$, and the projection $p(\E)$ will not change. If $I\in\mathcal Z$, we will apply \autoref{reduceTranzero} to replace $I$ with $K$ in $\mathcal Z$.
Then, we can apply \autoref{removeTriCont} to remove $I$ from the domain $\P$. By iteratively applying these procedures, we will remove all the tuples in $\E_{\tri,\point, \tri}$.
\end{proof}

\parbox{\textwidth}{ \emph{Step 5: Remove $\E_{\ver\hor\diag,\ver\hor\diag, \tri}$ by exchanging 7-tuples, inferring zeros, and changing the domain.}}
\begin{tabular}{|m{6cm}|m{6cm}|m{4.5cm}|}
\hline
\begin{tikzpicture}
\node (pic0){
\begin{tikzpicture}[scale=\myscale]
\drawGridred
    \mytriangle{\additive}{0.4}{3}{4} %
    \myhorEdge{\edgeadditive}{2}{1}
    \mydiagEdge{\edgeadditive}{1}{4}
\end{tikzpicture}
\tikzLRarrow
};
    \node[right=0.1 of pic0] (pic1) {
            \begin{tikzpicture}[scale=\myscale]
            \drawGridred
   
    \mydiagEdge{\edgeadditive}{3}{5}
    \mydiagEdge{\edgeadditive}{1}{4}
    \mypoint{\edgeadditive}{0}{0} %
        \end{tikzpicture}
    };

    \node[above=-0.2cm of pic1] (pic2) {
        \begin{tikzpicture}[scale=\myscale]
            \drawGridred
    \mytriangle{\additive}{0.4}{3}{4} %
    \myhorEdge{\edgeadditive}{3}{4}
    \mydiagEdge{\edgeadditive}{3}{5}

        \end{tikzpicture}
    };
    \node[below=-0.2cm of pic1] (pic3) {
        \begin{tikzpicture}[scale=\myscale]
            \drawGridred
    \mypoint{\edgeadditive}{0}{0} %
    \myhorEdge{\edgeadditive}{3}{4}
    \myhorEdge{\edgeadditive}{2}{1}

        \end{tikzpicture}
    };
\end{tikzpicture}
&
\emph{Step 5a. Canonical representation}
Consider  $\E_{\ver\hor\diag,\ver\hor\diag, \tri}$.
Suppose $\tau = (\diag I, \hor J, \tri K, \ve \sigma, \t)$. 
We now rewrite this by using edges of $K$.  For this, we need extra 7-tuples with edges and a point to relate values on corresponding edges.  These follow from using continuity arguments, and we can use the notation to absorb the point into the $\t$ argument and assume the point is $\ve 0$.
\begin{center} \textbf{[Use of continuity]}\end{center}
&
$\E \leftarrow \E \setminus \{\tau\} \cup \{\tau^1, \tau^2, \tau^3\}$
\\
\hline
 \begin{tikzpicture}
\node (pic0){
\begin{tikzpicture}[scale=\myscale]
\drawGridred
    \mytriangle{\additive}{0.4}{3}{4} %
    \myhorEdge{\edgeadditive}{3}{4}
    \mydiagEdge{\edgeadditive}{3}{5}
\end{tikzpicture}
};
    \node[below=0.1 of pic0] (pic0a) {
            \begin{tikzpicture}[scale=\myscale]
            \drawGridred
    \mytriangle{\additive}{0.4}{3}{4} %
    \myhorEdge{\edgeadditive}{3}{4}
    \myvertEdge{\edgeadditive}{3}{4} 
        \end{tikzpicture}
    };
        \node[right=1.3cm of $(pic0.north)!0.5!(pic0a.south)$] (arrow) {\Large$\Rightarrow$};

    \node[right=1.3cm of pic0] (pic1) {
        \begin{tikzpicture}[scale=\myscale]
            \drawGridred

             \mytriangle{\additive}{0.4}{3}{4} %
    \myhorEdge{\edgeadditive}{3}{4}
    \mydiagEdge{\edgeadditive}{3}{5}

        \end{tikzpicture}
    };
        \node[below=0.1cm of pic1] (pic2) {
        \begin{tikzpicture}[scale=\myscale]
            \drawGridred
    \myhorEdge{\zero}{3}{4}

        \end{tikzpicture}
    };

\end{tikzpicture}
 & \emph{Step 5b(i).} Two canonical tuples with two edges of the same triangle.  We can use hidden interval lemma~\autoref{lem:hidden_interval_lemma} to deduce an affine property.  This allows us to set the edge in common to 0.  This zero together with an additivity implies the other additivity.

 Note that if all three canonical tuples exist, we can apply this step twice.  This actually induces the function to be zero on the entire triangle.
 &
 \parbox{4.5cm}{$\tau^1, \tau^2$ with common $\tri K$ triangle and common edge $\ver\hor\diag I$.\\
 \\
 $\E \leftarrow \E \setminus \{\tau^2\}$\\
 $\ZZ \leftarrow \ZZ\cup \{\ver\hor\diag I\}$}
 \\
\hline
\begin{tikzpicture}
\node (pic0){
\begin{tikzpicture}[scale=\myscale]
\drawGridred
    \mytriangle{\additive}{0.4}{3}{4} %
    \myhorEdge{\edgeadditive}{3}{4}
    \mydiagEdge{\edgeadditive}{3}{5}
\end{tikzpicture}
\tikzLRarrow
};
    \node[right=0.1 of pic0] (pic1) {
            \begin{tikzpicture}[scale=\myscale]
            \drawGridred
   
    \mytriangleRemove{3}{4} %
    \myhorEdge{\edgeadditive}{3}{4}
    \mydiagEdge{\edgeadditive}{3}{5}
        \end{tikzpicture}
    };
\end{tikzpicture}
 & \emph{Step 5b(ii).} Exactly 1 tuple of this type
 In this case, we can just project out the triangle and record the formula update as a combination of function values from the edges.
 &
 $\mathcal P \leftarrow \mathcal P \setminus \{\tri K\}$\\
 \hline
\multicolumn{3}{|c|}{} \\ %
\multicolumn{3}{|c|}{Now we assume $ \E_{\ver\hor\diag,\ver\hor\diag, \tri} = \emptyset$.}\\
\multicolumn{3}{|c|}{$\Rightarrow$ Thus, $\E = \E_{\point, \ver\hor\diag, \ver\hor\diag} \cup \E_{\ver\hor\diag, \ver\hor\diag, \ver\hor\diag}$ (Type (2) and (3))} \\[-1ex] %
\multicolumn{3}{|c|}{\rule{\linewidth}{0.4pt}} \\ %
\hline
\end{tabular}

There are three types of separability relations.  After permutation, the 7-tuples that are separability relations look like
$$
 \text{(1) $ (\ver I, \hor J, \tri K, \ve\sigma, \t)$,} \hspace{1cm}
\text{(2) $ (\hor I, \diag J, \tri K, \ve\sigma, \t)$,} \hspace{1cm}
\text{(3) $ (\diag I, \ver J, \tri K, \ve\sigma, \t)$.}
$$
We next show that every separability relation including two edges $I,J$ and a triangle $K$ can be exchanged for  separability relation including two edges of $K$ and the triangle $K$, along with relations between the edges of $K$ and the edges $I,J$.

\begin{lemma}[Step 5a equal]
\label{lemCanonical}
  For each $\tau=(\EquiParam{1} I, \EquiParam{2}J, \tri K, \ve \sigma, \t)\in \Etuple$ with $K \in \Itri$, and $I\in \P_{q,\EquiParam{1} }$, $J \in \P_{q,\EquiParam{2}}$ for distinct $\EquiParam{1}, \EquiParam{2}\in \{\ver, \hor, \diag\}$, 
  there exists a set of 7-tuples
$$
\begin{array}{r@{\hskip -.5pt}l@{\hskip -.4pt}rrrl}
\mathcal E''=\{\tau^1=&(&\EquiParam{1} K_1, &\EquiParam{2} K_2, &\tri K, &\ve \mu, \w),\\
\tau^2=&(&\EquiParam{1} I, &\{\u'\}, &\EquiParam{1} K_1, &\ve\sigma, \t),\\
\tau^3=&(&\{\u\}, &\EquiParam{2} J, &\EquiParam{2} K_2, &\ve\sigma, \t)\}
\end{array}
$$
where $ K_1,  K_2 \in \Iedge$ are the edges of $K$ such that $K_i \in \P_{q,\EquiParam{i}}$ for $i=1,2$, $\u \in \verts(I)$, $\u' \in \verts(J)$ are chosen by \autoref{lemSepEdge} such that $\Etuple''$
is a set of valid 7-tuples,  $\ve \mu = (1,1,-1)$ and $\w = \sigma_3(\t - \sigma_2 \u' - \sigma_1 \u)$.
Then the update $\E \leftarrow \E':=(\E \setminus \{\tau\}) \cup \{\tau^1, \tau^2, \tau^3\}$ is equal.
\end{lemma}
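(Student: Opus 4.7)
My plan is to prove the two inclusions $\widehat\Pi^\E_{\ZZ,\P} \subseteq \widehat\Pi^{\E'}_{\ZZ,\P}$ and $\widehat\Pi^{\E'}_{\ZZ,\P} \subseteq \widehat\Pi^{\E}_{\ZZ,\P}$ by translating the additivity equations for the three new 7-tuples into the single equation for $\tau$, exploiting the fact that $\u, \u' \in \verts(\P_q) \subseteq \ZZ$ and hence $\bpi(\u) = \bpi(\u') = 0$.

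\textbf{Step 1 (validity and identification of $\w$).} Before comparing the spaces, I would verify that all three new tuples are valid. For $\tau^2$ and $\tau^3$, this is exactly the statement of \autoref{lemSepEdge}. For $\tau^1$, by \autoref{lem:F-formula-one-tri}, validity requires that $\w$ be the common vertex of $K_1$ and $K_2$. Using \autoref{validPointCharacterization} applied to $\tau^2$ (with $\x^1 = \u$) gives $\sigma_3(\t - \sigma_2 \u' - \sigma_1 \u) \in \verts(K_1)$, and applied to $\tau^3$ (with $\x^2 = \u'$) gives $\sigma_3(\t - \sigma_1 \u - \sigma_2 \u') \in \verts(K_2)$. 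The two expressions coincide with $\w$, so $\w$ is the shared vertex of $K_1$ and $K_2$ and $\tau^1$ is valid.

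\textbf{Step 2 (forward inclusion, $\tau \Rightarrow \tau^1, \tau^2, \tau^3$).} Assume $\bpi \in \widehat\Pi^\E_{\ZZ,\P}$. Since $F(\tau^2) \subseteq F(\tau)$ (fix the middle coordinate at $\u' \in J$ and restrict the third to the edge $K_1 \subseteq K$) and similarly $F(\tau^3) \subseteq F(\tau)$, \autoref{remContinuity} yields additivity on $\tau^2$ and $\tau^3$. For $\tau^1$, let $(\x^1, \x^2, \x^3) \in F(\tau^1)$, so $\x^1 + \x^2 - \x^3 = \w$. Define $\ve y^1 = \sigma_1(\t - \sigma_2 \u' - \sigma_3 \x^1) \in I$ and $\ve y^2 = \sigma_2(\t - \sigma_1 \u - \sigma_3 \x^2) \in J$. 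A direct computation shows $\sigma_1 \ve y^1 + \sigma_2 \ve y^2 + \sigma_3 \x^3 = \t$, i.e.\ $(\ve y^1, \ve y^2, \x^3) \in F(\tau)$. From $\tau^2$ (with $\bpi(\u') = 0$) I get $\bpi(\x^1) = -\sigma_1 \sigma_3 \bpi(\ve y^1)$, from $\tau^3$ (with $\bpi(\u) = 0$) I get $\bpi(\x^2) = -\sigma_2 \sigma_3 \bpi(\ve y^2)$, and $\tau$ gives $\sigma_3 \bpi(\x^3) = -\sigma_1 \bpi(\ve y^1) - \sigma_2 \bpi(\ve y^2)$. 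Combining yields $\bpi(\x^1) + \bpi(\x^2) - \bpi(\x^3) = 0$.

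\textbf{Step 3 (reverse inclusion, $\tau^1, \tau^2, \tau^3 \Rightarrow \tau$).} Assume $\bpi \in \widehat\Pi^{\E'}_{\ZZ,\P}$ and let $(\ve y^1, \ve y^2, \ve y^3) \in F(\tau)$. By validity of $\tau^2$, there is a unique $\x^1 \in K_1$ with $\sigma_1 \ve y^1 + \sigma_2 \u' + \sigma_3 \x^1 = \t$, giving $\bpi(\ve y^1) = -\sigma_1 \sigma_3 \bpi(\x^1)$. Similarly, $\tau^3$ produces $\x^2 \in K_2$ with $\bpi(\ve y^2) = -\sigma_2 \sigma_3 \bpi(\x^2)$. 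The key algebraic identity is that with these choices, $\x^1 + \x^2 - \w = \ve y^3$, which I would verify by direct substitution using the definition of $\w$ and the constraint $\sigma_1 \ve y^1 + \sigma_2 \ve y^2 + \sigma_3 \ve y^3 = \t$. Hence $(\x^1, \x^2, \ve y^3) \in F(\tau^1)$, and $\tau^1$ additivity gives $\bpi(\ve y^3) = \bpi(\x^1) + \bpi(\x^2)$. Substituting into $\sigma_1 \bpi(\ve y^1) + \sigma_2 \bpi(\ve y^2) + \sigma_3 \bpi(\ve y^3)$ then collapses to $0$, finishing the proof.

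The main obstacle is the careful bookkeeping of signs in $\ve\sigma$ and the verification that $\w$ is simultaneously the shared vertex of $K_1$ and $K_2$ and the correct translation constant so that $\x^3 = \ve y^3$ holds in Step 3. Once those two arithmetic checks go through (and they reduce to consequences of \autoref{validPointCharacterization} and the choice of $\u, \u'$ from \autoref{lemSepEdge}), the rest of the argument is essentially symmetric substitution exploiting $\bpi(\u) = \bpi(\u') = 0$.
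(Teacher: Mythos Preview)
Your proposal is correct and follows essentially the same approach as the paper: both arguments use \autoref{remContinuity} to obtain $\tau^2,\tau^3$ from $\tau$, and then establish the equivalence $\tau \Leftrightarrow \tau^1$ (given $\tau^2,\tau^3$ and $\bpi(\u)=\bpi(\u')=0$) by the very same substitution you carry out, with the paper phrasing it as a single chain of $\Leftrightarrow$'s rather than two inclusions. Your added Step~1, verifying that $\w$ is the common vertex of $K_1,K_2$ so that $\tau^1$ is valid via \autoref{lem:F-formula-one-tri}, is a welcome clarification that the paper leaves implicit in the lemma statement.
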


\begin{proof}
Consider any $\bpi \in \widehat\Pi^\Etuple_{\mathcal{Z},\P}$.  Since $\bpi$ is additive over $(I, J, K, \ve\sigma, \t)$, 
by \autoref{remContinuity}, we also have that $\bpi$ is additive over $(I, \{\u'\}, K_1, \ve\sigma, \t)$ and $(\{\u\}, J, K_2, \ve\sigma, \t)$.

Therefore, since $(I, \{\u'\}, K_1, \ve\sigma, \t), (\{\u\}, J, K_2, \ve\sigma, \t) \in \Etuple'$, finishing the proof is equivalent to showing the following claim:

\textbf{Claim:} Let $\bpi \colon \bigcup\P \to \R$ with $\bpi_{\P_q} \equiv 0$ such that $\bpi$ is additive over $(I, \{\u'\}, K_1, \ve\sigma, \t)$ and $(\{\u\}, J, K_2, \ve\sigma, \t)$.

Then $\bpi$ is additive over $(I, J, K, \ve\sigma, \t)$ if and only if $\bpi$ is additive over $(K_1, K_2, K, (1,1,-1), \w)$.

\textbf{Claim proof:}\\
Since $\bpi$ is additive over $(I, \{\u'\}, K_1, \ve\sigma, \t)$ and $(\{\u\}, J, K_2, \ve\sigma, \t)$, we have the following two equations:
\begin{align*}
\sigma_1 \bpi(\x) + \sigma_2 \bpi(\u') + \sigma_3 \bpi(\bar \x) &= 0 & \forall \x \in I, \bar \x \in K_1, \sigma_1 \x + \sigma_2 \u' + \sigma_3 \bar \x = \t,\\
  \sigma_1 \bpi(\u) + \sigma_2 \bpi(\y) + \sigma_3 \bpi(\bar \y) &= 0 & \forall \x \in I, \bar \y \in K_2, \sigma_1 \u + \sigma_2 \y + \sigma_3 \bar \y = \t.
\end{align*}
Since $\bpi|_{\frac{1}{q} \Z^2} \equiv 0$, these become the following
\begin{align}
\label{eq1}
\sigma_1 \bpi(\x)  &= -  \sigma_3 \bpi(\bar \x)  & \forall \x \in I, \bar \x \in K_1, \sigma_1 \x + \sigma_2 \u' + \sigma_3 \bar \x = \t,\\
\label{eq2}
\sigma_2 \bpi(\y) &= -  \sigma_3 \bpi(\bar \y) & \forall \x \in I, \bar \y \in K_2, \sigma_1 \u + \sigma_2 \y + \sigma_3 \bar\y = \t.
\end{align}
Now suppose $\bpi$ is additive over $(I, J, K, \ve\sigma, \t)$.  This is equivalent to
\begin{align*}
&& \sigma_1 \bpi(\x) + \sigma_2 \bpi(\y) + \sigma_3 \bpi(\z) = 0 &&&  \forall \x \in I,\y \in J, \z \in K, \sigma_1 \x + \sigma_2 \y + \sigma_3 \z = \t,\\
\Leftrightarrow && -\sigma_3 \bpi(\bar \x)  -\sigma_3 \bpi(\bar \y) + \sigma_3 \bpi(\z) = 0 &&&  \forall \bar \x \in K_1,\bar \y \in K_2, \z \in K,\\ 
&& &&&\hspace{1cm}  (\t - \sigma_2 \u' - \sigma_3 \bar \x ) + (\t - \sigma_1 \u - \sigma_3 \bar \y) + \sigma_3 \z = \t,\\
\Leftrightarrow &&  \bpi(\bar \x) +  \bpi(\bar \y) - \bpi(\z) = 0 &&& \forall \bar \x \in K_1,\bar \y \in K_2, \z \in K,
  \bar \x + \bar \y -  \z =  \w,
\end{align*}
The first equivalence, we use \eqref{eq1} to substitute for $\bpi(\x)$ and  \eqref{eq2} to substitute for $\bpi(\y)$.  The second is just a rewriting
  where $\w = \sigma_3(\t - \sigma_2 \u' - \sigma_1 \u)$.
By definition, this means $\bpi$ is additive over $(K_1, K_2, K, (1,1,-1), \w)$.
Therefore, the claim is proved and we are done.
\end{proof}

We now prove a more complicated lemma on just having two separability relations.

\begin{lemma}[Step 5b(i) equal]
\label{lemTwoSep}
  Suppose that we have a set of valid 7-tuples
  $$\{\tau^1=(\EquiParam{1} K_1, \EquiParam{2}K_2, \tri K, \ve \mu, \t^1), \tau^2=(\EquiParam{1} K_1, \EquiParam{3}K_3, \tri K, \ve \mu, \t^2)\} \subseteq \Etuple$$
  with $\ve \mu=(1,1,-1)$, $K \in \Itri$, and $ K_1,  K_2,  K_3\in \Iedge$ be the edges of $K$ such that $K_i \in \P_{q,\EquiParam{i}}$ for $i=1,2,3$.
  Then the update $\E \leftarrow \E':=(\E \setminus \{\tau^2\})$, $\ZZ \leftarrow \ZZ':=\mathcal Z \cup \{\EquiParam{1} K_1\}$ is equal.
\end{lemma}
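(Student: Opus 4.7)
The plan is to establish the two containments in $\widehat\Pi^\Etuple_{\mathcal{Z},\P} = \widehat\Pi^{\Etuple'}_{\mathcal{Z}',\P}$. The geometric setup is that $\tau^1, \tau^2$ share the edge $K_1$ and the triangle $K$, with middle projections $K_2, K_3$ of distinct edge types in $\{\ver, \hor, \diag\}$, so the three directions $\d^1, \d^2, \d^3$ are pairwise linearly independent. This matches the hypothesis pattern of the hidden interval lemma~\autoref{lem:hidden_interval_lemma}.

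For the forward containment, I would take $\bpi \in \widehat\Pi^\Etuple_{\mathcal{Z},\P}$ and aim to show $\bpi|_{K_1} \equiv 0$ (the only new requirement in $\ZZ'$). Using the explicit parametrization from~\autoref{lem:F-formula-one-tri}, each additivity becomes a Cauchy-type identity for $\bpi$ restricted along the triangle, with $\t^i$ the common vertex of $K_1$ and the respective middle edge. Equating the two expressions for $\bpi|_K$ yields a Cauchy-type relation
\[
\phi(s) - \phi(s - t) = \phi(t) \qquad \text{for } 0 \le t \le s \le 1,
\]
where $\phi(s) := \bpi\bigl(\t^1 + \tfrac{s}{q}\d^1\bigr)$ parametrizes $\bpi$ along $K_1$. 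Since both endpoints of $K_1$ are vertices of $\P_q$ at which $\bpi$ vanishes, we have $\phi(0) = \phi(1) = 0$, and continuity of $\bpi$ then forces $\phi \equiv 0$. Thus $\bpi|_{K_1} \equiv 0$ and $\bpi \in \widehat\Pi^{\Etuple'}_{\mathcal{Z}',\P}$.

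For the backward containment, I would start with $\bpi \in \widehat\Pi^{\Etuple'}_{\mathcal{Z}',\P}$ and recover additivity over $\tau^2$. From additivity over $\tau^1$ together with $\bpi|_{K_1} \equiv 0$, the additive equation collapses to $\bpi(x^2) = \bpi(x^3)$ on $F(\tau^1)$; as $x^1 \in K_1$ varies with $x^2 \in K_2$ fixed, $x^3 = x^1 + x^2 - \t^1$ sweeps a segment in direction $\d^1$ inside $K$, and varying $x^2 \in K_2$ then shows $\bpi$ is constant along direction $\d^1$ on all of $K$. For $(x^1, x^2, x^3) \in F(\tau^2)$, the additive equation reduces similarly to $\bpi(x^2) = \bpi(x^3)$, with $x^3 - x^2 = x^1 - \t^2 \in \mathrm{span}(\d^1)$ since both $x^1$ and the vertex $\t^2$ lie on $K_1$; constancy of $\bpi$ along $\d^1$ on $K$ closes the argument.

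The main obstacle is the forward direction, specifically the adaptation of the hidden interval lemma to our situation where $\p_3(F(\tau^1)) = K + \t^1$ and $\p_3(F(\tau^2)) = K + \t^2$ are translates of each other rather than identical. This requires a bookkeeping change of variables that absorbs the shifts $\t^1, \t^2$ into the middle functions $g$ in the Cauchy form $f(u) + g(v) = h(u+v)$, keeping $f = h = \bpi$ shared while $g$ differs between the two equations. The extraction of the Cauchy equation for $\phi$ then proceeds exactly as in the proof of~\autoref{lem:hidden_interval_lemma}, with the vertex-zero conditions on $K_1$ playing the role of the boundary data that rigidifies the resulting additive function.
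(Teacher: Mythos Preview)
Your proposal is correct and follows essentially the same route as the paper. For the forward containment the paper invokes \autoref{lemILedges} (a corollary of the hidden interval lemma) to conclude $\bpi$ is affine along $K_1$ and hence zero there; your plan simply unpacks that corollary in this special situation by equating the two additive expressions for $\bpi|_K$ and reading off the Cauchy equation $\phi(\alpha+\beta)=\phi(\alpha)+\phi(\beta)$ directly. For the backward containment your argument (constancy of $\bpi$ along $\d^1$ on $K$ from $\tau^1$ plus $\bpi|_{K_1}\equiv 0$, then $x^3-x^2\in\mathrm{span}(\d^1)$ for $(x^1,x^2,x^3)\in F(\tau^2)$) is exactly the paper's computation, just phrased coordinate-free rather than via the explicit parametrization of \autoref{lem:F-formula-one-tri}. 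One minor remark: the ``obstacle'' you flag about $\p_3(F(\tau^1))$ and $\p_3(F(\tau^2))$ being translates is not actually an issue in your direct approach---you never need to match the third projections in the $\R^2\times\R^2$ picture, since you equate $\bpi$ at the \emph{same} point $x^3\in K$ in the $\R^2\times\R^2\times\R^2$ picture, and the shift is automatically absorbed.
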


\begin{proof}
We will prove this for the case where $K_1 \in \P_{q,\hor}$, $K_2 \in \P_{q,\ver}$, $K_3 \in \P_{q,\diag}$.  The other cases are similar so we will omit their proofs.

First, suppose $\bpi \in \widehat\Pi^\Etuple_{\mathcal{Z},\P}$.  By \autoref{lemILedges}, $\bpi$ is affine on $K_1$ and is $\hor$-affine on $K$.   Since $\bpi$ is zero on $\verts(\P_q)$, this implies that $\bpi$ is zero on all of $K_1$. Thus, $\bpi \in \widehat\Pi^{\Etuple'}_{\mathcal{Z}',\P}$.

Next, suppose $\bpi \in \widehat\Pi^{\Etuple'}_{\mathcal{Z}',\P}$.  We just need to show that $\bpi$ is additive on $\tau^2$.

By \autoref{lem:F-formula-one-tri}, we can assume that
\begin{equation*}
\begin{array}{r@{\hskip -.5pt}rrlr@{\hskip -.25pt}l@{\hskip -.25pt}l@{\hskip -.5pt}rrr@{\hskip -.5pt}r}
F(&\hor K_1,& \ver K_2,& \tri K,& \ve \mu, \t^1) 
&=& \{ (\t^1, \t^1, \t^1) + \frac{1}{q}(&\lambda_1 \ve d^1,&\lambda_2 \ve d^2,&(\lambda_1 \ve d^1+\lambda_2 \ve d^2)) 
 &: \ve \lambda \in \Delta_2\},\\
 F(&\hor K_1,& \diag K_3,& \tri K,& \ve \mu, \t^2) 
&=& \{ (\t^2, \t^2, \t^2) + \frac{1}{q}(&-\lambda_1 \ve d^1,&\lambda_2 (\ve d^2 - \ve d^1),&(-\lambda_1\ve d^1+\lambda_2 (\ve d^2-\ve d^1))) 
 &: \ve \lambda \in \Delta_2\},
\end{array}
\end{equation*}
where $\d^1=\sigma\e^1, \d^2=\sigma\e^2, \sigma\in\{\pm1\}$. Then $\t^2 = \t^1+\frac1q\d^1$, $K_3 = \{\t^1 + \frac1q\d^1 + \frac1q\lambda(\d^2-\d^1):\lambda\in[0,1]\}$.

Since $\bpi$ is additive over $(\hor K_1, \ver K_2, \tri K, \ve\mu, \t^1),$ we have
\begin{align*}
&&\bpi(\t^1 + \tfrac1q\lambda_1\d^1) + &\bpi(\t^1 + \tfrac1q\lambda_2\d^2) - \bpi(\t^1 + \tfrac1q\lambda_1\d^1 + \tfrac1q\lambda_2\d^2) = 0   && 
\forall \ve \lambda \in \Delta_2 \\
\Leftrightarrow  &&& \bpi(\t^1 + \tfrac1q\lambda_2\d^2) - \bpi(\t^1 + \tfrac1q\lambda_1\d^1 + \tfrac1q\lambda_2\d^2) = 0   &&
\forall \ve \lambda \in \Delta_2\\
\Leftrightarrow &&& \bpi(\t^1 + \tfrac1q(1-\lambda_2)\d^1 +\tfrac1q\lambda_2\d^2) - \bpi(\t^1 + \tfrac1q\lambda_1\d^1 + \tfrac1q\lambda_2\d^2) = 0   &&
\forall \ve \lambda \in \Delta_2\\
\Leftrightarrow && \bpi(\t^1 + \tfrac1q(\lambda_1+\lambda_2)\d^1) + &\bpi(\t^2 + \tfrac1q\lambda_2(\d^2-\d^1)) - \bpi(\t^1 + \tfrac1q\lambda_1\d^1+ \tfrac1q\lambda_2\d^2) = 0
&& \forall \ve \lambda \in \Delta_2 %
\end{align*}
The second equation follows since $\bpi$ is zero on $K_1$.  Therefore, the value of $\bpi$ on $K$ is the constant along horizontal lines. Thus, we can obtain the third equation.
Because $\bpi$ is zero on $K_1$, we obtain the last equation, which is exactly the definition of $\bpi$ being additive on $\tau^2$.
\end{proof}

\begin{lemma}[Step 5b(ii) equivalence]%
  \label{removeTriFromEdgesCont}
  Let $K \in \Itri$ be a triangle with edges $K_1, K_2, K_3 \in\Iedge$.  Therefore, $K_1, K_2, K_3 \subseteq K$.
  Suppose $\tau^0 = (K_1, K_2, K, (1,1,-1), \t)$ is the \underline{\emph{only}} 7-tuple in $\Etuple$ containing $K$. Then for $\tau^1 = (K_1, K_2,  K_3, (1,1,-1), \t)$, the update 
    $\E \leftarrow \E':=\left(\E \setminus \{\tau^0\} \right)\cup \{
  \tau^1\}, \ \ 
  \P \leftarrow \P\setminus \{K\}
  $
  is equivalent. 
\end{lemma}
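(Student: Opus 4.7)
The plan is to mirror the bijection argument of \autoref{removeTriCont}, with the triangle $K$ now playing the role that $I$ played there. The auxiliary tuples $\tau^1,\tau^2,\tau^3$ of that proof (which recorded how values on the three edges of $I$ are recovered from values on the three edges of $K$) are replaced here by the single 7-tuple $\tau^1=(K_1,K_2,K_3,(1,1,-1),\t)$, which will record how values on $K_3$ can be recovered from values on $K_1$ and $K_2$ via the same affine relation that $\tau^0$ enforces on $\intr(K)$.

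First I would define the restriction map $h\colon \widehat\Pi^{\E}_{\ZZ,\P}\to\widehat\Pi^{\E'}_{\ZZ,\P\setminus\{K\}}$ by $\theta\mapsto\theta|_{\bigcup(\P\setminus\{K\})}$ and check that it lands in the claimed space. Continuity and vanishing on $\ZZ$ are immediate. Because $\tau^0$ is by hypothesis the only 7-tuple in $\E$ involving $K$, additivity over each 7-tuple in $\E\setminus\{\tau^0\}$ carries over verbatim. The one new verification is additivity over $\tau^1$: using the parametrization of $F(\tau^0)$ from \autoref{lem:F-formula-one-tri}, any triple $(\x^1,\x^2,\x^3)\in F(\tau^1)$ lies in $F(\tau^0)$ with $\x^3\in K_3\subseteq K$, so the required equation is inherited from the additivity of $\theta$ over $\tau^0$.

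Next I would construct the lift. Given $\hat\theta\in\widehat\Pi^{\E'}_{\ZZ,\P\setminus\{K\}}$, define $\theta$ to equal $\hat\theta$ outside $\intr(K)$, and for $\x\in\intr(K)$ set $\theta(\x)=\hat\theta(\x^1)+\hat\theta(\x^2)$, where $(\x^1,\x^2)\in K_1\times K_2$ is the unique pair satisfying $\x^1+\x^2-\x=\t$. Uniqueness follows because the edge directions $\d^1,\d^2$ from the common vertex $\t$ are linearly independent, by \autoref{lem:F-formula-one-tri}. By construction, $\theta$ satisfies the additivity of $\tau^0$ on $\intr(K)$; additivities over all other 7-tuples involve only faces in $\P\setminus\{K\}$ and are inherited directly from $\hat\theta$.

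The main obstacle is checking that the extension is continuous across $\partial K=K_1\cup K_2\cup K_3$, and hence lands in $\widehat\Pi^{\E}_{\ZZ,\P}$. On $K_1$ the parametrization forces the partner $\x^2=\t\in\verts(\P_q)\subseteq\ZZ$, and since $\hat\theta(\t)=0$ we get $\theta(\x)=\hat\theta(\x)$; the case $K_2$ is symmetric. The delicate boundary is $K_3$: here the extension value $\hat\theta(\x^1)+\hat\theta(\x^2)$ for $(\x^1,\x^2)\in K_1\times K_2$ with $\x^1+\x^2-\x=\t$ must agree with $\hat\theta(\x)$, and this is precisely the additivity of $\hat\theta$ over $\tau^1$. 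This is the whole point of including $\tau^1$ in $\E'$: it is the minimal additional constraint needed to make the lift continuous. Continuity of the affine map $\x\mapsto(\x^1,\x^2)$ then yields continuity of $\theta$ on all of $K$; the additivity of $\theta$ over $\tau^0$ extends from $\intr(K)$ to $K$ by this continuity, and the two maps are inverse by construction, giving the claimed equivalence.
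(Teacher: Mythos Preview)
Your proposal is correct and follows essentially the same approach as the paper's proof: define the restriction map, check additivity over $\tau^1$ via $F(\tau^1)\subseteq F(\tau^0)$ (the paper invokes \autoref{remContinuity} for this), then build the lift by extending into $\intr(K)$ using the additivity relation and verify continuity across $\partial K=K_1\cup K_2\cup K_3$ edge by edge.

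The one substantive difference is that the paper opens with an explicit verification, via \autoref{validCharacterization}, that $\tau^1=(K_1,K_2,K_3,(1,1,-1),\t)$ is a \emph{valid} 7-tuple (in particular deriving that $\t$ must be the common vertex of $K_1$ and $K_2$), whereas you use \autoref{lem:F-formula-one-tri} directly and never state validity of $\tau^1$. For the equivalence claim in this lemma your shortcut is fine, since $K_3\subseteq K$ gives $F(\tau^1)\subseteq F(\tau^0)$ regardless of validity; but note that validity of $\tau^1$ is what guarantees $p(\E')\subseteq\P'\otimes(\Ipoint\cup\Iedge)$, which is the hypothesis of \autoref{lemContinuousDiscrete} used in Step~6. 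So the paper's extra check is doing work for the global argument even though it is not strictly needed for the local bijection you establish.
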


\begin{proof}
We first show that $\tau^0$ is a valid 7-tuple given that $\tau^1$ is a valid 7-tuple.

Let $\v \in K_1 \cap K_2$ and consider the sets $\bar K_1 = K_1 - \v$, $\bar K_2 = K_2 - \v$, $\bar K_3 = K_3 - \v$, and $\bar K = K - \v$.
Since $(K_1, K_2, K, (1,1,-1), \t)$ is a valid 7-tuple, by \autoref{validCharacterization} we have
$ (\bar K + \v) \subseteq - \t + (\bar K_2 + \v)  + (\bar K_1 + \v)$
thus
$\bar K  \subseteq \v - \t + \bar K_2 + \bar K_1$.
By the geometry of $\P_q$, since $\bar K, \bar K_1, \bar K_2$ all touch the origin and $\bar K_1, \bar K_2$ are edges of $K$, this relation can only hold if $\v = \t$.  Therefore
$$
\bar K  \subseteq \bar K_2 + \bar K_1.
$$
Clearly $\bar K_3 \subseteq \bar K  \subseteq \bar K_2 + \bar K_1$, which is condition 3 of \autoref{validCharacterization}.
Furthermore, it can be checked that every triangle $\hat K$ in $\P_q$ touching the origin with edges $\hat K_1, \hat K_2, \hat K_3$ where $\hat K_1, \hat K_2$ both touch the origin, that $\hat K_1 \subseteq \hat K_3 + (-\hat K_2)$ and $\hat  K_2 \subseteq \hat  K_3 + (-\hat K_1)$.
Applying this to $\bar K_1, \bar K_2, \bar K_3$ and substituting back to $K_1, K_2, K_3$ then shows conditions 1 and 2 of \autoref{validCharacterization}.  Therefore, $(K_1, K_2, K_3, (1,1,-1), \t)$ is a valid 7-tuple.

Define the projection $h\colon \widehat \Pi^\Etuple_{\mathcal{Z},\P} \to \widehat \Pi^{\Etuple'}_{\mathcal{Z},\P\setminus \{K\}}$ such that $\theta \mapsto \theta|_{\bigcup(\P \setminus \{K\})}$.

Suppose $\theta \in \widehat \Pi^\Etuple_{\mathcal{Z},\P}$.
Clearly $h(\theta)$ is continuous since $h$ is just the restriction to a closed set.
By \autoref{remContinuity}, since $\theta$ is additive over $(K_1, K_2, K, (1,1,-1), \t)$, then it is also additive over $\tau^1=(K_1, K_2, K_3, (1,1,-1), \t)$.  Therefore,  $\theta \in \bar\Pi^{\Etuple\cup \{\tau^1\}}_{\mathcal{Z},\P} \subseteq \bar\Pi^{\Etuple'}_{\mathcal{Z},\P}$.   This inclusion holds since $\Etuple' \subseteq \Etuple \cup \{\tau^1\}$. Since $\Etuple'$ has no 7-tuples containing $K$, the restriction $\theta|_{\bigcup(\P \setminus \{K\})}$ must also be additive on all 7-tuples in $\Etuple'$.
Thus $h(\theta) \in \widehat\Pi^{\Etuple'}_{\mathcal{Z},\P\setminus \{K\}}$.

Now define the lift $h^{-1}\colon \bar\Pi^{\Etuple'}_{\mathcal{Z},\P\setminus \{K\}} \to \bar\Pi^\Etuple_{\mathcal{Z},\P}$ by
$
\hat \theta \mapsto \theta
$
where $\theta(\x) = \hat \theta(\x)$ for all $\x \in \bigcup(\P \setminus\{K\}) = \bigcup\P \setminus \intr(K)$, and $\theta$ is uniquely defined on $\intr(K)$
by the equation
\begin{align*}
\theta(\x) + \theta(\y) - \theta(\z) &= 0 &\forall \ \x \in K_1, \y \in K_2, \z \in \intr(K), \x + \y - \z = \t.
\end{align*}

Now consider any $\hat \theta \in \widehat\Pi^{\Etuple'}_{\mathcal{Z},\P\setminus \{K\}}$ and let $\theta = h^{-1}(\hat \theta)$.   By construction of $h^{-1}$, we see that $\theta$ is additive on the (non-valid) 7-tuple
$(K_1, K_2, \intr(K), (1,1,-1), \t)$. Since $\hat \theta$, and hence $\theta$, is additive on the 7-tuples in $\Etuple''$.
Specifically, $\theta$ is additive on
$(K_1, K_2, K_3, (1,1,-1), \t)$.  Also, since $\theta|_{\P_q} = 0$, it follows trivially that $\theta$ is additive on the (non-valid) 7-tuples $(K_1, K_2, K_1, (1,1,-1), \t)$ and $(K_1, K_2, K_2, (1,1,-1), \t)$.
Therefore, we obtain that $\theta$ is additive on the valid 7-tuple $(K_1, K_2,  K, (1,1,-1), \t)$.
Since $\theta$ is continuous on $K_1$ and $K_2$ it follows by construction that $\theta$ is continuous on $K$.
Therefore, $\theta \in \widehat\Pi^\Etuple_{\mathcal{Z},\P}$.

\end{proof}

After all the previous steps, we can assume that $\E = \E_{\point, \ver\hor\diag, \ver\hor\diag} \cup \E_{\ver, \hor, \diag}$.
\begin{lemma}
\label{lemEdges}
Let $\Etuple$ be the set of valid 7-tuples, $\P$ a collection of faces contains all faces in $\P_q$, and $\mathcal{Z} = \verts(\P_q)$. Then there exists
\begin{itemize}
\item a collection of faces $\P' \subseteq \P$ that is a union of faces in $\P_q$ and contains all faces in $\Iedge \cup \Ipoint$ and possibly some faces of $\Itri$, and
\item  a set $\Etuple'$ of $\pi$-additive 7-tuples containing no 7-tuples with triangles (Furthermore, we can assume $\Etuple'$ only contain $\Etuple_{\ver, \hor, \diag}$ and $\Etuple_{\point, \edge, \edge}$)
\item a set $\mathcal{Z}'\supseteq\mathcal{Z}$ (possibly some faces of $\Iedge \cup \Ipoint \cup \Itri$),
\end{itemize}
  such that
  \eqref{eq:perturbation-correspondance} holds.
\end{lemma}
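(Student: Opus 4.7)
The plan is to obtain the triple $(\E', \ZZ', \P')$ by iteratively composing the five reduction steps developed earlier in this section. Each step replaces the current triple $(\E, \ZZ, \P)$ by a new one while preserving the bijection \eqref{eq:perturbation-correspondance} between the associated continuous perturbation spaces; since both ``equal'' and ``equivalent'' updates imply this bijection, the composition of all updates yields the desired correspondence.

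More precisely, I would proceed in the following order. First, apply Step 1 (the trivial removal of $\E_{\point,\point,\point}$, which is justified since $\bar\pi|_{\frac{1}{q}\Z^2} \equiv 0$ already puts the tuples in $\ZZ$). Next, apply Step 2a to eliminate $\E_{\tri,\tri,\tri}$ (using \autoref{lem:projection_interval_lemma_fulldim} and continuity to conclude $\bar\pi$ is affine, hence zero, on the three triangles, adjoining them to $\ZZ$), and Step 2b to eliminate $\E_{*,*,*}$ for $* \in \{\ver,\hor,\diag\}$ by the same argument with \autoref{lem:projection_interval_lemma-corollary}. Then iterate the Step 3 lemma (\autoref{lem:step3step}) to exchange every tuple in $\E_{\ver\hor\diag,\tri,\tri}$ for three tuples of types (2) and (6), driving the edge values to zero in $\ZZ$ via \autoref{cor:two_triangle_one_edge}. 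This exhausts all tuples whose projections contain two triangles.

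At this point, the only tuples involving triangles are $\E_{\tri,\point,\tri}$ and $\E_{\ver\hor\diag,\ver\hor\diag,\tri}$. I iteratively apply Step 4 (\autoref{removeTriCont}): for each $\tau \in \E_{\tri,\point,\tri}$, first use \autoref{reduceTran} to replace all other tuples containing $I$ by equivalent tuples containing $K$, then \autoref{reduceTranzero} if $I \in \ZZ$, and finally \autoref{removeTriCont} to delete $I$ from $\P$. Finally, for each tuple in $\E_{\ver\hor\diag,\ver\hor\diag,\tri}$, apply Step 5a (\autoref{lemCanonical}) to pass to the canonical representation with two edges of $K$ and the triangle $K$. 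Then apply Step 5b(i) (\autoref{lemTwoSep}) whenever two canonical tuples share an edge of $K$, which drives that edge into $\ZZ'$ and eliminates one of the tuples; if only one canonical tuple involving $K$ remains, apply Step 5b(ii) (\autoref{removeTriFromEdgesCont}) to project $K$ out of $\P$ altogether, replacing the tuple by a pure-edge tuple in $\E_{\ver,\hor,\diag}$.

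After all steps, the remaining 7-tuples must lie in $\E_{\point,\ver\hor\diag,\ver\hor\diag} \cup \E_{\ver,\hor,\diag}$, which by construction contains no triangle faces; $\P'$ is a union of faces of $\P_q$ obtained from $\P$ by removing some triangles and hence still contains every edge and vertex; and $\ZZ'$ can only grow. The main obstacle is bookkeeping: one must verify that the hypotheses of each lemma are met when it is invoked (e.g., Step 5b(i) requires two tuples sharing $K$ and an edge, while Step 5b(ii) requires $\tau^0$ to be the \emph{only} tuple on $K$), and that the order of applications in Steps 4 and 5 does not block subsequent reductions. This is handled by always processing a single triangle $I$ (or $K$) at a time and, before invoking the ``only tuple'' lemmas, first applying \autoref{reduceTran} (or \autoref{lemCanonical} followed by \autoref{lemTwoSep}) to reduce the number of tuples touching that triangle down to one. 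Since each reduction strictly decreases either $|\E|$ or the number of triangles in $\P$, the process terminates in finitely many steps.
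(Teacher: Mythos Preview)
Your proposal is correct and follows essentially the same approach as the paper: the paper does not give a standalone proof of \autoref{lemEdges} but rather states it as the cumulative outcome of Steps~1--5 in the proof of \autoref{thm:extreme-restriction-m-2d-all}, each justified by the individual lemmas you cite (\autoref{lem:step3step}, \autoref{removeTriCont}, \autoref{lemCanonical}, \autoref{lemTwoSep}, \autoref{removeTriFromEdgesCont}) and composed in exactly the order you describe. Your only minor imprecision is the termination measure---Steps~3 and~5a can increase $|\E|$---but termination still follows since each step strictly decreases a more refined lexicographic quantity (e.g., the number of tuples of a given type, or the number of triangles in $\P$ modulo $\Z^2$), which is how the paper's iterative corollaries implicitly argue as well.
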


\noindent  
\begin{tabular}{|m{6cm}|m{6cm}|m{4.5cm}|}
\hline
\begin{tikzpicture}
\node (pic0){
\begin{tikzpicture}[scale=\myscale]
\drawGridred
    \foreach \x in {0,1,2,3,4} {
        \foreach \y in {0,1,2,3,4} {
            \mytriangleRemove{\x}{\y}
        }
    }
\foreach \x in {1,2,3,4,5} {
        \foreach \y in {1,2,3,4,5} {
            \mytriangleRemoveReflect{\x}{\y}
        }
    }
    \mytriangle{white}{1}{4}{2}
     \mytriangle{\zero}{0.4}{4}{2}

         \mytriangle{white}{1}{3}{3}
     \mytriangle{\zero}{0.4}{3}{3}

         \mytriangle{white}{1}{1}{2}
     \mytriangle{\zero}{0.4}{1}{2}

         \mytriangle{white}{1}{4}{2}
     \mytriangle{\zero}{0.4}{4}{2}

         \mytriangleReflect{white}{1}{2}{2}
             \mytriangleReflect{\zero}{0.4}{2}{2}

        \mytriangleReflect{white}{1}{1}{4}
        \mytriangleReflect{\zero}{0.4}{1}{4}
             
     \mytriangleReflect{white}{1}{2}{2}

\end{tikzpicture}
\tikzLRarrow
};
    \node[right=0.1 of pic0] (pic1) {
            \begin{tikzpicture}[scale=\myscale]
            \drawGridred

                        \foreach \x in {0,1,2,3,4} {
        \foreach \y in {0,1,2,3,4} {
            \foreach \xx in {0, 0.33, 0.66,1} {
                \foreach \yy in {0, 0.33, 0.66,1} {
                    \mypoint{red}{\x + \xx}{\y + \yy}
            }       }
        }
    }
    
    \foreach \x in {0,1,2,3,4} {
        \foreach \y in {0,1,2,3,4} {
             \mypointHole{white}{\x + 0.33}{\y + 0.33}
             \mypointHole{white}{\x + 0.66}{\y + 0.66}
}
}

    \mytriangle{white}{1}{4}{2}
     \mytriangle{\zero}{0.4}{4}{2}

         \mytriangle{white}{1}{3}{3}
     \mytriangle{\zero}{0.4}{3}{3}

         \mytriangle{white}{1}{1}{2}
     \mytriangle{\zero}{0.4}{1}{2}

         \mytriangle{white}{1}{4}{2}
     \mytriangle{\zero}{0.4}{4}{2}

        \mytriangleReflect{white}{1}{1}{4}
        \mytriangleReflect{\zero}{0.4}{1}{4}

    \mypointHole{blue}{1.66}{1.66}
    
        \end{tikzpicture}
    };
\end{tikzpicture}
&
\emph{Step 6a. Case: Empty triangle}
Suppose there exists a triangle $\tri I \in \mathcal P$ with $I \notin p(\mathcal E)$ and $i \notin \mathcal Z$.

Then there exists a perturbation on this triangle.

Then this perturbation implies a lifting to a perturbation function in the original space.

\begin{center} \textbf{[Contradiction by perturbation]}\end{center}
&
Derives a contradiction!
\\
\hline
\begin{tikzpicture}
\node (pic0){
\begin{tikzpicture}[scale=\myscale]
\drawGridred
    \foreach \x in {0,1,2,3,4} {
        \foreach \y in {0,1,2,3,4} {
            \mytriangleRemove{\x}{\y}
        }
    }
\foreach \x in {1,2,3,4,5} {
        \foreach \y in {1,2,3,4,5} {
            \mytriangleRemoveReflect{\x}{\y}
        }
    }
            \drawGridredNoLines

\end{tikzpicture}
\tikzLRarrow
};
    \node[right=0.1 of pic0] (pic1) {
            \begin{tikzpicture}[scale=\myscale]
            \diagGrid[{}{}{}{}]    
    
            \foreach \x in {0,1,2,3,4} {
        \foreach \y in {0,1,2,3,4} {
            \foreach \xx in {0,  0.66,1} {
                \foreach \yy in {0, 0.33,1} {
                    \mypointHole{white}{\x + \xx}{\y + \yy}
            }       }
        }
    }
            \foreach \x in {0,1,2,3,4} {
        \foreach \y in {0,1,2,3,4} {
            \foreach \xx in {0, 0.33, 1} {
                \foreach \yy in {0,  0.66,1} {
                    \mypointHole{white}{\x + \xx}{\y + \yy}
            }       }
        }
    }

            \drawGridredNoLines

        \end{tikzpicture}
    };
\end{tikzpicture}
 & \emph{Step 6b. Case: No empty triangles} Suppose now that we can project out all triangles.  Then we have a system that is actually a 1-dimensional system.  Thus we can apply \autoref{lem:infinte-system-to-finite-system-edges} to show that the remaining infinite system of equations is nontrivial if and only if a finite system of equations is nontrivial.  This implies a perturbation on the finite system that maps to a subset of $\tfrac{1}{mq} \Z^2$.

 & {Derives a contradiction!} \\
 \hline
\multicolumn{3}{|c|}{} \\ %
\multicolumn{3}{|c|}{This completes the proof!} \\[-1ex] %
\multicolumn{3}{|c|}{\rule{\linewidth}{0.4pt}} \\ %
\hline
\end{tabular}

From the previous steps, we can assume that $\bar\Pi^{\Etuple'}_{\mathcal{Z}',\P'}\ne \{0\}$, and we only need to show that ${\bar\Pi^{\Etuple'\cap\frac1{mq}\Z^2}_{\mathcal{Z}'\cap \frac1{mq}\Z^2,\P' \cap \frac1{mq}\Z^2}}\ne \{0\}$. If there is a nontrivial $\bpi_{mq}\in {\bar\Pi^{\Etuple'\cap\frac1{mq}\Z^2}_{\mathcal{Z}'\cap \frac1{mq}\Z^2,\P' \cap \frac1{mq}\Z^2}}$, by \autoref{lemContinuousDiscrete}, we also have a continuous piecewise linear function $\bpi$ in $\widehat\Pi^{\Etuple'}_{\mathcal{Z}', \P'}$ constructed from the interpolation of $\bpi_{mq}$, whose restriction on $\frac1{mq}\Z^2$ is $\bpi_{mq}$.
And we can lift $\bpi$ back to $\bpi^0\in\widehat\Pi^{\Etuple}_{\mathcal{Z}, \P}$ such that the restriction of $\bpi^0|_{\frac1{mq}\Z^2}\in\bar\Pi^{\Etuple\cap\frac1{mq}\Z^2}_{\mathcal{Z}\cap \frac1{mq}\Z^2, \P \cap \frac1{mq}\Z^2}$ is nonzero because it is a lift of $\bpi_{mq}$.\\

\noindent {\emph{Step 6a. Interior Triangle Perturbation:}}\\
Suppose there exists a triangle $\tri K \in \P'$ and $K\notin\ZZ'$.
Let $\bpi\colon \bigcup\P'\cap\frac{1}{mq}\Z^2\mapsto \R$ such that $\bpi(\x) = 0$ for any $\x\in(\ZZ'\cup\Iedge\cup \Ipoint) \cap(\bigcup\P'\cap \frac{1}{mq}\Z^2)$, and $\bpi(\y + \t) = \bpi(\y)$ for any $\y\in\bigcup\P'\cap\frac{1}{mq}\Z^2$ and $\t\in\Z^2$.
Then for any such $\bpi$, $\bpi$ is trivially additive on all valid 7-tuples $(I_1, I_2, I_3, \ve\sigma, \t)$ where $I_1, I_2, I_3 \subseteq \Iedge \cup \Ipoint$.  Therefore,
$$
\bpi \in \bar\Pi^{\Etuple'\cap \frac1{mq}\Z^2}_{\mathcal{Z}'\cap \frac1{mq}\Z^2,\bigcup\P'\cap \frac1{mq}\Z^2}.
$$
In particular, assume that $K=\begin{psmallmatrix}x_0\\y_0\end{psmallmatrix} + \frac1q \conv(\{ \begin{psmallmatrix}0\\0\end{psmallmatrix}
, \begin{psmallmatrix}1\\0\end{psmallmatrix}
, \begin{psmallmatrix}0\\1\end{psmallmatrix}
 \})$. Then because $m\ge 3$, we can define $\bpi(\x) = 1$ for  $\x = \begin{psmallmatrix}x_0\\y_0\end{psmallmatrix} + \frac{1}{mq} \begin{psmallmatrix}1\\1\end{psmallmatrix}\notin \P_{q,\edge}$.
Therefore, there exists a non-trivial $\bpi$ for $m\ge 3$ in ${\bar\Pi^{\Etuple'\cap\frac1{mq}\Z^2}_{\mathcal{Z}'\cap \frac1{mq}\Z^2,\P' \cap \frac1{mq}\Z^2}}$. Thus, $\pi$ is not extreme.

\noindent {\emph{Step 6b. Edge perturbation:}}\\
We will now assume that $p(\E')$ does not contain any triangles $K \in \Itri$ that is not in $\ZZ'$ (i.e, $K$ is not used in any 7-tuple from $\E'$).

Let $\ver K_1, \hor K_2, \diag K_3$ be the edges of $K$.   We update $\ZZ'$ by adding 
 $K_1, K_2, K_3$  and remove $K$.  Then by setting $\P' \leftarrow \P' \setminus \{K\}$ we achieve an equivalent update where the map simply sets function values on $K$ to zero.

Therefore, we can assume $\P'=\P_{q, \edge}$ and only need to show there exists a nontrivial function $\bpi$ on $\P_{q, \edge}\cap\frac1{mq}\Z^2$. The function value in the interior of triangles is determined uniquely by interpolation.

The only remaining relations contain edges and points and are of the form of Items~\ref{item:point-edge},\ref{item:three-diags} from \autoref{lemma:cases}.
 We now map the problem to a different space using \autoref{lem:point-edge-system} and apply \autoref{lem:infinte-system-to-finite-system-edges}.  This shows that when looking for nontrivial solutions, it suffices to look at the discrete grid $\tfrac{1}{mq} \Z^2$.  In particular, we can now look for solutions to a finite problem on the restricted grid.  Solutions here exist if and only if solutions on the continuous domain exist.  

Finally then, we can conclude that 3 $\Rightarrow$ 1.
\end{proof}

As a corollary to this result, we could provide an algorithmic result, akin to {\cite[Theorem 1.8]{bhk-IPCOext}}, that shows that in time polynomial in $q$, we can detect if a given piecewise linear function is extreme or a facet.  But we don't feel the need to state that explicitly here.

\section*{Acknowledgments}
The authors gratefully acknowledge partial support from the National Science
Foundation through grants 
DMS-0914873 (R.~Hildebrand, M.~K\"oppe), DMS-1320051 (M.~K\"oppe), DMS-2012764
(M.~K\"oppe, L.~Xu). R. Hildebrand was also funded by ONR Grant N00014-20-1-2156. Any opinions, findings, and conclusions or recommendations expressed in this material are those of the authors and do not necessarily reflect the views of the Office of Naval Research.

\bibliographystyle{abbrv}
\providecommand\ISBN{ISBN }
\bibliography{MLFCB_bib}

\appendix

\section{Postponed Proofs}

\begin{proof}[Proof of \autoref{lem:F-formulas}]
\renewcommand{\qedsymbol}{$\triangle$} %

\begin{proof}[Proof for $F(\ver\hor\diag I_1, \point \{\u^2\}, \ver\hor\diag I_3, \ve \sigma, \t)$ ]
We show that the set of additivities is a 1-dimensional space.
Suppose that 
\(
I_3 = \{\x^3\in\R^2 : \x^3 = \u^3 + \frac{1}{q} \lambda \d, \lambda \in \Delta_1\},
\)
where \(\d\in\{(1,0), (0,1), (1,-1)\}\), and \(\lambda \in [0,1]\) are scalar parameters. 
By \autoref{validPointCharacterization}, we know that 
$ I_1 = \sigma_1 \t - (\sigma_1 \sigma_2 \{\u^2\} + \sigma_1 \sigma_3 I_3).$

Therefore, $I_1=\{\x^1\in\R^2 : \x^1 = \u^1 - \sigma_1\sigma_3\frac{1}{q} \lambda \d, \lambda \in \Delta_1\}$, where $\u^1 = \sigma_1\t - \sigma_1\sigma_2\u^2 -\sigma_1\sigma_3\u^3$.

If $F(\tau)$ can be parametrized, then $ I_1 = \sigma_1 \t - (\sigma_1 \sigma_2 \{\u^2\} + \sigma_1 \sigma_3 I_3)$. By \autoref{validPointCharacterization}, $\tau$ is valid.

Furthermore, $\t = \sigma_1\u^1+\sigma_2\u^2+\sigma_3\u^3$ is unique.
\end{proof}

    \begin{proof}[Proof for $F(\ver I_1, \hor I_2, \diag I_3, \ve \sigma, \t)$]
    We express each point \(\x^i \in I_i\) as:
\(
\x^i = \u^i + \frac{1}{q} \lambda_i \d^i,
\)
where \(\d^1 = (1,0)\), \(\d^2 = (0,1)\), and \(\d^3 = (1,-1)\). Here, \(\u^i\) is a fixed point on the edge \(I_i\), and \(\lambda_i\in [0,1]\) represents the scalar parameter for each direction \(\d^i\).
    Since \((\x^1, \x^2, \x^3) \in F\Tuple(I_1, I_2, I_3, \ve \sigma, \t)\), we have:
\[
\sum_{i=1}^3 \sigma_i \x^i = \t \quad \Rightarrow \quad 
\sigma_1 \x^1 + \sigma_2 \x^2 = \t - \sigma_3 \x^3.
\]
Substituting \(\x^i = \u^i + \frac{1}{q} \lambda_i \d^i\) for each \(i\), we obtain:
\[
\sigma_1 \left(\u^1 + \frac{1}{q} \lambda_1 \d^1\right) + \sigma_2 \left(\u^2 + \frac{1}{q} \lambda_2 \d^2\right) = \t - \sigma_3 \left(\u^3 + \frac{1}{q} \lambda_3 \d^3\right).
\]
Expanding and rearranging, we get:
\[
\sigma_1 \u^1 + \sigma_2 \u^2 + \frac{1}{q} \left( \sigma_1 \lambda_1 \d^1 + \sigma_2 \lambda_2 \d^2 \right) = \t - \sigma_3 \u^3 - \frac{1}{q} \sigma_3 \lambda_3 \d^3.
\]
We rearrange to express \(\lambda_1\) and \(\lambda_2\) as a linear system:
\[
\frac{1}{q} \begin{bmatrix} \sigma_1 \d^1 & \sigma_2 \d^2 \end{bmatrix} \begin{bmatrix} \lambda_1 \\ \lambda_2 \end{bmatrix} = \t - \sigma_1 \u^1 - \sigma_2 \u^2 - \sigma_3 \u^3 - \frac{1}{q} \sigma_3 \lambda_3 \d^3.
\]
Substitute \(\d^1 = (1,0)\), \(\d^2 = (0,1)\), and \(\d^3 = (1,-1)\), and simplify:
\[
\frac{1}{q} \begin{bmatrix} \sigma_1 & 0 \\ 0 & \sigma_2 \end{bmatrix} \begin{bmatrix} \lambda_1 \\ \lambda_2 \end{bmatrix} = \t - \sigma_1 \u^1 - \sigma_2 \u^2 - \sigma_3 \u^3 - \frac{1}{q} \sigma_3 \lambda_3 \begin{bmatrix}
    1\\
    -1
\end{bmatrix}.
\]
Since the matrix 
\(\begin{bmatrix} \sigma_1 & 0 \\ 0 & \sigma_2 \end{bmatrix}\) is non-singular with \(\begin{bmatrix} \sigma_1 & 0 \\ 0 & \sigma_2 \end{bmatrix}^{-1} = \begin{bmatrix} \sigma_1 & 0 \\ 0 & \sigma_2 \end{bmatrix}\),
we can solve for \(\lambda_1\) and \(\lambda_2\) uniquely in terms of \(\lambda_3\):
\[
\begin{bmatrix} \lambda_1 \\ \lambda_2 \end{bmatrix} = q \begin{bmatrix} \sigma_1 & 0 \\ 0 & \sigma_2 \end{bmatrix} \left( \t - \sigma_1 \u^1 - \sigma_2 \u^2 - \sigma_3 \u^3 - \frac{1}{q} \sigma_3 \lambda_3 \begin{bmatrix}
    1\\
    -1
\end{bmatrix} \right) 
= 
 \t' + \lambda_3  \begin{bmatrix} - \sigma_3\sigma_1\\
\sigma_3\sigma_2
\end{bmatrix},
\]
for some $\t' \in \Z^2$.
Note that the expression is only feasible (as a valid 7-tuple) if for each $i=1, 2$ we have either $\lambda_i = 1-\lambda_3$ or $\lambda_i = \lambda_3$
since we need $\lambda_i \in [0,1]$.

Thus \(\lambda_3\)
  serves as a single parameter from which 
$\lambda_1$
  and 
$\lambda_2$
  can be uniquely determined.
  Set $\lambda  := \lambda_3$. 
    \end{proof}
       \begin{proof}[Proof for 
       $F(\tri I_1, \point I_2, \tri I_3, \ve \sigma, \t)$]
 This follows similar to $F(\ver\hor\diag I_1, \point I_2, \ver\hor\diag I_3, \ve \sigma, \t)$.
       \end{proof}
   \begin{proof}[Proof for 
   $F(\hor I_1, \ver I_2, \tri I_3, \ve \sigma, \t)$]

Let $I_i' = \sigma_i I_i$ for $i=1,2,3$. Then \((\x^1, \x^2, \x^3) \in F\Tuple(I_1, I_2, I_3, \ve \sigma, \t)\) is equivalent to \((\sigma_1\x^1, \sigma_2\x^2, -\sigma_3\x^3) \in F\Tuple(I_1', I_2', I_3', (1,1,-1), \t)\).

   We express each point $\tilde{\x}^i\in I_i'$ as $\tilde{\x}^i=\u^i +\frac1q\lambda_i\e^i$ for $i=1,2$, where $\u^i$ is a vertex of $I_i$ and $\lambda_i\in\Delta_1$. 
   By \autoref{validCharacterization}, we know that $I_3'\subseteq -\t + I_2' + I_1'$. Thus,
   $I_3'$ is contained in the parallelogram $\u^3 + \frac1q\lambda_1\e^1 + \frac1q\lambda_2\e^2$, where $\u^3 = -\t + \u^2 + \u^1$, $\lambda_1,\lambda_2\in [0,1]$.

   If $I_3'$ is a lower triangle $\FundaTriangleLower$, then $I_3' = \{\x \in \R^2 : \x = \u^3 + \frac{1}{q} \left(\lambda_1 \ve{e}^1 + \lambda_2 \ve e^2\right), \ve \lambda \in \Delta_2\}$. Therefore, 
   $$
F(\hor I_1, \ver I_2, \tri I_3, \ve \sigma, \t) 
= \{ (\sigma_1\u^1, \sigma_2\u^2, -\sigma_3\u^3) + \frac{1}{q}(\lambda_1 \sigma_1 \ve e^1,\lambda_2 \sigma_2\ve e^2,-\sigma_3(\lambda_1 \ve e^1+\lambda_2 \ve e^2)): \ve \lambda \in \Delta_2\}.
 $$
 
   If $I_3'$ is an upper triangle $\FundaTriangleUpper$, then $I_3' = \{\x \in \R^2 : \x = \u^3 + \frac{1}{q} \left(\lambda_1 \ve{e}^1 + \lambda_2 \ve e^2\right), \mathbf{1}-\ve\lambda \in \Delta_2\}$, and
$$
F(\hor I_1, \ver I_2, \tri I_3, \ve \sigma, \t) 
= \{(\sigma_1\tilde{\u}^1, \sigma_2\tilde{\u}^2, -\sigma_3\tilde{\u}^3) + \frac{1}{q}(-\tilde{\lambda}_1 \sigma_1 \ve e^1,-\tilde{\lambda}_2 \sigma_2\ve e^2,\sigma_3(\tilde{\lambda}_1 \ve e^1+\tilde{\lambda}_2 \ve e^2)): \tilde{\ve \lambda} \in \Delta_2\},
 $$
 where $\tilde{\lambda}_i=1-\lambda_i$, $\tilde{\u}^i=\u^i+\frac1q\e^i$, $i=1,2$, and $\tilde{\u}^3 = -\t +\tilde{\u}^2 + \tilde{\u}^1$. Then $\tilde{\u}^i$, $i=1,2,3$ and $\d^i=-\e^i$, $i=1,2$ will be the desired parameterization.

On the other hand, if $F(\tau)$ can be parametrized
$$
F(\hor I_1, \ver I_2, \tri I_3, \ve \sigma, \t) 
= \{ (\sigma_1\u^1, \sigma_2\u^2, -\sigma_3\u^3) + \frac{1}{q}(\lambda_1 \sigma_1 \ve d^1,\lambda_2 \sigma_2\ve d^2,-\sigma_3(\lambda_1 \ve d^1+\lambda_2 \ve d^2)): \ve \lambda \in \Delta_2\},
$$
then $\sigma_1 I_1+\u^2=\t- \sigma_3 (I_3)_1$, $\u^1 + \sigma_2 I_2=\t- \sigma_3 (I_3)_2$, where $(I_3)_i=\{-\sigma_3(\u^3+\frac1q\lambda_i \d^i): \lambda_i\in \Delta_1\}$, $i=1,2$.
Thus, $ I_1 =\sigma_1 \t - (\sigma_1\u^2 + \sigma_1\sigma_3 (I_3)_1) \subseteq \sigma_1 \t - (\sigma_1 \sigma_2 I_2 + \sigma_1 \sigma_3 I_3)$. Similarly, $ I_2 \subseteq \sigma_2 \t - (\sigma_2 \sigma_1 I_1 + \sigma_2 \sigma_3 I_3)$. Also $I_3 \subseteq \sigma_3 \t - (\sigma_3 \sigma_2 I_2 + \sigma_3 \sigma_1 I_1)$. By \autoref{validCharacterization}, $\tau$ is valid.

Furthermore, note that we can find $\u^3, \d^1, \d^2$ from $I_3$ because the three vertices of $I_3$ is $-\sigma_3\u^3$, $-\sigma_3(\u^3+\frac1q\d^1)$, $-\sigma_3(\u^3+\frac1q\d^2)$. Then we can determine $\u^1, \u^2$ from $I_1$ and $I_2$.
Thus $\t = \u^1+\u^2 - \u^3$ is unique.
   \end{proof}
   \renewcommand{\qedsymbol}{$\square$}
\end{proof}

\begin{proof}{Proof of \autoref{lem:orbits}}
    We will prove each property of the lemma in turn.

    \textbf{(1) Symmetry of Orbits:}  
    For any \( x \in [0, 1] \), the orbit \( \mathcal{O}(x) \) is defined as:
    \[
    \mathcal{O}(x) = \{(x, i), (1 - x, i) : i = 1, \dots, \ell\}.
    \]
    By definition, this set includes both \(x\) and \(1 - x\) for each index \(i\). It follows immediately that:
    \[
    \mathcal{O}(x) = \mathcal{O}(1 - x),
    \]
    because swapping \(x\) and \(1 - x\) in the orbit definition yields the same set. Thus, the system behaves symmetrically with respect to \(x\) and \(1 - x\).

    \textbf{(2) Disjointness of Orbits:}  
    Suppose \( x \neq x' \) and \( x, x' \in [0, 1/2] \). We want to show that:
    \[
    \mathcal{O}(x) \cap \mathcal{O}(x') = \emptyset.
    \]
    If $x$ (or $x'$) is $1/2$, then the orbit is unique as $\{(1/2, i) : i \in [\ell]\}$
    By definition, for \( x \in [0, 1/2) \), the orbit \( \mathcal{O}(x) \) consists of the pairs \( (x, i) \) and \( (1 - x, i) \), where \( 1 - x \in (1/2, 1] \). Similarly, the orbit \( \mathcal{O}(x') \) consists of the pairs \( (x', i) \) and \( (1 - x', i) \), where \( 1 - x' \in (1/2, 1] \).

    Since \( x \neq x' \) and both \( x \) and \( 1 - x \) are distinct from \( x' \) and \( 1 - x' \), it follows that:
    \[
    \mathcal{O}(x) \cap \mathcal{O}(x') = \emptyset.
    \]
    This is because the pairs \( (x, i) \) and \( (1 - x, i) \) are distinct from \( (x', i) \) and \( (1 - x', i) \), as \( x \neq x' \) and \( 1 - x \neq 1 - x' \). Therefore, the orbits for distinct points in \( [0, 1/2] \) are disjoint.

    \textbf{(3) Union of Orbits for \( x \in [0, 1/2] \) Covers All Points:}  
    We need to show that:
    \[
    \{(x, i) :  x \in [0, 1], i = 1, \dots, \ell\} = \bigsqcup_{x \in [0, 1/2]} \mathcal{O}(x).
    \]
    For any \( x \in [0, 1] \), we have two cases:
    (1) If \( x \in [0, 1/2] \), then \( (x, i) \) is part of \( \mathcal{O}(x) \).
    (2) If \( x \in [1/2, 1] \), then \( (x, i) = (1 - x', i) \) for some \( x' \in [0, 1/2] \), and thus \( (x, i) \in \mathcal{O}(x') \).

    Therefore, every point \( (x, i) \) in \( [0, 1] \) can be expressed as part of the orbit \( \mathcal{O}(x') \) for some \( x' \in [0, 1/2] \). This shows that the union of orbits for \( x \in [0, 1/2] \) covers the entire set of points \( (x, i) \) for \( x \in [0, 1] \).
\end{proof}

\end{document}